\theoremstyle{plain}
\newtheorem{thm}{Theorem}
\newtheorem{lem}[thm]{Lemma}
\newtheorem{cor}[thm]{Corollary}
\newtheorem{prop}[thm]{Proposition}
\theoremstyle{definition}
\newtheorem{rmk}[thm]{Remark}
\numberwithin{thm}{section}
\numberwithin{equation}{thm}
\newcommand{\al}{\alpha}
\newcommand{\inj}{\hookrightarrow}
\newcommand{\End}{{\rm End}}
\newcommand{\Char}{{\rm char}}
\newcommand{\Tr}{{\rm Tr}}
\newcommand{\Gal}{{\rm Gal}}
\newcommand{\Min}{{\rm Min}}
\newcommand{\Max}{{\rm Max}}
\newcommand{\Syl}{{\rm Syl}}
\newcommand{\Trace}{{\rm Trace}}
\newcommand{\Stab}{{\rm Stab}}
\newcommand{\Ker}{{\rm Ker}}
\newcommand{\Aut}{\mathrm{Aut}}
\newcommand{\Irr}{\mathrm{Irr}}
\newcommand{\eps}{\epsilon}
\newcommand{\gam}{\gamma}
\newcommand{\tG}{\tilde{G}}
\newcommand{\diag}{\mathrm{diag}}
\newcommand{\ord}{\mathrm{ord}}
\newcommand{\SL}{\mathrm{SL}}
\newcommand{\GL}{\mathrm{GL}}
\newcommand{\GU}{\mathrm{GU}}
\newcommand{\SU}{\mathrm{SU}}
\newcommand{\PSU}{\mathrm{PSU}}
\newcommand{\PGU}{\mathrm{PGU}}
\newcommand{\Sp}{\mathrm{Sp}}
\newcommand{\PSp}{\mathrm{PSp}}
\newcommand{\sF}{{\mathcal F}}
\newcommand{\sG}{{\mathcal G}}
\newcommand{\sH}{{\mathcal H}}
\newcommand{\sK}{{\mathcal K}}
\newcommand{\sL}{{\mathcal L}}
\newcommand{\sW}{{\mathcal W}}
\newcommand{\sX}{{\mathcal X}}
\newcommand{\A}{{\mathbb A}}
\newcommand{\C}{{\mathbb C}}
{

\newcommand{\F}{{\mathbb F}}
\newcommand{\G}{{\mathbb G}}

\newcommand{\K}{{\mathbb K}}

\renewcommand{\P}{{\mathbb P}}
\newcommand{\Q}{{\mathbb Q}}

\newcommand{\Z}{{\mathbb Z}}

\newcommand{\ZB}{{\mathbf Z}}
\newcommand{\CB}{{\mathbf C}}
\newcommand{\NB}{{\mathbf N}}
\newcommand{\OB}{{\mathbf O}}
\newcommand{\ari}{\mathrm{arith}}
\newcommand{\geo}{\mathrm{geom}}

\newcommand{\bj}{\boldsymbol{j}}
\newcommand{\bt}{\boldsymbol{t}}
\newcommand{\Id}{\mathrm{Id}}
\newcommand{\stW}{\widetilde{\mathcal{W}}}
\newcommand{\stH}{\widetilde{\mathcal{H}}}
\newcommand{\tC}{\tilde{C}}
\newcommand{\tH}{\tilde{H}}
\newcommand{\Gauss}{{\mathsf {Gauss}}} 
\newcommand{\triv}{{\mathds{1}}}

\renewcommand{\mod}{\bmod \,}
\renewcommand{\Char}{{\sf Char}}

\begin{document}
\title[Exponential sums and symplectic and unitary groups]
{Exponential sums and total Weil representations of finite symplectic and unitary groups}
\author{Nicholas M. Katz and Pham Huu Tiep}
\address{Department of Mathematics, Princeton University, Princeton, NJ 08544}
\email{nmk@math.princeton.edu}
\address{Department of Mathematics, Rutgers University, Piscataway, NJ 08854}
\email{tiep@math.rutgers.edu}
\thanks{The second author gratefully acknowledges the support of the NSF (grant DMS-1840702), and the Joshua 
Barlaz Chair in Mathematics.}

\maketitle

\begin{abstract}
We construct explicit local systems on the affine line in characteristic $p>2$, whose geometric monodromy groups are the finite symplectic groups $\Sp_{2n}(q)$ for all $n \ge 2$, and others whose geometric monodromy groups are the special unitary groups $\SU_n(q)$ for all  odd $n \ge 3$, and $q$ any power of $p$, in their total Weil representations.  One principal merit of these local systems is that their associated trace functions are one-parameter 
families of exponential sums of a very simple, i.e.,  easy to remember, form. We also exhibit 
hypergeometric sheaves on $\G_m$, whose geometric monodromy groups are the finite symplectic groups
$\Sp_{2n}(q)$ for any $n \ge 2$, and others whose geometric monodromy groups are
the finite general unitary groups $\GU_n(q)$ for any odd $n \geq 3$.
\end{abstract}

\tableofcontents

\section{Introduction}
Throughout this paper, $p$ is an {\bf odd} prime and $q=p^f$ is a (strictly positive) power of $p$.
In our previous paper \cite{KT3}, we exhibited explicit local systems on the affine line $\A^1/\F_p$ whose geometric monodromy groups were  the symplectic groups $\Sp_{2n}(q)$ for all {\bf even} $n \ge 2$, or the special unitary groups $\SU_n(q)$ for all {\bf odd} $n \ge 3$, in their total Weil representations. In this paper, we give new local systems which do this, and which also handle the case of  
$\Sp_{2n}(q)$ for $n$ {\bf odd}. Moreover, our results lead to hypergeometric sheaves whose geometric monodromy groups are 
$\Sp_{2n}(q)$ for any $n \geq 2$, and the general unitary groups $\GU_n(q)$ for any odd $n \geq 3$. This paper may also be viewed as a companion piece to \cite{KT4}, which determines which almost quasisimple groups can possibly occur as monodromy groups of
hypergeometric sheaves.

All of our local systems on $\A^1$ are those attached to one-parameter families of exponential sums of the following simple shape. We fix a nontrivial additive character $\psi$ of $\F_p$, and for each finite extension $k/\F_p$, we obtain the additive character $\psi_k$ of $k$ by composition with $\Trace_{k/\F_p}$. For fixed positive integers $N>M$ with $p \nmid NM$, we look at the one-parameter family of the shape
$$t \in k \mapsto (1/\Gauss_k)\sum_{x \in k}\psi_k(x^N +tx^M),$$
with $\Gauss_k$ a (correctly chosen) quadratic Gauss sum over $k$.

We first give some general results about local systems of this $(N,M)$ type. We then specialize to the cases where
$$N=q^n+1, M=q^m+1, n > m>0,$$
which, under suitable hypotheses, we show realize various total Weil representations. In hindsight, our earlier paper \cite{KT3} was devoted to the special case $m=1$. Despite the apparent simplicity of these
local systems, analysis of them depends heavily on their relation to hypergeometric sheaves, and on a great deal of finite group theory. The finite group theory is used to ``go-up'' from known one-parameter local systems to multi-parameter local systems, and 
then to ``go-down"  to  our target one-parameter local systems. This technology of ``going up" and ``going down" also turns out
to be a crucial ingredient in our paper \cite{KT5}.

Our main results for finite symplectic groups $\Sp_{2n}(q)$ are Theorems \ref{main-sp3}, \ref{main-sp4}, and 
\ref{sp-center}. In Theorem \ref{main-sp3}, we show that certain local systems on $\A^1/\F_p$ have as their geometric monodromy groups the image of $\Sp_{2n}(q)$ in its total Weil representation of degree $q^n$ and whose trace functions are easy to remember  one-parameter
families of exponential  sums. In Theorem  \ref{main-sp4} and Theorem \ref{sp-center} we show that certain hypergeometric sheaves on $\G_m/\F_q$ have geometric monodromy groups which are the images of $\Sp_{2n}(q)$ in 
its irreducible Weil representations of degree $(q^n \pm 1)/2$. The structure of  the arithmetic monodromy groups is also determined completely.
We  obtain similar results for the finite unitary groups, see Theorems \ref{main-su1}--\ref{main-su4}.

\section{A miscellany on moments, irreducibility, and van der Geer--van der Vlugt}
Let us recall the basic mechanism.

Let $p$ be a prime, $k/\F_p$ a finite extension, $U/k$ smooth, and geometrically connected of dimension $\dim(U) > 0$, $\ell$ a prime $\ell \neq p$, and $\sF$ a lisse $\overline{\Q_\ell}$ sheaf on $U$ which is $\iota$-pure of weight zero for a given $\iota:\overline{\Q_\ell} \subset \C$. By purity, one knows \cite[3.4.1(iii)]{De-Weil II} that $\sF$ is geometrically semisimple, say on $U_{\overline{k}}:=U\otimes_k \overline{k}$ we have
$$\sF \cong \bigoplus_i n_i\sG_i,$$
with lisse $\sG_i$ on $U\otimes_k\overline{k}$ which are geometrically irreducible and pairwise non-isomorphic.
\begin{prop}\label{momentcalc}One has
$$\sum_i (n_i)^2 = \limsup_{{\rm finite  \ extensions\ }E/k}(1/\#E)^{\dim(U)}\sum_{x \in U(E)}|\Trace(Frob_{x,E}|\sF)|^2.$$
\end{prop}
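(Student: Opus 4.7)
The plan is to interpret $|\Trace|^2$ via the auxiliary sheaf $\sF\otimes\sF^\vee$, invoke the Grothendieck--Lefschetz trace formula, and bound the resulting cohomological sum via Deligne's weight theorem. The first step is a pointwise trace identity: because $\sF$ is $\iota$-pure of weight $0$, every Frobenius eigenvalue $\alpha$ on every stalk satisfies $|\iota(\alpha)|=1$, so $\iota\Trace(Frob_{x,E}|\sF^\vee) = \overline{\iota\Trace(Frob_{x,E}|\sF)}$, and hence
$$|\Trace(Frob_{x,E}|\sF)|^2 = \iota\Trace(Frob_{x,E}|\sF\otimes\sF^\vee).$$
The Grothendieck--Lefschetz trace formula then gives
$$\sum_{x\in U(E)} |\Trace(Frob_{x,E}|\sF)|^2 = \sum_{i=0}^{2\dim U}(-1)^i \iota\Trace\bigl(Frob_E\,|\,H^i_c(U_{\overline{k}},\sF\otimes\sF^\vee)\bigr).$$

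Next, $\sF\otimes\sF^\vee$ is $\iota$-pure of weight $0$, so Deligne's main theorem bounds the $\iota$-weight of $H^i_c$ by $i$; after dividing by $(\#E)^{\dim U}$, the terms with $i<2\dim U$ are $O((\#E)^{-1/2})$ and vanish in the limit. Poincar\'e duality on the smooth, geometrically connected $U$ identifies the top cohomology as $(\sF\otimes\sF^\vee)_{\pi_1^{\geo}(U_{\overline{k}})}(-\dim U)$, which is $\iota$-pure of weight $2\dim U$. Geometric semisimplicity of $\sF$ forces semisimplicity of $\sF\otimes\sF^\vee\cong\End(\sF)$, so its geometric invariants and coinvariants have equal dimension. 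Schur's lemma applied to $\sF\cong\bigoplus_i n_i\sG_i$ with absolutely irreducible, pairwise non-isomorphic $\sG_i$ gives
$$\dim (\sF\otimes\sF^\vee)^{\pi_1^{\geo}} = \dim \End_{\pi_1^{\geo}}(\sF) = \sum_i n_i^2 =: N.$$

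Finally, let $\alpha_1,\dots,\alpha_N$ be the Frobenius eigenvalues on this top cohomology over $k$; by purity, $\beta_j:=\iota(\alpha_j)/(\#k)^{\dim U}$ lie on the unit circle. For $E=k_n$ the normalized contribution of the top cohomology is $\sum_{j=1}^N \beta_j^n$, bounded in absolute value by $N$ and, by simultaneous Dirichlet approximation applied to the arguments of the $\beta_j$, taking values arbitrarily close to $N$ for infinitely many $n$. Combining with the previous step produces the equality $\limsup_E (\#E)^{-\dim U}\sum_{x\in U(E)}|\Trace(Frob_{x,E}|\sF)|^2 = \sum_i n_i^2$. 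The main obstacle is this last step: extracting the precise value of the $\limsup$ rather than merely an upper bound requires the non-cohomological input that finitely many unit-circle numbers admit common powers arbitrarily close to $1$, which is what forces the statement to be phrased in terms of $\limsup$ instead of $\lim$.
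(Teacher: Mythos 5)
Your proposal is correct and follows essentially the same route as the paper: identify the normalized second-moment sum with the trace of Frobenius on the compactly supported cohomology of $\End(\sF)=\sF\otimes\sF^\vee$ via the Lefschetz trace formula, kill the lower $H^i_c$ by Deligne's weight bounds, identify the top cohomology (of dimension $\sum_i n_i^2$, by semisimplicity and Schur) as pure with unitary Frobenius eigenvalues, and extract the $\limsup$ by noting that powers of finitely many unit-circle numbers come arbitrarily close to $1$ along a suitable sequence of degrees. The only differences are expository (you make Poincar\'e duality, the invariants/coinvariants step, and the Dirichlet approximation explicit, where the paper states them more briefly).
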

\begin{proof}We have 
$$\sum_i (n_i)^2={\rm dim}{\rm End}_{U_{\overline{k}}}(\sF)) ={\rm dim}H^{2\dim(U)}_c(U_{\overline{k}}, \End(\sF)(\dim(U))).$$
This cohomology group is pure of weight zero, say with Frobenius$_k$ eigenvalues $\alpha_j$, $j =1,\ldots,d$, each unitary, for $d$ the dimension of this $H^{2\dim(U)}_c$. By the Lefschetz trace formula, for each finite extension $E/k$, we have
$$\begin{aligned}& \Trace(Frob_E|H^{2\dim(U)}_c(U_{\overline{k}}, \End(\sF)(\dim(U)))\\ 
+ & \sum_{i < 2\dim(U)}(-1)^i\Trace(Frob_E|H^i_c(U_{\overline{k}}, \End(\sF)(\dim(U)))\\
= & \sum_{x \in U(E)}\Trace(Frob_{x,E}|\End(\sF))/(\#E)^{\dim(U)}.\end{aligned}$$
The $H^i_c$ traces for $i < 2\dim(U)$ are $O(1/\sqrt{E})$ (because the coefficients are pure of weight $-2\dim(U)$, so each $H^i_c$ is mixed of weight $\le i - 2\dim(U)\le -1$),
while the  $H^{2\dim(U)}_c$ trace is
$$\sum_j(\alpha_j)^{\deg(E/k)}.$$
As the $\alpha_j$ are each unitary,  the $H^{2\dim(U)}_c$ trace is always $\le d$ in absolute value, but comes arbitrarily close to $d$ for
some infinite sequence of $E/k$ with suitably chosen degrees. The lower $H^i_c$ traces do not affect the $\limsup$, as they tend to $0$ as the degree grows.
\end{proof}

We will refer to the quantity $\sum_i (n_i)^2$ in Proposition \ref{momentcalc} above as the {\it second moment} $M_2(\sF)$.

\begin{lem}\label{ABmoment1}Given strictly positive integers $A \neq B$ which are both prime to $p$, a finite extension $k/\F_p$, a nontrivial additive character $\psi$ of $k$, invertible scalars $a,b \in k^\times$, and a choice $\Gauss_k$ in $\overline{\Q_\ell}$ with absolute value $\sqrt{\#k}$, consider the lisse $\overline{\Q_\ell}$-sheaf $\sF$ on $\G_m/k$ whose trace function is given by
$$t \in E/k \mapsto \frac{-1}{(\Gauss_k)^{\deg(E/k)}}\sum_{x \in E}\psi_E(ax^A +btx^B).$$
Then we have the following results.
\begin{itemize}
\item[(i)] We have $$M_2(\sF)=\gcd(A,B).$$
\item[(ii)] Let us denote by $D:=\gcd(A,B)$. Over the extension field $k(\mu_D)$, for each multiplicative character $\chi$ of order dividing $D$ we have the lisse sheaf $\sF_\chi$ whose trace function is given by
$$t \in E/k(\mu_D) \mapsto \frac{-1}{(\Gauss_k)^{\deg(E/k)}}\sum_{x \in E}\psi_E(ax^{A/D} +btx^{B/D})\chi(x).$$
Then geometrically we have
$$\sF \cong \bigoplus_{\chi \in \Char(D)}\sF_\chi,$$
each $\sF_\chi$ is geometrically irreducible, and the various $\sF_\chi$ are pairwise not geometrically isomorphic.
\end{itemize}
\end{lem}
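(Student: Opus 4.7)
The plan for (i) is to apply Proposition \ref{momentcalc} with $U = \G_m$. Using $|\Gauss_k|^2 = \#k$, the second moment expands as
\[
M_2(\sF) \;=\; \limsup_E \frac{1}{(\#E)^2}\sum_{t \in E^\times}\sum_{x,y \in E}\psi_E\bigl(a(x^A-y^A) + bt(x^B-y^B)\bigr).
\]
I would first extend the $t$-sum from $E^\times$ to $E$ (the missing $t = 0$ term is $O(1/\#E)$), apply additive orthogonality in $t$ to impose $x^B = y^B$, and then parametrize by $y = \zeta x$ with $\zeta \in \mu_B$ after passing to extensions containing $\mu_B$. The ``diagonal'' $\zeta$'s with $\zeta^A = 1$ each contribute $\#E$; the remaining ones produce pure exponential sums of degree $A$ in $x$, which are $O(\sqrt{\#E})$ by Weil (since $p \nmid A$). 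Counting $\#\{\zeta \in \mu_B : \zeta^A = 1\} = \gcd(A,B) = D$ then gives $M_2(\sF) = D$.

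For the decomposition in (ii), the starting point is a trace-function identity. Writing $A = DA'$, $B = DB'$ with $\gcd(A',B') = 1$, and using that $x \mapsto x^D$ is $D$-to-$1$ from $E^\times$ onto $(E^\times)^D$ whenever $\mu_D \subset E$, the orthogonality $\mathbf{1}_{(E^\times)^D}(u) = \frac{1}{D}\sum_{\chi^D = \triv}\chi(u)$ yields
\[
\sum_{x \in E}\psi_E(ax^A + btx^B) \;=\; \sum_{\chi^D = \triv}\sum_{x \in E}\chi(x)\psi_E(ax^{A'} + btx^{B'}),
\]
i.e.\ $S(t) = \sum_{\chi} S_\chi(t)$. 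Each $\sF_\chi$ is lisse on $\G_m/k(\mu_D)$ and $\iota$-pure of weight zero, hence geometrically semisimple; Chebotarev density then converts the trace identity into the geometric isomorphism $\sF \cong \bigoplus_\chi \sF_\chi$.

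It remains to establish that each $\sF_\chi$ is geometrically irreducible and that distinct $\sF_\chi$ are non-isomorphic. I would compute the cross-moment
\[
\limsup_E \frac{1}{(\#E)^2}\sum_{t \in E^\times} S_\chi(t)\overline{S_{\chi'}(t)} \;=\; \delta_{\chi,\chi'}
\]
by the same method as in (i), now carrying the extra factor $\chi(x)\overline{\chi'(y)}$. After the $t$-sum, $x^{B'} = y^{B'}$ combined with $\gcd(A',B') = 1$ forces the only $\zeta \in \mu_{B'}$ with $\zeta^{A'} = 1$ to be $\zeta = 1$; its contribution is $\tfrac{1}{\#E}\sum_{x \in E^\times}(\chi\overline{\chi'})(x) = \delta_{\chi,\chi'}$ by multiplicative orthogonality, while the remaining $\zeta$'s yield mixed additive--multiplicative character sums of size $O(\sqrt{\#E})$ by Weil. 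This gives $\dim \End(\sF_\chi)^{\rm geo} = 1$ and $\dim \Hom(\sF_{\chi'}, \sF_\chi)^{\rm geo} = 0$ for $\chi \neq \chi'$, completing (ii).

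The main obstacle I anticipate is the clean justification that each $\sF_\chi$ is in fact lisse on all of $\G_m/k(\mu_D)$ and $\iota$-pure of weight zero, so that Chebotarev can be applied at the sheaf level; this amounts to the standard non-degeneracy check for the two-variable Laurent polynomial $ax^{A'} + btx^{B'}$ with $\gcd(A',B') = 1$, and it is the nontrivial input that allows the moment calculations above to translate into the desired sheaf-theoretic statements.
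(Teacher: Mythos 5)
Your proposal is correct, and part (i) is exactly the paper's argument: the paper proves Lemma \ref{ABmoment1} as the special case $f(x)=ax^A$ of Proposition \ref{ABmoment}, whose proof is the same chain you describe (apply Proposition \ref{momentcalc}, complete the $t$-sum with an $O(1/\#E)$ correction, use additive orthogonality to impose $x^B=y^B$, break into lines $y=\zeta x$ for $\zeta\in\mu_B$, and invoke Weil for the off-diagonal $\zeta$, counting $\gcd(A,B)$ diagonal lines). Where you diverge is the endgame of (ii). The paper never computes cross-correlations: having written the trace of $\sF$ as the sum of the $D$ trace functions of the nonzero sheaves $\sF_\chi$, it deduces irreducibility and pairwise non-isomorphism purely by counting, since $M_2(\sF)=\sum_i n_i^2=D$ together with at least $D$ nonzero summands forces every multiplicity $n_i$ to be $1$. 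You instead compute the full matrix of moments $\langle S_\chi,S_{\chi'}\rangle=\delta_{\chi,\chi'}$, using $\gcd(A/D,B/D)=1$ and Weil bounds for mixed additive--multiplicative sums; this is essentially Proposition \ref{ABmomentrho} applied to each $\sF_\chi$ separately, an alternative the paper itself points out later (after Proposition \ref{total-su}). Both routes are sound: the paper's counting argument is more economical, needing no bounds on sums decorated by multiplicative characters, while yours yields the slightly stronger local statement that each $\sF_\chi$ individually has second moment one, independent of the global decomposition count. Your flagged concern about lisseness and purity of the $\sF_\chi$ is treated as standard in the paper as well (the lemma's statement already posits these sheaves), so it is not a gap in your argument.
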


In fact, this is a special case of the following slightly more general statement. Recall that a one-variable polynomial $f(x)$ over an $\F_p$-algebra is said to be {\it Artin-Schreier reduced} if it is the zero polynomial, or if it has no constant term, and if any monomial appearing with a nonzero coefficient has degree prime to $p$. Given an Artin-Schreier reduced polynomial $f(x)$, we denote by
$${\rm gcd}_{\deg}(f)$$
the greatest common divisor of the degrees of the monomials appearing in $f$.
 
\begin{prop}\label{ABmoment}Given strictly positive integers $A \neq B$ which are both prime to $p$, a finite extension $k/\F_p$, a nontrivial additive character $\psi$ of $k$, an Artin-Schreier reduced polynomial $f(x) \in k[x]$ of degree $A$, and a choice $\Gauss_k$ in $\overline{\Q_\ell}$ with absolute value $\sqrt{\#k}$, consider the lisse $\overline{\Q_\ell}$-sheaf $\sF$ on $\G_m/k$ whose trace function is given by
$$t \in E/k \mapsto \frac{-1}{(\Gauss_k)^{\deg(E/k)}}\sum_{x \in E}\psi_E(f(x) +tx^B).$$
Then we have the following results.
\begin{itemize}
\item[(i)] We have $$M_2(\sF)=\gcd({\rm gcd}_{\deg}(f),B).$$

\item[(ii)]Let us denote by $D:=\gcd({\rm gcd}_{\deg}(f),B)$. Then $f(x)$ is of the form $g(x^D)$ for a unique polynomial $g(x) \in k[x]$. Over the extension field $k(\mu_D)$, for each multiplicative character $\chi$ of order dividing $D$ we have the lisse sheaf $\sF_\chi$ whose trace function is is given by
$$t \in E/k(\mu_D) \mapsto \frac{-1}{(\Gauss_k)^{\deg(E/k)}}\sum_{x \in E}\psi_E(g(x) +tx^{B/D})\chi(x).$$
Then geometrically we have
$$\sF \cong \bigoplus_{\chi \in \Char(D)}\sF_\chi,$$
each $\sF_\chi$ is geometrically irreducible, and the various $\sF_\chi$ are pairwise not geometrically isomorphic. The rank of $\sF_{\triv}$ is $\Max(A/D,B/D) -1$, the rank of each $\sF_\chi$ with $\chi \neq \triv$ is $\Max(A/D,B/D)$.
\end{itemize}
\end{prop}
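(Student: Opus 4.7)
For part (i), the plan is a direct second-moment computation via Proposition \ref{momentcalc}. Expanding $|{\rm trace}_\sF(t)|^2$ and summing first over $t \in E$ via the additive orthogonality $\sum_{t \in E}\psi_E(tz) = \#E \cdot \mathbf{1}_{z=0}$, the $(x,y)$-sum collapses to the locus $x^B = y^B$; the difference between summing $t$ over $E$ versus $E^\times$, and the boundary terms with $x=0$ or $y=0$, are all $O(1)$ and do not affect the limsup. Parametrizing $x = \eta y$ with $\eta \in \mu_B(E)$, the residual phase becomes $f(\eta y) - f(y) = \sum_j c_j(\eta^j - 1)y^j$, which vanishes identically in $y$ exactly when $\eta \in \mu_{{\rm gcd}_{\deg}(f)}$; for all other $\eta$, Weil's bound on the resulting nonzero Artin--Schreier reduced polynomial yields $O(\sqrt{\#E})$. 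The main term is therefore $\#\bigl(\mu_B(E) \cap \mu_{{\rm gcd}_{\deg}(f)}(E)\bigr) \cdot \#E = D \cdot \#E$ once $E \supseteq k(\mu_D)$; dividing by $\#E$ and taking $\limsup$ yields $M_2(\sF) = D$.

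For the geometric decomposition in part (ii), the plan is to use the $D$-th power map $h\colon \A^1_x \to \A^1_y$, $x \mapsto x^D$. The divisibilities $D \mid {\rm gcd}_{\deg}(f)$ and $D \mid B$ give the polynomial identity $f(x) + tx^B = g(x^D) + t(x^D)^{B/D}$, hence the sheaf identity
$$\sL_{\psi(f(x) + tx^B)} \;=\; h^*\,\sL_{\psi(g(y) + ty^{B/D})}.$$
Applying the projection formula to the family projection $\pi\colon \A^1_x \times \G_m \to \G_m$, factored as $\pi = \pi' \circ (h \times \id)$, then gives
$$R\pi_!\,h^*\sL_{\psi(g(y) + ty^{B/D})} \;\cong\; R\pi'_!\bigl(h_*\overline{\Q_\ell} \otimes \sL_{\psi(g(y) + ty^{B/D})}\bigr).$$
Over $k(\mu_D)$ the restriction of $h$ to $\G_m$ is a $\mu_D$-torsor, so $h_*\overline{\Q_\ell}\big|_{\A^1} \cong \overline{\Q_\ell} \oplus \bigoplus_{\chi \neq \triv,\,\chi^D = \triv}j_*\sL_\chi$. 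Taking $R^1$ (the $R^0$ and $R^2$ vanish because each piece is tensored with a nontrivially-ramified $\sL_\psi$-factor) produces the claimed geometric isomorphism $\sF \cong \bigoplus_{\chi^D = \triv} \sF_\chi$ on $\G_m/k(\mu_D)$.

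Finally, I rerun the part-(i) second-moment calculation on each $\sF_\chi$. The character $\chi$ enters only as $\chi(\eta)$ on the $\eta$-sum, and the main term still comes from those $\eta$ with $g(\eta y) - g(y) \equiv 0$. Since $\gcd({\rm gcd}_{\deg}(g), B/D) = D/D = 1$, only $\eta = 1$ contributes, so $M_2(\sF_\chi) = 1$ and each $\sF_\chi$ is geometrically irreducible. Combined with $M_2(\sF) = D$ and the decomposition into $D$ irreducible pieces, the elementary inequality $\sum m_i^2 \geq \sum m_i$ on multiplicities (equality iff all $m_i \leq 1$) forces the summands to be pairwise non-isomorphic. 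The rank formulas follow from Euler--Poincar\'e: for $\chi = \triv$ the generic fiber is $H^1_c(\A^1, \sL_{\psi(g(x)+tx^{B/D})})$ over $\overline{k}$, of dimension $\Max(A/D,B/D) - 1$; for $\chi \neq \triv$ it is $H^1_c(\G_m, \sL_\chi \otimes \sL_{\psi(g(x)+tx^{B/D})})$, with Swan conductor $\Max(A/D,B/D)$ at $\infty$ and tame nontrivial ramification at $0$, of dimension $\Max(A/D,B/D)$. I expect the main technical obstacle to be the bookkeeping around the projection formula and the extension of the $\mu_D$-torsor decomposition of $h_*\overline{\Q_\ell}$ across $y = 0$; verifying the Weil bound off-diagonal also requires checking that $f(\eta y) - f(y)$ remains Artin--Schreier reduced of positive degree for $\eta \notin \mu_{{\rm gcd}_{\deg}(f)}$, which is immediate from the Artin--Schreier reducedness of $f$.
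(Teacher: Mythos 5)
Your proposal is correct, and part (i) is essentially the paper's own argument (orthogonality in $t$, collapse to $x^B=y^B$, parametrization by $\mu_B$, Weil off the locus $f(\zeta x)\equiv f(x)$, main term $D$); the only imprecision is calling the $t\in E$ versus $t\in E^\times$ discrepancy ``$O(1)$'': it equals $|\sum_{x\in E}\psi_E(f(x))|^2$ before normalization, and one needs Weil's bound on this complete sum (degree of $f$ prime to $p$) to see it contributes $O(1/\#E)$ after dividing by $(\#E)^2$, which is the step the paper spells out explicitly. In part (ii) you take a genuinely different route. The paper writes $f(x)=g(x^D)$, observes that over $k(\mu_D)$ the trace function of $\sF$ is the sum of those of the $D$ sheaves $\sF_\chi$, and then obtains irreducibility and pairwise non-isomorphism in one stroke from the count $M_2(\sF)=D$ together with the existence of $D$ nonzero summands (if $\sum_i n_i=D$ and $\sum_i n_i^2=D$ then every $n_i=1$). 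You instead derive the decomposition structurally, via the projection formula for the $D$-power map $h$ and the splitting $h_*\overline{\Q_\ell}\cong\bigoplus_{\chi\in\Char(D)}j_*\sL_\chi$ over $k(\mu_D)$; this buys an actual sheaf-level (indeed arithmetic) isomorphism rather than an identity of trace functions, at the cost of the checks you flag at $y=0$ and the fibrewise vanishing of $R^0\pi'_!$ and $R^2\pi'_!$ (the latter uses $\Max(A/D,B/D)\ge 2$, which holds since $A\neq B$ and $D$ divides both). Your separate second-moment computation for each $\sF_\chi$, using $\gcd({\rm gcd}_{\deg}(g),B/D)=1$, is correct — it is in effect the paper's Proposition \ref{ABmomentrho}, whose proof is left to the reader — but it is logically redundant here: once the decomposition into $D$ nonzero pieces is in hand, $M_2(\sF)=D$ already forces each piece irreducible and the pieces pairwise non-isomorphic, which is exactly how the paper concludes. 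Your Euler--Poincar\'e computation of the ranks, $\Max(A/D,B/D)-1$ for $\chi=\triv$ and $\Max(A/D,B/D)$ for $\chi\neq\triv$, is fine and in fact more explicit than the paper's brief remark.
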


\begin{rmk}When $A >B$, the sheaves $\sF$ and $\sF_\chi$ are all lisse on $\A^1$, not ``just" on $\G_m$, and this fact would
slightly simplify the proof in this case. 
\end{rmk}

\begin{proof}We first calculate 
$$(1/\#E)\sum_{t \in E}|\Trace(Frob_{t,E}|\sF)|^2$$
for a single $E/k$, large enough to contain the $B^{\mathrm {th}}$ roots of unity. It is
$$(1/\#E)^2 \sum_{t \in E^\times, x,y \in E}\psi_E(f(x)-f(y) +t(x^B - y^B))=$$
$$=(1/\#E)^2 \sum_{x,y \in E}\psi_E(f(x)-f(y))\sum_{t \in E^\times}\psi_E(t(x^B-y^B)).$$

We now rewrite the sum so that the sum over $t$ is over all $t \in E$. It becomes
$$-(1/\#E)^2 \sum_{x,y \in E}\psi_E(f(x)-f(y)) + (1/\#E)^2 \sum_{x,y \in E}\psi_E(f(x)-f(y))\sum_{t \in E}\psi_E(t(x^B-y^B)).$$
We claim that the first term
$$-(1/\#E)^2 \sum_{x,y \in E}\psi_E(f(x)-f(y))$$
is $O(1/\#E)$. Indeed, it is minus the square absolute value of  
$$(1/\#E)\sum_{x \in E}\psi_E(f(x)),$$
which is $O(1/\sqrt{\#E})$ (because $f$ has degree prime to $p$).

So it is only the second term which affects the $\limsup$. That term is.
$$(1/\#E)\sum_{x,y \in E,\,x^B=y^B}\psi_E(f(x)-f(y)).$$
The domain of summation is the union of the lines $y=\zeta x$, one for each $\zeta \in \mu_B$. They all intersect in $x=y=0$, but otherwise are pairwise disjoint. So up to an error of at most $B/\#E$, this sum is
$$\sum_{\zeta \in \mu_B}(1/\#E)\sum_{x \in E}\psi_E(f(x) -f(\zeta x)).$$
Because $f$ is Artin-Schreier reduced, so also is $f(x) -f(\zeta x)$. If $f(x) -f(\zeta x)$ is nonzero, then its degree is prime to $p$, 
and by Weil the sum 
$$(1/\#E)\sum_{x \in E}\psi_E(f(x) -f(\zeta x))$$
has absolute value $O(1/\sqrt{\#E})$. If $f(x) -f(\zeta x)=0$, then this sum is $1$. Thus up to an $O(1/\sqrt{\#E})$ error, the sum is the number of $\zeta \in \mu_B$ for which $f(x)=f(\zeta x)$, an equality which holds precisely for $\zeta$ a root of unity of order
dividing ${\rm gcd}_{\deg}(f)$. This proves the first assertion.

Once we have (i), we write the  trace function of $\sF$ as
$$t \in E/k \mapsto (-1/(\Gauss_k)^{\deg(E/k)})\sum_{x \in E}\psi_E(g(x^D) +tx^{D(B/D)}.$$
If $E$ contains $\mu_D$, this is the sum over $\chi \in \Char(D)$ of the functions
$$t \in E/k \mapsto (-1/(\Gauss_k)^{\deg(E/k)})\sum_{x \in E}\psi_E(g(x) +tx^{B/D}).\chi(x),$$
each of which is the trace function of a lisse $\sF_\chi$ which is pure of weight zero and lisse of rank $A/D=\deg(g)$ for $\chi =\triv$, and
of rank $\deg(g)-1$. [Notice that $\deg(g) \ge 2$, because $\Max(A,B)/D > \Min(A,B)/D \ge 1$, so each $\sF_\chi$ is nonzero.]
Once we have $\sF$ having second moment $D$ expressed geometrically as the sum of $D$ nonzero summands, each summand must irreducible (otherwise we get even more summands) and the $D$ summands must be pairwise nonisomorphic (for if $\sum_i n_i=D$ and 
$\sum_i (n_i)^2=D$, then each $n_i=1$).
\end{proof}

Here is a slight generalization of this last result, where we allow a multiplicative character to ``decorate" the sum in question. The proof,
a straightforward rewriting of the proof of Proposition \ref{ABmoment}, is left to the reader.

\begin{prop}\label{ABmomentrho}Given strictly positive integers $A \neq B$ which are both prime to $p$, a finite extension $k/\F_p$, a nontrivial additive character $\psi$ of $k$, an Artin-Schreier reduced polynomial $f(x) \in k[x]$ of degree $A$, a nontrivial multiplicative character $\rho$ of $k^\times$, and a choice $\Gauss_k$ in $\overline{\Q_\ell}$ with absolute value $\sqrt{\#k}$, consider the lisse $\overline{\Q_\ell}$-sheaf $\sF$ on $\G_m/k$ whose trace function is given by
$$t \in E/k \mapsto \frac{-1}{(\Gauss_k)^{\deg(E/k)}}\sum_{x \in E}\psi_E(f(x) +tx^B)\rho(x).$$
Then we have the following results.
\begin{itemize}
\item[(i)] We have $$M_2(\sF)=\gcd({\rm gcd}_{\deg}(f),B).$$

\item[(ii)]Let us denote by $D:=\gcd({\rm gcd}_{\deg}(f),B)$. Then $f(x)$ is of the form $g(x^D)$ for a unique polynomial $g(x) \in k[x]$. 
Extend scalars so that $k$ contains $\mu_D$ and so that $\rho$ is a $D$'th power, say $\rho =\sigma^D$.
For each multiplicative character $\chi$ of order dividing $D$ we have the lisse sheaf $\sF_\chi$ whose trace function is is given by
$$t \in E/k(\mu_D) \mapsto \frac{-1}{(\Gauss_k)^{\deg(E/k)}}\sum_{x \in E}\psi_E(g(x) +tx^{B/D})\chi(x)\sigma(x).$$
Then geometrically we have
$$\sF \cong \bigoplus_{\chi \in \Char(D)}\sF_\chi,$$
each $\sF_\chi$ is geometrically irreducible, and the various $\sF_\chi$ are pairwise not geometrically isomorphic.
\end{itemize}
\end{prop}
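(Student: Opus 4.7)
The proof is a parallel of Proposition \ref{ABmoment} with the factor $\rho$ carried along. For (i), I would square $|\Trace(Frob_{t,E}|\sF)|$ and average over $t \in E^\times$ for a single $E/k$ large enough to contain $\mu_B$, obtaining (after clearing the Gauss-sum denominator) the same double sum as in \ref{ABmoment} but now with an extra factor $\rho(x)\overline{\rho(y)}$ inside (the $x=0$ and $y=0$ terms drop out since $\rho$ is extended by $0$). Completing the $t$-sum from $t \in E^\times$ to $t \in E$ and separating the $t=0$ correction, the latter is minus the square absolute value of $(1/\#E)\sum_{x}\psi_E(f(x))\rho(x)$, still $O(1/\#E)$ by the Weil bound for this twisted sum. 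The remaining $x^B=y^B$ contribution is parametrized by $y=\zeta x$ with $\zeta \in \mu_B(E)$; using $\rho(x)\overline{\rho(\zeta x)}=\overline{\rho(\zeta)}$ for $x\neq 0$, it becomes
\[
\sum_{\zeta \in \mu_B(E)}\overline{\rho(\zeta)} \cdot \frac{1}{\#E}\sum_{x \in E^\times}\psi_E(f(x)-f(\zeta x)).
\]
Exactly as in \ref{ABmoment}, the inner sum is $1+O(1/\sqrt{\#E})$ when $\zeta \in \mu_D$ and $O(1/\sqrt{\#E})$ otherwise. Since $M_2(\sF)$ is a geometric invariant, we may pass to the extension of (ii) in which $\mu_D \subset k$ and $\rho=\sigma^D$; there $\rho(\zeta)=\sigma(\zeta)^D=\sigma(\zeta^D)=1$ for every $\zeta \in \mu_D$, so the weighted sum is identically $D$ for all sufficiently large $E$, proving (i).

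For (ii), I would use $\rho(x)=\sigma(x^D)$ together with the orthogonality
\[
\#\{x \in E : x^D = u\} = \sum_{\chi \in \Char(D)}\chi(u) \quad \text{for } u \in E^\times,
\]
valid after extending scalars so that $D \mid \#E-1$, to rewrite the trace function of $\sF$ as the sum over $\chi \in \Char(D)$ of the trace functions of the $\sF_\chi$. By Chebotarev this promotes to the geometric isomorphism $\sF \cong \bigoplus_{\chi \in \Char(D)} \sF_\chi$. A rank count identical in form to the one in Proposition \ref{ABmoment} (noting that $\chi\sigma$ is automatically nontrivial, since $\sigma^D=\rho \neq \triv$ rules out $\chi=\sigma^{-1}$ having order dividing $D$) shows each $\sF_\chi$ is non-zero. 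With $D$ non-zero geometric summands and $M_2(\sF)=D$, the usual numerical argument (if $\sum n_i = D$ and $\sum n_i^2 = D$, every $n_i = 1$ and the summands are pairwise distinct) forces each $\sF_\chi$ to be geometrically irreducible and the $\sF_\chi$ to be pairwise non-isomorphic.

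The only real obstacle is managing the twist $\rho$ in (i); once one observes that $M_2$ is a geometric invariant and so may be computed after extending scalars to a field in which $\rho$ becomes a $D$-th power, the character $\rho$ is automatically trivial on $\mu_D$ and the moment calculation collapses onto the one already carried out in Proposition \ref{ABmoment}.
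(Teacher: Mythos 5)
Your proof is correct and is essentially the paper's intended argument: the paper declares the proof of Proposition \ref{ABmomentrho} to be a straightforward rewriting of the proof of Proposition \ref{ABmoment} and leaves it to the reader, and that rewriting is exactly what you have carried out (same completion of the $t$-sum, same Weil-bound estimates on the diagonal lines $y=\zeta x$, same second-moment counting argument for irreducibility and pairwise non-isomorphism). Your one added wrinkle --- disposing of the weights $\overline{\rho(\zeta)}$ for $\zeta \in \mu_D$ by invoking the geometric invariance of $M_2$ and extending scalars so that $\rho=\sigma^D$ becomes trivial on $\mu_D$ --- is a valid way to handle the only point where the twist by $\rho$ genuinely intervenes.
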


We will also frequently use the following higher-dimensional analogue of the previous result:

\begin{cor}\label{multiABmomentrho}Given $r+1 \ge 3$ pairwise distinct integers $A, B_1,\ldots,B_r$ with
$$A \ge 0, B_1>B_2 >\ldots >B_r \ge 1,$$
 which are each prime to $p$, a finite extension $k/\F_p$, a nontrivial additive character $\psi$ of $k$, an Artin-Schreier reduced polynomial $f(x) \in k[x]$ of degree $A$, a nontrivial multiplicative character $\rho$ of $k^\times$, and a choice $\Gauss_k$ in $\overline{\Q_\ell}$ with absolute value $\sqrt{\#k}$, consider the lisse $\overline{\Q_\ell}$-sheaf $\sF$ on $(\G_m)^r/k$ whose trace function is given by
$$(t_1,\ldots,t_r) \in (E^\times)^r/k \mapsto \frac{-1}{(\Gauss_k)^{\deg(E/k)}}\sum_{x \in E}\psi_E(f(x) +\sum_i t_ix^{B_i})\rho(x).$$
Then we have the following results.
\begin{itemize}
\item[(i)] We have $$M_2(\sF)=\gcd({\rm gcd}_{\deg}(f), B_1,\ldots,B_r).$$

\item[(ii)]Let us denote by $D:=\gcd({\rm gcd}_{\deg}(f),B_1,\ldots,B_r)$. Then $f(x)$ is of the form $g(x^D)$ for a unique polynomial $g(x) \in k[x]$. 
Extend scalars so that $k$ contains $\mu_D$ and so that $\rho$ is a $D$'th power, say $\rho =\sigma^D$.
For each multiplicative character $\chi$ of order dividing $D$ we have the lisse sheaf $\sF_\chi$ whose trace function is is given by
$$(t_1,\ldots,t_r) \in (E^\times)^r/k(\mu_D) \mapsto \frac{-1}{(\Gauss_k)^{\deg(E/k)}}\sum_{x \in E}\psi_E(g(x) +\sum_it_ix^{B/D})\chi(x)\sigma(x).$$
Then geometrically we have
$$\sF \cong \bigoplus_{\chi \in \Char(D)}\sF_\chi,$$
each $\sF_\chi$ is geometrically irreducible, and the various $\sF_\chi$ are pairwise not geometrically isomorphic.\end{itemize}
\end{cor}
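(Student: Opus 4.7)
The plan is to follow the proofs of Propositions \ref{ABmoment} and \ref{ABmomentrho} verbatim, the only substantive change being that the single auxiliary variable is now a tuple $(t_1, \ldots, t_r) \in (E^\times)^r$. Invoking Proposition \ref{momentcalc} and expanding $|\mathrm{Trace}|^2$ so as to separate the $x, y$ summation from the $t_i$ summations yields
\[
\frac{1}{(\#E)^{r+1}} \sum_{x, y \in E} \psi_E(f(x) - f(y)) \rho(x)\bar\rho(y) \prod_{i=1}^{r} \sum_{t_i \in E^\times} \psi_E(t_i (x^{B_i} - y^{B_i})).
\]
Since each factor equals $\#E \cdot \mathbf{1}[x^{B_i} = y^{B_i}] - 1$, the product expands as a sum over subsets $S \subseteq \{1,\ldots,r\}$, with the summand indexed by $S$ equal to $(-1)^{r-|S|}(\#E)^{|S|}\prod_{i \in S}\mathbf{1}[x^{B_i}=y^{B_i}]$. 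For each $S$ with $|S| < r$, the resulting contribution to the limsup is $O(1/\#E)$, since the partial-diagonal $x, y$ sum then has cardinality $O(\#E)$; so only $S = \{1, \ldots, r\}$ survives.

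The leading term, after parameterizing the locus $\{(x, y) : x^{B_i} = y^{B_i}\ \forall i,\ x \neq 0\}$ by $y = \zeta x$ with $\zeta \in \mu_d$, $d := \gcd(B_1, \ldots, B_r)$, becomes
\[
\frac{1}{\#E} \sum_{\zeta \in \mu_d} \bar\rho(\zeta) \sum_{x \in E} \psi_E(f(x) - f(\zeta x)) + O(1/\#E).
\]
Since $f$ is Artin--Schreier reduced, each inner sum is $O(\sqrt{\#E})$ by Weil unless $f(x) - f(\zeta x) \equiv 0$, which happens precisely for $\zeta \in \mu_{{\rm gcd}_{\deg}(f)}$, in which case the inner sum is $\#E$. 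The surviving contribution is $\sum_{\zeta \in \mu_D} \bar\rho(\zeta)$, and taking the limsup over suitable $E/k$ on which $\rho_E|_{\mu_D}$ is trivial (as in the proof of Proposition \ref{ABmomentrho}) gives $M_2(\sF) = D$, proving (i).

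For (ii), extend scalars so that $k \supset \mu_D$ and $\rho = \sigma^D$, write $f(x) = g(x^D)$, and apply the Fourier identity $\mathbf{1}[y \in k^{\times D}] = (1/D) \sum_{\chi^D = \triv} \chi(y)$ together with the change of variable $y = x^D$ to express the trace function of $\sF$ as the sum over $\chi \in \Char(D)$ of the trace function of $\sF_\chi$. This exhibits $\sF \cong \bigoplus_{\chi \in \Char(D)} \sF_\chi$ geometrically. Since $D$ nonzero summands appear and $\sum_i n_i^2 = D$ by (i), the identity $\sum_i n_i = D = \sum_i n_i^2$ forces each $n_i = 1$, so each $\sF_\chi$ is geometrically irreducible and no two are geometrically isomorphic. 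The main technical challenge is the clean handling of the $2^r - 1$ sub-maximal expansion terms, but each reduces to the same one-variable Weil estimate already used in the $r = 1$ case.
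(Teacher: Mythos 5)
Your proof is correct in substance, but it takes a genuinely different route from the paper. You establish (i) head-on by redoing the second-moment computation of Propositions \ref{ABmoment} and \ref{ABmomentrho} with $r$ parameters, and then deduce (ii) from the counting identity $\sum_i n_i = \sum_i n_i^2 = D$. The paper does the opposite: it never computes a multi-parameter moment, but freezes $t_2,\dots,t_r$ at constants $a_i$ chosen so that the coefficient of $x^{B_i}$ in $f(x)+\sum_{i\ge 2}a_ix^{B_i}$ is nonzero (so the specialized one-variable family still has parameter $D$), applies Proposition \ref{ABmomentrho} to that family, and observes that geometric irreducibility and pairwise non-isomorphism of the $\sF_\chi$ follow a fortiori from the same statements for their pullbacks to the line $t_1\mapsto (t_1,a_2,\dots,a_r)$; statement (i) is then automatic. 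The specialization route buys two things for free: the hypothesis $r\ge 2$ is used exactly to cover the degenerate case $A=0$, and the nonvanishing and irreducibility of each individual $\sF_\chi$ come packaged with the one-variable result. In your version two small points deserve a line each: in the inclusion--exclusion over subsets $S$, the term $S=\emptyset$ has partial diagonal of size $(\#E)^2$, not $O(\#E)$, so its contribution is $O\bigl((\#E)^{1-r}\bigr)$, which is $O(1/\#E)$ precisely because $r\ge 2$ (or one invokes the Weil bound for $\sum_{x}\psi_E(f(x))\rho(x)$ as in the proof of Proposition \ref{ABmoment}); and your counting step needs each $\sF_\chi$ to be nonzero, which does hold since $D$ divides $B_1$, divides $A$ when $A>0$, and $\Max(A,B_1)/D\ge 2$, so each summand has rank at least $\Max(A,B_1)/D-1\ge 1$, exactly as in the bracketed remark in the proof of Proposition \ref{ABmoment} -- but you should say so, since otherwise the identity $\sum_i n_i=D$ is not forced.
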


\begin{proof}It is clear that the trace function of $\sF$ is the sum of the trace functions of the $\sF_\chi$. So it suffices to show
that each $\sF_\chi$ is geometrically irreducible, and that they are pairwise not geometrically isomorphic.

Freeze $t_i$ for $i \ge 2$ by setting $t_i=a_i$ for any chosen $a_i \in \F_p$ for which 
 $$a_i +{\rm the\ coefficient\ of\ }x^{B_i}\ {\rm in\ }f(x) \neq 0.$$
 [By such a choice of the $a_i$, the monomials that appear in $f(x) +\sum_i t_ix^{B_i}$ will be exactly those that appear in $(f(x) +t_1x^{B_1} +\sum_{i \ge 2}a_ix^{B_i}$.]
By the previous result applied to this one variable ($t_1$) family, the 
pullbacks of the $\sF_\chi$ to $\G_m$ by $t_1 \mapsto (t_1,a_2,\ldots,a_r)$ are each geometrically irreducible and pairwise not geometrically isomorphic. So a fortiori the same is true of the $\sF_\chi$ themselves. [We use $r \ge 2$ in this argument to be sure we may apply this pullback argument in the case $A=0$, in which case $f(x)$, being Artin-Schreier reduced, is the zero polynomial, and our family
has trace function
$$(t_1,\ldots,t_r) \in (E^\times)^r/k \mapsto (-1/(\Gauss_k)^{\deg(E/k)})\sum_{x \in E}\psi_E(\sum_i t_ix^{B_i})\rho(x).]$$
\end{proof}

To end this section, let us recall the wonderful insight of van der Geer and van der Vlugt.
\begin{thm}\label{thm:vdG-vdV}
Let $p$ be a prime, $q$ a power of $p$, $E/\F_p$ a finite extension, and $f(x)\in E[X]$ a polynomial of the form
$$f(x) =\sum_{i=0}^n a_i x^{q^i + 1}$$
with $n \ge 0$ and $a_n \neq 0$.
\begin{itemize}
\item[(i)]Consider the sum 
$$S_f:= \frac{-1}{(\Gauss_{\F_p})^{\deg(E/\F_p)}}\sum_{x \in E}\psi_E(f(x)).$$
If $p$ is odd, and $E$ contains $\F_q$, then $|S_f|^2$ is a power $q^r$ of $q$, with $0 \le r \le n$. If $p=2$, the value $0$ may also occur (as it does, for
example, in  the n=0 case). If $E \subset \F_q$, then $|S_f|^2$ is a power of $\#E$. If $p=2$, the value $0$ may also occur.
\item[(ii)]Suppose $p$ is odd, and denote by $\K$ the unique subfield of $\Q(\zeta_p)$ which is quadratic over $\Q$.
Consider the polynomial 
$$\tilde f(x):= \sum_{i=0}^n a_i x^{(q^i + 1)/2}$$
and the two sums
$$S_{\tilde f,+}:= \frac{-1}{(\Gauss_{\F_p})^{\deg(E/\F_p)}}\sum_{x \in E}\psi_E(\tilde f(x)),$$
$$ S_{\tilde f,-}:= \frac{-1}{(\Gauss_{\F_p})^{\deg(E/\F_p)}}\sum_{x \in E}\psi_E(\tilde f(x))\chi_2(x).$$
Both these sums lie in $\K$. Moreover, if $q$ is a square, and $E$ contains $\F_q$, then both these sums lie in $\Q$.
\end{itemize}
\end{thm}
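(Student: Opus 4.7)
The proof splits into the two parts, both resting on the polarization identity $f(x+y)-f(x)-f(y)=xL(y)+yL(x)$, where $L(z):=\sum_{i=0}^n a_iz^{q^i}$ is the $q$-linearized polynomial attached to $f$.

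For part (i), the plan is to interpret $Q(x):=\Tr_{E/\F_p}(f(x))$ as an $\F_p$-valued quadratic form on $E$ and study its radical. Using the adjoint $L^*$ of $L$ with respect to the trace pairing, namely $L^*(z)=\sum_i a_i^{q^{-i}}z^{q^{-i}}$, the polar form becomes $\tilde B(x,z)=\Tr_{E/\F_p}\!\bigl(x(L+L^*)(z)\bigr)$, so the radical is $R:=\ker(L+L^*)$. When $p$ is odd, adjointness gives $\Tr(zL(z))=\Tr(zL^*(z))$, and since $L^*(z)=-L(z)$ on $R$ this forces $2\Tr(zL(z))=0$, i.e.\ $Q|_R=0$; the classical Gauss sum formula for quadratic forms then yields $\bigl|\sum_x\psi_E(f(x))\bigr|^2=|E|\cdot|R|$, whence $|S_f|^2=|R|$. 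When $E\supseteq\F_q$, both $L$ and $L^*$ are $\F_q$-linear, so $R$ is an $\F_q$-subspace and $|R|$ is a power of $q$; when $E\subset\F_q$ the map is only $\F_p$-linear, so $|R|$ is a power of $\#E$. In characteristic $2$ the step $2\Tr=0$ is automatic, so $Q|_R$ need not vanish and the Gauss sum can be zero, explaining the $|S_f|^2=0$ possibility.

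The main obstacle is sharpening the bound on $r$. Since $L$ is $q$-linearized of $q$-degree $n$, $\ker L$ has $\F_q$-dimension at most $n$; raising $L(z)+L^*(z)=0$ to the $q^n$th power yields a polynomial of $q$-degree at most $2n$, giving only $r\le 2n$ naively. To reach $r\le n$ I would exploit the Frobenius-palindromic structure of the coefficients of $(L+L^*)^{q^n}$ — the coefficient of $z^{q^{n+i}}$ is the $q^i$-Frobenius of the coefficient of $z^{q^{n-i}}$ — and factor the corresponding element in a suitable Ore ring into a self-adjoint ``half-product'' whose kernel on $E$ has $\F_q$-dimension at most $n$. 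This Ore-polynomial bookkeeping is the technical heart of part (i).

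For part (ii), the plan is a Galois descent inside $\Q(\zeta_p)$. The central computation is the identity $\tilde f(b^2y)=b^2\tilde f(y)$ for every $b\in\F_p^\times$, which follows from $(b^2)^{(q^i+1)/2}=b^{q^i+1}=b\cdot b^{q^i}=b^2$ (using $b^{q^i}=b$ for $b\in\F_p$). The Galois group $\Gal(\Q(\zeta_p)/\Q)\cong(\Z/p)^\times$ acts via $\sigma_a\colon\psi_E(t)\mapsto\psi_E(at)$ and $\Gauss_{\F_p}\mapsto\chi_2(a)\Gauss_{\F_p}$. For $a=b^2$ a nonzero square in $\F_p$, the substitution $x\mapsto ax$ combined with $\tilde f(ax)=a\tilde f(x)$ and $\chi_2(ax)=\chi_2(x)$ shows that $\sigma_a$ fixes both $\sum_x\psi_E(\tilde f(x))$ and $\sum_x\psi_E(\tilde f(x))\chi_2(x)$; together with $\chi_2(a)^{\deg(E/\F_p)}=1$ for squares $a$, this gives $\sigma_a(S_{\tilde f,\pm})=S_{\tilde f,\pm}$ for every square $a$, placing both sums in the fixed field $\K$ of the subgroup of squares in $(\Z/p)^\times$. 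For the strengthening to $\Q$ when $q$ is a square and $E\supseteq\F_q$: the identity $c^{(q^i+1)/2}=c$ now holds for every $c\in\F_p^\times$, because the parity obstruction $k_i:=(p^{if}-1)/(p-1)\equiv if\pmod{2}$ is automatically even when $f$ is even. This upgrades the substitution identity to $\tilde f(cy)=c\tilde f(y)$ for arbitrary $c\in\F_p^\times$, so the Galois argument now applies to every $a\in\F_p^\times$; combined with $\deg(E/\F_p)=f\cdot\deg(E/\F_q)$ being even, hence $\chi_2(a)^{\deg}=1$ for every $a$, we get $\sigma_a(S_{\tilde f,\pm})=S_{\tilde f,\pm}$ for every $a\in(\Z/p)^\times$, forcing both sums to lie in $\Q$.
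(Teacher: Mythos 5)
Your part (ii) is correct and is essentially the paper's own argument: the paper likewise strips off the Gauss-sum factor (which lies in $\K$) and uses the substitution $x\mapsto \lambda^2x$ together with $(\lambda^2)^{(q^i+1)/2}=\lambda^{q^i+1}=\lambda^2$ to get invariance under the subgroup of squares in $\Gal(\Q(\zeta_p)/\Q)\cong\F_p^\times$; for the rationality claim when $q$ is a square it writes each $\lambda\in\F_p^\times$ as $\tau^2$ with $\tau\in\F_q\subseteq E$ and uses $\tau^{q^i}=\tau$, rather than your parity computation with $(q^i-1)/2=\tfrac{p-1}{2}(1+p+\cdots+p^{if-1})$, but the two are interchangeable and equally valid.

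Part (i) is where you diverge from the paper, which simply cites van der Geer--van der Vlugt, whereas you attempt a proof. Your radical computation (polarization, the adjoint $L^*$, $R=\ker(L+L^*)\cap E$, $Q|_R=0$ for $p$ odd, $|S_f|^2=|R|$, and $\F_q$-linearity of $L+L^*$ when $E\supseteq\F_q$) is sound and already yields everything the paper actually uses later, namely that $|S_f|^2$ is a power of $q$. Two problems remain. First, for $E\subseteq\F_q$ your reason is a non sequitur: $\F_p$-linearity only gives a power of $p$; the correct observation is that on such $E$ one has $x^{q^i}=x$, so $f(x)=(\sum_i a_i)\,x^2$ there and the radical is either $0$ or all of $E$, giving $|S_f|^2\in\{1,\#E\}$. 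Second, and more seriously, the step you yourself call the technical heart --- the bound $r\le n$ via an Ore-ring factorization with kernel of $\F_q$-dimension at most $n$ --- is left as an unexecuted sketch, and it cannot be carried out in the stated generality: take $n=1$, $E=\F_{q^2}$, $f(x)=ax^{q+1}$ with $a\neq 0$ and $a+a^q=0$; then $\Tr_{E/\F_p}(f(x))=\Tr_{\F_q/\F_p}\bigl(x^{q+1}(a+a^q)\bigr)\equiv 0$, so $|S_f|^2=\#E=q^2>q^n$. So no bookkeeping of the kind you propose can produce a kernel of dimension at most $n$ in general (the inequality $r\le n$, quoted from van der Geer--van der Vlugt, is not what is used downstream anyway --- only the ``power of $q$'', resp.\ ``power of $\#E$'', statements are). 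As written, your part (i) therefore has a genuine gap exactly at the one claim you did not prove, and the proposed route to it would fail.
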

\begin{proof}The first statement is  van der Geer and van der Vlugt \cite{vdG-vdV}. 
For the second statement, we argue as follows.
The Gauss sum itself lies in $\K$, so it suffices to look at the sums without the Gauss sum factor.
For $\lambda^2 $ a square in $\F_p^\times$, and any power $Q$ of $p$, we have ${(\lambda^2)}^{(Q+1)/2}=\lambda^{Q+1}=\lambda^2$. So the substitution $x \mapsto \lambda^2x$ leaves the sum invariant. When $q$ is a square, and $E$ is an extension of $\F_q$, the Gauss sum factor lies in $\Q$, and again it suffices to look at the sums without the Gauss sum factor. Then every $\lambda \in \F_p^\times$ becomes a square $\tau^2$ with $\tau \in \F_q$. Then we have $({\tau^2})^{(q^i+1)/2} =\tau^{q^i+1}=\tau^2$, so the substitution $x \mapsto \tau^2 x =\lambda x$ leaves the sum invariant.
\end{proof}

\section{$(A,B)$ generalities}
In this section, we consider the following situtation. We are given a prime $p$, a  (strictly positive) power $q=p^f$ of $p$, and two
relatively prime, strictly positive integers $A,B$, both of which are prime to $p$. We also fix a prime $\ell \neq p$ so as to be able to use $\overline{\Q_\ell}$-cohomology, and an embedding of $\Q^{\mathrm {ab}}=\Q({\rm all\ roots\ of\  unity})$ into $\overline{\Q_\ell}$. We also fix a nontrivial additive character $\psi$ of $\F_p$ which, unless explicitly specified otherwise, is the additive character we will
use in forming hypergeometric sheaves. For a multiplicative character $\chi$, we denote
$\Char(A,\chi):= \{ \xi \mid \xi^A = \chi\}$ and $\Char(A):=\Char(A,\triv)$.

We first define
$$\sH_{small,A,B}:=\sH yp_\psi(\Char(A)\setminus \{\triv \};\Char(B)\setminus \{\triv \}),$$
of type $(A-1, B-1)$ and rank $\max(A,B)-1$. It is pure of weight $A+B-3$.
For each character $\chi$ with $\chi^A \neq \triv$, we define 
$$\sH_{big,A,B,\chi}:=\sH yp_\psi(\Char(A);\Char(B,\overline{\chi})),$$
of rank $\max(A,B)$.
The hypothesis that $\chi^A \neq \triv$ insures that the two sets $\Char(A)$ and $\Char(B,\chi)$ are disjoint, 
for if $\rho$ were in both, then $\rho^B=\chi$, hence $\rho^{AB}=\chi^A$. But also $\rho^A=\triv$, so  $\rho^{AB}=\triv$, a contradiction. Thus $\sH_{big,A,B,\chi}$ is of type $(A,B)$ and rank $\max(A,B)$. It is pure of weight $A+B-1$.

Similarly, for each character $\chi$ with $\chi^B \neq \triv$, we define
$$\sH^\sharp_{big,A,\chi,B}:=\sH yp_\psi(\Char(A,\chi), \Char(B)).$$
The hypothesis that $\chi^B \neq \triv$ insures that the two sets $\Char(A,\chi)$ and $\Char(B)$ are disjoint (same argument as above). Thus $\sH^\sharp_{big,A,\chi,B}$ is of type $(A,B)$ and rank $\max(A,B)$. It is pure of weight $A+B-1$.

Because $\gcd(A,B)=1$, at least one of $A,B$ is odd.

\begin{lem}\label{detdown1}If $A-B \ge 2$, then the sheaves $\sH_{small,A,B}$ and $\sH_{big,A,B,\chi}$ for any $\chi$ with $\chi^A \neq \triv$  each have geometric determinant $\sL_{\chi_2^{A-1}}$, with the understanding that $\chi_2$ is the quadratic character if $p$ is odd, and is $\triv$ if $p=2$.  If $B-A \ge 2$, then $\sH_{small,A,B}$ has geometric determinant $\sL_{\chi_2^{B-1}}$, and $\sH_{big,A,B,\chi}$ has geometric determinant $\sL_{\chi\chi_2^{B-1}}$. 
\end{lem}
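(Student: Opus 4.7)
The plan is to establish that the geometric determinant of each of these hypergeometric sheaves is tame on all of $\G_m$, hence a Kummer sheaf $\sL_\tau$, and then to read off $\tau$ from the tame local monodromy at one of the two punctures.  The standard local structure (due to Katz) of $\sH=\sH yp_\psi(\chi_1,\ldots,\chi_n;\rho_1,\ldots,\rho_m)$ of type $(n,m)$ with $n>m$ is: $\sH$ is lisse on $\G_m$, tame at $0$ with $I(0)$-semisimplification $\bigoplus_i\sL_{\chi_i}$, and at $\infty$ splits as the tame part $\bigoplus_j\sL_{\rho_j}$ plus an irreducible wild $I(\infty)$-representation $W$ of rank $n-m$ and Swan conductor $1$.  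The central input I will invoke is that $\det W$ is a \emph{tame} character of $I(\infty)$: up to a Kummer twist, $W$ is a Kummer pushforward $[n-m]_*\sL_{\psi(cy)}$, and applying the formula $\det([e]_*\sF)=\det([e]_*\sO)\otimes\mathrm{Nm}_{[e]}(\sF)$, the $\psi$-wild contribution vanishes because $\sum_{\zeta^e=1}\zeta=0$ for $e\geq 2$, leaving only a tame Kummer character.  Therefore $\det\sH$ is rank $1$ and tame on $\G_m$, i.e.\ of the form $\sL_\tau$ for a unique Kummer character $\tau$.

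Next I record two product-of-characters identities.  Since $\Char(D)$ is a cyclic group of order $D$, for $D\geq 1$ coprime to $p$ one has
$$\prod_{\xi\in\Char(D)}\xi=\chi_2^{D-1}=\prod_{\xi\in\Char(D)\setminus\{\triv\}}\xi$$
(the product of all elements of a cyclic group of even order is the unique involution, and of odd order is $\triv$), and since $\Char(B,\chi_0)$ is a coset of $\Char(B)$,
$$\prod_{\rho\in\Char(B,\chi_0)}\rho=\chi_0\cdot\chi_2^{B-1}.$$

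Finally, I run the case analysis, using $\sL_\tau|_{I(0)}=\tau$ and $\sL_\tau|_{I(\infty)}=\tau^{-1}$.  When $A-B\geq 2$, both $\sH_{small,A,B}$ and $\sH_{big,A,B,\chi}$ are of $n>m$ type and the upstairs character sets $\Char(A)\setminus\{\triv\}$ and $\Char(A)$ both have product $\chi_2^{A-1}$, so $\tau=\det\sH|_{I(0)}=\chi_2^{A-1}$.  When $B-A\geq 2$ the roles of $0$ and $\infty$ are interchanged, so $\tau^{-1}=\det\sH|_{I(\infty)}=\prod_j\rho_j$, giving $\tau=\chi_2^{-(B-1)}=\chi_2^{B-1}$ for $\sH_{small,A,B}$ and $\tau=(\bar\chi\,\chi_2^{B-1})^{-1}=\chi\,\chi_2^{B-1}$ for $\sH_{big,A,B,\chi}$, matching the asserted formulas.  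The main obstacle in this plan is the tameness of $\det W$; once that is accepted, the rest is a short bookkeeping calculation with cyclic-group character products.
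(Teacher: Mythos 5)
Your overall strategy is the right one, and it is essentially the content of the result the paper simply cites (\cite[8.11.2]{Ka-ESDE}): the determinant is lisse of rank one on $\G_m$ and tame at both punctures, hence a Kummer sheaf, which is then identified by its restriction to the tame puncture; the cyclic-group and coset product identities you use are correct. But the step you yourself flag as the crux is justified by a mechanism that does not cover the cases this lemma is actually used for. You argue that $\det W$ is tame because $W$ is, up to a Kummer twist, the Kummer pushforward $[n-m]_*\sL_{\psi(cy)}$, and because $\sum_{\zeta^{e}=1}\zeta=0$. Both that structural description of the wild part (and its irreducibility) and the vanishing of the sum of $e$-th roots of unity require $p\nmid (n-m)$. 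In this paper the lemma is applied precisely when $A-B$ is divisible by $p$ (e.g.\ $A-B=(q^n-q^m)/2$ or $(q^n-q^m)/(q+1)$ is divisible by $q^m$); indeed the divisibility of $A-B$ by $p$ is what drives Corollary \ref{ABprimitive1}. So as written your tameness argument fails exactly where it is needed. The repair is short and works uniformly: the wild part has all breaks $1/|A-B|<1$ (cf.\ Proposition \ref{cond-S}), so its determinant, being a rank-one $I(\infty)$- (resp.\ $I(0)$-) representation, has break at most $1/|A-B|<1$; since the Swan conductor of a character is a nonnegative integer, $\det W$ is tame. With that substitute, the rest of your argument goes through.

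A second point needs care: your bookkeeping at $\infty$, namely reading $\sL_\tau|_{I(\infty)}=\tau^{-1}$ against the list of ``downstairs'' characters. In Katz's conventions the tame part of the local monodromy at $\infty$ is the direct sum of the restrictions to $I(\infty)$ of the Kummer sheaves $\sL_{\rho_j}$ themselves, so in the case $m-n\ge 2$ one reads off the geometric determinant directly as $\sL_{\prod_j\rho_j}$ with no inversion; this is also how the paper itself computes in the proof of Theorem \ref{detbis} (``the geometric determinant \dots is the product of all the characters in $\Char(B,\chi_2)$''). Your inverted reading and the direct reading agree for $\sH_{small,A,B}$ and whenever $\chi=\chi_2$ (the only situations used later in the paper), because there the product is its own inverse; but for $\sH_{big,A,B,\chi}=\sH yp_\psi(\Char(A);\Char(B,\overline{\chi}))$ with general $\chi$ they differ by $\chi\leftrightarrow\overline{\chi}$, since $\prod_{\rho\in\Char(B,\overline{\chi})}\rho=\overline{\chi}\,\chi_2^{B-1}$. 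You should pin this convention down against \cite[8.11.2]{Ka-ESDE} (or the definition of the hypergeometric as a convolution) rather than assert the inversion; as it stands, that sign of the identification is doing real work in your derivation of the $B-A\ge2$ case for $\sH_{big,A,B,\chi}$.
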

\begin{proof}Immediate from \cite[8.11.2]{Ka-ESDE}.
\end{proof}

\begin{lem}\label{detdown2}
If $A-B \ge 2$, the sheaf $\sH^\sharp_{big,A,\chi,B}$ for any $\chi$ with $\chi^B \neq \triv$ has geometric determinant $\chi_2^{A-1}\chi$. If $B-A \ge 2$, $\sH^\sharp_{big,A,\chi,B}$ for any $\chi$ with $\chi^B \neq \triv$ has geometric determinant $\chi_2^{B-1}$.
\end{lem}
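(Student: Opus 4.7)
The plan is to apply \cite[8.11.2]{Ka-ESDE} directly, in exactly the same spirit as the proof of Lemma \ref{detdown1}. That reference expresses the geometric determinant of any hypergeometric sheaf $\sH yp_\psi(\xi_1,\dots,\xi_D;\rho_1,\dots,\rho_m)$ with $D\neq m$ as the Kummer sheaf of an explicit character built out of $\prod_i \xi_i$, $(\prod_j \rho_j)^{-1}$, and a correction factor which is a power of $\chi_2$ depending only on $D-m$ (and trivial in characteristic $2$). So the task reduces to evaluating the products of the characters on each side for the two sets $\Char(A,\chi)$ and $\Char(B)$, and simplifying.

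The one non-formal step is to compute $\prod_{\xi\in\Char(A,\chi)}\xi$. I would fix any $\xi_0$ with $\xi_0^A=\chi$; then $\Char(A,\chi)=\xi_0\cdot\Char(A)$ as a coset in the character group, and therefore
$$\prod_{\xi \in \Char(A,\chi)}\xi \;=\; \xi_0^{\,A}\,\prod_{\eta \in \Char(A)}\eta \;=\; \chi\,\prod_{\eta \in \Char(A)}\eta.$$
The residual product $\prod_{\eta\in\Char(A)}\eta$ is the unique element of order $2$ in $\Char(A)$ when $A$ is even (i.e., $\chi_2$) and is $\triv$ when $A$ is odd; the same calculation without the $\chi$ factor gives $\prod_{\rho\in\Char(B)}\rho$.

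Plugging these products into \cite[8.11.2]{Ka-ESDE} in the regime $A-B\ge 2$ produces the character $\chi_2^{A-1}\chi$ after one observes that $\gcd(A,B)=1$ forces at most one of $A,B$ to be even, so the contributions of $\chi_2^{\epsilon_A}$, $\chi_2^{\epsilon_B}$, and the $D-m$ correction collapse into a single power $\chi_2^{A-1}$; the analogous simplification in the regime $B-A\ge 2$ yields $\chi_2^{B-1}$ (the factor $\chi$ coming from the upstairs product is absorbed by an opposite contribution, as happens symmetrically in the second case of Lemma \ref{detdown1}).

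The only genuine point to watch is bookkeeping of conventions: \cite[8.11.2]{Ka-ESDE} is naturally written for the case where the upstairs count exceeds the downstairs count, so in the regime $B-A\ge 2$ I would either apply the variant of the formula directly or pass through the multiplicative inversion $x\mapsto 1/x$ on $\G_m$ to reduce to the first regime. I do not expect any real obstacle; everything is controlled by the same reference already used in Lemma \ref{detdown1}, and the parity bookkeeping is the only place where a sign or a stray $\chi_2$ could creep in.
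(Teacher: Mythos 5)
Your route is the paper's route: the entire proof in the paper is the citation of the determinant formula \cite[8.11.2]{Ka-ESDE}, and your coset computation $\prod_{\xi\in\Char(A,\chi)}\xi=\xi_0^A\prod_{\eta\in\Char(A)}\eta=\chi\,\chi_2^{A-1}$ (together with $\prod_{\rho\in\Char(B)}\rho=\chi_2^{B-1}$) is exactly the one nontrivial input. The problem is that your paraphrase of what 8.11.2 gives is wrong precisely at the point where you wave your hands. For a hypergeometric sheaf of type $(n,m)$ with $|n-m|\ge 2$, the geometric determinant is a Kummer sheaf read off from the side on which the local monodromy is entirely tame: if $n-m\ge 2$ it is $\sL_{\prod_i\xi_i}$, and the downstairs characters do not enter at all; if $m-n\ge 2$ it is $\sL_{(\prod_j\rho_j)^{-1}}$, and the upstairs characters do not enter at all. (One can see this directly: for $|n-m|\ge2$ the wild part has rank $\ge 2$ and slopes $<1$, so the rank-one determinant is tame at both $0$ and $\infty$, hence Kummer, and is then determined by the tame local monodromy on the all-tame side; the inverse in the second case is the Kummer convention at $\infty$, and is what makes Lemma \ref{detdown1} read $\chi\chi_2^{B-1}$ rather than $\overline{\chi}\chi_2^{B-1}$.) There is no correction factor $\chi_2^{\,?}$ depending on $n-m$, and no interaction between the two sides. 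With the formula stated this way, both cases of the lemma follow at once from your products: $\det=\chi\chi_2^{A-1}$ when $A-B\ge2$, and $\det=(\chi_2^{B-1})^{-1}=\chi_2^{B-1}$ when $B-A\ge2$, the $\chi$ disappearing simply because it sits on the upstairs side.

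As written, your two simplification steps would not survive scrutiny. A formula of the shape $\prod_i\xi_i\cdot(\prod_j\rho_j)^{-1}\cdot\chi_2^{g(A-B)}$ cannot "collapse" to $\chi_2^{A-1}\chi$ for all admissible pairs: $(A,B)=(7,4)$ and $(8,5)$ have the same difference but downstairs products $\chi_2$ and $\triv$ respectively, so no power of $\chi_2$ depending only on $A-B$ closes the bookkeeping, and the observation that $\gcd(A,B)=1$ forces at most one of $A,B$ to be even is not what is doing the work. Likewise, in the regime $B-A\ge2$ there is no "opposite contribution" absorbing $\chi$: the determinant just never sees the upstairs characters. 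The analogy with the second case of Lemma \ref{detdown1} is also inverted — there $\chi$ survives in $\sL_{\chi\chi_2^{B-1}}$ precisely because $\overline{\chi}$ sits downstairs in $\Char(B,\overline{\chi})$, not because of any cancellation. So: correct conclusion, correct key character-product computation, same source as the paper, but you need to state \cite[8.11.2]{Ka-ESDE} correctly before the "collapse" and "absorption" steps become an actual proof.
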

\begin{proof}Immediate from \cite[8.11.2]{Ka-ESDE}.
\end{proof}

\begin{rmk}\label{rmk-descent}
In the following sections, we will deal systematically with descents of hypergeometric sheaves. Here is one way to think of them. View a given hypergeometric sheaf $\sH$ as living on $\G_m/\overline{\F_p}$, and giving an irreducible $\overline{\Q_\ell}$-representation $V$ of the geometric fundamental group $\pi_1^{\geo} :=\pi_1(\G_m/\overline{\F_p})$. Given a finite field $k/\F_p$, a {\it descent} $\sH_0$ of $\sH$ may be seen as an an irreducible $\overline{\Q_\ell}$-representation $V_0$ of the arithmetic fundamental
group $\pi_1^{\ari}:=\pi_1(\G_m/k)$ whose restriction to the normal subgroup $\pi_1^{\geo} < \pi_1^{\ari}$ is $V$. If such a descent $V_0$ to $\G_m/k$ exists, any other such descent is of the form $V_0\otimes \rho$ for some one-dimensional representation $\rho$ 
of the quotient group $\pi_1^{\ari}/\pi_1^{\geo} \cong \Gal(\overline{\F_p}/k)$. This indeterminacy will appear later as a {\it clearing factor}
when we force our descents to be pure of weight zero.
\end{rmk}

The arguments of \cite[\S\S7,8]{KT3} give the following lemmas.
\begin{lem}\label{smalltracebis1}Suppose  $A$ and $B$ are strictly positive integers, both prime to $p$, with $\gcd(A,B)=1$. Then $\sH_{small,A,B}$ is geometrically isomorphic to the lisse sheaf 
$$\sH_{small,A,B, descent}$$
on $\G_m/\F_p$ whose trace function is as follows: for $E/\F_p$ a finite extension, and $u \in E^\times$, the trace at time $u$ is given by
$$u \in E^\times \mapsto -\sum_{x,y \in E, x^A=uy^B}\psi_E(Ax-By).$$
\end{lem}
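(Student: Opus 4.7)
Following the template of \cite[\S\S 7, 8]{KT3}, the plan is to verify the identification by computing the multiplicative (Mellin) transform on both sides and matching the results. First I would realize the right-hand side as the trace function of an honest lisse sheaf on $\G_m/\F_p$: the affine scheme $Y := \{(x,y,u) \in \A^2\times\G_m : x^A = uy^B\}$ maps to $\G_m$ by the projection $\pi$, and since $u \in \G_m$ forces the only solution with $xy=0$ to be $(0,0)$, one has
\[
\sum_{x,y\in E,\ x^A = uy^B}\psi_E(Ax-By) \;=\; 1 \;+\; \sum_{(x,y)\in(E^\times)^2,\ x^A=uy^B}\psi_E(Ax-By),
\]
and the second sum is the Lefschetz trace of $R\pi_!\,\sL_{\psi(Ax - By)}$ restricted to $(\G_m)^2$. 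Because $\gcd(A,B)=1$, the fibers $\{x^A = uy^B\}$ in $(\G_m)^2$ are geometrically irreducible curves, so $R^i\pi_!$ vanishes outside degree $1$, giving a single lisse sheaf of the expected rank $\max(A,B)-1$. The overall $-1$ in the statement of the lemma absorbs the contribution of the extra point $(0,0)$ and the sign from the odd cohomological degree.

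Next I would evaluate the Mellin transform of both sides at a multiplicative character $\Lambda$ of a sufficiently large $E/\F_p$. After the change of variables $u = x^A y^{-B}$, the right-hand side factors as
\[
-\,\Lambda^A(A^{-1})\,\Lambda^{-B}(B^{-1})\cdot g(\Lambda^A,\psi_E)\cdot g(\overline{\Lambda^B},\overline{\psi_E}),
\]
where $g(\cdot,\cdot)$ denotes the Gauss sum. On the other hand, the multiplicative-convolution definition of a hypergeometric sheaf gives the Mellin transform of $\sH_{small,A,B}$ as a product
\[
\prod_{\chi \in \Char(A)\setminus\{\triv\}} g(\chi\Lambda, \psi_E)\;\cdot\; \prod_{\rho \in \Char(B)\setminus\{\triv\}} g(\overline{\rho\Lambda}, \overline{\psi_E}).
\]
The bridge between the two expressions is the Hasse--Davenport product relation: for $N \in \{A,B\}$,
\[
\prod_{\chi \in \Char(N)} g(\chi\Lambda, \psi_E) \;=\; -\Lambda(N^N)\,g(\Lambda^N, \psi_E)\,\prod_{\chi \in \Char(N)\setminus\{\triv\}} g(\chi, \psi_E).
\]
Dividing out the $\chi = \triv$ factor $g(\Lambda, \psi_E)$ and applying the analogous identity downstairs, the Mellin transform of $\sH_{small,A,B}$ is seen to equal $g(\Lambda^A, \psi_E)\cdot g(\overline{\Lambda^B}, \overline{\psi_E})$ multiplied by a ``clearing factor'' built from Gauss sums of characters at arguments \emph{independent} of $\Lambda$, hence independent of $u$.

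The main obstacle is the bookkeeping of this clearing factor: one must identify it with a scalar that depends only on $\deg(E/\F_p)$, and interpret it arithmetically as a one-dimensional character of $\Gal(\overline{\F_p}/\F_p)$, precisely the indeterminacy described in Remark \ref{rmk-descent}. Once this is verified, equality of Mellin transforms up to such a scalar implies, by Mellin inversion and Chebotarev, equality of trace functions up to an arithmetic twist that becomes trivial upon restriction to $\pi_1^{\geo}$, yielding the claimed geometric isomorphism $\sH_{small,A,B}\cong \sH_{small,A,B,\mathrm{descent}}$. The constants $\Lambda^A(A^{-1})\Lambda^{-B}(B^{-1})$ from the change of variables, although $\Lambda$-dependent, simply correspond geometrically to translation by the fixed scalars $A^A$ and $B^{-B}$ on the base, which are trivial up to the arithmetic twist and do not affect the geometric isomorphism class.
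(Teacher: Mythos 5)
Your overall route (realize the asserted trace function via $R^1\pi_!$ along the fibers of $x^A=uy^B$, compare Mellin transforms, and use the Hasse--Davenport product relation) is indeed the intended one: the paper gives no independent proof here but invokes the computations of \cite[\S\S 7,8]{KT3}, which run along exactly these lines. However, your final step rests on a false principle, and that is where the real content of the lemma lies. A factor $\Lambda(c)$ in the Mellin transform corresponds to the multiplicative translation $u\mapsto cu$ of the trace function, and such a translation is \emph{not} an arithmetic (constant-field) twist: it is pullback by a nontrivial automorphism of $\G_m$, and in general it changes the geometric isomorphism class (already $\sL_{\psi(u)}$ and $\sL_{\psi(cu)}$ are geometrically non-isomorphic for $c\neq 1$; for $\sH_{small,A,B}$, translation rescales the Artin--Schreier characters occurring in the wild part at $\infty$, resp.\ $0$, and so changes the sheaf). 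If translations really were harmless, the lemma would hold just as well with $\psi(x-y)$ in place of $\psi(Ax-By)$, and it does not: the coefficients $A$ and $B$ inside $\psi$ are there precisely so that the $\Lambda$-dependent constants cancel on the nose.

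Concretely, your change of variables gives Mellin transform $-\Lambda(A^{-A}B^{B})\,g(\Lambda^A,\psi_E)\,g(\overline{\Lambda^B},\overline{\psi_E})$ for the asserted trace function, while Hasse--Davenport in the form $\prod_{\chi\in\Char(N)}g(\chi\Lambda,\psi_E)=\Lambda(N^{-N})\,g(\Lambda^N,\psi_E)\prod_{\chi\neq\triv}g(\chi,\psi_E)$ (note your version has the constant inverted, $\Lambda(N^{N})$) turns the Mellin transform of the trace of $\sH_{small,A,B}$ into $\Lambda(A^{-A}B^{B})\,g(\Lambda^A,\psi_E)\,g(\overline{\Lambda^B},\overline{\psi_E})$ times a factor depending only on $E$: the removed trivial factors contribute $g(\Lambda,\psi_E)\,g(\overline{\Lambda},\overline{\psi_E})=\#E$, and the leftover $\prod_{\chi\neq\triv}g(\chi,\psi_E)\prod_{\rho\neq\triv}g(\overline{\rho},\overline{\psi_E})$ is independent of $\Lambda$. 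It is this \emph{exact cancellation} of the $\Lambda$-dependent constants --- which must be checked, not waved away --- that gives equality of the two trace functions up to a constant of the form $\alpha^{\deg}$, after which Chebotarev plus the geometric irreducibility of $\sH_{small,A,B}$ yields the claimed geometric isomorphism; with your inverted constant and the dismissal of translations, the argument as written only identifies $\sH_{small,A,B}$ with a multiplicative translate of the asserted descent, which is strictly weaker and useless for the later determinant and rationality statements. A smaller slip: over the locus $xy\neq 0$ the fiberwise $H^1_c$ has rank $\max(A,B)$, not $\max(A,B)-1$; the rank $\max(A,B)-1$ appears only after including the point $(0,0)$, i.e.\ after parametrizing the whole fiber by $z\in\A^1$ via $x=u^{\alpha}z^{B}$, $y=u^{\beta}z^{A}$ with $\alpha A-\beta B=1$, as in Lemma \ref{ztracesbis1}.
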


\begin{lem}\label{bigtracebis1}Suppose  $A$ and $B$ are strictly positive integers, both prime to $p$, with $\gcd(A,B)=1$. Suppose $\chi^A \neq \triv$. Then $\sH_{big,A,B,\chi}$ is geometrically isomorphic to the lisse sheaf
$$\sH_{big,A,B,\chi, descent}$$
on $\G_m/\F_p(\chi)$ 
whose trace function is as follows: for $E/\F_p(\chi)$
a finite extension, and $u \in E^\times$, the trace at time $u$ is given by
$$u \in E^\times \mapsto -\sum_{x,y \in E, x^A=uy^B}\psi_E(Ax-By)\chi(y).$$
\end{lem}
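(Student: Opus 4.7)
The plan is to follow the template of \cite[\S\S 7, 8]{KT3}, where an analogous descent was established in the $B=1$ case, and adapt it to handle general $B$ together with the downstairs character $\chi$. The whole argument reduces to matching two explicit expressions for the trace of $\sH_{big,A,B,\chi}$ at $u \in E^\times$: one from the standard hypergeometric multiple-sum formula, the other from the claimed descent.

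First, I would expand the trace of $\sH_{big,A,B,\chi} = \sH yp_\psi(\Char(A); \Char(B,\overline\chi))$ using the standard formula for hypergeometric sheaves \cite[\S 8]{Ka-ESDE}, obtaining
\[
(-1)^{A+B-1}\!\!\sum_{\substack{x_\xi, y_\eta \in E^\times \\ \prod x_\xi \,=\, u \prod y_\eta}}\!\! \psi_E\!\Bigl(\sum x_\xi - \sum y_\eta\Bigr) \prod_{\xi \in \Char(A)}\!\xi(x_\xi)\! \prod_{\eta \in \Char(B,\overline\chi)}\!\!\overline\eta(y_\eta),
\]
where the variables are indexed by the character sets. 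The hypothesis $\chi^A \neq \triv$ ensures $\Char(A) \cap \Char(B,\overline\chi) = \emptyset$ (as already noted before the lemma), so the sheaf has full rank $\max(A,B)$ and there is no cancellation.

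Next, I would Fourier-invert the product constraint $\prod x_\xi = u \prod y_\eta$ by summing over multiplicative characters $\alpha \in \widehat{E^\times}$, which decouples the $x$- and $y$-sums. Each factors into a product of Gauss sums, which I would collapse via the Hasse--Davenport product relation:
\[
\prod_{\xi^A=\triv}\! g(\xi\overline\alpha, \psi_E) \;\longrightarrow\; g(\overline\alpha^A, \psi_E), \qquad \prod_{\eta^B=\overline\chi}\! \overline{g(\eta\overline\alpha, \psi_E)} \;\longrightarrow\; \overline{g(\overline\alpha^B\chi, \psi_E)},
\]
up to explicit prefactors of the shape $\alpha(A^A)^{-1}$ and $\alpha(B^B)^{-1}$. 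Inverting the remaining single Fourier transform in $\alpha$, and recognizing these collapsed Gauss sums as the Mellin transforms of $\sum_{x^A = \cdot}\psi_E(Ax)$ and $\sum_{y^B=\cdot}\overline{\psi_E(By)}\chi(y)$ respectively, I would recover the claimed two-variable sum $\sum_{x^A = uy^B}\psi_E(Ax - By)\chi(y)$. The specific coefficients $A$ and $B$ appearing inside the phase $\psi_E(Ax - By)$ come precisely from the Hasse--Davenport prefactors $\alpha(A^A)^{-1}$ and $\alpha(B^B)^{-1}$. Equality of trace functions on all finite extensions $E/\F_p(\chi)$ then upgrades to the asserted geometric isomorphism by Chebotarev plus geometric irreducibility.

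The main obstacle is careful bookkeeping of the constants: the global sign $(-1)^{A+B-1}$, the $A^A, B^B$ factors from Hasse--Davenport, a possible Gauss-sum-of-$\chi$ prefactor arising from the $\overline\chi$ twist in the downstairs set, and the boundary contributions at $x=0$ or $y=0$ in the target sum (where the summand vanishes or must be suitably interpreted). In the undecorated Lemma \ref{smalltracebis1} these all collapse to the clean scalar $-1$; here, the additional $\chi$-dependent factors must conspire to match the specific arithmetic normalization built into the definition of $\sH_{big,A,B,\chi,\mathrm{descent}}$ on $\G_m/\F_p(\chi)$. By Remark \ref{rmk-descent}, any residual ambiguity lives in a one-dimensional character of $\Gal(\overline{\F_p}/\F_p(\chi))$, so the geometric isomorphism class is insensitive to it, and what has to be checked is only that the constants actually cancel out over $\F_p(\chi)$, matching the $-1$ in the claimed formula.
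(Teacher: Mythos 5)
Your plan is correct in substance, and it is essentially the trace-function (Mellin/Gauss-sum) shadow of the argument the paper is citing from \cite[\S\S7,8]{KT3}. The cited route is sheaf-theoretic rather than character-sum-theoretic: one invokes the Hasse--Davenport identities in their sheaf form, identifying the Kummer direct image $[A]_*\sL_{\psi(Ax)}$ with the upstairs factor $\sH yp_\psi(\Char(A);\emptyset)$ and the $\chi$-decorated direct image $[B]_*\bigl(\sL_{\overline{\psi}(By)}\otimes\sL_{\chi(y)}\bigr)$ (after inversion) with the downstairs factor attached to $\Char(B,\overline{\chi})$, and then uses the fact that $\sH yp_\psi(\Char(A);\Char(B,\overline{\chi}))$ is the multiplicative convolution of these two pieces; the two-variable sum over $x^A=uy^B$ is then literally the trace of that convolution (set $v=x^A$, $w=y^B$, $u=v/w$), so the coefficients $A,B$ in the phase and the factor $\chi(y)$ are visible from the outset and no Gauss-sum bookkeeping ever appears. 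Your Mellin-inversion/Hasse--Davenport computation reproves exactly these identities fiber by fiber, and your appeal to Remark \ref{rmk-descent} correctly disposes of any residual constant, since a $c^{\deg}$ discrepancy is geometrically invisible. Two points you should still make explicit: first, the lemma requires an actual lisse sheaf on $\G_m/\F_p(\chi)$ carrying the stated trace function, which you get either from the convolution construction itself or, more cheaply, from the parametrization $x=u^{\alpha}z^{B}$, $y=u^{\beta}z^{A}$ of $x^A=uy^B$ (as in Lemma \ref{ztracesbis1}), which rewrites the sum as a one-parameter Artin--Schreier family; second, in the Mellin computation the degenerate characters (where the collapsed character is trivial, so the Gauss sum degenerates to $-1$) and the $x=0$, $y=0$ boundary terms must be tracked if you want the exact trace identity, although for the purely geometric conclusion agreement up to a constant, combined with geometric irreducibility of both sides, already suffices.
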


\begin{lem}\label{bigtracebis2}Suppose  $A$ and $B$ are strictly positive integers, both prime to $p$, with $\gcd(A,B)=1$. Suppose $\chi^B \neq \triv$. Then $\sH^\sharp_{big,A,\chi,B}$ is geometrically isomorphic to the lisse sheaf
$$\sH^\sharp_{big,A,\chi,B, descent}$$
on $\G_m/\F_p(\chi)$ 
whose trace function is as follows: for $E/\F_p(\chi)$
a finite extension, and $u \in E^\times$, the trace at time $u$ is given by
$$u \in E^\times \mapsto -\sum_{x,y \in E, x^A=uy^B}\psi_E(Ax-By)\chi(x).$$
\end{lem}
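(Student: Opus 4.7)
The plan is to mirror the proof of Lemma \ref{bigtracebis1}, which follows \cite[\S\S 7,8]{KT3}: the only substantive change is that the multiplicative character $\chi$ now sits on the upper ($A$-)side of the hypergeometric symbol rather than on the lower ($B$-)side, so the output will display $\chi(x)$ rather than $\chi(y)$.

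Concretely, I start from the canonical description of the trace function of $\sH yp_\psi(\Char(A,\chi); \Char(B))$ at $u \in E^\times$ with $E \supset \F_p(\chi)$ (cf. \cite[Ch.~8]{Ka-ESDE}) as a weighted multiple sum over tuples $(x_i)_{i=1}^A, (y_j)_{j=1}^B \in (E^\times)^{A+B}$ constrained by $\prod_i x_i = u \prod_j y_j$, with weight $\psi_E\bigl(\sum_i x_i - \sum_j y_j\bigr)$ times a product of characters pairing the $\xi \in \Char(A,\chi)$ with the $x_i$'s and the $\eta \in \Char(B)$ with the $y_j$'s. Orthogonality collapses the upper character sum: for any fixed $\xi_0 \in \Char(A,\chi)$ one has $\sum_{\xi \in \Char(A,\chi)} \xi(t) = \xi_0(t) \sum_{\eta \in \Char(A)} \eta(t)$, which vanishes unless $t = s^A$ and equals $A\chi(s)$ in that case (well-defined modulo $\mu_A$ because $\chi = \xi_0^A$ is trivial on $\mu_A$); similarly, the lower sum forces each $y_j$ to be a $B$-th power. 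Using $\gcd(A,B) = 1$ to reparametrize the constraint by a single pair $(x, y)$ with $x^A = u y^B$---exactly as in \cite[\S\S 7,8]{KT3}---the additive part telescopes to $\psi_E(Ax - By)$ and the surviving character factor collects to $\chi(x)$.

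Finally, I verify that the sign, the Gauss sum normalization built into $\sH yp_\psi$, and the combinatorial factors of $A$ and $B$ produced by the two orthogonality relations combine to give exactly the displayed formula, and note that the resulting trace function is defined over $\F_p(\chi)$, yielding a concrete descent of $\sH^\sharp_{big,A,\chi,B}$ to $\G_m/\F_p(\chi)$. The main obstacle is the careful bookkeeping of all these normalization constants; the key structural point---that $\chi$ emerges on the $x$-variable rather than the $y$-variable---is automatic from the placement of $\chi$ on the upper side of the hypergeometric symbol, and is the precise mirror of the $\chi(y)$ outcome in Lemma \ref{bigtracebis1}.
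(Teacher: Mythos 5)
Your structural intuition (that $\chi$ ends up on the $x$-variable because it sits on the upper side of the hypergeometric symbol) is right, but the mechanism you propose does not work, and this is exactly where the real content of the lemma lies. In the canonical multiple-sum description of the trace of $\sH yp_\psi(\Char(A,\chi);\Char(B))$, each character $\xi\in\Char(A,\chi)$ is attached to its \emph{own} variable $x_i$, so the weight is a product $\prod_i \xi_i(x_i)$ over distinct variables; nowhere does the expression $\sum_{\xi\in\Char(A,\chi)}\xi(t)$, a sum of characters evaluated at a single element, occur. Hence there is no "orthogonality collapse" available that forces all $x_i$ to coalesce into a single $A$-th power and produces $A\chi(s)$. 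The genuine identity relating the multiple sum to the one-parameter sum $-\sum_{x^A=uy^B}\psi_E(Ax-By)\chi(x)$ is the Hasse--Davenport product relation, or, sheaf-theoretically, Katz's theorem that $[A]_\star\bigl(\sL_{\psi(Ax)}\otimes\sL_{\chi(x)}\bigr)$ is geometrically isomorphic to $\sH yp_\psi(\Char(A,\chi);\emptyset)$ (its tame part at $0$ is the induction of $\chi$ through the $A$-power map, i.e.\ $\bigoplus_{\xi^A=\chi}\xi$), combined with the fact that the sheaf with the displayed trace function is the multiplicative $!$-convolution of $[A]_\star(\sL_{\psi(Ax)}\otimes\sL_\chi)$ with $\mathrm{inv}^\star[B]_\star\sL_{\overline{\psi}(By)}$, and that hypergeometric sheaves convolve by concatenating character lists. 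That convolution-of-pushforwards argument is precisely what the paper invokes from \cite[\S\S7,8]{KT3}; your proof skips it.

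A second, related problem: your final step proposes to verify that signs, Gauss-sum normalizations and factors of $A$, $B$ make the two trace functions literally equal, but this is impossible. The canonically normalized $\sH^\sharp_{big,A,\chi,B}$ is pure of weight $A+B-1$, while the displayed sum is pure of weight $1$; the two trace functions differ by a nontrivial constant (a product of $A+B-2$ Gauss sums), not by a sign or an integer. This is consistent with the lemma, which only claims a \emph{geometric} isomorphism, but it means your strategy must be restructured: either work geometrically via the convolution identification above, or prove equality of trace functions up to a constant (e.g.\ by comparing Mellin transforms and using Hasse--Davenport) and then deduce geometric isomorphism using the geometric irreducibility of both sides. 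There is also a small bookkeeping trap you should flag explicitly: in the multiple sum the downstairs characters enter through their inverses (compare $\Char(B,\overline{\chi})$ versus the trace $\chi(y)$ in Lemma \ref{bigtracebis1}), so getting $\chi(x)$ rather than $\overline{\chi}(x)$ requires care.
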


We have the following rationality results.
\begin{lem}\label{tracesinchi1}
Suppose $A \equiv B \mod (p-1)$. Then we have the following results.
\begin{itemize}
\item[(i)] $\sH_{small,A,B, descent}$ has all its traces in $\Q$. 
\item[(ii)] Given a character $\chi$ with $\chi^A \neq \triv$, for $D$ the order of $\chi$ define 
$E_\chi:=\F_p(\mu_{D(p-1)})$.
Then pulled back to $\G_m/E_\chi$, $\sH_{big,A,B,\chi, descent}$ has all its traces in $\Q(\chi):=\Q({\rm the\ values\ of\ }\chi)$.
\item[(iii)]Given a character $\chi$ with $\chi^B \neq \triv$, for $D$ the order of $\chi$ define 
$E_\chi:=\F_p(\mu_{D(p-1)})$.
Then pulled back to $\G_m/E_\chi$, $\sH^\sharp_{big,A,\chi,B, descent}$ has all its traces in $\Q(\chi):=\Q({\rm the\ values\ of\ }\chi)$.
\end{itemize}
\end{lem}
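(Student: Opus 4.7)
The plan is to derive each rationality statement directly from the explicit trace formulas of Lemmas \ref{smalltracebis1}, \ref{bigtracebis1}, \ref{bigtracebis2} by exploiting a variable change internal to the summation. Since $\gcd(p,D)=1$, one has the canonical identification
\[
\Gal(\Q(\mu_p,\mu_D)/\Q(\mu_D))\;\cong\;\F_p^\times,
\]
under which the automorphism $\sigma_a$ acts by $\psi_E(z)\mapsto\psi_E(az)$ while fixing every $D$-th root of unity. To show the trace at $u\in E^\times$ lies in $\Q$ (part (i)) or in $\Q(\chi)$ (parts (ii), (iii)), it suffices to prove it is fixed by every such $\sigma_a$, and the key lever is the substitution $(x,y)\mapsto(a^{-1}x,a^{-1}y)$, which lives inside $E$ because $a\in\F_p^\times\subseteq E$.

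For part (i), $\sigma_a$ turns $\psi_E(Ax-By)$ into $\psi_E(a(Ax-By))$; performing the above substitution sends the locus $\{x^A=uy^B\}$ to $\{x^A=u\,a^{A-B}y^B\}$, which equals the original locus because the hypothesis $A\equiv B\pmod{p-1}$ forces $a^{A-B}=1$ for every $a\in\F_p^\times$. The exponent of $\psi_E$ simultaneously becomes $a\cdot a^{-1}(Ax-By)=Ax-By$, so the sum is fixed by every $\sigma_a\in\Gal(\Q(\mu_p)/\Q)$ and therefore lies in $\Q$.

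Parts (ii) and (iii) use the identical substitution, but it now drags along an extra factor $\chi(a^{-1})$ coming from the $\chi(y)$ (resp.\ $\chi(x)$) decoration. The enlargement to $E_\chi=\F_p(\mu_{D(p-1)})$ is designed precisely to trivialize this factor: inside the cyclic group $\mu_{D(p-1)}\subseteq E_\chi^\times$, the subgroup of $D$-th powers has order $p-1$ and hence equals $\mu_{p-1}=\F_p^\times$, so every $a\in\F_p^\times$ can be written as $\lambda^D$ for some $\lambda\in E_\chi^\times$. Because $\chi$ is transported to any overfield $E\supseteq\F_p(\chi)$ via $\chi\circ\Norm_{E/\F_p(\chi)}$, the resulting character still has order dividing $D$, whence $\chi(a)=\chi(\lambda)^D=1$. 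Combined with the computation of part (i), this proves $\sigma_a$-invariance for all $a\in\F_p^\times$, so the trace lies in $\Q(\chi)$. The only genuine subtlety is this last bookkeeping step about $D$-th powers in $\mu_{D(p-1)}$; this is exactly the ``clearing factor'' issue flagged in Remark \ref{rmk-descent}, and it is precisely what forces the passage to $E_\chi$ rather than to $\F_p(\chi)$ alone.
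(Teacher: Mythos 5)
Your argument is correct and is essentially the paper's own proof: the same substitution $(x,y)\mapsto(\lambda x,\lambda y)$ (equivalently your $(a^{-1}x,a^{-1}y)$), which preserves the locus $x^A=uy^B$ precisely because $A\equiv B \pmod{p-1}$, combined with the observation that over extensions of $E_\chi$ the character $\chi$ restricts trivially to $\F_p^\times$ — your $D$-th power computation in $\mu_{D(p-1)}$ just makes explicit what the paper asserts without proof. The only quibble is your closing attribution of this last point to the ``clearing factor'' of Remark \ref{rmk-descent}, which actually concerns the twist-indeterminacy of descents rather than the triviality of $\chi$ on $\F_p^\times$; this mislabel does not affect the validity of the proof.
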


\begin{proof}From the explicit formulas, we see that the traces lie in $\Q(\zeta_p)$ and in $\Q(\zeta_p,\chi)$ respectively. So
it suffces to show the traces are fixed by $Gal(\Q(\zeta_p)/\Q)$ and by $Gal(\Q(\zeta_p,\chi)/\Q(\chi))$ respectively.  
For any $\lambda \in \F_p^\times$, the domain of summation, $x^A=uy^B$ is mapped to itself by the automorphism $(x,y) \mapsto (\lambda x,\lambda y)$, precisely because $A \equiv B \mod (p-1)$. Making this substitution shows that each trace of $\sH_{small,A,B, descent}$ is fixed by $\F_p^\times \cong Gal(\Q(\zeta_p)/\Q)$. To prove (ii) and (iii), the key point is that over extensions of $E_\chi$, the restriction of $\chi$ to the subgroup $\F_p^\times $ of $E_\chi^\times$ is trivial. So the same substitution shows that each trace of $\sH_{big,A,B,\chi, descent}$ on $\G_m/E_\chi$ is fixed by  $\F_p^\times \cong Gal(\Q(\zeta_p,\chi)/\Q(\chi))$.
\end{proof}

\begin{lem}\label{tracesinK1}Suppose $p$ is odd. Denote by $\K$ the unique subfield of $\Q(\zeta_p)$ which is quadratic over $\Q$. {\rm [Thus $\K$ is $\Q(\Gauss_p)$, with $\Gauss_p$ either choice of quadratic Gauss sum over $\F_p$.]} Suppose  $A$ and $B$ are strictly positive integers, both prime to $p$, with $\gcd(A,B)=1$. Suppose further that we have the congruence
$$2A \equiv 2B \mod (p-1),$$
or that we have the congruence
$$(q+1)A \equiv (q+1)B \mod (p-1),$$
Then all three of the sheaves $\sH_{small,A,B, descent}$, $\sH_{big,A,B, \chi_2,descent}$ and $\sH^\sharp_{big,A,\chi_2,B,descent}$
have all their traces in $\K$.
\end{lem}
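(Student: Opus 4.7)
The plan is to mirror the argument of Lemma \ref{tracesinchi1}, but now using the index-$2$ subgroup $\Gal(\Q(\zeta_p)/\K) \subset \Gal(\Q(\zeta_p)/\Q) \cong \F_p^\times$, which under the standard identification is the subgroup $(\F_p^\times)^2$ of squares. It suffices to show that every value of each of the three descent trace functions is fixed by every $\sigma_c$ with $c \in (\F_p^\times)^2$.

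First I would pin down the Galois action: for $c \in \F_p^\times$, the automorphism $\sigma_c : \zeta_p \mapsto \zeta_p^c$ sends $\psi_E = \psi \circ \Tr_{E/\F_p}$ to $z \mapsto \psi_E(cz)$, while fixing $\chi_2$ (whose values lie in $\{\pm 1\} \subset \Q$). Applied to the explicit trace formulas of Lemmas \ref{smalltracebis1}, \ref{bigtracebis1}, \ref{bigtracebis2}, the operation $\sigma_c$ simply inserts an extra factor of $c$ inside the $\psi_E$.

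Next I would cancel this factor via the bijective change of variables $(x,y) \mapsto (c^{-1}x, c^{-1}y)$ on $E \times E$. Under it, $c(Ax - By)$ returns to $Ax - By$; the constraint $x^A = uy^B$ is transformed into $x^A = u c^{A-B} y^B$; and for the two big descents, $\chi_2(c^{-1}x) = \chi_2(x)$ and $\chi_2(c^{-1}y) = \chi_2(y)$, because $c^{-1}$ is again a square. Hence if $T(u)$ denotes the value of the trace function at $u$, we obtain $\sigma_c(T(u)) = T(uc^{A-B})$. This equals $T(u)$ for every square $c \in \F_p^\times$ precisely when $c^{A-B} = 1$ for all such $c$, equivalently when $p-1 \mid 2(A-B)$; this is the first hypothesis. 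Hence all three trace functions take values in $\K$.

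Finally, because $q = p^f$ and $p \equiv 1 \mod (p-1)$, we have $q + 1 \equiv 2 \mod (p-1)$, so the alternative congruence $(q+1)A \equiv (q+1)B \mod (p-1)$ is equivalent to the first, and no separate argument is required. I do not anticipate any significant obstacle here: once the correct subgroup (squares) and the compensating substitution $(x,y) \mapsto (c^{-1}x, c^{-1}y)$ are identified, the verification is bookkeeping essentially identical to that in Lemma \ref{tracesinchi1}.
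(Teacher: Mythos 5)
Your proposal is correct and is essentially the paper's own argument: both exploit the explicit $(x,y)$-sum trace formulas and compensate the Galois twist by a square $c=\lambda^2=\lambda^{q+1}\in\F_p^\times$ via the scaling substitution on $(x,y)$, with $\chi_2$ unaffected and the congruence hypothesis (the two versions being equivalent since $q+1\equiv 2 \bmod (p-1)$) guaranteeing invariance of the domain $x^A=uy^B$, hence fixity under $\Gal(\Q(\zeta_p)/\K)$. The only difference is cosmetic: you scale by $c^{-1}$ and read off $\sigma_c(T(u))=T(uc^{A-B})$, whereas the paper scales by $\lambda^2$ and observes directly that the domain is preserved.
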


\begin{proof}For any $\lambda \in \F_p^\times$, we have $\lambda^2=\lambda^{q+1}$. So the domain of summation, $x^A=uy^B$ is mapped to itself by the automorphism $(x,y) \mapsto (\lambda^2x,\lambda^2y)=(\lambda^{q+1}x,\lambda^{q+1}y)$, either because $2A \equiv 2B \mod (p-1)$ or because $(q+1)A \equiv (q+1)B \mod (p-1)$. Making this substitution shows that each trace is fixed by by the subgroup of squares in $\F_p^\times \cong Gal(\Q(\zeta_p)/\Q)$.
\end{proof}

We now use the fact that $\gcd(A,B)=1$ to find integers $\alpha, \beta$ with
$$\alpha A -\beta B=1.$$
In the indexing set equation, $x^A=uy^B$, write $u =u^{ \alpha A -\beta B}.$ Then this equation becomes
$$ (x/u^\alpha)^A =(y/u^\beta)^B.$$
Again because  $\gcd(A,B)=1$, there exists a unique $z \in E$ such that
$$x/u^\alpha=z^B, y/u^\beta=z^A.$$
Making use of these substitutions, we obtain the following.

\begin{lem}\label{ztracesbis1}Suppose  $A$ and $B$ are strictly positive integers, both prime to $p$, with $\gcd(A,B)=1$. 
Then we have the following results.
\begin{itemize}
\item[(i)] $\sH_{small,A,B, descent}$ is isomorphic to the lisse sheaf on $\G_m/\F_p$ whose trace function is as follows: for $E/\F_p$ a finite extension, and $u \in E^\times$, the trace at time $u$ is given by
$$u \in E^\times \mapsto -\sum_{z \in E}\psi_E(Au^\alpha z^B -Bu^\beta z^A).$$

\item[(ii)]$\sH_{big,A,B,\chi,descent}$ is isomorphic to the lisse sheaf on $\G_m/\F_p(\chi)$ whose trace function is as follows: for $E/\F_p(\chi)$ a finite extension, and $u \in E^\times$, the trace at time $u$ is given by
$$u \in E^\times \mapsto -\sum_{z \in E}\psi_E(Au^\alpha z^B -Bu^\beta z^A)\chi(u^\beta z^A).$$
\item[(iii)]$\sH^\sharp_{big,A,\chi,B,descent}$ is isomorphic to the lisse sheaf on $\G_m/\F_p(\chi)$ whose trace function is as follows: for $E/\F_p(\chi)$ a finite extension, and $u \in E^\times$, the trace at time $u$ is given by
$$u \in E^\times \mapsto -\sum_{z \in E}\psi_E(Au^\alpha z^B -Bu^\beta z^A)\chi(u^\alpha z^B).$$
\end{itemize}
\end{lem}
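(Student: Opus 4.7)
The plan is to use exactly the substitution anticipated in the paragraph preceding the lemma: parametrize the indexing variety
$$C_u := \{(x,y) \in E^2 \mid x^A = uy^B\}$$
by a single variable $z \in E$, and then rewrite the trace formulas of Lemmas~\ref{smalltracebis1}, \ref{bigtracebis1}, and \ref{bigtracebis2} accordingly. Since $\gcd(A,B)=1$, B\'ezout supplies integers $\alpha,\beta$ with $\alpha A - \beta B = 1$. Consider then the map
$$\varphi_u: E \longrightarrow E^2, \qquad z \longmapsto (u^\alpha z^B,\ u^\beta z^A).$$
The identity $(u^\alpha z^B)^A = u^{\alpha A}z^{AB} = u \cdot u^{\beta B} z^{AB} = u \cdot (u^\beta z^A)^B$, which uses precisely $\alpha A - \beta B = 1$, shows that $\varphi_u$ lands in $C_u$.

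The heart of the argument is to show $\varphi_u$ is a bijection $E \xrightarrow{\sim} C_u$. For injectivity, suppose $\varphi_u(z_1) = \varphi_u(z_2)$; since $u \in E^\times$, this gives $z_1^A = z_2^A$ and $z_1^B = z_2^B$, hence
$$z_1 = z_1^{\alpha A - \beta B} = z_2^{\alpha A - \beta B} = z_2.$$
For surjectivity, given $(x,y) \in C_u$ with both coordinates nonzero (note $u \neq 0$ forces $x=0 \iff y=0$), set $z := y^\alpha / x^\beta$. Using $u = x^A/y^B$, a direct check gives $u^\alpha z^B = x^{\alpha A - \beta B} = x$ and similarly $u^\beta z^A = y$. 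The remaining point $(0,0) \in C_u$ is the image of $z=0$.

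I would then substitute $x = u^\alpha z^B$, $y = u^\beta z^A$ directly into the three trace formulas of Lemmas~\ref{smalltracebis1}--\ref{bigtracebis2}. In case (i) the summand $\psi_E(Ax - By)$ becomes $\psi_E(A u^\alpha z^B - B u^\beta z^A)$, and the special value at $z=0$ matches the single contribution of $(0,0)$ in the original sum, so no boundary correction is needed. In cases (ii) and (iii), the original sums carry the factors $\chi(y)$ and $\chi(x)$ respectively, which vanish at $(0,0)$; after substitution these become $\chi(u^\beta z^A)$ and $\chi(u^\alpha z^B)$, which likewise vanish at $z=0$, so the two sides agree term by term. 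The three asserted formulas then drop out immediately.

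I do not foresee a serious obstacle: the only nontrivial ingredient is the bijectivity of $\varphi_u$, which is transparent once the identity $\alpha A - \beta B = 1$ is in hand. The one small bookkeeping point is to keep track of the field over which the descent is defined in each case — $\F_p$ in (i) and $\F_p(\chi)$ in (ii), (iii) — but since the substitution is rational over the prime field and the character $\chi$ has already been absorbed in the original formulas, the base field for each descent is inherited without change.
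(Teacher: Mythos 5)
Your proposal is correct and is essentially the paper's own argument: the paper obtains the lemma by exactly this Bézout substitution, rewriting $x^A=uy^B$ as $(x/u^\alpha)^A=(y/u^\beta)^B$ and parametrizing solutions by the unique $z$ with $x/u^\alpha=z^B$, $y/u^\beta=z^A$, then substituting into Lemmas \ref{smalltracebis1}--\ref{bigtracebis2}. Your explicit verification of the bijectivity of $\varphi_u$ and of the behaviour at $(0,0)$ merely fills in details the paper leaves implicit.
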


We now consider the Kummer pullbacks by $[A]$, $u \mapsto u^A$, and by $[B]$, $u \mapsto u^B$. 
\begin{cor}\label{Apullbackbis1}Suppose  $A$ and $B$ are strictly positive integers, both prime to $p$, with $\gcd(A,B)=1$, and with $\chi$ a character with $\chi^A \neq \triv$
and with $\rho$ a character with $\rho^B \neq \triv$.
Then we have the following results.
\begin{itemize}
\item[(i)]The Kummer pullback $[A]^\star \sH_{small,A,B,descent}$ is isomorphic to the lisse sheaf on $\G_m/\F_p$ whose trace function is as follows: for $E/\F_p$ a finite extension, and $u \in E^\times$, the trace at time $u$ is given by
$$u \in E^\times \mapsto  -\sum_{z \in E}\psi_E(Au z^B -B z^A),$$
 by the substitution $z \mapsto z/u^\beta$.
 
The Kummer pullback $[B]^\star \sH_{small,A,B,descent}$ is isomorphic to the lisse sheaf on $\G_m/\F_p$ whose trace function is as follows: for $E/\F_p$ a finite extension, and $u \in E^\times$, the trace at time $u$ is given by
$$u \in E^\times \mapsto  -\sum_{z \in E}\psi_E(Az^B -Bu^{-1} z^A),$$
 by the substitution $z \mapsto z/u^\alpha$.

 
\item[(ii)]The Kummer pullback $[A]^\star \sH_{big,A,B,\chi,descent}$ is isomorphic to the lisse sheaf on $\G_m/\F_p(\chi^A)$ whose trace function is as follows: for $E/\F_p(\chi^A)$ a finite extension, and $u \in E^\times$, the trace at time $u$ is given by
$$u \in E^\times \mapsto  -\sum_{z \in E}\psi_E(Au z^B -B z^A)\chi^A(z),$$
the last equality by the substitution $z \mapsto z/u^\beta$.

The Kummer pullback $[B]^\star \sH_{big,A,B,\chi,descent}$ is isomorphic to the lisse sheaf on $\G_m/\F_p(\chi)$ whose trace function is as follows: for $E/\F_p(\chi)$ a finite extension, and $u \in E^\times$, the trace at time $u$ is given by
$$u \in E^\times \mapsto  -\sum_{z \in E}\psi_E(A z^B -Bu^{-1} z^A)\chi(u^{-1} z^A),$$
the last equality by the substitution $z \mapsto z/u^\alpha$.

\item[(iii)]The Kummer pullback  $[A]^\star \sH^\sharp_{big,A,\rho,B,descent}$ is isomorphic to the lisse sheaf on $\G_m/\F_p(\rho)$ whose trace function is as follows: for $E/\F_p(\chi^A)$ a finite extension, and $u \in E^\times$, the trace at time $u$ is given by
$$u \in E^\times \mapsto  -\sum_{z \in E}\psi_E(A uz^B -Bz^A)\rho(uz^B),$$

The Kummer pullback $[B]^\star \sH^\sharp_{big,A,\rho,B,descent}$ is isomorphic to the lisse sheaf on $\G_m/\F_p(\rho^B)$ whose trace function is as follows: for $E/\F_p(\chi)$ a finite extension, and $u \in E^\times$, the trace at time $u$ is given by
$$u \in E^\times \mapsto  -\sum_{z \in E}\psi_E(A z^B -Bu^{-1} z^A)\rho^B(z),$$
the last equality by the substitution $z \mapsto z/u^\alpha$.

\end{itemize}
\end{cor}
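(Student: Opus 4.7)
My plan is to deduce this corollary directly from the explicit trace formulas of Lemma \ref{ztracesbis1}, by first applying the Kummer pullback and then performing a change of variables in the inner sum. Since all the sheaves in question are lisse on $\G_m$ and pure, the trace function on all finite extensions determines the sheaf up to isomorphism (via Chebotarev together with semisimplicity), so it suffices to verify the trace identities.

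Concretely, I would start from the small case: by Lemma \ref{ztracesbis1}(i), the trace of $\sH_{small,A,B,descent}$ at $u$ is $-\sum_{z \in E}\psi_E(Au^\alpha z^B - Bu^\beta z^A)$. The Kummer pullback $[A]^\star$ amounts to substituting $u \mapsto u^A$, giving trace $-\sum_{z \in E}\psi_E(Au^{A\alpha} z^B - Bu^{A\beta} z^A)$. Now I would perform the change of variable $z \mapsto z/u^\beta$, which transforms $u^{A\alpha}z^B$ into $u^{A\alpha - \beta B} z^B = u z^B$ (using $\alpha A - \beta B = 1$) and $u^{A\beta} z^A$ into $u^{A\beta - \beta A} z^A = z^A$. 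This produces exactly the asserted trace $-\sum_{z \in E} \psi_E(Auz^B - Bz^A)$. The $[B]^\star$ case is symmetric: one substitutes $u \mapsto u^B$ and then $z \mapsto z/u^\alpha$, and uses $\alpha A - \beta B = 1$ in the form $\beta B - \alpha A = -1$ to extract the factor $u^{-1}$ in front of $z^A$.

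For parts (ii) and (iii), the same two substitutions work; the only additional bookkeeping is tracking what happens to the multiplicative-character factor. For (ii) with $[A]^\star$, the factor $\chi(u^\beta z^A)$ becomes $\chi(u^{A\beta} z^A)$ after pullback, and then becomes $\chi(u^{A\beta}(z/u^\beta)^A) = \chi(z^A) = \chi^A(z)$ after the substitution. Similarly, $[B]^\star$ followed by $z \mapsto z/u^\alpha$ turns $\chi(u^\beta z^A)$ into $\chi(u^{B\beta -\alpha A} z^A) = \chi(u^{-1}z^A)$. Part (iii) is handled identically, with the character argument $\rho(u^\alpha z^B)$ playing the role of $\chi(u^\beta z^A)$, so that $[A]^\star$ produces $\rho(u z^B)$ (using $\alpha A - \beta B = 1$) and $[B]^\star$ produces $\rho(u^{\alpha B - \alpha B} z^B) = \rho(z^B) = \rho^B(z)$.

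There is essentially no obstacle beyond careful bookkeeping: the one conceptual point to check is that the changes of variable live over the correct base field (so that the descent-level isomorphism is witnessed arithmetically, not merely geometrically), which holds because $u^\alpha$, $u^\beta$ are defined over the same field as $u$, and because $\chi^A$, $\rho^B$ are defined over the respective fields $\F_p(\chi^A)$, $\F_p(\rho^B)$ appearing in the statement. The hardest step, if any, is keeping the signs and exponents straight in the character factor for case (iii), but this is a routine verification using $\alpha A - \beta B = 1$.
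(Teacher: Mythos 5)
Your proposal is correct and is exactly the paper's argument: the corollary follows from the explicit trace formulas of Lemma \ref{ztracesbis1} by substituting $u \mapsto u^A$ (resp. $u \mapsto u^B$) and then $z \mapsto z/u^\beta$ (resp. $z \mapsto z/u^\alpha$), using $\alpha A - \beta B = 1$, precisely the substitutions the paper indicates in the statement. Your bookkeeping of the character factors in (ii) and (iii) and the remark about identifying sheaves by their trace functions match the intended proof.
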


%

\begin{lem}\label{tracesinK2}
Suppose  $A$ and $B$ are strictly positive integers, both prime to $p$, with $\gcd(A,B)=1$. 
Define $\K :=\Q(\Gauss_{\F_p})$. Suppose further that $A>B$ and that
$$2A \equiv 2B \equiv 2 \mod (p-1),$$
or that
$$(q+1)A \equiv (q+1)B\equiv 2 \mod (p-1).$$
Then we have the following results.
\begin{itemize}
\item[(i)]The lisse sheaf $[A]^\star \sH_{small,A,B,descent}$ on $\G_m/\F_p$ has all traces in $\K$.
\item[(ii)]Given $\chi$ with $\chi^A \neq \triv$, and $D$ the order of $\chi^A$, define $\F_{\chi^A} :=\F_p(\mu_{D(p-1)/2})$. Then the lisse sheaf $[A]^\star \sH_{big,A,B,\chi,descent}$ on $\G_m/\F_{\chi^A}$ has all traces in $\K$.
\item[(iii)]Given $\rho$ with $\rho^B \neq \triv$, and $D$ the order of $\rho^B$, define $\F_{\rho^B} :=\F_p(\mu_{D(p-1)/2})$. Then the lisse sheaf $[B]^\star \sH^\sharp_{big,A,\chi,B,descent}$ on $\G_m/\F_{\rho^B}$ has all traces in $\K$.
\end{itemize}
\end{lem}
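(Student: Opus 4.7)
I plan to mirror the proof of Lemma \ref{tracesinK1}, but apply it to the three explicit trace formulas provided by Corollary \ref{Apullbackbis1}(i)--(iii). The key will be to exhibit a substitution in the dummy summation index $z$ that realizes the Galois action $\sigma_a$ for $a$ in the subgroup of squares $(\F_p^\times)^2 \cong \Gal(\Q(\zeta_p)/\K)$.

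Fixing any $\mu \in \F_p^\times$, I will set $c = 2$ under the first hypothesis, or $c = q+1$ under the alternate one, so that $cA \equiv cB \equiv 2 \pmod{p-1}$ and hence $\mu^{cA} = \mu^{cB} = \mu^2 \in (\F_p^\times)^2$. The stronger ``$\equiv 2$'' congruence, relative to Lemma \ref{tracesinK1}, is exactly what is needed to compensate for the appearance of both $z^A$ and $z^B$ in the arguments of $\psi_E$ after the Kummer pullback. Writing $a := \mu^2$, I then substitute $z \mapsto z/\mu^c$ in each of the sums.

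For (i), the argument of $\psi_E$ is scaled by $a^{-1} = \mu^{-2}$, which is exactly the effect of $\sigma_{a^{-1}}$ on $\psi_E$. As $\mu$ ranges over $\F_p^\times$, the square $a^{-1}$ ranges over all of $(\F_p^\times)^2$, so this shows the trace is fixed by the whole group $\Gal(\Q(\zeta_p)/\K)$ and therefore lies in $\K$. For (ii) and (iii), the same substitution acts identically on the $\psi_E$ factor but introduces an additional factor $\chi^{-cA}(\mu)$ (respectively $\rho^{-cB}(\mu)$) coming from $\chi^A(\mu^{-c})$. Using $cA \equiv 2 \pmod{p-1}$ together with the identity $\chi^{p-1}(\mu) = \chi(\mu^{p-1}) = \chi(1) = 1$ for any $\mu \in \F_p^\times$, this residual factor reduces to $\chi^{-2}(\mu)$ (resp.\ $\rho^{-2}(\mu)$).

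The enlargement of the base field from $\F_p(\chi^A)$ to $\F_{\chi^A} = \F_p(\mu_{D(p-1)/2})$ is precisely what allows us to absorb this factor via a clearing twist of the descent, in the sense of Remark \ref{rmk-descent}; the denominator $D(p-1)/2$ is tailored so that the compensating character is well-defined on $\Gal(\overline{\F_p}/\F_{\chi^A})$. The main obstacle I anticipate is pinning down this clearing twist precisely and verifying that, after applying it, the substitution argument of (i) produces invariance under the full group $\Gal(\Q(\zeta_p, \mu_D)/\K)$. Once the twist is identified, the verification should be a routine computation, and part (iii) is strictly parallel to (ii) with $\rho$ in place of $\chi$ and $[B]^\star$ in place of $[A]^\star$.
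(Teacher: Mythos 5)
Part (i) of your proposal is correct and coincides with the paper's argument: the substitution $z \mapsto \mu^{\pm 2}z$ multiplies the argument of $\psi_E$ by a square of $\F_p^\times$ (this is where $2A\equiv 2B\equiv 2$, resp. $(q+1)A\equiv(q+1)B\equiv 2$, mod $p-1$ is used), so the trace is fixed by the squares in $\F_p^\times\cong\Gal(\Q(\zeta_p)/\Q)$ and lies in $\K$. The gap is in (ii) and (iii), exactly where the content of the lemma lies. After your substitution the residual factor is $\chi^A(\mu^{-c})=\chi^A(\mu^{-2})$, i.e. the value of the order-$D$ character $\chi^A$ at a \emph{square} of $\F_p^\times$ (your $\chi^{-2}(\mu)$ is the same number, by the congruence). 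The whole point of working over $\F_{\chi^A}=\F_p(\mu_{D(p-1)/2})$ is that this factor is literally equal to $1$: if $E\supseteq\F_p(\mu_{D(p-1)/2})$, then $D(p-1)/2$ divides $\#E-1$, so $(\#E-1)/D$ is a multiple of $(p-1)/2$; hence every square in $\F_p^\times$, having order dividing $(p-1)/2$, is a $D^{\mathrm{th}}$ power in $E^\times$ and is therefore killed by $\chi^A$ (whose extension to $E^\times$ has order dividing $D$). With this one observation the substitution argument of (i) goes through unchanged, which is precisely the paper's proof; (iii) is the same with $\rho^B$, $[B]^\star$ in place of $\chi^A$, $[A]^\star$.

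Your proposed mechanism for handling the residual factor -- absorbing it "via a clearing twist of the descent" in the sense of Remark \ref{rmk-descent} -- cannot work, and you yourself flag that the twist is not pinned down, so this is a genuine gap rather than a routine verification. A clearing factor is a constant-field twist: it changes the trace over $E$ only by a single scalar $\alpha^{\deg(E/k)}$, depending on $E$ alone, whereas the factor you must eliminate, $\chi^{-2}(\mu)$, depends on which Galois element $\sigma_{\mu^2}$ you are testing invariance against; no $\alpha^{\deg}$ twist can cancel a $\mu$-dependent factor, and twisting would in any case change the sheaf the lemma is about. Relatedly, aiming for invariance under the full group $\Gal(\Q(\zeta_p,\mu_D)/\K)$ overshoots what the substitution can deliver: the substitution realizes only the automorphisms $\zeta_p\mapsto\zeta_p^{\mu^2}$ fixing $\mu_D$, and the paper's proof consists exactly in showing these already fix the trace because the residual character factor is $1$ over the enlarged base field.
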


\begin{proof}The key point is that under the first hypothesis, for $\lambda \in \F_p^\times$, we have $(\lambda^2)^A = \lambda^{2A }=\lambda^2$, similarly $(\lambda^2)^B= \lambda^2$. Under the second hypothesis,
 $(\lambda^2)^A=(\lambda^{q+1})^A=\lambda^2$, similarly $(\lambda^2)^B= \lambda^2$. To prove (i),  
for each $\lambda \in \F_p^2$, simply make the substitution $z \mapsto \lambda^2z$; this does not change the sum, but both $z^A$ 
and $z^B$ are multiplied by $\lambda^2$. For (ii) and (iii), use the same substitution, remembering that $\chi^A(\lambda^2)=1$ over 
extensions of $F_{\chi^A}$ and $\rho^B(\lambda^2)=1$ over 
extensions of $F_{\rho^B}$.
\end{proof}

\begin{prop}\label{arithdet}Suppose  $A$ and $B$ are strictly positive integers, both prime to $p$, with $\gcd(A,B)=1$. Suppose further that $A-B \ge 2$ and that $A$ is odd. 
Then we have the following results.
\begin{itemize}
\item[(i)]Suppose $p$ is odd. The for any nontrivial additive character $\psi'$ of $\F_p$, the lisse sheaf $\sH_{small,A,B, descent}\otimes (-\Gauss(\psi',\chi_2))^{-\deg}$ on $\G_m/\F_p$ is pure of weight zero and has arithmetic determinant $\bigl((\chi_2(-1))^{(A-1)/2}\bigr)^{\deg}$.
\item[(ii)]Suppose $p=2$. Then the lisse sheaf $\sH_{small,A,B, descent}\otimes (-\sqrt{2})^{-\deg}$ on $\G_m/\F_2$ is pure of weight zero and has arithmetically trivial determinant.
\item[(iii)]Suppose $p$ is odd. Define $\epsilon(A):=(-1)^{(A-1)/2}$, and denote by $\psi_{-\epsilon(A)AB}$ the  nontrivial additive character $t \mapsto \psi(-\epsilon(A)ABt)$ of $\F_p$. Then the lisse sheaf 
$$\sH_{big,A,B, \chi_2,descent} \otimes (-\Gauss(\psi_{-\epsilon(A)AB},\chi_2))^{-\deg}$$ 
on $\G_m/\F_p$ has arithmetically trivial determinant.
\end{itemize}
\end{prop}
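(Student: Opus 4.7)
My plan for proving Proposition \ref{arithdet} is as follows.

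For part (i), my strategy is first to establish purity and trivialize the geometric determinant, then reduce the claim to a clean Tate identity, then compute the outstanding scalar by a single specialization. By Lemma \ref{ztracesbis1}(i), the trace function of $\sH_{small,A,B,descent}$ at $u \in E^\times$ is the Artin--Schreier sum $-\sum_{z \in E}\psi_E(Au^\alpha z^B - Bu^\beta z^A)$ of a polynomial in $z$ of degree $A$ prime to $p$, which by Weil's bound has magnitude $O(\sqrt{\#E})$; hence the rank-$(A-1)$ sheaf is pure of weight one, and tensoring with $(-\Gauss(\psi',\chi_2))^{-\deg}$ of weight $-1$ yields weight zero. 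The geometric determinant is $\sL_{\chi_2^{A-1}}$ by Lemma \ref{detdown1} (using $A-B \ge 2$), and this is trivial since $A$ odd forces $A-1$ even; hence the arithmetic determinant has the form $\gamma^{\deg}$ for a scalar $\gamma$. Using $(-\Gauss(\psi',\chi_2))^2 = \chi_2(-1)\cdot p$ together with $A-1$ even, a short rearrangement reduces the claim to the pure Tate identity $\gamma = p^{(A-1)/2}$.

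To verify $\gamma = p^{(A-1)/2}$, I will specialize at $u = 1$: the fiber is $H^1_c(\A^1_z, \sL_{\psi(f_1)})$ for $f_1(z) = Az^B - Bz^A$. Its critical locus consists of the $(A-B)$ simple critical points at the $(A-B)$-th roots of unity, together with (when $B \ge 2$) a higher-order critical point at $z=0$ of multiplicity $B-1$; the determinant of Frobenius on $H^1_c$ is then computable via Laumon's stationary-phase / local-$\epsilon$-factor product formula (equivalently, Katz's hypergeometric determinant formula \cite[Ch.~8]{Ka-ESDE}). The key combinatorial input is that since $A$ is odd, every nontrivial character $\chi$ with $\chi^A = \triv$ has odd order hence satisfies $\chi(-1) = 1$; combined with $g(\psi,\chi)g(\psi,\bar{\chi}) = \chi(-1)p$, the $\{\chi,\bar{\chi}\}$ pairing gives $\prod_{\chi \neq \triv,\, \chi^A = \triv} g(\psi,\chi) = p^{(A-1)/2}$. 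Combined (via Hasse--Davenport applied to Frobenius-orbits of critical points not defined over $\F_p$, and the local-$\epsilon$-factor at the critical point $z=0$) with the analogous contribution from the down-stairs $B$-characters and the normalizing factors, the net answer collapses to $\gamma = p^{(A-1)/2}$.

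Parts (ii) and (iii) will follow by parallel arguments. For (ii) in characteristic 2, the absence of $\chi_2$ simplifies everything and the clearing factor $-\sqrt{2}$ yields trivial arithmetic determinant directly from the analogous Gauss-sum product identity. For (iii), $\sH_{big,A,B,\chi_2,descent}$ has rank $A$ with trivial geometric determinant (Lemma \ref{detdown1} with $A$ odd); the carefully chosen additive character $\psi_{-\epsilon(A)AB}$ satisfies $g(\psi_{-\epsilon(A)AB},\chi_2) = \chi_2(-\epsilon(A)AB) g(\psi,\chi_2)$, and is calibrated precisely so that the extra $g(\psi,\chi_2)$ factor contributed by including $\chi_2$ as an additional down-stairs character is exactly absorbed, producing trivial arithmetic determinant overall.

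The hard part will be the Frobenius-orbit bookkeeping in the stationary-phase computation for critical points not lying in $\F_p$, together with the local-$\epsilon$-factor analysis at the higher-order critical point $z=0$ when $B\ge 2$; the Hasse--Davenport product formula and classical Gauss-sum identities are the essential tools for collapsing these contributions to the clean Tate answer $\gamma = p^{(A-1)/2}$.
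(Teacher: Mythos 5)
Your reduction steps are fine and coincide with the paper's opening moves: purity, geometric triviality of the determinant via Lemma \ref{detdown1} (or the hypergeometric description), hence $\det = \gamma^{\deg}$, and the algebra showing (i) is equivalent to the untwisted scalar being exactly $p^{(A-1)/2}$. The gap is in the one step that carries all the content: the actual evaluation of $\gamma$. You propose to compute $\det\bigl(Frob_1 \mid H^1_c(\A^1,\sL_{\psi(Az^B-Bz^A)})\bigr)$ directly by stationary phase/local $\epsilon$-factors, and you assert that the bookkeeping ``collapses'' to $p^{(A-1)/2}$, but the sketch does not support this. A correct $\epsilon$-factor computation at $u=1$ would involve quadratic Gauss sums and $\psi$-values at the $A-B$ simple critical points (the $(A-B)$-th roots of unity), Hasse--Davenport descent for their Frobenius orbits, a $\Char(B)$-type contribution from the degenerate critical point at $z=0$, and a $\Char(A)$-type contribution only from the wild point at infinity; your ``key identity'' $\prod_{\chi\neq\triv,\ \chi^A=\triv}g(\psi,\chi)=p^{(A-1)/2}$ is stated as if these characters were $\F_p$-rational (they are not in general) and is attributed to the critical-point analysis, while the parenthetical appeal to the determinant formula of \cite{Ka-ESDE} cannot settle the question, since that formula controls the geometric determinant and the whole point here is the arithmetic constant of this particular descent. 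Nothing in the outline shows that the auxiliary factors cancel; that cancellation is precisely the theorem.

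The same problem is more acute in (iii): the specific character $\psi_{-\epsilon(A)AB}$ is not recovered by ``absorbing one extra $g(\psi,\chi_2)$ factor''; it encodes the determinant of the orthogonal autoduality of $H^1(\A^1,\sL_{\psi(-Bz^A)}\otimes\sL_{\chi_2})$, which is a genuine input (\cite[1.4]{Ka-NG2}) and does not fall out of a one-line Gauss-sum manipulation. The paper avoids all of this by a trick you are missing: since the determinant is geometrically constant, pass to the $[A]^\star$ Kummer pullback, which is lisse on all of $\A^1$ (here $A>B$), so the value at $v=1$ equals the value at the degenerate point $v=0$, where the fiber is the cohomology of the pure monomial $\sL_{\psi(-Bz^A)}$ (resp.\ tensored with $\sL_{\chi_2}$). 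There one concludes by autoduality: symplectic autoduality for (i)--(ii), with the sign $(\chi_2(-1))^{(A-1)/2}$ arising from the discrepancy between $\sqrt p$ and the chosen Gauss sum when $p\equiv 3 \pmod 4$, and the known determinant of the orthogonal autoduality for (iii). If you want to keep your route, you must actually carry out the $\epsilon$-factor computation at $u=1$ with all local terms and descent factors made explicit; as written, the decisive computation is asserted rather than proved.
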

\begin{proof}The explicit formulas of Lemma \ref{ztracesbis1} makes clear that $\sH_{small,A,B}$ and $\sH_{big,A,B, \chi_2,descent}$ are pure of weight one, and thus their twists in the two cases puts us in weight zero. 
We first prove (i) and (ii). To save having to write 
``by $(-\Gauss(\psi',\chi_2))^{-\deg}$ or by $(-\sqrt{2})^{-\deg}$" in the rest of the proof, let us adopt the convention that
$$\Gauss(\psi',\chi_2) := \sqrt{p} \ \ {\rm for\ }p=2.$$

The hypergeometric description of $\sH_{small,A,B}$ shows that the determinant is geometrically trivial, cf. \cite[8.12.2, (3)]{Ka-ESDE}. Therefore the determinant is of the form $D^{\deg}$ for some $\ell$-adic unit $D$. To show that $D=1$, it suffices to do so at the single point $v=1$ in $\G_m(\F_p)$. This determinant at $v=1$ is equal to the determinant at $v=1$ on the Kummer pullback by $v \mapsto v^A$. But this Kummer pullback, whose trace function is, by Corollary \ref{Apullbackbis1},
$$v \in E^\times \mapsto  (1/\Gauss(\psi,\chi_2))^{\deg(E/\F_p)}\sum_{z \in E}\psi_E(Av z^B -B z^A),$$
is lisse on $\A^1/\F_p$, and its determinant remains $D^{\deg}$. 

So its determinant at $v=1$ is equal to its determinant at $v=0$.
But at $v=0$, we are looking at the cohomology group $$H^1(\A^1/\overline{\F_p},\sL_{\psi(-Bz^A)}) \otimes(-\Gauss(\psi,\chi_2))^{-\deg}.$$
Because $A$ is odd, {\bf if} we had twisted instead by either choice of $\sqrt{p}^{-\deg}$, then we would be arithmetically symplectically self dual, and would have determinant $1$. So if $p$ is $1$ mod $4$, this is our situation:
we are arithmetically symplectically self-dual, and therefore the determinant $D=1$. However, if $p$ is $3$ mod $4$, then our twisting Gauss sum is $i\sqrt{p}$, so our determinant is $i^{A-1}$, which is $(-1)^{(A-1)/2}$, which, because $p$ is $3$ mod $4$, is the asserted $(\chi_2(-1))^{(A-1)/2}$.

We now turn to proving (iii). Again the hypergeometric description of $\sH_{big,A,B, \chi_2,descent}$ shows, by  \cite[8.12.2, (3)]{Ka-ESDE}, that its determinant is geometrically trivial, of the form $D^{\deg}$ for some $\ell$-adic unit $D$. We now repeat the argument above. It suffices to show that $D=1$ at the point $v=1$ in $\G_m(\F_p)$. This is equal to the determinant at $v=1$ on the Kummer pullback by $v \mapsto v^A$. But this Kummer pullback, whose trace function is, by Corollary \ref{Apullbackbis1},
$$v \in E^\times \mapsto  (1/\Gauss(\psi_{\epsilon(A)A},\chi_2),\chi_2))^{\deg(E/\F_p)}\sum_{z \in E}\psi_E(Av z^B -B z^A)\chi_2(z),$$
is lisse on $\A^1/\F_p$, and its determinant remains $D^{\deg}$. So its determinant at $v=1$ is equal to its determinant at $v=0$.
But at $v=0$, we are looking at the cohomology group $$H^1(\A^1/\overline{\F_p},\sL_{\psi(-Bz^A)}\otimes \sL_{\chi_2(z)}) \otimes(-\Gauss(\psi_{-\epsilon(A)AB},\chi_2),\chi_2),\chi_2))^{-\deg},$$
which, because $A$ is odd, is orthogonally self-dual. It is proven in \cite[1.4]{Ka-NG2} that with the imposed choice of quadratic Gauss sum, this orthogonal autoduality has determinant $D=1$ (remembering that we are applying the cited result to the additive character $\psi_{-B}$). 
\end{proof}

\begin{prop}\label{arithdetbis}Suppose  $A$ and $B$ are strictly positive integers, both prime to $p$, with $\gcd(A,B)=1$. Suppose further that $p$ is odd, that $A-B \ge 2$ and that $A$ is even. 
Then we have the following results.
\begin{itemize}
\item[(i)]For any choice of $C \in \F_p^\times$, denote by $\psi_C$ the additive character $t \mapsto  \psi(Ct)$. 
The lisse sheaf $[A]^\star \sH_{small,A,B, descent}\otimes (-\Gauss(\psi_C,\chi_2))^{-\deg}$ on $\A^1/\F_p$, whose trace function at $u \in E/\F_p$ is
$$u \in E \mapsto  (-1/ (-\Gauss(\psi_{C},\chi_2))^{\deg(E/\F_p)})\sum_{z \in E}\psi_E(Au z^B -B z^A),$$
 is pure of weight zero and has arithmetic determinant $\bigl(\chi_2(2ABC(-1)^{A/2})\bigr)^{\deg}$.
\item[(ii)]Choose a character $\rho$ with $\rho^A=\chi_2$. For any choice of  $C \in \F_p^\times$, the lisse sheaf
$$[A]^\star \sH_{big,A,B, \rho,descent}\otimes (-\Gauss(\psi_{C},\chi_2))^{-\deg}$$ on $\A^1/\F_p$, with trace function
$$u \in E \mapsto  (-1/ (-\Gauss(\psi_{C},\chi_2))^{-\deg(E/\F_p)}\sum_{z \in E}\psi_E(Au z^B -B z^A)\chi_2(z),$$
is pure of weight zero and has arithmetic determinant $\bigl(\chi_2(2(-1)^{A/2})\bigr)^{\deg}$.
\end{itemize}
\end{prop}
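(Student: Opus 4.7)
The plan is to mirror the proof of Proposition \ref{arithdet}, but with the symplectic and orthogonal self-duality arguments (which relied on $A$ being odd) replaced by an explicit Gauss-sum computation, since for $A$ even neither duality pins down the determinant in a canonical way. First I would observe that both twisted sheaves are lisse on $\A^1/\F_p$ (by the Remark, since $A > B$), and their trace functions are one-variable exponential sums of degree $A$ prime to $p$, hence pure of weight one by Weil's bound; dividing by $\Gauss(\psi_C,\chi_2)$, which has absolute value $\sqrt{p}$, renders them pure of weight zero. Lemma \ref{detdown1} gives the geometric determinant of each of $\sH_{small,A,B}$ and $\sH_{big,A,B,\rho}$ (valid in the latter case since $\rho^A = \chi_2 \ne \triv$) as $\sL_{\chi_2^{A-1}}$; its $[A]$-pullback is $\sL_{\chi_2^{A(A-1)}}$, which is geometrically trivial because $A$ is even. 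Hence the arithmetic determinant of each twisted sheaf is of the form $D^{\deg}$ for a single constant $D$.

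Next I would compute $D$ at $u=0$. Since the twisted sheaf is lisse on the simply connected $\A^1$, $D$ equals the determinant of Frobenius on the stalk at $u=0$. From the explicit trace-function formulas of Corollary \ref{Apullbackbis1}, that stalk is identified, via the Lefschetz trace formula, with
$$H^1\bigl(\A^1/\overline{\F_p},\, \sL_{\psi(-Bz^A)}\bigr) \otimes \bigl(\Gauss(\psi_C,\chi_2)\bigr)^{-\deg}$$
in case (i), of rank $A-1$, and with
$$H^1\bigl(\A^1/\overline{\F_p},\, \sL_{\psi(-Bz^A)} \otimes \sL_{\chi_2(z)}\bigr) \otimes \bigl(\Gauss(\psi_C,\chi_2)\bigr)^{-\deg}$$
in case (ii), of rank $A$. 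Over a scalar extension containing $\mu_A$, both cohomology groups decompose into one-dimensional Frobenius eigenlines indexed by multiplicative characters $\chi$ with $\chi^A = \triv$ (excluding $\triv$ in case (i), but with all $A$ such characters contributing in case (ii) because the $\chi_2$-twist prevents $\chi = \triv$ from dropping out), with Frobenius eigenvalues given by signed Gauss sums $-\chi^{-1}(-B)\, g(\chi,\psi)$ and $-\chi^{-1}(-B)\, g(\chi\chi_2,\psi)$ respectively.

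Finally I would take the product of eigenvalues and simplify. The identity $\prod_{\chi \ne \triv,\,\chi^A = \triv} \chi = \chi_2$ (valid for $A$ even, since the character group is cyclic of even order) collapses the $\chi^{-1}(-B)$ factors to $\chi_2(-B)$ in case (i); case (ii), after the substitution $\xi = \chi\chi_2$, reduces to a product of $g(\xi,\psi)$ over $\xi$ with $\xi^A = \chi_2$. The Hasse--Davenport product formula then reduces each resulting product of Gauss sums to a single Gauss sum times a power of $p$; combining with $g(\chi_2,\psi)^2 = \chi_2(-1)\,p$, with $\Gauss(\psi_C,\chi_2) = \chi_2(C)\,\Gauss(\psi,\chi_2)$, and with the $(A-1)$st or $A$th power of $\Gauss(\psi_C,\chi_2)^{-1}$ coming from the twist yields the claimed $\chi_2\bigl(2ABC(-1)^{A/2}\bigr)^{\deg}$ in case (i) and $\chi_2\bigl(2(-1)^{A/2}\bigr)^{\deg}$ in case (ii). The principal obstacle is the explicit Hasse--Davenport bookkeeping: one must carefully track the accumulated factors $\chi_2(A)$, $\chi_2(B)$, $\chi_2(C)$, $\chi_2(2)$, and $\chi_2(-1)^{A/2}$ to confirm that they combine to the stated product, and in particular to see why the $AB$-dependence cancels in case (ii).
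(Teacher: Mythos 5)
Your structural reduction coincides with the paper's proof: lisseness on $\A^1$ and weight zero after the Gauss-sum twist, geometric triviality of the determinant from Lemma \ref{detdown1} together with $2\mid A$, hence an arithmetic determinant of the form $D^{\deg}$, evaluated at $u=0$ where the stalk is $H^1(\A^1/\overline{\F_p},\sL_{\psi(-Bz^A)})$, resp.\ $H^1(\A^1/\overline{\F_p},\sL_{\psi(-Bz^A)}\otimes\sL_{\chi_2(z)})$, twisted by $(-\Gauss(\psi_C,\chi_2))^{-\deg}$. (Incidentally, your justification ``$\A^1$ is simply connected'' is false in characteristic $p$; what you actually need, and already have, is that the determinant is geometrically constant, so $D$ can be read off at any rational point where the sheaf is lisse.) The only real divergence is the last step: the paper does not redo any Gauss-sum computation but simply quotes the determinants of these two cohomology groups from parts (1) and (2) of \cite[Theorem 2.3]{KT1} (with the $D$ there equal to our $A$, $q=p$, and $\psi$ there equal to $\psi_{-B}$), and then performs the short algebraic reduction to the stated $\chi_2$-values, whereas you propose to rederive those determinants via an eigenline decomposition and the Hasse--Davenport product formula.

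That rederivation is feasible, but as sketched it has two genuine soft spots. First, in case (ii) your index set is wrong: the correct decomposition is $[A]_*\sL_{\chi_2}\cong\bigoplus_{\xi\colon \xi^A=\chi_2}\sL_{\xi}$, so the eigenvalues are $-\overline{\xi}(-B)\,g(\xi,\psi)$ with $\xi^A=\chi_2$; since $A$ is even, the set $\{\chi\chi_2 : \chi^A=\triv\}$ that you write down is just $\{\chi:\chi^A=\triv\}$ again and is \emph{not} that set (your later sentence ``after the substitution $\xi=\chi\chi_2$\dots over $\xi$ with $\xi^A=\chi_2$'' is inconsistent with it for the same reason). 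Second, the characters $\xi$ with $\xi^A=\chi_2$ exist only over an extension $E\supseteq\F_p(\mu_{2A})$, so the product of eigenvalues you compute there gives only $D^{\deg(E/\F_p)}$; since the answer is precisely a sign $\chi_2(\cdots)$, recovering $D$ itself requires either applying the Hasse--Davenport product relation over $\F_p$ directly (keeping track of the Frobenius permutation of the eigenlines) or some other descent argument --- which is exactly the bookkeeping packaged once and for all in \cite[Theorem 2.3]{KT1}. So either carry out that descent carefully, or, as the paper does, simply invoke the cited theorem and finish with the elementary manipulation of quadratic Gauss sums.
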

\begin{proof}The explicit formulas of Corollary \ref{Apullbackbis1} make that the $[A]^\star$ pullbacks are lisse on $\A^1/\F_p$, and after the twist by any quadratic Gauss sum, are pure of weight zero. Because $A$ is even, it results from Lemma \ref{detdown1} that each pullback has geometrically trivial determinant. So to compute the arithmetic determinant, it suffices to do so at time $u=0$ in
$\A^1(\F_p)$. At this point, we use the idea already used in the proof of Proposition \ref{arithdet}, namely we first compute the determinants of the cohomology groups
$$H^1(\A^1/\overline{\F_p},\sL_{\psi(-Bz^A)}) {\rm \ \  and\ }H^1(\A^1/\overline{\F_p},\sL_{\psi(-Bz^A)}\otimes \sL_{\chi_2(z)}) .$$
These are computed in parts (1) and (2) of \cite[Theorem 2.3]{KT1}, where the $D$ there is our $A$, the $q$ there is $p$, and
the $\psi$ there is our $\psi_{-B}$. The first determinant is
$$(-\Gauss(\psi_{-BA/2},\chi_2)p^{(A/2)-1}=\chi_2(-CAB/2)(-\Gauss(\psi_C,\chi_2)(\chi_2(-1)(-\Gauss(\psi_C,\chi_2))^2)^{(A/2)-1}=$$
$$=\chi_2(-2CAB)\chi_2(-1)^{(A/2)-1}(-\Gauss(\psi_C,\chi_2)^{A-1}=\chi_2(2CAB(-1)^{A/2})(-\Gauss(\psi_C,\chi_2)^{A-1}.$$
The second determinant is $(-\Gauss(\psi_{BA},\chi_2)$ times the first, so is
$$\chi_2(CAB)\chi_2(2CAB(-1)^{A/2})(-\Gauss(\psi_C,\chi_2)^{A}.$$
In both cases, after the Gauss sum twisting, we are left with the asserted arithmetic determinant.
\end{proof}

\begin{prop}\label{arithdetter}Suppose  $A$ and $B$ are strictly positive integers, both prime to $p$, with $\gcd(A,B)=1$. Suppose further that $p$ is odd, that $A-B \ge 2$, that $A$ is even and that $B$ is odd. Choose integers $\alpha, \beta$ with
$$\alpha A -\beta B =1, \ \ \alpha \ {\rm even},$$
(which is always possible, for if $(\alpha, \beta)$ works, then so does $(\alpha +B, \beta + A)$, and $B$ is odd).
For any choice of  $C \in \F_p^\times$, the lisse sheaf
$$\sH^\sharp_{big,A,\chi_2,B,descent}\otimes (-\Gauss(\psi_{C},\chi_2))^{-\deg}$$ on $\G_m/\F_p$, with trace function
$$u \in E \mapsto  (-1/ (-\Gauss(\psi_{C},\chi_2))^{-\deg(E/\F_p)}\sum_{z \in E}\psi_E(Au^\alpha z^B -Bu^\beta z^A)\chi_2( z),$$
is pure of weight zero and has arithmetic determinant $\bigl(\chi_2(2(-1)^{A/2})\bigr)^{\deg}$.
\end{prop}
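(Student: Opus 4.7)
The plan is to follow the template of Propositions \ref{arithdet} and \ref{arithdetbis}, and reduce the computation of the arithmetic determinant to Proposition \ref{arithdetbis}(ii) by a Kummer pullback along $[A]$.

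I would first verify that the displayed trace function is indeed that of $\sH^\sharp_{big,A,\chi_2,B,descent}\otimes (-\Gauss(\psi_C,\chi_2))^{-\deg}$. By Lemma \ref{ztracesbis1}(iii) with $\chi=\chi_2$, the trace of $\sH^\sharp_{big,A,\chi_2,B,descent}$ at $u$ is
\[
-\sum_{z\in E}\psi_E(Au^\alpha z^B - Bu^\beta z^A)\,\chi_2(u^\alpha z^B),
\]
and the hypotheses $\alpha$ even and $B$ odd collapse $\chi_2(u^\alpha z^B)=\chi_2(u)^\alpha \chi_2(z)^B=\chi_2(z)$, matching the stated formula after the Gauss-sum twist. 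Purity of weight zero follows from the purity of the hypergeometric $\sH^\sharp_{big,A,\chi_2,B}$ combined with the normalization by the Gauss sum. Next, Lemma \ref{detdown2} gives geometric determinant $\chi_2^{A-1}\cdot \chi_2=\chi_2^A=\triv$ because $A$ is even; hence the twisted sheaf has trivial geometric determinant, and its arithmetic determinant is $D^{\deg}$ for some scalar $D\in\overline{\Q_\ell}^\times$. The remaining task is to show $D=\chi_2(2(-1)^{A/2})$.

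The crux is to identify $[A]^\star \sH^\sharp_{big,A,\chi_2,B,descent}$. By Corollary \ref{Apullbackbis1}(iii) with $\rho=\chi_2$, and using that $B$ is odd so $\chi_2(uz^B)=\chi_2(u)\chi_2(z)$, the pullback has trace
\[
-\chi_2(u)\sum_{z\in E}\psi_E(Auz^B-Bz^A)\,\chi_2(z).
\]
On the other hand, for any character $\rho$ satisfying $\rho^A=\chi_2$, Corollary \ref{Apullbackbis1}(ii) gives the trace $-\sum_z \psi_E(Auz^B-Bz^A)\chi_2(z)$ for $[A]^\star \sH_{big,A,B,\rho,descent}$. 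Both pullback sheaves are lisse of rank $A$ on $\G_m/\F_p$ and are geometrically irreducible (Proposition \ref{ABmomentrho} yields second moment $\gcd(A,B)=1$), so the coincidence of trace functions forces
\[
[A]^\star\sH^\sharp_{big,A,\chi_2,B,descent} \;\cong\; \sL_{\chi_2(u)}\otimes [A]^\star \sH_{big,A,B,\rho,descent}
\]
as arithmetic sheaves on $\G_m/\F_p$. Tensoring both sides with $(-\Gauss(\psi_C,\chi_2))^{-\deg}$, taking determinants, and using $\sL_{\chi_2}^A=\triv$ (because $A$ is even), Proposition \ref{arithdetbis}(ii) then gives
\[
[A]^\star \det\bigl(\sH^\sharp_{big,A,\chi_2,B,descent}\otimes (-\Gauss)^{-\deg}\bigr) = \chi_2(2(-1)^{A/2})^{\deg}.
\]
Since Kummer pullback preserves the scalar character $D^{\deg}$, we conclude $D=\chi_2(2(-1)^{A/2})$.

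The main technical obstacle is confirming that the displayed isomorphism is arithmetic (over $\F_p$) and not merely geometric, since $\rho$ itself need not be defined over $\F_p$ even though $\rho^A=\chi_2$ is. This is safeguarded because Proposition \ref{arithdetbis}(ii) already descends $[A]^\star \sH_{big,A,B,\rho,descent}$ to $\A^1/\F_p$ with the indicated trace function, and two geometrically irreducible lisse sheaves on $\G_m/\F_p$ with identical trace functions on every finite extension of $\F_p$ must coincide as arithmetic sheaves.
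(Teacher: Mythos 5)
Your proof is correct and follows essentially the paper's route: both arguments use Lemma \ref{detdown2} (with $A$ even) to see the determinant is geometrically trivial, hence of the form $D^{\deg}$, and both reduce the computation of $D$ to Proposition \ref{arithdetbis}(ii) via the trace identities of Lemma \ref{ztracesbis1} and Corollary \ref{Apullbackbis1}. The only difference in execution is that you set up a global arithmetic isomorphism $[A]^\star\sH^\sharp_{big,A,\chi_2,B,descent}\cong \sL_{\chi_2}\otimes[A]^\star\sH_{big,A,B,\rho,descent}$ by Chebotarev and cancel $\sL_{\chi_2}^{\otimes A}$, whereas the paper more simply evaluates the constant determinant at the single rational point $u=1$ and identifies the Frobenius there with that of the lisse-on-$\A^1$ sheaf of Proposition \ref{arithdetbis}; both transfers are sound.
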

\begin{proof}From the explicit formula for its trace function, it is obvious that $\sH^\sharp_{big,A,\chi_2,B,descent}$ is pure of weight one,
and hence that twisting by any quadratic Gauss sum renders it pure of weight zero. Because $B$ is odd and $\alpha$ is even, the asserted trace formula is just (iii) of Proposition \ref{ztracesbis1}.
Because $A$ is even, it results from Lemma \ref{detdown2} that $\sH^\sharp_{big,A,\chi_2,B,descent}$ has geometrically trivial determinant. Therefore we may compute its arithmetic determinant at the point $u=1$ in $\G_m(\F_p)$. This is the determinant attached to the trace function over varying extensions $E/\F_p$ given by
$$-\sum_{z \in E}\psi_E(A z^B -Bz^A)\chi_2( z),$$
which is in term the trace at $u=1$ on the lisse sheaf on $\A^1/\F_p$ whose trace function is
$$u \in E \mapsto -\sum_{z \in E}\psi_E(Au z^B -Bz^A)\chi_2( z).$$
This lisse sheaf is none other than $[A]^\star \sH_{big,A,B, \rho,descent}$, for any choice of $\rho$ with $\rho^A=\chi_2$, which, we have seen in  Proposition \ref{arithdetbis}, has geometrically trivial determinant, and whose arithmetic determinant is the asserted 
$\bigl(\chi_2(2(-1)^{A/2})\bigr)^{\deg}$.
\end{proof}

From \cite[Prop. 1.2]{Ka-RL-T-2Co1}, one sees the following.
\begin{prop}\label{ABBelyi1}If a geometrically irreducible hypergeometric sheaf of type $(n,m)$ with $n > m > 0$, or with $m > n > 0$, is Belyi induced, then $n-m$ is prime to $p$, and is divisible by $p-1$.
\end{prop}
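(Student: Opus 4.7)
The plan is to reduce the statement to a local analysis at $\infty$ by first applying a symmetry, and then to separate the two conclusions into a ``break'' argument and a Galois-descent argument. By the $[x\mapsto 1/x]$-pullback on $\G_m$, which interchanges the roles of upstairs and downstairs characters and of the points $0$ and $\infty$, and which preserves both geometric irreducibility and the property of being Belyi induced, I reduce to the case $n>m>0$. Under the Belyi induction hypothesis, I then have a realization $\sH\cong \pi_\ast\sL$, where $\pi:U\to\G_m$ is a Belyi cover over $\F_p$ and $\sL$ is a rank-one lisse sheaf on $U$; in the hypergeometric Belyi setup of \cite{Ka-RL-T-2Co1}, $\pi$ has a unique point $\infty_\pi$ lying over $\infty$ at which it is totally ramified of degree $n-m$, matching the wild rank of $\sH$ at $\infty$.

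For the prime-to-$p$ assertion, the plan is simply to invoke the intrinsic ramification structure of a hypergeometric of type $(n,m)$ with $n>m>0$: by \cite[Ch.~8]{Ka-ESDE}, it has $\mathrm{Swan}_\infty=1$, wild rank $n-m$, and all wild breaks at $\infty$ equal to $1/(n-m)$. Since the breaks of any lisse $\overline{\Q_\ell}$-sheaf in characteristic $p$ are rationals with denominators coprime to $p$, this forces $\gcd(n-m,p)=1$. This part is essentially automatic and needs no use of the Belyi hypothesis beyond knowing that $n\neq m$.

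For the divisibility $(p-1)\mid(n-m)$, I analyze the tame piece of the $I(\infty)$-representation on the stalk of $\sH=\pi_\ast\sL$. Because $\pi$ is totally ramified of degree $n-m$ at $\infty_\pi$ and $\gcd(n-m,p)=1$ from the previous step, the inclusion $I(\infty_\pi)\subset I(\infty)$ has tame-quotient cokernel $\mu_{n-m}$, and the tame characters of $I(\infty)$ that occur in the semisimplification of the induced representation form a single $\mu_{n-m}$-orbit of the tame character of $\sL$ at $\infty_\pi$. The key observation is that because the Belyi cover, its unique point over $\infty$, and a local uniformizer there can be chosen over $\F_p$, Frobenius compatibility forces this $\mu_{n-m}$-orbit to be closed under the natural $\F_p^\times$-action on tame characters of $I(\infty_\pi)$ induced by scaling the uniformizer. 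Closure under $\F_p^\times\cong\mu_{p-1}$ of a single $\mu_{n-m}$-coset inside the character group of $I(\infty_\pi)^t$ translates into the containment $\mu_{p-1}\subseteq \mu_{n-m}$, i.e.\ $(p-1)\mid (n-m)$.

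The hard part will be the last step: pinning down the $\F_p^\times$-action on tame characters coming from the $\F_p$-rationality of the Belyi map, and then arguing that compatibility with the $\mu_{n-m}$-orbit forces exactly $(p-1)\mid(n-m)$ rather than some weaker divisibility. The symmetry reduction and the break argument are routine, but this descent-type compatibility is the delicate part; carrying it out rigorously is exactly what the Kummer-theoretic computation in \cite[Prop.~1.2]{Ka-RL-T-2Co1} accomplishes, and my proof would simply cite that as the technical input.
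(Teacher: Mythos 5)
Your reduction to the case $n>m>0$ by multiplicative inversion is the same first step as the paper, but after that there is a genuine error. Your proof of the prime-to-$p$ assertion rests on the claim that breaks of lisse $\overline{\Q_\ell}$-sheaves in characteristic $p$ are rationals with denominator prime to $p$; this is false. Hypergeometric (and Kloosterman) sheaves of type $(n,m)$ with $p\mid(n-m)$ exist and have all wild breaks at $\infty$ equal to $1/(n-m)$: for instance the sheaves $\sH_{small,A,B}$ and $\sH_{big,A,B,\chi}$ used throughout this paper have $A-B$ divisible by $p$, and Corollary \ref{ABprimitive1} invokes the present proposition precisely to conclude that such sheaves are \emph{not} Belyi induced. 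Your own remark that this part ``needs no use of the Belyi hypothesis'' should have been a warning: without that hypothesis the assertion is simply false, so no argument about break denominators can establish it. The constraint on $n-m$ comes from the classification of Belyi-induced hypergeometrics, not from the intrinsic ramification structure of hypergeometric sheaves.

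The second half is also off target in its local picture, although it ends by citing the right source. In the setup of \cite[Prop.~1.2]{Ka-RL-T-2Co1}, with Belyi map of the shape $x^A(1-x)^B$ and (in the case where the wildness is at $\infty$) $A+B=d_0p^r$ with $r\ge 1$ and $p\nmid d_0$, the cover is totally ramified over $\infty$ of degree $n=A+B$, not of degree $n-m$; the wild dimension $n-m$ arises because inducing a tame character through this totally (and wildly) ramified extension of degree $d_0p^r$ produces a tame part of dimension $d_0$ and a wild part of dimension $d_0p^r-d_0$. Accordingly $m$ equals $d_0$ (or $d_0+B$, or $A+d_0$ in the cases $p\mid A$, $p\mid B$), and both assertions follow at once from the single identity $n-m=d_0p^r-d_0=d_0(p^r-1)$, which is prime to $p$ and divisible by $p-1$. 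That elementary computation, read directly off the statement of \cite[Prop.~1.2]{Ka-RL-T-2Co1} which you were already prepared to cite, is the entire proof; the $\F_p^\times$-orbit-closure mechanism you sketch (forcing $\mu_{p-1}\subseteq\mu_{n-m}$) is neither what that proposition asserts nor needed.
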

\begin{proof}In the notations of \cite[Prop. 1.2]{Ka-RL-T-2Co1} (whose $A$ and $B$ have nothing to do with ours), when $n >m >0$, we have $n=A+B$ and either $A+B$ or $A$ or $B$ is $d_0p^r$ with $sr \ge 1$ and $d_0$ prime to $p$. In these cases, $m$ is either
$d_0$, or $d_0 +B$, or $A +d_0$. So in each case, $n-m =d_0p^r -d_0 =d_0(p^r -1)$, which is prime to $p$ and is divisible by $p-1$. To deal with the case $m > n > 0$, first apply multiplicative inversion.
\end{proof}

\begin{cor}\label{ABprimitive1}Suppose  $A$ and $B$ are strictly positive integers, both prime to $p$, with $\gcd(A,B)=1$. Suppose further that $A-B$ is divisible by $p$. Then neither $\sH_{small,A,B}$ nor $\sH_{big,A,B,\chi}$ for any $\chi$ with $\chi^A \neq \triv$ nor $\sH^\sharp_{big,A,\rho,B}$ for any $\rho$ with $\rho^B \neq \triv$
is geometrically induced.
\end{cor}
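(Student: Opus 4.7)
The plan is to reduce each case to Proposition \ref{ABBelyi1} by ruling out Belyi induction, then invoke the general fact (from the same reference \cite{Ka-RL-T-2Co1}) that for irreducible hypergeometric sheaves, geometric induction forces Belyi induction. So the first step is to identify the hypergeometric types: by definition $\sH_{small,A,B}$ is of type $(A-1,B-1)$, while $\sH_{big,A,B,\chi}$ and $\sH^\sharp_{big,A,\rho,B}$ are both of type $(A,B)$. In every case, the difference of the two type parameters has absolute value $|A-B|$.

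Next I would note that $A \neq B$: since $\gcd(A,B)=1$, the only way $A=B$ is $A=B=1$, but then $A-B=0$ is divisible by $p$ vacuously and the sheaves in question are either zero or rank one, where the statement is either empty or trivial; so we may assume $|A-B|\geq 1$, and under the hypothesis $p \mid (A-B)$ this forces $|A-B|\geq p$. In particular, $|A-B|$ is a \emph{nonzero} multiple of $p$, hence is not prime to $p$.

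Now suppose, for contradiction, that one of the three sheaves $\sH$ is geometrically induced. By the cited characterization of induction for hypergeometric sheaves, $\sH$ must then be Belyi induced. Each of the three is geometrically irreducible (this follows from the hypergeometric construction, since the upstairs and downstairs character sets are disjoint under the hypotheses $\chi^A\neq\triv$ or $\rho^B\neq\triv$), and is of type $(n,m)$ with $\min(n,m)>0$ and $n\neq m$ (since $A-B\neq 0$). Applying Proposition \ref{ABBelyi1} to $\sH$ then yields that $|n-m|=|A-B|$ is prime to $p$, contradicting the hypothesis $p\mid(A-B)$.

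The main (in fact only) genuine obstacle is the bridge step, namely the assertion that geometric induction of a hypergeometric sheaf forces Belyi induction; this is not the content of Proposition \ref{ABBelyi1} as stated here, but is available from the same body of results cited for that proposition. Once that bridge is in place, the rest of the argument is just the type computation and the divisibility contradiction.
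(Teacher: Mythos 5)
There is a genuine gap at exactly the step you flag as the ``bridge.'' The dichotomy available from \cite{Ka-RL-T-2Co1} (and used implicitly in the paper) is not that a geometrically induced irreducible hypergeometric sheaf must be \emph{Belyi} induced; it is that such a sheaf must be either \emph{Kummer} induced or Belyi induced. So ruling out Belyi induction via Proposition \ref{ABBelyi1} only handles half of the dichotomy, and your argument never excludes Kummer induction. This is not a cosmetic omission: Kummer-induced hypergeometric sheaves genuinely occur (a degree-$d$ Kummer induction corresponds to the upstairs and downstairs character sets being stable under multiplication by $\Char(d)$), and the hypothesis $\gcd(A,B)=1$ --- which your proof uses only for the throwaway remark that $A\neq B$ --- is precisely what the paper uses to kill this case: for the sheaves at hand, a nontrivial Kummer induction would force the degree $d>1$ to divide both $A$ and $B$, contradicting coprimality. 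The paper's proof is exactly these two sentences: relative primality of $A$ and $B$ excludes Kummer induction, and Proposition \ref{ABBelyi1} excludes Belyi induction because $p \mid (A-B)$.

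The rest of your argument (identifying the types $(A-1,B-1)$ and $(A,B)$, noting the wild part has dimension $|A-B|$ which is a nonzero multiple of $p$, and deriving the contradiction with Proposition \ref{ABBelyi1}) matches the paper's use of that proposition and is fine. To repair the proof, replace the false bridge by the correct Kummer-or-Belyi dichotomy and add the one-line Kummer exclusion from $\gcd(A,B)=1$.
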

\begin{proof} The relative primality of $A$ and $B$ shows that neither sheaf is Kummer induced. That neither is Belyi induced results from Proposition \ref{ABBelyi1}.
\end{proof}

Combining the above primitivity result Corollary \ref{ABprimitive1} with \cite[Theorem 1.5]{KT4}, we get the following two results.
\begin{prop}\label{ABcondSbig1}Suppose  $A$ and $B$ are strictly positive integers, both prime to $p$, with $\gcd(A,B)=1$. Suppose that $A-B$ is divisible by $p$, and that $\max(A,B)$ is prime to $p$. Suppose further that 
$$|A-B| > \max(A,B)/2 \ge 2.$$ 
In the special case $\max(A,B)=8$, suppose $A-B=7$ (possible only when $p=7$). In the special case $\max(A,B)=9$, suppose $A-B=7 \ {\rm\ or}\ 8$ (possible only when $p=7$, respectively when $p=2$). Then for any $\chi$ with $\chi^A \neq \triv$,  $\sH_{big,A,B,\chi}$ satisfies condition  {\rm ({\bf S+})  (as defined in \cite[\S1]{KT4})}.
\end{prop}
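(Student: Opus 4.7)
The plan is to reduce the proposition directly to \cite[Theorem 1.5]{KT4}, whose hypotheses are (i) primitivity of the hypergeometric sheaf and (ii) explicit numerical conditions on the rank $D=\max(A,B)$ and on the ``gap'' $|A-B|$ between the upstairs and downstairs character sets, together with a short list of low-rank exceptions that must be dealt with by hand.

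The first step is to verify primitivity. Since $\gcd(A,B)=1$, the sheaf $\sH_{big,A,B,\chi}$ is not Kummer induced. Since $p \mid A-B$, Proposition \ref{ABBelyi1} forbids Belyi induction as well. Therefore Corollary \ref{ABprimitive1} applies, and $\sH_{big,A,B,\chi}$ is not geometrically induced from any proper open subgroup. This is the ``primitive'' hypothesis demanded by \cite[Theorem 1.5]{KT4}.

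The second step is to match the numerical hypotheses. By construction $\sH_{big,A,B,\chi}$ is a geometrically irreducible hypergeometric sheaf of type $(A,B)$ and rank $D=\max(A,B)$, which by assumption is prime to $p$. The quantity $|A-B|$ is precisely the ``gap'' which controls the unique slope at $\infty$ (or at $0$), and the inequality $|A-B|>D/2\ge 2$ is exactly the gap condition needed by \cite[Theorem 1.5]{KT4} to force condition (S+). The exceptional possibilities $D\in\{8,9\}$ in that theorem are handled respectively by the stipulations $A-B=7$ (which forces $p=7$, since $p\mid A-B$ and $p$ is odd and not dividing $D$) and $A-B\in\{7,8\}$ (forcing $p=7$ or $p=2$).

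With primitivity and the numerical conditions both in place, \cite[Theorem 1.5]{KT4} applies verbatim and delivers condition (\textbf{S+}) for $\sH_{big,A,B,\chi}$. The only non-routine aspect of the proof is the bookkeeping around the sporadic exceptions at $D=8,9$: one must be sure that the explicit lists of allowed $(A-B,p)$ in the statement exhaust all configurations in which both $p\mid A-B$ and the hypotheses of \cite[Theorem 1.5]{KT4} can fail to imply (S+) on their own, and that they do not introduce any extraneous cases inconsistent with $p\nmid D$. Once this verification is made, no further argument is required.
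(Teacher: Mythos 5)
Your proposal is correct and follows exactly the paper's route: the paper derives this proposition in one line by combining the primitivity statement (Corollary \ref{ABprimitive1}, which itself rests on $\gcd(A,B)=1$ to rule out Kummer induction and Proposition \ref{ABBelyi1} with $p \mid A-B$ to rule out Belyi induction) with the numerical criterion of \cite[Theorem 1.5]{KT4}, including its exceptional ranks $8$ and $9$, precisely as you do. One cosmetic remark: in the $\max(A,B)=8$ case, $A-B=7$ forces $p=7$ simply because $p$ is a prime dividing $7$; the extra appeal to $p$ being odd is unnecessary (and is not assumed in this section, as the $\max(A,B)=9$, $p=2$ case shows).
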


\begin{prop}\label{ABcondSsmall1}Suppose  $A$ and $B$ are strictly positive integers, both prime to $p$, with $\gcd(A,B)=1$. Suppose that $A-B$ is divisible by $p$, and that $\max(A-1,B-1)$ is  divisible by $p$. Suppose further that $$|A-B| >(2/3)( \max(A-1,B-1)-1)> 2$$ In the special case $\max(A,B)=8$, suppose $A-B=7$ (possible only when $p=7$). In the special case $p=2$, suppose $ \max(A-1,B-1) \neq 8$. Then 
$\sH_{small,A,B}$ satisfies condition {\rm ({\bf S+})}.
\end{prop}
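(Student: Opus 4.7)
The plan is straightforward: the sentence immediately preceding the proposition already announces the two ingredients. I would combine the primitivity statement Corollary~\ref{ABprimitive1} with the classification-type result \cite[Theorem 1.5]{KT4}, exactly as was done for the ``big'' analogue Proposition~\ref{ABcondSbig1}. Only the numerical inputs change, because the sheaf under consideration is $\sH_{small,A,B}$, which has type $(A-1,B-1)$ and rank $N:=\max(A-1,B-1)$, rather than type $(A,B)$ and rank $\max(A,B)$.

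First I would verify that Corollary~\ref{ABprimitive1} applies. The hypothesis $p \mid (A-B)$ is precisely what is needed there, so $\sH_{small,A,B}$ is neither Kummer nor Belyi induced. Since $\gcd(A,B)=1$ forces the multisets $\Char(A)\setminus\{\triv\}$ and $\Char(B)\setminus\{\triv\}$ to be disjoint, $\sH_{small,A,B}$ is moreover geometrically irreducible, and the primitivity input needed to feed into \cite[Theorem 1.5]{KT4} is in place.

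Second I would match the rest of the numerical hypotheses of \cite[Theorem 1.5]{KT4} against $\sH_{small,A,B}$. The divisibility $p\mid \max(A-1,B-1)$ matches the divisibility assumption on the rank there; the quantitative separation inequality $|A-B| > (2/3)(N-1) > 2$ translates directly into the gap hypothesis between the tame and wild parts required in that theorem; and the two special-case restrictions (the requirement $A-B=7$ when $\max(A,B)=8$, and the exclusion $\max(A-1,B-1)\ne 8$ when $p=2$) are exactly the sporadic low-rank configurations that the cited theorem forces one to throw out. With these checks in place, \cite[Theorem 1.5]{KT4} gives condition (\textbf{S+}) for $\sH_{small,A,B}$.

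The main obstacle is purely bookkeeping: one must carefully translate the hypotheses here, phrased in terms of $A,B,p$, into the form used in \cite[Theorem 1.5]{KT4}, which is phrased in terms of rank, tame/wild type, and Swan conductor of a hypergeometric sheaf, and then check every low-rank exceptional case by hand. There is no new idea beyond this careful comparison, since the primitivity half is already handled by Corollary~\ref{ABprimitive1}.
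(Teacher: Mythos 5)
Your proposal matches the paper's own argument: the paper proves this proposition (together with its ``big'' companion) precisely by the one-line combination of the primitivity statement, Corollary~\ref{ABprimitive1}, with \cite[Theorem 1.5]{KT4}, the numerical hypotheses in the statement being exactly the translation of that theorem's rank, wild-part, and low-rank exceptional conditions to the type $(A-1,B-1)$ sheaf $\sH_{small,A,B}$. Your bookkeeping (irreducibility from $\gcd(A,B)=1$, primitivity from $p\mid(A-B)$, and the gap inequality $|A-B|>(2/3)(\max(A-1,B-1)-1)>2$ with the stated exceptional exclusions) is the same route, so there is nothing to add.
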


\section{Local system candidates for $\Sp_{2n}(q)$}\label{sec:local-sp}
In this section, expanding \cite{KT3}, we consider the following situation: $p$ is an odd prime, $q$ is a (strictly positive) power $q=p^f$ of $p$, and we are given  two positive integers $n \neq m$ with $\gcd(n,m)=1$ about which we assume
$$\gcd(q^n+1,q^m+1)=2.$$
Notice that $n,m$ cannot both be odd, otherwise $q+1$ divides $\gcd(q^n+1,q^m+1)$, nor can they both be even, as  $\gcd(n,m)=1$.
So precisely one of $n,m$ is even, and the other is odd. In what follows, we suppose that
$$n {\rm \ even},\ m {\rm \ odd}.$$
We define
$$A:=(q^n+1)/2,\ B:=(q^m +1 )/2.$$

We will apply the results of the previous section to this $(A,B)$, and to the quadratic character $\chi_2$.
Thus $A$ is odd. The parity of $B$ depends on the value of $q$ mod $4$ ($B$ will be odd if $q$ is $1$ (mod $4$), and will be even
if $q$ is $3$ (mod $4$)).

%
\begin{lem}\label{parity} There exist integers $\alpha, \beta$ with $\alpha A-\beta B=1$ with $\beta$ even.
\end{lem}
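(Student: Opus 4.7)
The plan is very short. By hypothesis, $\gcd(q^n+1,q^m+1)=2$, so $\gcd(A,B)=1$, and Bézout gives us some pair of integers $(\alpha_0,\beta_0)$ with $\alpha_0 A-\beta_0 B=1$. The general solution is then
\[
(\alpha,\beta) \;=\; (\alpha_0+kB,\ \beta_0+kA), \qquad k\in\Z.
\]
I want to choose $k$ so that $\beta_0+kA$ is even.

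The key observation is that $A=(q^n+1)/2$ is odd. Indeed, since $n$ is even and $q$ is odd, $q^n\equiv 1\pmod 4$, hence $q^n+1\equiv 2\pmod 4$, so $A=(q^n+1)/2$ is odd. (This odd parity of $A$ was already noted in the paragraph preceding the statement.)

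Because $A$ is odd, the parity of $\beta_0+kA$ alternates as $k$ runs through $\Z$. So whatever the parity of $\beta_0$ is, we can pick $k\in\{0,1\}$ to force $\beta=\beta_0+kA$ to be even; then $\alpha=\alpha_0+kB$ gives the required pair with $\alpha A-\beta B=1$ and $\beta$ even. No obstacle to speak of — the statement is essentially the observation that one can freely shift $\beta$ by multiples of the odd integer $A$.
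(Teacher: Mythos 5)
Your proof is correct and is essentially the paper's own argument: both shift a Bézout solution by $(\alpha,\beta)\mapsto(\alpha+B,\beta+A)$ and use the oddness of $A=(q^n+1)/2$ (from $n$ even, $q$ odd) to adjust the parity of $\beta$. Your extra remarks justifying $\gcd(A,B)=1$ and the oddness of $A$ are fine but already part of the surrounding setup in the paper.
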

\begin{proof}If $\alpha, \beta$ has $\alpha A-\beta B=1$, so does $(\alpha +B, \beta +A)$. Since $A$ is odd, we may
change the parity of $\beta$ at will.
\end{proof}

For the rest of this section, we fix a choice of $\alpha, \beta$ with 
$$\alpha A-\beta B=1, \  \beta\  {\rm even}.$$


We consider the hypergeometric sheaf
$$\sH_{small,A,B}:=\sH yp_\psi(\Char(A)\setminus \{\triv \};\Char(B)\setminus \{\triv \}),$$
of rank $\max(A,B)-1$, 
and the hypergeometric sheaf
$$\sH_{big,A,B}:=\sH_{big,A,B,\chi_2}:=\sH yp_\psi(\Char(A);\Char(B,\chi_2)),$$
of rank $\max(A,B)$.


\begin{lem}\label{ztraces} We have the following results.
\begin{itemize}
\item[(i)] {\rm (mise\ pour\ m\'{e}moire)} $\sH_{small,A,B, descent}$ is isomorphic to the lisse sheaf on $\G_m/\F_p$ whose trace function is as follows: for $E/\F_p$ a finite extension, and $u \in E^\times$, the trace at time $u$ is given by
$$u \in E^\times \mapsto -\sum_{z \in E}\psi_E(Au^\alpha z^B -Bu^\beta z^A).$$

\item[(ii)]$\sH_{big,A,B,descent}$ is isomorphic to the lisse sheaf on $\G_m/\F_p$ whose trace function is as follows: for $E/\F_p$ a finite extension, and $u \in E^\times$, the trace at time $u$ is given by
$$u \in E^\times \mapsto -\sum_{z \in E}\psi_E(Au^\alpha z^B -Bu^\beta z^A)\chi_2(z).$$
\end{itemize}
\end{lem}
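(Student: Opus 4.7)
The plan is very short, because part (i) is merely ``mise pour m\'emoire''\,: it is exactly the content of Lemma \ref{ztracesbis1}(i) in the special case $\chi = \triv$, so nothing needs to be proved. The entire content of the lemma lies in part (ii), and that reduces to a simple character identity using the parity hypotheses built into the set-up of this section.

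For (ii), I would begin by invoking Lemma \ref{ztracesbis1}(ii) with $\chi = \chi_2$. That lemma, applied over the field of definition $\F_p(\chi_2)$ of $\chi_2$, gives a descent of $\sH_{big,A,B,\chi_2}$ with trace function
$$u \in E^\times \mapsto -\sum_{z \in E}\psi_E(Au^\alpha z^B - Bu^\beta z^A)\chi_2(u^\beta z^A).$$
Since $\chi_2$ takes values in $\{\pm 1\} \subset \Q$, we have $\F_p(\chi_2) = \F_p$, so the descent is genuinely over $\G_m/\F_p$, as claimed. The only thing left is to check that the displayed $\chi_2$-factor collapses to $\chi_2(z)$.

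By multiplicativity, $\chi_2(u^\beta z^A) = \chi_2(u)^\beta \chi_2(z)^A$. Two arithmetic facts then finish the job. First, $A = (q^n+1)/2$ is odd: $q$ is odd and $n$ is even, hence $q^n \equiv 1 \pmod{8}$, so $q^n + 1 \equiv 2 \pmod{8}$ and $A$ is odd; therefore $\chi_2(z)^A = \chi_2(z)$. Second, by our standing choice from Lemma \ref{parity}, $\beta$ is even, hence $\chi_2(u)^\beta = 1$. Multiplying these together gives $\chi_2(u^\beta z^A) = \chi_2(z)$, which is exactly the factor appearing in the asserted formula.

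There is no real obstacle: both of the relevant parities ($A$ odd, $\beta$ even) were arranged in advance, precisely so that this cleanup of the $\chi_2$ factor would work, and all the substantive geometric content (the descent, the change of variables to the $z$-form of the trace) is already contained in Lemmas \ref{bigtracebis1} and \ref{ztracesbis1}.
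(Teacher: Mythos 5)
Your proposal is correct and matches the paper's own argument: both parts follow from Lemma \ref{ztracesbis1} (with $\chi=\chi_2$ for (ii)), and the collapse $\chi_2(u^\beta z^A)=\chi_2(z)$ is exactly the paper's use of $A$ odd and $\beta$ even from Lemma \ref{parity}. Nothing is missing.
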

\begin{proof}Immediate from Corollary \ref{Apullbackbis1}. The first assertion is Lemma \ref{ztracesbis1}(i). The second is statement (ii) of that same lemma, remembering that
$\beta$ is even and $A$ is odd, so that $\chi_2(u^\beta z^A)=\chi_2(z)$.
\end{proof}
\smallskip

%

\begin{cor}\label{totalztrace}The direct sum 
$$\sW_{\sH}:=\sH_{small,A,B,descent} \oplus \sH_{big,A,B,descent}$$
is isomorphic to the  arithmetically semisimple lisse sheaf on $\G_m/\F_p$ whose trace function is as follows: for $E/\F_p$ a finite extension, and $u \in E^\times$, the trace at time $u$ is given by
$$u \in E^\times \mapsto -\sum_{z \in E}\psi_E(Au^\alpha z^{q^m+1} -Bu^\beta z^{q^n+1}).$$
\end{cor}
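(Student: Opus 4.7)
The plan is to add the trace functions of the two summands from Lemma \ref{ztraces} and simplify using the character sum identity for $1+\chi_2(z)$, then verify arithmetic semisimplicity separately.

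First I would write down, for $u \in E^\times$, the sum of the trace functions of $\sH_{small,A,B,descent}$ and $\sH_{big,A,B,descent}$ as given by Lemma \ref{ztraces} (parts (i) and (ii)):
\[
-\sum_{z \in E}\psi_E(Au^\alpha z^B -Bu^\beta z^A)\bigl(1+\chi_2(z)\bigr).
\]
Here I am using that $\chi_2(u^\beta z^A) = \chi_2(z)$, which holds because $\beta$ is even and $A$ is odd, per the choice made just before Lemma \ref{ztraces}.

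Next I would use the standard identity
\[
1+\chi_2(z) = \begin{cases} 2, & z \in (E^\times)^2,\\ 0, & z \in E^\times\setminus (E^\times)^2,\\ 1, & z=0, \end{cases}
\]
to rewrite the inner sum as a sum over nonzero squares $z=w^2$ (with $w \in E^\times$) plus the $z=0$ term. Under $z=w^2$, we have $z^A = w^{2A} = w^{q^n+1}$ and $z^B = w^{2B} = w^{q^m+1}$, so the factor of $2$ and the bijection $z = w^2$ between nonzero squares and pairs $\pm w$ combine to give
\[
\sum_{z\in E}\psi_E(Au^\alpha z^B - Bu^\beta z^A)(1+\chi_2(z)) = 1 + \sum_{w \in E^\times}\psi_E(Au^\alpha w^{q^m+1} - Bu^\beta w^{q^n+1}).
\]
Absorbing the ``$1$'' as the $w=0$ contribution (since $\psi_E(0)=1$) gives
\[
\sum_{w \in E}\psi_E(Au^\alpha w^{q^m+1} - Bu^\beta w^{q^n+1}),
\]
which upon negation is exactly the claimed trace formula.

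Finally, for arithmetic semisimplicity I would observe that $\sH_{small,A,B,descent}$ and $\sH_{big,A,B,descent}$ are arithmetically irreducible (being descents of geometrically irreducible hypergeometric sheaves) and have different generic ranks $\max(A,B)-1$ and $\max(A,B)$, so they are not even geometrically isomorphic; hence their direct sum is arithmetically semisimple. There is no real obstacle here: the entire argument is a one-line substitution $z=w^2$ once the $1+\chi_2$ identity is invoked, and the parity bookkeeping for $\alpha,\beta$ has already been arranged in Lemma \ref{parity}.
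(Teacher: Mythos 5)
Your proof is correct and is essentially the paper's own argument: both add the two trace functions from Lemma \ref{ztraces} to get the weight $1+\chi_2(z)$ and then use the substitution $z\mapsto z^2$ (equivalently your square-counting identity) together with $2A=q^n+1$, $2B=q^m+1$. Your extra remark on arithmetic semisimplicity is fine (indeed a direct sum of arithmetically irreducible sheaves is automatically semisimple, so the rank comparison is not even needed).
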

\begin{proof}Indeed, the trace function at time $u \in E^\times$ of this direct sum has value
$$-\sum_{z \in E}\psi_E(Au^\alpha z^B -Bu^\beta z^A)(1+\chi_2(z))=-\sum_{z \in E}\psi_E(Au^\alpha z^{2B} -Bu^\beta z^{2A}),$$
and $2A=q^n+1, \ 2B=q^m+1$.
\end{proof}

%

\begin{rmk}If $A > B$, then after $[A]$ pullback, both $[A]^\star \sH_{small,A,B}$ and $[A]^\star \sH_{big,A,B}$ become lisse on $\A^1$.
But when $A < B$,  neither $[A]^\star \sH_{small,A,B}$ nor $[A]^\star \sH_{big,A,B}$ becomes lisse on $\A^1$.
\end{rmk}

\begin{thm}\label{det}  Suppose $A > B$.  Then we have the following results.
\begin{itemize}
\item[(i)]For ${\sf G}$ either choice of minus the quadratic Gauss sum over $\F_p$, the lisse sheaf $$\sH_{small,A,B,descent}\otimes {\sf G}^{-\deg}$$ on $\G_m/\F_p$ has arithmetically trivial determinant.
\item[(ii)]Denote by $\overline{\psi}$ the nontrivial additive character $t \mapsto \psi(-t)$. Then with the clearing factor 
$${\sf G} := -\Gauss(\overline{\psi},\chi_2)$$
the lisse sheaf  $\sH_{big,A,B,descent}\otimes {\sf G}^{-\deg}$
 on $\G_m/\F_p$ has arithmetically trivial determinant.
\item[(iii)]If $-1$ is not a square in $\F_p$, then with the clearing factor $-\Gauss(\psi,\chi_2)$, the arithmetic determinant of the
 lisse sheaf  $\sH_{big,A,B,descent}\otimes(-\Gauss(\psi,\chi_2))^{-\deg}$ is $(-1)^{\deg}$.
\end{itemize}
\end{thm}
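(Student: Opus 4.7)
The plan is to reduce each of (i), (ii), (iii) to Proposition~\ref{arithdet}, using the explicit forms $A=(q^n+1)/2$ and $B=(q^m+1)/2$ (with $n$ even, $m$ odd, $A>B$) to evaluate the quadratic-character exponents that appear there. Two arithmetic inputs drive the computation: since $n$ is even and $q$ is an odd prime power, $q^n\equiv 1\pmod 8$, so $(A-1)/2=(q^n-1)/4$ is even, giving $\chi_2(-1)^{(A-1)/2}=1$ and $\epsilon(A):=(-1)^{(A-1)/2}=1$; and modulo $p$, $A\equiv B\equiv 1/2$, so $AB\equiv 1/4\pmod p$ and $\chi_2(AB)=1$.

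For (i), I would apply Proposition~\ref{arithdet}(i) directly: it yields arithmetic determinant $(\chi_2(-1)^{(A-1)/2})^{\deg}$, which is trivial by the first observation above. The answer does not depend on the choice of nontrivial $\psi'$, so both choices of minus the quadratic Gauss sum work equally well.

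For (ii) and (iii), I start from Proposition~\ref{arithdet}(iii), which gives arithmetic triviality of the determinant with clearing factor ${\sf G}_0:=-\Gauss(\psi_{-\epsilon(A)AB},\chi_2)=-\Gauss(\psi_{-AB},\chi_2)$. Using the standard identity $\Gauss(\psi_c,\chi_2)=\chi_2(c)\Gauss(\psi,\chi_2)$, one computes
$$\frac{{\sf G}_0}{-\Gauss(\overline{\psi},\chi_2)}=\chi_2(AB)=1,\qquad \frac{{\sf G}_0}{-\Gauss(\psi,\chi_2)}=\chi_2(-AB)=\chi_2(-1).$$
Since $\sH_{big,A,B,descent}$ has rank $\max(A,B)=A$, replacing ${\sf G}_0$ by a new clearing factor ${\sf G}$ multiplies the arithmetic determinant by $({\sf G}_0/{\sf G})^{A\cdot\deg}$. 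Oddness of $A$ then yields $1^{\deg}$ for ${\sf G}=-\Gauss(\overline{\psi},\chi_2)$, proving (ii), and $\chi_2(-1)^{\deg}=(-1)^{\deg}$ when $-1$ is not a square in $\F_p$, proving (iii).

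The main content is already packaged in Proposition~\ref{arithdet}; the only real work is the pair of arithmetic simplifications above and the Gauss-sum bookkeeping that relates the various clearing factors. I do not foresee a substantive obstacle beyond keeping the signs straight.
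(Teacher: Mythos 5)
Your proposal is correct and follows essentially the same route as the paper: both parts reduce to Proposition~\ref{arithdet}, using that $(A-1)/2=(q^n-1)/4$ is even (so $\epsilon(A)=1$ and (i) is immediate), that $-AB\equiv -1/4\pmod p$ is $-1$ modulo squares (so the clearing factor of Proposition~\ref{arithdet}(iii) coincides with $-\Gauss(\overline{\psi},\chi_2)$, giving (ii)), and the oddness of the rank $A$ to pick up the factor $(-1)^{\deg}$ in (iii) when $\chi_2(-1)=-1$. Your Gauss-sum ratio bookkeeping is just a slightly more explicit rendering of the paper's same argument.
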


\begin{proof}The first assertion is a special case of Proposition \ref{arithdet}(i), because the $(A-1)/2$ exponent there is
$(q^n-1)/4$, which is even because $n$ is even and $q$ is odd.  The second assertion is a special case of Proposition \ref{arithdet}(iii), remembering that in this case $\epsilon(A)=1$ (because, as $n$ is even,  $(A-1)/2 = (q^n-1)/4$ is even), and
$-AB=-(q^n+1)(q^m+1)/4$ is $-1/4(\bmod\ p)$, and mod squares is $-1$. For (iii), observe that when $-$ is not a square in $\F_p$, the ``usual" Gauss sum is minus the one making the arithmetic determinant trivial in (ii), and as the rank $A$ is odd, the arithmetic determinant in (iii) will be $(-1)^{\deg}$.
\end{proof}

\begin{thm}\label{detbis}Suppose $B >A$. Then on $\G_m/\F_{p^2}$, with the clearing factor 
$$\tilde{\sf G} := (-1)^{(p-1/2}p,$$
we have the following results.
\begin{itemize}
\item[(i)]
If $B$ is odd, the lisse sheaf $\sH_{small,A,B,descent}\otimes \tilde{\sf G}^{-\deg}$ has arithmetically trivial determinant, and
the lisse sheaf $\sH_{big,A,B,descent}\otimes \tilde{\sf G}^{-\deg}$ has geometric determinant  $\sL_{\chi_2}$. The  Kummer pullback $$[2]^\star\sH_{small,A,B,descent}\otimes \tilde{\sf G}^{-\deg}$$ has arithmetically trivial determinant.
\item[(ii)]
If $B$ is even, the lisse sheaf $\sH_{small,A,B,descent}\otimes \tilde{\sf G}^{-\deg}$ has geometric determinant  $\sL_{\chi_2}$, and the lisse sheaf $\sH_{big,A,B,descent}\otimes \tilde{\sf G}^{-\deg}$  has arithmetically trivial determinant. 
 The  Kummer pullback $$[2]^\star\sH_{small,A,B,descent}\otimes \tilde{\sf G}^{-\deg}$$ has arithmetically trivial determinant.
\end{itemize}
\end{thm}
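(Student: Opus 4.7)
The plan is to split the theorem into its geometric part (essentially immediate from Lemma \ref{detdown1}) and its three arithmetic-triviality assertions, which I would handle via multiplicative inversion on $\G_m$ to reduce to results already in hand for the regime $A>B$. For the geometric determinants, since $B-A=(q^m-q^n)/2\ge q^n(q-1)/2\ge 3$ we are in the case $B-A\ge 2$ of Lemma \ref{detdown1}, which gives $\det\sH_{small,A,B}\cong\sL_{\chi_2^{B-1}}$ and $\det\sH_{big,A,B}=\det\sH_{big,A,B,\chi_2}\cong\sL_{\chi_2^B}$ geometrically; splitting on the parity of $B$ reproduces the identifications claimed in (i) and (ii).

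For the arithmetic triviality of $\sH_{small,A,B,descent}\otimes \tilde{\sf G}^{-\deg}$ in (i) and of $\sH_{big,A,B,descent}\otimes \tilde{\sf G}^{-\deg}$ in (ii), I would apply multiplicative inversion $u\mapsto 1/u$ on $\G_m$. A direct comparison of the trace formulas of Lemma \ref{ztracesbis1} shows that inversion intertwines the $\psi$-version of $\sH_{small,A,B,descent}$ with the $\bar\psi$-version of $\sH_{small,B,A,descent}$, and the $\psi$-version of $\sH_{big,A,B,descent}$ with the $\bar\psi$-version of $\sH^\sharp_{big,B,\chi_2,A,descent}$, provided the auxiliary integers $\alpha',\beta'$ on the swapped side are chosen as $-\beta,-\alpha$. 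This lands us in the regime $A'=B>B'=A$ where Theorem \ref{det}, Proposition \ref{arithdetbis}, and Proposition \ref{arithdetter} apply. The clearing factor $\tilde{\sf G}=(-1)^{(p-1)/2}p=\Gauss(\psi,\chi_2)^2$ is, by Hasse--Davenport, $-\Gauss(\psi_{\F_{p^2}},\chi_{2,\F_{p^2}})$; hence extending scalars from $\F_p$ to $\F_{p^2}$ converts the $\F_p$-Gauss-sum clearing of those earlier results into $\tilde{\sf G}^{-\deg(E/\F_{p^2})}$, while the $\chi_2$-valued arithmetic-determinant corrections appearing there become trivial on the quadratic extension (an order-$2$ character, raised to an even power). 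Case (i) then follows by applying Theorem \ref{det}(i) to the $\bar\psi$-version of $\sH_{small,B,A,descent}$ (the new ``$A$'' is $B$, which is odd here) and pulling back along inversion; case (ii) follows by applying Proposition \ref{arithdetter} to the $\bar\psi$-version of $\sH^\sharp_{big,B,\chi_2,A,descent}$ (with $A'=B$ even, $B'=A$ odd, and $\alpha'$ necessarily even since $A$ is odd) and pulling back along inversion.

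The final claim concerns $[2]^\star\sH_{small,A,B,descent}\otimes\tilde{\sf G}^{-\deg}$. In case (i) this is automatic, since $\sH_{small,A,B,descent}\otimes\tilde{\sf G}^{-\deg}$ already has arithmetically trivial determinant. In case (ii), $[2]^\star$ trivializes the geometric determinant $\sL_{\chi_2}$ (as $\chi_2^2=\triv$), so the arithmetic determinant of $[2]^\star\sH_{small,A,B,descent}\otimes\tilde{\sf G}^{-\deg}$ reduces to a scalar $c^{\deg}$ on $\G_m/\F_{p^2}$. To identify $c$, I would factor $[B]^\star=[B/2]^\star\circ [2]^\star$ (possible because $B$ is even) and apply Proposition \ref{arithdetbis}(i), in its inversion-swapped form, to the $\bar\psi$-version of $\sH_{small,B,A,descent}$. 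Over $\F_{p^2}$ this yields arithmetic triviality of $[B]^\star\sH_{small,A,B,descent}\otimes\tilde{\sf G}^{-\deg}$; since pullback along $[B/2]$ preserves a geometrically trivial arithmetic determinant $c^{\deg}$, this forces $c=1$.

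The main obstacle I anticipate is the bookkeeping in the inversion step: verifying, on the nose rather than up to an unmonitored constant twist, that the specific descents defined by the trace formulas of Lemma \ref{ztracesbis1} are intertwined by $u\mapsto 1/u$, and that the sign conventions for $\alpha,\beta$ on both sides remain consistent. The parity bookkeeping---which of $A,B$ is odd or even, and the resulting choice between Theorem \ref{det}, Proposition \ref{arithdetbis}, and Proposition \ref{arithdetter}---must also be done with care.
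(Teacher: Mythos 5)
Your proposal is correct, but it takes a genuinely different route from the paper. The paper also reads off the geometric determinants from the hypergeometric description, but for the arithmetic constants it stays in the $B>A$ regime: since each determinant is geometrically trivial (after $[2]^\star$ where needed), it is a constant $D^{\deg}$, and the paper evaluates it at $u=1$, passes to the $[B]^\star$ pullback (lisse on $\P^1\setminus 0$), and then evaluates at $u=\infty$, reducing directly to the determinant computations of \cite[Thm.\ 2.3(1),(2)]{KT1} with $D=B$ and $\psi_{-A}$; the squares of the Gauss sums appearing there become $\tilde{\sf G}$ over $\F_{p^2}$. You instead exploit the inversion symmetry $u\mapsto 1/u$, which (as your trace-function check correctly shows, with $(\alpha',\beta')=(-\beta,-\alpha)$) exchanges the descents for $(A,B)$ built with $\psi$ and those for $(B,A)$ built with $\overline\psi$, and then quote the already-proved $A>B$ determinant results; note that the ambiguity you worried about is harmless, since the descents are pinned down by the intrinsic formulas of Lemmas \ref{smalltracebis1}--\ref{bigtracebis2} and the rewritten trace formulas of Lemma \ref{ztracesbis1} are independent of the choice of $(\alpha,\beta)$. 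Morally the two arguments are the same (inversion exchanges the special points $0$ and $\infty$, and both ultimately rest on \cite[Thm.\ 2.3]{KT1}), but yours recycles Propositions \ref{arithdet}, \ref{arithdetbis}, \ref{arithdetter} instead of redoing the evaluation at $\infty$, at the cost of having to pass to $\F_{p^2}$ to kill the residual quadratic signs --- which the theorem permits. One small correction: in case (i) you should cite Proposition \ref{arithdet}(i) rather than Theorem \ref{det}(i); the latter is tied to the Section 4 situation where the larger index is $(q^n+1)/2$ with $n$ even (that parity is what made the sign $(\chi_2(-1))^{(A-1)/2}$ trivial already over $\F_p$), whereas after your swap the larger index is $(q^m+1)/2$ with $m$ odd, so over $\F_p$ one only gets a sign --- exactly the kind of correction you already observe dies over the quadratic extension, so the conclusion over $\F_{p^2}$ is unaffected. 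Your treatment of the final $[2]^\star$ claim in case (ii), via $[B]^\star=[B/2]^\star\circ[2]^\star$ and the fact that a geometrically constant determinant is unchanged by further Kummer pullback, is also sound and is an alternative to the paper's direct evaluation.
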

\begin{proof}Again we have $B-A \ge 2$, so the geometric determinant of $\sH_{small,A,B,descent}$ is the product of all the nontrivial characters of order dividing $B$, so is trivial if $B$ is odd and is $\sL_{\chi_2}$ if $B$ is even. The geometric determinant of $\sH_{big,A,B,descent}$
is the product of all the characters in $\Char(B,\chi_2)$, which is $\sL_{\chi_2}$ if $B$ is odd and is trivial if $B$ is even. So after taking
the appropriate Kummer $[2]^\star$ pullbacks, both sheaves in question have geometrically trivial determinants, and we proceed
as in the proof of Theorem \ref{det}, first evaluating at $u=1$, then at $u=1$ on the $[B]^\star$ pullbacks, which are lisse on $\P^1 \setminus 0$, then at $u=\infty$ on these pullbacks, to reduce to applying \cite[2.3, (1) and (2)]{KT1}, with the $D$ there taken to be our $B$, and the $\psi$ taken to be $\psi_{-A}$. The results of  \cite[2.3, (1) and (2)]{KT1} involve various quadratic Gauss sums, but by working on $\G_m/\F_{p^2}$, only their squares occur, and these squares are each the $\tilde{\sf G}$ in the statement of the theorem.
\end{proof}

From Lemma \ref{tracesinK2}, we have
\begin{lem}\label{ourtracesinK1} Each of the sheaves $[A]^\star\sH_{small,A,B,descent}$ and $[A]^\star \sH_{big,A,B,descent}$ has all its traces in $\K$.
\end{lem}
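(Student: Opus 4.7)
The plan is to invoke Lemma \ref{tracesinK2} after verifying that its congruence hypotheses are satisfied in our setting. Specifically, since $q = p^f$ we have $q \equiv 1 \pmod{p-1}$, hence $q^n \equiv q^m \equiv 1 \pmod{p-1}$, and therefore
$$2A = q^n + 1 \equiv 2 \pmod{p-1}, \qquad 2B = q^m + 1 \equiv 2 \pmod{p-1}.$$
This confirms the first of the two alternative hypotheses of Lemma \ref{tracesinK2}.

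For the small sheaf, part (i) of Lemma \ref{tracesinK2} immediately yields that $[A]^\star \sH_{small,A,B,descent}$ has all traces in $\K$. For the big sheaf, I would apply part (ii) with $\chi = \chi_2$. Since $n$ is even, the integer $A = (q^n+1)/2$ is odd (because $q^n \equiv 1 \pmod 4$ for $q$ odd and $n$ even, so $q^n+1 \equiv 2 \pmod 4$). Therefore $\chi_2^A = \chi_2$ has order $D=2$, and the field $\F_{\chi^A}$ of Lemma \ref{tracesinK2}(ii) equals $\F_p(\mu_{2(p-1)/2}) = \F_p(\mu_{p-1}) = \F_p$. Hence $[A]^\star \sH_{big,A,B,descent}$ already has all traces in $\K$ on $\G_m/\F_p$.

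The only potentially subtle point is the parity of $A$: one must use both that $q$ is odd and that $n$ is even to conclude that $A$ is odd, which is exactly what collapses the field $\F_{\chi^A}$ down to $\F_p$ and lets part (ii) apply directly without any extension of scalars. Beyond this bookkeeping, no further argument is needed.
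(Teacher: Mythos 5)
Your proposal is correct and is essentially the paper's own argument: the paper derives Lemma \ref{ourtracesinK1} simply by citing Lemma \ref{tracesinK2}, and your checks — $2A=q^n+1\equiv 2$ and $2B=q^m+1\equiv 2 \pmod{p-1}$ since $q\equiv 1\pmod{p-1}$, the oddness of $A$ (so $\chi_2^A=\chi_2\neq\triv$, $D=2$), and the resulting collapse of $\F_{\chi_2^A}=\F_p(\mu_{p-1})$ to $\F_p$ — are exactly the routine verifications the paper leaves implicit. The one hypothesis of Lemma \ref{tracesinK2} that neither you nor the paper comments on is ``$A>B$'' (which can fail in this section, since only $n$ even, $m$ odd is imposed), but this is harmless because the substitution $z\mapsto\lambda^2 z$ in the proof of Lemma \ref{tracesinK2} nowhere uses it.
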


Recall once again that for our $(A,B)$, the image of each of $A,B$ in $\F_p$ is $1/2$. So if we denote by $\psi_{-1/2}$ the additive character of $\F_p$ given by $x \mapsto \psi(-x/2)$, then we can restate Lemma \ref{ztracesbis1} in our case as follows.

\begin{lem}\label{Wtrace}The direct sum 
$$\sW_{ \Sp}:=[A]^\star \sH_{small,A,B,descent} \oplus [A]^\star  \sH_{big,A,B,descent}$$
is isomorphic to the lisse sheaf on $\A_1/\F_p$ whose trace function is as follows: for $E/\F_p$ a finite extension, and $u \in E^\times$, the trace at time $u$ is given by
$$u \in E^\times \mapsto -\sum_{z \in E}\psi_{-1/2,E}(  z^{q^n+1}-uz^{q^m+1}).$$
\end{lem}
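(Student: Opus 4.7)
The plan is to add the trace functions of the two Kummer pullbacks, use the identity $1+\chi_2(z)$ (which picks out twice the squares in $E^\times$), and make the substitution $z\mapsto z^2$ to collapse the pair of sums into a single exponential sum of the asserted shape.

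\textbf{Step 1: the two explicit pullback traces.} By Corollary \ref{Apullbackbis1}(i), the trace function of $[A]^\star\sH_{small,A,B,descent}$ at $u\in E$ is
$$-\sum_{z\in E}\psi_E(Auz^B - Bz^A).$$
By Corollary \ref{Apullbackbis1}(ii) applied with $\chi=\chi_2$, and using $A$ odd so that $\chi_2^A=\chi_2$, the trace function of $[A]^\star\sH_{big,A,B,descent}$ at $u\in E$ is
$$-\sum_{z\in E}\psi_E(Auz^B - Bz^A)\chi_2(z).$$

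\textbf{Step 2: sum and the square-counting identity.} Adding the two, the trace function of $\sW_{\Sp}$ at $u\in E$ is
$$-\sum_{z\in E}\psi_E(Auz^B - Bz^A)\bigl(1+\chi_2(z)\bigr).$$
Now $1+\chi_2(z)$ equals $2$ on nonzero squares of $E$, equals $0$ on non-squares of $E^\times$, and equals $1$ at $z=0$. Rewriting the sum over $z\in E^\times$ as a sum over $w\in E^\times$ via $z=w^2$ (which double-covers the squares), and remembering the $z=0$ contribution matches $\psi_E(0)$, yields
$$\sum_{z\in E}\psi_E(Auz^B-Bz^A)(1+\chi_2(z)) = \sum_{w\in E}\psi_E(Auw^{2B}-Bw^{2A}).$$

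\textbf{Step 3: rewrite exponents and normalize the additive character.} Since $2A=q^n+1$ and $2B=q^m+1$, the right-hand sum becomes
$$\sum_{w\in E}\psi_E\bigl(Auw^{q^m+1}-Bw^{q^n+1}\bigr).$$
Because $A=(q^n+1)/2$ and $B=(q^m+1)/2$ both reduce to $1/2$ in $\F_p$, the argument of $\psi_E$ is $(uw^{q^m+1}-w^{q^n+1})/2$. Therefore the trace function of $\sW_{\Sp}$ at $u$ is
$$-\sum_{w\in E}\psi_{-1/2,E}\bigl(w^{q^n+1}-uw^{q^m+1}\bigr),$$
which is the formula asserted in the lemma. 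Every step is a straightforward consequence of results already established (chiefly Corollary \ref{Apullbackbis1}); the only thing one needs to keep track of is the elementary identity $(1+\chi_2)(z)\,dz = $ counting measure on squares, and the normalization of $\psi$ by the factor $-1/2$. There is no real obstacle here — the lemma is essentially a bookkeeping consolidation of the two trace formulas of Corollary \ref{Apullbackbis1}, and the only thing to verify carefully is the sign and the $1/2$ factor in passing from $\psi_E$ to $\psi_{-1/2,E}$.
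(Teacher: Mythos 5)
Your argument is correct and is essentially the paper's own route: both take the two pullback trace functions from Corollary \ref{Apullbackbis1} (with $\chi_2^A=\chi_2$ since $A$ is odd), add them using $1+\chi_2(z)$ and the substitution $z\mapsto z^2$ to get the exponents $2A=q^n+1$, $2B=q^m+1$, and then use $A\equiv B\equiv 1/2 \pmod p$ to absorb the coefficients into $\psi_{-1/2}$. The only (immaterial) difference is that the paper performs the $1+\chi_2$ collapse already at the level of the descents (Corollary \ref{totalztrace}) and then pulls back, whereas you collapse after pulling back.
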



We now consider the Kummer pullback by $[A]$. We apply Corollary \ref{Apullbackbis1}.
\begin{cor}\label{pullback}
We have the following results.
\begin{itemize}
\item[(i)]The Kummer pullback $[A]^\star \sH_{small,A,B,descent}$ is isomorphic to the lisse sheaf on $\G_m/\F_p$ whose trace function is as follows: for $E/\F_p$ a finite extension, and $u \in E^\times$, the trace at time $u$ is given by
$$u \in E^\times \mapsto -\sum_{z \in E}\psi_{-1/2,E}(z^A- u z^B).$$
\item[(ii)]The Kummer pullback $[A]^\star \sH_{big,A,B,descent}$ is isomorphic to the lisse sheaf on $\G_m/\F_p$ whose trace function is as follows: for $E/\F_p$ a finite extension, and $u \in E^\times$, the trace at time $u$ is given by
$$u \in E^\times \mapsto  -\sum_{z \in E}\psi_{-1/2,E}(z^A-u z^B )\chi_2(z).$$
\item[(iii)]The Kummer pullback $[A]^\star \sW_{\sH,\Sp}$ is isomorphic to the arithmetically semisimple lisse sheaf on $\G_m/\F_p$ whose trace function is as follows: for $E/\F_p$ a finite extension, and $u \in E^\times$, the trace at time $u$ is given by
$$u \in E^\times \mapsto  -\sum_{z \in E}\psi_{-1/2,E}(z^{2A}-u z^{2B} )=-\sum_{z \in E}\psi_{-1/2,E}(z^{q^n+1}-u z^{q^m+1} ).$$
In particular, if $E$ is a subfield of $\F_q$, then the trace at the time $u=1$ is $-\# E$.
\end{itemize}
\end{cor}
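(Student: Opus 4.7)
The plan is to derive all three parts directly from Corollary \ref{Apullbackbis1} applied with $(A,B) = ((q^n+1)/2,(q^m+1)/2)$ and $\chi = \chi_2$. The single arithmetic input is that, since $q = p^f \equiv 0 \pmod p$, both $A$ and $B$ reduce to $1/2$ in $\F_p$; hence for every $z \in E$,
$$Au z^B - B z^A \equiv \tfrac{1}{2}(uz^B - z^A) = -\tfrac{1}{2}(z^A - u z^B) \pmod p,$$
and therefore $\psi_E(Au z^B - B z^A) = \psi_{-1/2,E}(z^A - u z^B)$. Substituting this identity into the trace formula of Corollary \ref{Apullbackbis1}(i) yields (i) immediately. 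For (ii), one additionally observes that $A = (q^n+1)/2$ is odd: the elementary $2$-adic check $q^n + 1 \equiv 2 \pmod 8$ (valid for any odd $q$ and any even $n$) gives this, so $\chi_2^A = \chi_2$, and Corollary \ref{Apullbackbis1}(ii) produces the trace function in (ii).

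For (iii), unwinding the definitions shows that $[A]^\star \sW_{\sH,\Sp}$ is the direct sum $[A]^\star \sH_{small,A,B,descent} \oplus [A]^\star \sH_{big,A,B,descent}$, so its trace function is the sum of those in (i) and (ii):
$$u \mapsto -\sum_{z \in E} \psi_{-1/2, E}(z^A - u z^B)\bigl(1 + \chi_2(z)\bigr).$$
Under the convention $\chi_2(0) = 0$, the weight $1 + \chi_2(z)$ equals $1$ at $z = 0$, $2$ on nonzero squares, and $0$ on non-squares. Reparametrizing via $w \mapsto z = w^2$ (which is $1$-to-$1$ at $0$ and $2$-to-$1$ on $E^\times$) folds the $2$'s back into single contributions, converting the sum into
$$-\sum_{w \in E} \psi_{-1/2,E}(w^{2A} - u w^{2B}) = -\sum_{w \in E} \psi_{-1/2,E}(w^{q^n+1} - u w^{q^m+1}),$$
since $2A = q^n+1$ and $2B = q^m+1$. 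Arithmetic semisimplicity is automatic: each summand is pure and geometrically irreducible as a hypergeometric sheaf, hence arithmetically semisimple, and Kummer pullback preserves this.

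For the final numerical assertion, if $E \subseteq \F_q$ then $w^q = w$ for every $w \in E$, whence $w^{q^n+1} = w^{q^m+1} = w^2$; at $u = 1$ the argument $w^{q^n+1} - w^{q^m+1}$ vanishes identically in $w$, so every summand equals $\psi_{-1/2,E}(0) = 1$, and the total trace is $-\#E$. There is no genuine obstacle in this proof: the whole corollary is a bookkeeping consequence of Corollary \ref{Apullbackbis1}, once one records the two elementary arithmetic facts $A \equiv B \equiv 1/2 \pmod p$ and the oddness of $A$.
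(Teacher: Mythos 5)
Your proof is correct and follows the same route the paper takes: it invokes Corollary \ref{Apullbackbis1} for the two summands, uses $A\equiv B\equiv 1/2 \pmod p$ to rewrite $\psi_E(Auz^B-Bz^A)$ as $\psi_{-1/2,E}(z^A-uz^B)$, the oddness of $A$ (so $\chi_2^A=\chi_2$) for the big sheaf, and the square-counting identity $1+\chi_2(z)$ to fold the direct sum into the single sum with exponents $2A=q^n+1$, $2B=q^m+1$. The closing observations on arithmetic semisimplicity and the value $-\#E$ at $u=1$ for $E\subseteq\F_q$ are exactly the (implicit) justifications in the paper, so nothing further is needed.
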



Let us denote by 
$$\sW_{\sH,\Sp}(1/2)$$
the constant field twist of $\sW_{\sH,\Sp}$ obtained by dividing the trace over $E$ by $-{\mathsf {Gauss}}(\overline\psi_E,
\chi_2)$, so that we are pure of weight zero.
\begin{thm}\label{vdG-vdV2}We have the following results.
\begin{itemize}
\item[(ii]Over any extension of $\F_q$, the square absolute values of traces of $\sW_{\sH,\Sp}(1/2)$ are powers of $q$. Over any subfield $k$ of $\F_q$, the square absolute values of the traces of $\sW_{\sH,\Sp}(1/2)$ are powers of $\#k$; moreover, the trace at $u=1$ has squared absolute value equal to $\# k$. 
\item[(ii)]The arithmetic and geometric monodromy groups of $\sW_{\sH,\Sp}(1/2)$ (and hence also of both $\sH_{small,A,B,descent}(1/2)$ and $\sH_{big,A,B,descent}(1/2)$)  are finite, and all three of these sheaves have all traces in $\K$. 
\end{itemize}
\end{thm}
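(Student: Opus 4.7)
The plan for (i) combines Corollary \ref{pullback}(iii) with Hasse--Davenport and Theorem \ref{thm:vdG-vdV}. Corollary \ref{pullback}(iii) identifies the trace of $\sW_{\sH,\Sp}$ at $u\in E^\times$ as $-\sum_{z\in E}\psi_{-1/2,E}(z^{q^n+1}-uz^{q^m+1})$, whose phase polynomial has exactly the shape $\sum_i a_iz^{q^i+1}$ (with $a_n=-1/2\neq 0$) required by Theorem \ref{thm:vdG-vdV}. The weight-zero twist divides by $-\Gauss(\overline{\psi}_E,\chi_2)$, which by Hasse--Davenport equals $(-\Gauss(\overline{\psi},\chi_2))^{\deg(E/\F_p)}$, exactly matching the normalization of Theorem \ref{thm:vdG-vdV}(i). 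That theorem then gives squared absolute values of traces as powers of $q$ when $E\supseteq\F_q$ and as powers of $\#k$ when $E=k\subseteq\F_q$. For $u=1$ with $E=k$, Corollary \ref{pullback}(iii) already records the untwisted value as $-\#k$, so the twisted trace has squared absolute value $(\#k)^2/\#k=\#k$.

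For (ii), the rationality in $\K$ comes from Lemma \ref{ourtracesinK1}: both $[A]^\star\sH_{small,A,B,descent}$ and $[A]^\star\sH_{big,A,B,descent}$, hence their direct sum $\sW_{\sH,\Sp}$, have all traces in $\K$; the twist scalar $-\Gauss(\overline{\psi},\chi_2)$ itself lies in $\K$, so this property is preserved by $(1/2)$. For finite monodromy I work with $\sW:=\sW_{\sH,\Sp}(1/2)$. Purity of weight zero gives $|t_u|\le\rank(\sW)$ at every closed point $u$, while by (i) each $|t_u|^2$ belongs to the finite integer set $\{q^r:0\le r\le n\}$. Combined with $t_u\in\K$ and with the algebraic integrality of the Hasse--Davenport-normalized trace, the trace function takes only finitely many values in $\sO_\K$. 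Applying the same reasoning to each tensor power $\sW^{\otimes j}$ (whose trace is $t_u^j$, still in $\K$, with squared absolute value still a power of $q$) bounds the number of characteristic polynomials of Frobenius occurring at closed points. The Zariski closure of the image of Frobenius in $\GL_N(\overline{\Q_\ell})$ is then $0$-dimensional, and by Chebotarev density it coincides with the arithmetic monodromy group, which is therefore finite; the geometric monodromy group, being a normal subgroup, is finite as well.

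Finiteness for $\sH_{small,A,B,descent}(1/2)$ and $\sH_{big,A,B,descent}(1/2)$ follows because by Lemma \ref{Wtrace} their pullbacks under the finite étale cover $[A]:\G_m\to\G_m$ are direct summands of $\sW$, and having a finite-index subgroup with finite image in $\GL$ forces the ambient monodromy image to be finite as well. The main obstacle I anticipate is making rigorous the algebraic-integrality step used above: one must verify that the Hasse--Davenport quotient (an element of $\Z[\zeta_p]$ divided by a power of a quadratic Gauss sum) actually lands in $\sO_\K$ once one knows the quotient lies in $\K$ and has norm an integer power of $q$. This should follow from careful bookkeeping of how Gauss sums factor in $\Z[\zeta_p]$ relative to the quadratic subfield $\K$, together with the rationality already established in Lemma \ref{ourtracesinK1}.
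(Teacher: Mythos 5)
Your treatment of (i) is essentially the paper's (the paper simply quotes van der Geer--van der Vlugt, via \cite[Lemma 5.2]{KT2}, together with the $\K$-rationality lemma), up to one conflation you should repair: the formula $-\sum_{z\in E}\psi_{-1/2,E}(z^{q^n+1}-uz^{q^m+1})$ from Corollary \ref{pullback}(iii) is the trace of the Kummer pullback $[A]^\star\sW_{\sH,\Sp}=\sW_{\Sp}$, not of $\sW_{\sH,\Sp}$ itself. For the theorem as stated you should feed the trace formula of Corollary \ref{totalztrace} (phase $Au^\alpha z^{q^m+1}-Bu^\beta z^{q^n+1}$, still of the shape $\sum_i a_iz^{q^i+1}$) into Theorem \ref{thm:vdG-vdV}(i); likewise the $\K$-rationality of the un-pulled-back sheaves is Lemma \ref{tracesinK1} rather than Lemma \ref{ourtracesinK1}. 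These are cosmetic fixes, and your Hasse--Davenport matching of the clearing factor and the $u=1$ evaluation are fine.

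The genuine gap is in your finiteness argument for (ii). The paper proves (ii) by invoking the criterion established in \cite[\S 5]{KT2} (weight zero purity plus algebraic-integer traces forces finite arithmetic and geometric monodromy); your attempt to reprove it does not work as written. First, the tensor-power step is vacuous: the trace of $\mathrm{Frob}_{u,E}$ on $\sW^{\otimes j}$ is just $t_u^j$, so it adds nothing beyond $t_u$, and finitely many values of $t_u$ alone bound neither the characteristic polynomials of Frobenius nor the dimension of the monodromy group (a unipotent subgroup of $\GL_N$ has constant trace $N$ and constant characteristic polynomial, yet is infinite); even granting finitely many characteristic polynomials, concluding that the Zariski closure is $0$-dimensional requires the semisimplicity/reductivity supplied by purity, which you never use. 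The information one actually needs is the traces of Frobenius \emph{powers}, i.e.\ the trace function at the same point over all finite extensions; once those are known to be algebraic integers in $\K$ of archimedean absolute value at most the rank, they take finitely many values, and Chebotarev density together with semisimplicity (or, more efficiently, the ready-made criterion of \cite[\S 5]{KT2}) yields finiteness. Second, the ``obstacle'' you defer --- integrality of the Gauss-sum-normalized traces --- is precisely the crux and cannot be left as bookkeeping: an element $t\in\K$ with $|t|^2$ a power of $q$ need not lie in $\sO_\K$ (its other archimedean embedding, or its denominator, is not controlled), and it is exactly the van der Geer--van der Vlugt analysis carried out in \cite[\S 5]{KT2} that pins down the trace values as genuine algebraic integers. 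Finally, your last step is fine but roundabout: $\sH_{small,A,B,descent}(1/2)$ and $\sH_{big,A,B,descent}(1/2)$ are direct summands of $\sW_{\sH,\Sp}(1/2)$ itself, so their finiteness is immediate once the theorem's sheaf is handled; the finite-index argument via the $[A]$-cover is only needed if one works with $\sW_{\Sp}$ instead.
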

\begin{proof}Assertion (i) is van der Geer-van der Vlugt \cite[Lemma 5.2]{KT2}, and Lemma \ref{ourtracesinK1}. It implies the second assertion, cf. \cite[\S5]{KT2}. [Note that the definition of the relevant local systems in \cite[\S5]{KT2} uses a possibly different clearing factor, 
which can, however, change only the sign of the trace function.]
\end{proof}

Let us define
 $$\sW_{\Sp}:=[A]^\star \sW_{\sH,\Sp}, $$
 and denote by
  $$\sW_{\Sp}(1/2)$$
 the constant field twist of $\sW_{\Sp}$ obtained by dividing the trace over $E$ by $-{\mathsf {Gauss}}(\overline\psi_E,
\chi_2)$. We record the following corollary of Theorem \ref{vdG-vdV2}.
 \begin{cor}\label{AvdG-vdV2}We have the following results.
\begin{itemize}
\item[(ii]Over any extension of $\F_q$, the square absolute values of traces of $\sW_{\Sp}(1/2)$ are powers of $q$. Over any subfield $k$ of $\F_q$, the square absolute values of the traces of $\sW_{\Sp}(1/2)$ are powers of $\#k$; moreover, the trace at $u=1$ has squared absolute value equal to $\# k$. 
\item[(ii)]The arithmetic and geometric monodromy groups of $\sW_{\Sp}(1/2)$ (and hence also of both $[A]^\star\sH_{small,A,B,descent}(1/2)$ and $[A]^\star\sH_{big,A,B,descent}(1/2)$)  are finite, and all three of these sheaves have all traces in $\K$. 
\end{itemize}
\end{cor}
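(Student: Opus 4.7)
The plan is to deduce Corollary \ref{AvdG-vdV2} directly from Theorem \ref{vdG-vdV2} by exploiting the defining relation $\sW_{\Sp} = [A]^\star \sW_{\sH,\Sp}$ and noting that Kummer pullback by $[A]$ (with $\gcd(A,p)=1$, so that $[A]\colon \G_m \to \G_m$ is a finite \'etale cover) preserves each of the structural properties in question. The basic mechanism is that the trace of $[A]^\star \sF$ at a point $u \in E^\times$ is simply the trace of $\sF$ at $u^A \in E^\times$, and the $A$-th power map stabilizes both $\F_q$ and every subfield $k \subseteq \F_q$. This transfers clause (i) of Theorem \ref{vdG-vdV2} at once to $\sW_{\Sp}(1/2)$, and the assertion at $u=1$ is transparent because $1^A=1$, so the trace of $\sW_{\Sp}(1/2)$ at $1$ equals the trace of $\sW_{\sH,\Sp}(1/2)$ at $1$, of squared absolute value $\#k$.

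For clause (ii), the trace-field statement for $\sW_{\Sp}(1/2)$ is immediate: every one of its trace values is a trace value of $\sW_{\sH,\Sp}(1/2)$, which lies in $\K$ by Theorem \ref{vdG-vdV2}(ii). The trace-in-$\K$ statement for the two individual sheaves $[A]^\star \sH_{small,A,B,descent}(1/2)$ and $[A]^\star \sH_{big,A,B,descent}(1/2)$ has already been recorded in Lemma \ref{ourtracesinK1}. For finiteness of monodromy, I would invoke the standard fact that for a finite \'etale cover, the induced map on arithmetic (resp.\ geometric) fundamental groups has image of finite index; consequently the monodromy group of a Kummer pullback $[A]^\star \sF$ sits as a finite-index subgroup of the monodromy group of $\sF$, and in particular is finite whenever the latter is. Applying this to $\sF = \sW_{\sH,\Sp}(1/2)$, whose monodromy groups are finite by Theorem \ref{vdG-vdV2}(ii), gives finite arithmetic and geometric monodromy for $\sW_{\Sp}(1/2)$. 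The finiteness assertion for each of $[A]^\star \sH_{small,A,B,descent}(1/2)$ and $[A]^\star \sH_{big,A,B,descent}(1/2)$ follows from Theorem \ref{vdG-vdV2}(ii) applied to $\sW_{\sH,\Sp}(1/2)$ directly, since these are its direct summands and their monodromy groups are quotients of its (finite) monodromy.

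There is no substantive obstacle here; the entire content of the corollary is the bookkeeping observation that Kummer pullback by the degree-$A$ map (with $A$ prime to $p$) is compatible with each of the three structural properties --- square absolute values of traces lying in the stated power sets, traces lying in $\K$, and finiteness of arithmetic/geometric monodromy. This is precisely why the authors present the result as a corollary rather than a theorem, and the proof proposal is correspondingly short.
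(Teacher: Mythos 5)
Your proposal is correct and is essentially the argument the paper intends: the corollary is simply Theorem \ref{vdG-vdV2} (together with Lemma \ref{ourtracesinK1}) transported through the finite \'etale Kummer pullback $[A]^\star$, under which the trace at $u\in E^\times$ becomes the trace at $u^A\in E^\times$ and monodromy groups become (finite-index) subgroups, respectively quotients for direct summands. The only quibble is a harmless slip of wording: $[A]^\star\sH_{small,A,B,descent}(1/2)$ and $[A]^\star\sH_{big,A,B,descent}(1/2)$ are summands of $\sW_{\Sp}(1/2)$, not of $\sW_{\sH,\Sp}(1/2)$, but either route (pull back the summands of $\sW_{\sH,\Sp}(1/2)$, or take summands of the pullback) gives the finiteness you need.
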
 
 
 

\begin{thm}\label{sp-det}
 If $A > B$, then on $\A^1/\F_{p^2}$, both  $\sH_{small,A,B,descent}(1/2)$ and $\sH_{big,A,B,descent}(1/2)$ have arithmetically trivial determinants. If $B > A$, then on  $\G_m/\F_{p^2}$, both  $[2]^\star \sH_{small,A,B,descent}(1/2)$ and $[2]^\star \sH_{big,A,B,descent}(1/2)$ have arithmetically trivial determinants.
 \end{thm}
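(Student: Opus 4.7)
The plan is to identify the $(1/2)$ normalization used in the statement with the clearing factors already introduced in Theorems \ref{det} and \ref{detbis}, reducing the assertion to a direct quotation of those results; the only substantive calculation is a short Hasse--Davenport comparison over $\F_{p^2}$.

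In the case $A>B$, the $(1/2)$ twist has clearing factor $-\Gauss(\overline{\psi}_E,\chi_2)$ as $E/\F_p$ varies. Over $\F_p$, this is one of the two choices of ``minus the quadratic Gauss sum over $\F_p$" permitted in Theorem \ref{det}(i), and it is precisely the clearing factor ${\sf G} := -\Gauss(\overline{\psi},\chi_2)$ specified in Theorem \ref{det}(ii). Hence both $\sH_{small,A,B,descent}(1/2)$ and $\sH_{big,A,B,descent}(1/2)$ already have arithmetically trivial determinant on $\G_m/\F_p$; since $A>B$ ensures lisseness on $\A^1$, the same holds on $\A^1/\F_p$, and a fortiori on $\A^1/\F_{p^2}$.

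For $B>A$, the main (and only nontrivial) step is to verify that on restriction to $\F_{p^2}$ the $(1/2)$ twist coincides with the $\tilde{\sf G}^{-\deg}$ twist of Theorem \ref{detbis}, where $\tilde{\sf G}=(-1)^{(p-1)/2}p$. Using the standard identities $\Gauss(\overline{\psi},\chi_2)=\chi_2(-1)\Gauss(\psi,\chi_2)$ and $\Gauss(\psi,\chi_2)^2=\chi_2(-1)p$, Hasse--Davenport gives
$$-\Gauss(\overline{\psi}_{\F_{p^2}},\chi_2) = \bigl(-\Gauss(\overline{\psi},\chi_2)\bigr)^2 = \chi_2(-1)^2\,\Gauss(\psi,\chi_2)^2 = \chi_2(-1)\,p = \tilde{\sf G}.$$
With this identification in hand, Theorem \ref{detbis} directly gives the arithmetic triviality of the determinants of both $[2]^\star\sH_{small,A,B,descent}(1/2)$ and $[2]^\star\sH_{big,A,B,descent}(1/2)$ on $\G_m/\F_{p^2}$: in each parity of $B$, one of the two sheaves already has arithmetically trivial determinant on $\G_m/\F_{p^2}$, while the other has geometric determinant $\sL_{\chi_2}$ which is killed by $[2]^\star$ and whose arithmetic triviality after $[2]^\star$ is exactly the Kummer-pullback clause of Theorem \ref{detbis}. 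I do not anticipate any further obstacle, as all the substantive arithmetic- and geometric-determinant content is already encapsulated in Theorems \ref{det} and \ref{detbis}.
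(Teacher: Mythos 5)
Your proposal is correct and follows the paper's own route: the paper's proof is precisely the observation that, once the $(1/2)$ twist is identified with the clearing factors ${\sf G}=-\Gauss(\overline\psi,\chi_2)$ and $\tilde{\sf G}=(-1)^{(p-1)/2}p$, the statement is a special case of Theorem \ref{det} (for $A>B$) and Theorem \ref{detbis} (for $B>A$), and your Hasse--Davenport identification $-\Gauss(\overline\psi_{\F_{p^2}},\chi_2)=(-1)^{(p-1)/2}p=\tilde{\sf G}$ together with reading the Kummer-pullback clause of Theorem \ref{detbis} as applying to whichever summand has geometric determinant $\sL_{\chi_2}$ is exactly what ``special case'' amounts to. One harmless slip: when $A>B$ it is the $[A]^\star$ pullbacks, not the descended hypergeometric sheaves themselves, that are lisse on $\A^1$, but this is immaterial since arithmetic triviality of the determinant already on $\G_m/\F_p$ gives everything claimed.
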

 \begin{proof}
 This is a special case of Theorem \ref{det} when $A > B$, and of Theorem \ref{detbis} when $B > A$.
 \end{proof}


\begin{prop}\label{cond-S}
Both sheaves $\sH_{small,A,B}$ and $ \sH_{big,A,B}$ satisfy condition {\rm ({\bf S+})}. Their wild parts (at $\infty$ if $A>B$, at $0$ if $B > A$) have dimension $|A-B|$, with all slopes $1/|A-B|$.
\end{prop}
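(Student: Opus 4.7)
The strategy is a direct verification: check that the numerical hypotheses of Propositions~\ref{ABcondSbig1} (taken with $\chi=\chi_2$) and~\ref{ABcondSsmall1} are satisfied by the specific pair $A=(q^n+1)/2$, $B=(q^m+1)/2$, and then read the wild-part statement off the standard structure theorem for geometrically irreducible hypergeometric sheaves \cite[Chapter~8]{Ka-ESDE}. No real obstacle is anticipated; the argument is numerical bookkeeping on top of the infrastructure assembled earlier in Section~3.

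Set $M=\max(n,m)$ and $\mu=\min(n,m)$, so $\max(A,B)=(q^M+1)/2$ and $|A-B|=q^\mu(q^{M-\mu}-1)/2$. First I would collect the divisibility facts: the standing assumption $\gcd(q^n+1,q^m+1)=2$ gives $\gcd(A,B)=1$; since $q\equiv 0\pmod p$, each of $A$, $B$ reduces to $1/2\pmod p$, so both $\max(A,B)$ and $\max(A-1,B-1)=(q^M-1)/2$ are prime to $p$; and $|A-B|$ is divisible by $p$ because $\mu\geq 1$. The character $\chi_2^A$ is nontrivial because $A$ is odd: with $n$ even and $q$ odd one has $q^n\equiv 1\pmod 4$, whence $A=(q^n+1)/2$ is odd.

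Next I would verify the size estimates. The inequality $|A-B|>\max(A,B)/2$ rewrites as $q^M>2q^\mu+1$, and $|A-B|>(2/3)(\max(A-1,B-1)-1)$ rewrites as $q^M>3q^\mu-6$; both follow from $q\geq 3$ and $M>\mu\geq 1$, which give $q^M\geq 3q^\mu$. The sporadic exclusions in the two propositions cannot occur: $\max(A,B)=(q^M+1)/2$ is never $8$ or $9$ since neither $15$ nor $17$ is a prime power, and $p$ is odd throughout this section. Thus Proposition~\ref{ABcondSbig1} applied with $\chi=\chi_2$ delivers \textbf{(S+)} for $\sH_{big,A,B}$, and Proposition~\ref{ABcondSsmall1} delivers it for $\sH_{small,A,B}$.

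For the wild part I would invoke the standard description of a geometrically irreducible hypergeometric sheaf of type $(N,M)$ with $N>M$ from \cite[Chapter~8]{Ka-ESDE}: such a sheaf is tame at $0$, and at $\infty$ its wild part has rank $N-M$ with every slope equal to $1/(N-M)$, while the remaining rank-$M$ piece is tame. Applied to $\sH_{small,A,B}$ of type $(A-1,B-1)$ and $\sH_{big,A,B}$ of type $(A,B)$ in the case $A>B$, this yields in both sheaves a wild part at $\infty$ of dimension $A-B=|A-B|$ with all slopes $1/(A-B)$. In the case $B>A$ I would apply multiplicative inversion $t\mapsto 1/t$, which swaps $0\leftrightarrow\infty$ and the two character lists of any hypergeometric sheaf, to transport the same conclusion to the singularity at $0$.
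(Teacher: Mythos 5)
Your treatment of $\sH_{big,A,B}$ is fine: the hypotheses of Proposition \ref{ABcondSbig1} (with $\chi=\chi_2$, legitimate since $A$ is odd) do hold, and your numerical checks match what the paper does when it applies \cite[Theorem 1.5]{KT4} directly. The wild-part/slope statement read off from \cite[Ch.~8]{Ka-ESDE}, with inversion handling the case $B>A$, is also unobjectionable.

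The gap is in the small sheaf. Proposition \ref{ABcondSsmall1} hypothesizes that $\max(A-1,B-1)$ is \emph{divisible} by $p$ (it is the branch of \cite[Theorem 1.5]{KT4} for rank divisible by $p$, tailored to the unitary situation where the small sheaf has rank $(q^n-q)/(q+1)$). In the present symplectic situation the rank of $\sH_{small,A,B}$ is $\max(A-1,B-1)=(q^{M}-1)/2\equiv -1/2\pmod p$, prime to $p$ --- a fact you yourself record in your first paragraph --- so the proposition you invoke simply does not apply; in addition, its chained inequality $(2/3)\bigl(\max(A-1,B-1)-1\bigr)>2$ already fails in the boundary case $q^{M}=9$. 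The repair is the route the paper takes: since \emph{both} ranks $(q^{M}\pm 1)/2$ are prime to $p$, apply \cite[Theorem 1.5]{KT4} in its rank-prime-to-$p$ form to the small sheaf as well (primitivity being supplied by Corollary \ref{ABprimitive1}, whose hypotheses $\gcd(A,B)=1$ and $p\mid(A-B)$ you have already checked). The inequality needed there is $|A-B|>\tfrac{1}{2}\,\mathrm{rank}=\tfrac{1}{2}\cdot\tfrac{q^{M}-1}{2}$, i.e. $2(q^{M}-q^{\mu})>q^{M}-1$, which holds since $q\ge 3$ and $M>\mu\ge 1$; the exclusions $\mathrm{rank}\ne 8,9$ are ruled out exactly as in your argument for the big sheaf.
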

\begin{proof}To avoid the confusion caused by not knowing if $A>B$ or if $B>A$, let us define
$$c:=\Max(n,m), \ d:=\Min(n,m), \ C:=\Max(A,B)=(q^c+1)/2, \ D:=\Min(A,B) =(q^d+1)/2.$$
At the expense of a possible multiplicative inversion, our sheaves are of type $(C-1,D-1)$ and of type $(C,D)$. Both have wild part of dimension
$$W:=C-D =(q^c -q^d)/2.$$
This difference being divisible by $p$, their primitivity results from Corollary \ref{ABprimitive1}. 

The ranks are both prime to $p$, being  $(q^c \pm 1)/2$. So we may apply
\cite[Theorem 1.5]{KT4}. We must show that $W >(D-1)/2$ for $\sH_{small,A,B}$  and $W >D/2$, that $D -1 \ge 4$, and check that neither $D-1$ nor $D$ is $8$. We have $c \ge 2$, so $D-1 = (q^c -1)/2 \ge (3^2-1)/2 =4$. Neither $D-1$ nor $D$ is $8$, otherwise we have either
$$(q^c-1)/2 =8 {\rm \ or \ }(q^c+1)/2 =8.$$
In the first case $q^c=17$, impossible as $c \ge 2$. In the second case, $q^c=15$, nonsense.

It remains to show that $W > D/2$, i.e., that
$$(q^c - q^d)/2 >(q^c+1)/4,$$
or, equivalently,
$$2(q^c - q^d) >q^c _1, {\rm \  \ \ i.e.\ \ \ }q^c -2q^d>1, {\rm \ \ \  i.e.\ \ \ } q^d(q^{c-d}-2) >1.$$
This last holds because $q \ge 3$ and $c-d \ge 1$, $d \ge 1$. 
\end{proof}
 
%
%

\section{Local system candidates for $\SU_n(q)$}
We now turn to the particular situation relevant to unitary groups. Thus $q$ is a power of $p$, $n >m >0$ are odd integers with
$\gcd(n,m)=1$ (which forces $\gcd(q^n+1,q^m+1) =q+1$), and
$$A:=(q^n+1)/(q+1), \ B:=(q^m+1)/(q+1).$$

\begin{rmk}In this section, we impose $n > m$, so that $A > B$. 
In the previous section, we imposed $n$ even, $m$ odd, but this did not determine which of $A, B$ was the larger.
\end{rmk}

\begin{lem}\label{ABparity}Both A and B are odd. Indeed $A \equiv n{\rm\ mod\ }(q+1),\ \ B \equiv m{\rm\ mod\ }(q+1).$
\end{lem}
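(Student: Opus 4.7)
The plan is to prove the congruence $A \equiv n \pmod{q+1}$ directly (and $B \equiv m$ analogously), and then deduce the parity statement as a trivial consequence.

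First I would write $q = (q+1) - 1$ and expand via the binomial theorem modulo $(q+1)^2$:
$$q^n = \bigl((q+1) - 1\bigr)^n \equiv (-1)^n + n(-1)^{n-1}(q+1) \pmod{(q+1)^2}.$$
Since $n$ is odd this simplifies to $q^n \equiv -1 + n(q+1) \pmod{(q+1)^2}$, so $q^n + 1 \equiv n(q+1) \pmod{(q+1)^2}$. Dividing by $q+1$ (both sides are divisible by $q+1$, which is permissible in $\Z$) gives
$$A = \frac{q^n+1}{q+1} \equiv n \pmod{q+1}.$$
The identical argument with $m$ in place of $n$ yields $B \equiv m \pmod{q+1}$.

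For the parity claim, observe that $p$ is odd (a standing hypothesis of the paper), so $q$ is odd and hence $q+1$ is even. Reducing the congruences $A \equiv n$ and $B \equiv m \pmod{q+1}$ modulo $2$ shows that $A$ has the same parity as $n$ and $B$ has the same parity as $m$; since $n$ and $m$ are odd by hypothesis, so are $A$ and $B$.

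No step here looks like a genuine obstacle — the whole lemma is a one-line binomial expansion. The only thing to keep straight is that the odd-ness of $n$ is what kills the leading $(-1)^n$ and makes the linear term survive with sign $+n$, which is precisely what makes the congruence $A \equiv n$ (rather than $A \equiv -n$) come out right.
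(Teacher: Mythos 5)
Your argument is correct and essentially the paper's: both prove $A\equiv n$ and $B\equiv m \pmod{q+1}$ and then read off oddness from $2\mid(q+1)$, the paper obtaining the congruence from the alternating sum $A=1-q+q^2-\cdots+q^{n-1}$ together with $q\equiv-1\pmod{q+1}$ rather than from your binomial expansion modulo $(q+1)^2$. The only difference of substance is that the paper also disposes of $p=2$ separately (there $q+1$ is odd, but $A$ and $B$ are then odd simply as quotients of odd integers), whereas your parity step uses that $q$ is odd; under the paper's standing hypothesis $p>2$ this is immaterial.
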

\begin{proof}If $p=2$, then each of  $q^n+1,q^m+1, q+1$ is odd. If $p$ is odd, then
$$A:=(q^n +1)/(q+1)=1-q+q^2 -q^3+ \ldots +q^{n-1} \equiv n{\rm\ mod\ }(q+1),$$
hence $A$ has the same parity as $n$, which is odd. Simillarly, $B \equiv m{\rm\ mod\ }(q+1)$.
\end{proof}

Notice here that both $A,B$ are $\equiv 1 \mod (q-1)$, hence also  $\equiv 1 \mod (p-1)$.

\begin{prop}\label{totaltraces}For any nontrivial character $\rho$ of order dividing $q+1$, there is a lisse sheaf $\sG_\rho$ on $\A^1/\F_{q^2}$ whose trace function is as follows: for $E/\F_{q^2}$ a finite extension, and $u \in E$, the trace at time $u$ is given by
$$u \in E \mapsto  -\sum_{z \in E}\psi_E(u z^B - z^A)\rho(z).$$
This sheaf $\sG_\rho$ is geometrically isomorphic to the Kummer pullback $[A]^\star \sH_{big,A,B,\chi,descent}$ for any choice of
$\chi$ with $\chi^A=\rho$. Moreover, the sheaf $\sG_\rho$ has all its traces in the field $\Q(\rho)$.
\end{prop}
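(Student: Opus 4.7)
The plan is to identify $\sG_\rho$ with the Kummer pullback of a hypergeometric sheaf and then control its field of trace values by a symmetry argument. Fix any character $\chi$ (valued in $\overline{\Q_\ell}$) with $\chi^A=\rho$; since the character group of $\overline{\F_p}^\times$ is divisible, such a $\chi$ exists, and since $\rho$ is nontrivial, so is $\chi^A$. By Corollary~\ref{Apullbackbis1}(ii), $[A]^\star \sH_{big,A,B,\chi,\mathrm{descent}}$ is lisse on $\G_m/\F_p(\rho)$ with trace at $u\in E^\times$
\[
-\sum_{z\in E}\psi_E(Auz^B-Bz^A)\chi^A(z).
\]
From $A(q+1)=q^n+1$ and $p\mid q$ I read off $A\equiv 1\pmod p$, and likewise $B\equiv 1\pmod p$, so the integer coefficients $A,B$ inside $\psi_E$ collapse to $1$ in $E$; together with $\chi^A=\rho$ this gives the trace formula in the proposition. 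Since $A>B$, the local monodromy of $\sH_{big,A,B,\chi}$ at $0$ is the tame direct sum $\bigoplus_{\eta\in\Char(A)}\sL_\eta$, each summand killed by $[A]^\star$, so the pullback extends uniquely to a lisse sheaf on $\A^1/\F_p(\rho)$. Because $\mu_{q+1}\subset\F_{q^2}^\times$ forces $\F_p(\rho)\subseteq\F_{q^2}$, extending scalars to $\F_{q^2}$ produces $\sG_\rho$. Two choices of $\chi$ with the same $\chi^A=\rho$ yield the same trace function, hence geometrically isomorphic sheaves.

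For the rationality claim I use Galois invariance. The trace $T(u)=-\sum_z\psi_E(uz^B-z^A)\rho(z)$ lies a priori in $\Q(\zeta_p,\rho)$, and $\gcd(p,q+1)=1$ makes $\Q(\zeta_p)$ and $\Q(\rho)$ linearly disjoint, so $\Gal(\Q(\zeta_p,\rho)/\Q(\rho))\cong\F_p^\times$ acts via $\sigma_c\colon\zeta_p\mapsto\zeta_p^c$. One computes $\sigma_c T(u)=-\sum_z\psi_E(c(uz^B-z^A))\rho(z)$. The substitution $z=c^{-1}w$ with $c\in\F_p^\times\subset E^\times$ --- using that $A\equiv B\equiv 1\pmod{p-1}$, a consequence of $A,B\equiv 1\pmod{q-1}$ --- gives $z^A=c^{-1}w^A$, $z^B=c^{-1}w^B$, and $\rho(z)=\rho(c)^{-1}\rho(w)$. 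The factors $c$ and $c^{-1}$ inside $\psi_E$ then cancel, and the sum reduces to $\rho(c)^{-1}T(u)$.

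The proof concludes with the observation that $\rho|_{\F_p^\times}$ is trivial. Characters of $\F_{q^2}^\times$ of order dividing $q+1$ are precisely those trivial on $(\F_{q^2}^\times)^{q+1}$; but $x^{q+1}=x\cdot x^q=\Norm_{\F_{q^2}/\F_q}(x)$, and the norm surjects onto $\F_q^\times$, so $(\F_{q^2}^\times)^{q+1}=\F_q^\times\supseteq\F_p^\times$. Hence $\rho(c)=1$ for every $c\in\F_p^\times$, giving $\sigma_c T(u)=T(u)$ and $T(u)\in\Q(\rho)$. The main (mild) obstacle is exactly this final observation: a direct application of Lemma~\ref{tracesinchi1}(ii) would only land the traces in $\Q(\chi)$, potentially strictly larger than $\Q(\chi^A)=\Q(\rho)$, and the strengthening hinges crucially on the divisibility $\ord(\rho)\mid q+1$, which forces $\rho$ to factor through the quotient $\F_{q^2}^\times/\F_q^\times$.
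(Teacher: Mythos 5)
Your proof is correct and follows essentially the same route as the paper: existence and the trace formula come from Corollary \ref{Apullbackbis1}(ii) with a choice of $\chi$ satisfying $\chi^A=\rho$ (plus the observation that $A\equiv B\equiv 1 \bmod p$ and lisseness across $u=0$ after $[A]^\star$ pullback), and rationality comes from the substitution $z\mapsto \lambda z$ together with $A\equiv B\equiv 1$ modulo $p-1$ (the paper uses $q-1$ and all $\lambda\in\F_q^\times$) and the triviality of $\rho$ on these scalars via surjectivity of the norm $\F_{q^2}^\times\to\F_q^\times$. The only differences are cosmetic: you run the Galois-invariance argument over $\F_p^\times$ rather than $\F_q^\times$, and you spell out details (tameness at $0$, collapse of the coefficients $A,B$ to $1$ in $E$) that the paper leaves implicit.
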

\begin{proof}To get the existence, choose any $\chi$ with $\chi^A =\rho$, and apply Corollary \ref{Apullbackbis1}. The traces a priori 
lie in $\Q(\zeta_p,\rho)$. To see that they lie in $\Q(\rho)$, we must show that for any $\lambda \in \F_p^\times$, and any $u \in E/\F_{q^2}$ we have the identity
$$\sum_{z \in E}\psi_E(\lambda u z^B - \lambda z^A)\rho(z)=\sum_{z \in E}\psi_E(u z^B - z^A)\rho(z).$$
In fact, this will hold for any $\lambda \in \F_q^\times$, by the substitution $z \mapsto \lambda z$, because  both $A,B \equiv 1 \mod (q-1)$ and because $\lambda$ is the $(q+1)^{\mathrm {th}}$ power of some element of $\F_{q^2}$ by surjectivity of the norm.
\end{proof}

\begin{prop}\label{totaltraces-1}There is a lisse sheaf $\sG_\triv$ on $\A^1/\F_p$ whose trace function is as follows: for $E/\F_p$ a finite extension, and $u \in E$, the trace at time $u$ is given by
$$u \in E \mapsto  -\sum_{z \in E}\psi_E(u z^B - z^A).$$
This sheaf $\sG_\rho$ is geometrically isomorphic to the Kummer pullback $[A]^\star \sH_{small,A,B,descent}$. It has all its traces in $\Q$ {\rm (}again because both $A,B \equiv 1 \mod (q-1)${\rm )}.
\end{prop}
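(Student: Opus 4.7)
The plan is to follow the same template as the proof of Proposition \ref{totaltraces}, with $\rho$ replaced by $\triv$. The starting point is Corollary \ref{Apullbackbis1}(i), which identifies $[A]^\star\sH_{small,A,B,descent}$, a priori lisse on $\G_m/\F_p$, with the sheaf whose trace function at $u \in E^\times$ is
$$-\sum_{z \in E}\psi_E(Au z^B - B z^A).$$
Since $q$ is a positive power of $p$, we have $q \equiv 0 \pmod{p}$, hence $A=(q^n+1)/(q+1) \equiv 1 \pmod{p}$ and likewise $B \equiv 1 \pmod{p}$. Therefore $A u z^B - B z^A$ and $u z^B - z^A$ have the same image in $E$, so the trace function above coincides with the one claimed in the statement.

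To promote this to a lisse sheaf on the entire affine line $\A^1/\F_p$, I would invoke the remark following Proposition \ref{ABmoment}: because the hypothesis $n>m$ forces $A > B$, the Fourier-type sheaf whose trace function is $u \mapsto -\sum_z \psi_E(u z^B - z^A)$ is lisse on all of $\A^1$, not merely on $\G_m$. This extended sheaf is the desired $\sG_\triv$, and the geometric isomorphism with $[A]^\star \sH_{small,A,B,descent}$ over $\G_m$ is exactly the content of Corollary \ref{Apullbackbis1}(i) after the coefficient identification above.

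For the rationality of the traces, I would argue in parallel with Proposition \ref{totaltraces}, but using $\F_p^\times$ in place of $\F_{q^2}^\times$. Since $A, B \equiv 1 \pmod{q-1}$ and $(p-1)\mid (q-1)$, we have $A \equiv B \equiv 1 \pmod{p-1}$, so $\lambda^A = \lambda^B = \lambda$ for every $\lambda \in \F_p^\times$. Each trace lies a priori in $\Q(\zeta_p)$, and it suffices to show it is fixed by every $\sigma_\lambda \in \Gal(\Q(\zeta_p)/\Q) \cong \F_p^\times$. The automorphism $\sigma_\lambda$ sends $\psi$ to $\psi_\lambda : t \mapsto \psi(\lambda t)$, transforming the trace at $u$ into $-\sum_z \psi_E(\lambda u z^B - \lambda z^A)$; the substitution $z \mapsto \lambda^{-1} z$ recovers the original sum, establishing $\sigma_\lambda$-invariance and hence rationality.

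I do not expect any real obstacle: once Corollary \ref{Apullbackbis1} and the remark after Proposition \ref{ABmoment} have been set up, the only point requiring care is the passage from $\G_m$ to $\A^1$, which the inequality $A>B$ resolves. Everything else is a bookkeeping translation of the $\sG_\rho$ argument to the trivial-character case, with the additional simplification that $A \equiv B \equiv 1 \pmod{p}$ cleans up the coefficients inside $\psi_E$.
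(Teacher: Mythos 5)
Your proposal is correct and follows essentially the route the paper intends: Proposition \ref{totaltraces-1} is the trivial-character analogue of Proposition \ref{totaltraces}, whose proof is exactly "apply Corollary \ref{Apullbackbis1}, drop the coefficients $A,B$ inside $\psi_E$ since $A\equiv B\equiv 1 \bmod p$, and get rationality from the substitution $z\mapsto \lambda z$ using $A\equiv B\equiv 1 \bmod (p-1)$." Your extra care about extending from $\G_m$ to $\A^1$ via $A>B$ (the remark after Proposition \ref{ABmoment}) is exactly the point the paper leaves implicit, so nothing is missing.
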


\begin{prop}\label{total-su}
The direct sum $\bigoplus_{\rho \in \Char(q+1)} \sG_\rho$ is geometrically isomorphic to the arithmetically semisimple lisse sheaf
$$\sW_\SU$$
on $\A^1/\F_p$ whose trace function at time $u \in E/\F_p$ is given by
$$u \in E \mapsto  -\sum_{z \in E}\psi_E(u z^{q^m+1} - z^{q^n+1}).$$
\end{prop}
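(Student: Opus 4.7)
The plan is to compute the Frobenius trace function of the left-hand direct sum, verify it matches the declared trace function of $\sW_\SU$ over every sufficiently large finite extension $E/\F_p$, and then invoke the fact that a geometrically semisimple lisse $\overline{\Q_\ell}$-sheaf is determined up to geometric isomorphism by its trace function (Chebotarev). First I would justify the existence of $\sW_\SU$ as a lisse sheaf on $\A^1/\F_p$. The two-variable polynomial $F(u,z) := uz^{q^m+1} - z^{q^n+1}$ has $z$-degree $q^n+1$, which is prime to $p$ (since $q$ is a power of $p$) and constant in $u$, so the exponential sum sheaf
$$\sW_\SU \;:=\; R^1(\pi_u)_!\,\sL_{\psi(F(u,z))},$$
with $\pi_u:\A^1_z\times\A^1_u\to\A^1_u$ the projection, is lisse on $\A^1/\F_p$ of generic rank $q^n$, pure of weight one, with the stated trace function (the leading minus absorbs the Lefschetz sign). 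Purity forces arithmetic semisimplicity by Deligne.

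Next I would carry out the trace identity. Fix $E\supseteq\F_{q^2}$, so $q+1$ divides $\#E-1$ and every $\rho\in\Char(q+1)$ restricts to a genuine character of $E^\times$. Combining the trace formulas of Propositions \ref{totaltraces} and \ref{totaltraces-1}, separating the $z=0$ contribution of $\sG_\triv$ (which gives $-1$), and using orthogonality
$$\sum_{\rho\in\Char(q+1)}\rho(z) \;=\; (q+1)\mathbf{1}_{\{z\in(E^\times)^{q+1}\}} \qquad (z\in E^\times),$$
the sum of the $\sG_\rho$-traces becomes $-1-(q+1)\sum_{z\in(E^\times)^{q+1}}\psi_E(uz^B-z^A)$. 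The substitution $z=w^{q+1}$ has fibres of size $q+1$, cancelling the factor $q+1$; combined with $A(q+1)=q^n+1$ and $B(q+1)=q^m+1$, this rewrites the expression as $-1-\sum_{w\in E^\times}\psi_E(uw^{q^m+1}-w^{q^n+1})$, and absorbing the $w=0$ term produces exactly $-\sum_{w\in E}\psi_E(uw^{q^m+1}-w^{q^n+1})$, the declared trace of $\sW_\SU$.

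Finally, since both sides are geometrically semisimple lisse sheaves on $\A^1$ whose Frobenius traces agree at every point of $\A^1(E)$ for all finite $E\supseteq\F_{q^2}$, Chebotarev density forces a geometric isomorphism. (By Corollary \ref{Apullbackbis1} and Proposition \ref{ABmoment} the $\sG_\rho$ are pairwise non-isomorphic geometrically irreducible summands, so the left-hand side is manifestly semisimple with known constituents.) The main subtlety I expect is the discrepancy of fields of definition: the individual $\sG_\rho$ with $\rho\neq\triv$ live naturally on $\A^1/\F_{q^2}$, whereas $\sW_\SU$ descends to $\F_p$, so the stated isomorphism must be read geometrically. Beyond that bookkeeping, the argument is character orthogonality plus the substitution $z\mapsto w^{q+1}$.
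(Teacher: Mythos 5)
Your computation is exactly the intended argument: the paper states this proposition without proof, but it proves the analogous symplectic statement, Corollary \ref{totalztrace}, by the very same manipulation (there the kernel is $1+\chi_2(z)$, here $\sum_{\rho\in\Char(q+1)}\rho(z)=\#\{w\in E:\ w^{q+1}=z\}$ once $E\supseteq\F_{q^2}$), and your handling of the $z=0$ term, the substitution $z=w^{q+1}$, and the identities $(q+1)A=q^n+1$, $(q+1)B=q^m+1$ is correct, as is the Chebotarev step. One small correction: Deligne's purity theorem gives \emph{geometric} semisimplicity, not arithmetic semisimplicity; the arithmetic semisimplicity asserted in the statement instead follows because over $\F_{q^2}$ the sheaf decomposes as the direct sum of the $q+1$ sheaves $\sG_\rho$, which are geometrically irreducible and pairwise geometrically non-isomorphic, and a lisse sheaf whose geometric monodromy representation is semisimple and multiplicity-free is automatically arithmetically semisimple. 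This slip does not affect your main conclusion, since the geometric isomorphism only requires geometric semisimplicity of both sides.
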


\begin{cor}Each of the lisse sheaves $\sG_\rho$ is geometrically irreducible.
\end{cor}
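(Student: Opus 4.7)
The plan is to deduce the geometric irreducibility of each $\sG_\rho$ by matching it, up to a constant-field twist, with one of the geometrically irreducible summands produced by Proposition~\ref{ABmoment}(ii) applied to the exponential sum underlying $\sW_\SU$.

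Setting $A := q^n+1$, $B := q^m+1$, and $f(x) := -x^A$ in the notation of Proposition~\ref{ABmoment}, the hypotheses are immediate: $f$ is Artin--Schreier reduced with ${\rm gcd}_{\deg}(f) = A$; the integers $A$ and $B$ are distinct and both prime to $p$ (since $q \equiv 0 \bmod p$); and, using that $n,m$ are coprime odd integers, one has $D := \gcd(A,B) = q+1$. Writing $f(x) = g(x^D)$ with $g(x) = -x^{(q^n+1)/(q+1)}$, Proposition~\ref{ABmoment}(ii) then produces the geometric decomposition $\bigoplus_{\chi \in \Char(q+1)} \sF_\chi$, each $\sF_\chi$ geometrically irreducible and pairwise non-isomorphic, with explicit trace function involving $g(x)$, $x^{B/D} = x^{(q^m+1)/(q+1)}$, and $\chi$.

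Comparing this with the trace formulas of Propositions~\ref{totaltraces} and \ref{totaltraces-1} shows that $\sG_\rho|_{\G_m}$ differs from $\sF_\rho$ only by the Gauss-sum clearing factor, which is a constant-field twist and hence trivial as a representation of $\pi_1^{\geo}$. Therefore $\sG_\rho|_{\G_m}$ is geometrically isomorphic to $\sF_\rho$ and is in particular geometrically irreducible; since $\sG_\rho$ is lisse on the connected variety $\A^1$ with dense open $\G_m$, any strict geometric subsheaf of $\sG_\rho$ would restrict to a strict geometric subsheaf on $\G_m$, so the irreducibility propagates to all of $\A^1$. No real obstacle arises: the serious work — the second-moment calculation of Section~2 — has already been carried out, and the present corollary reduces to a clerical check that the normalization conventions of $\sW_\SU$ and of Proposition~\ref{ABmoment} differ only by a harmless constant-field twist.
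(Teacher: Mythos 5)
Your argument is correct and is essentially the paper's own first proof: both rest on applying the second-moment result (Proposition~\ref{ABmoment}) to the total sum $\sW_\SU$ with $\gcd(q^n+1,q^m+1)=q+1$, you via the explicit decomposition of part (ii) plus a trace-function/Chebotarev identification of $\sG_\rho$ with $\sF_\rho$ up to a constant-field twist, the paper via the slightly quicker count that a sheaf of second moment $q+1$ written as a sum of $q+1$ nonzero summands must have each summand irreducible. The paper also records a second, even more direct route you could have used: since $\gcd(A,B)=1$ for $A=(q^n+1)/(q+1)$, $B=(q^m+1)/(q+1)$, Proposition~\ref{ABmomentrho} applied to each $\sG_\rho$ individually gives second moment $1$, hence geometric irreducibility, with no identification step needed.
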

\begin{proof}Here are two proofs. Because $\gcd(q^n+1,q^m+1)=q+1$, $\sW_\sG$ has second moment $q+1$ (by Lemma \ref{ABmoment}). Being the sum of the $q+1$ summands $\sG_\rho$, each summand must be geometrically irreducible.
Alternatively, one could use the fact that $\gcd(A,B)=1$, and apply Proposition \ref{ABmomentrho} to each $\sG_\rho$.
\end{proof}

\begin{thm}\label{su-det}On $\A^1/\F_{q^4}$, each of the lisse sheaves $\sG_\rho \otimes (1/q^2)^{\deg}$ has arithmetically trivial determinant. In characteristic $2$, this is already true for the lisse sheaves $\sG_\rho \otimes (1/q)^{\deg}$ on $\A^1/\F_{q^2}$.
\end{thm}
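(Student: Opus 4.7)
The strategy is to show that on $\A^1/\F_{q^2}$, the arithmetic determinant of $\sG_\rho$ is a constant character $D_\rho^{\deg}$ with $D_\rho = \pm q^{r_\rho}$, where $r_\rho = \rank(\sG_\rho)$. Squaring upon base change to $\F_{q^4}$ then yields $D_\rho^2 = q^{2r_\rho}$, which cancels exactly the $(1/q^{2r_\rho})^{\deg}$ contribution of the constant field twist.

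First, by Lemmas \ref{detdown1}--\ref{detdown2}, both hypergeometric sheaves $\sH_{small,A,B}$ and $\sH_{big,A,B,\chi}$ have geometric determinant $\sL_{\chi_2^{A-1}}$, which is trivial since $A$ is odd (Lemma \ref{ABparity}). Kummer pullback by $[A]$ preserves this triviality, so $\sG_\rho$ has geometrically trivial determinant, and purity forces $|D_\rho| = q^{r_\rho}$.

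For $\rho = \triv$: Corollary \ref{Apullbackbis1}(i), together with the congruence $A \equiv B \equiv 1 \pmod{p}$, identifies $\sG_\triv \cong [A]^\star \sH_{small,A,B,descent}$ arithmetically on $\A^1/\F_p$. Proposition \ref{arithdet}(i) gives that the determinant of $\sH_{small,A,B,descent} \otimes (-\Gauss(\psi,\chi_2))^{-\deg}$ at Frobenius over $\F_p$ is $\chi_2(-1)^{(A-1)/2}$. Unwinding the twist (using $(-\Gauss(\psi,\chi_2))^2 = \chi_2(-1)\,p$ and that $A-1$ is even) yields $D_\triv = p^{(A-1)/2}$ over $\F_p$, hence $q^{A-1} = q^{r_\triv}$ over $\F_{q^2}$.

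For $\rho \neq \triv$, I evaluate $D_\rho$ at $u = 0$, where $\sG_\rho$ is lisse (the Kummer $[A]^\star$ kills the tame local monodromy of the hypergeometric at $0$, since every $\eta \in \Char(A)$ satisfies $\eta^A = \triv$). By proper base change, the stalk at $u = 0$ is $H^1_c(\G_m/\overline{\F_p}, \sL_{\psi(-z^A)} \otimes \sL_\rho)$, and Mellin inversion on the trace gives
\[
T_0(\F_{q^{2s}}) = -\sum_{\xi\,:\,\xi^A \,=\, \rho\,\circ\, N_{\F_{q^{2s}}/\F_{q^2}}} \xi(-1)^{-1}\, g(\xi, \psi_{\F_{q^{2s}}}),
\]
identifying the Frobenius eigenvalues at $u=0$ as signed Gauss sums. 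Passing to $\F_{q^4}$ makes the relevant character set stable under $\xi \mapsto \bar\xi$, and the complement identity $g(\xi) g(\bar\xi) = \xi(-1)\, q^2$ then pairs the factors of $D_\rho^2$ and yields $D_\rho^2 = q^{2r_\rho}$. The characteristic-$2$ case is analogous with twist $(1/q)^{\deg}$ over $\F_{q^2}$, bypassing the base change to $\F_{q^4}$ because no nontrivial quadratic character enters. The main obstacle will be the sign bookkeeping in the Gauss sum product and handling self-conjugate characters, where the $\xi(-1)^{-1}$ factors must be shown to cancel in pairs.
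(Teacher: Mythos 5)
Your reduction to geometric triviality of the determinant plus evaluation of the constant at a single point is exactly the paper's strategy, and your treatment of $\sG_\triv$ (via Proposition \ref{arithdet}(i) together with $A\equiv B\equiv 1 \bmod p$) is a fine variant of what the paper does by citing \cite[2.3(3)]{KT1}. The problem is the case $\rho\neq\triv$. At $u=0$ the Frobenius eigenvalues are indeed Gauss sums $g_{\F_{q^{2s}}}(\psi',\xi)$ indexed by the characters $\xi$ with $\xi^A$ equal to (the appropriate twist of) $\rho$, but your key claim that ``passing to $\F_{q^4}$ makes the relevant character set stable under $\xi\mapsto\bar\xi$'' is false: conjugation sends that set to the corresponding set for $\bar\rho$, and the two coincide only when $\rho^2=\triv$, which fails for most $\rho$ of order dividing $q+1$. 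So the pairing via $g(\xi)g(\bar\xi)=\xi(-1)q^2$ never gets off the ground; the individual eigenvalues are not pure (not $\pm q$ up to roots of unity), and only their \emph{product} is. A telling symptom is that your argument nowhere uses the hypothesis $\rho^{q+1}=\triv$; for a general nontrivial $\rho$ the conclusion is actually wrong, since the determinant over $\F_{q^2}$ is $\bigl(-\Gauss_{\F_{q^2}}(\overline{\psi}_A,\rho)\bigr)q^{A-1}$ and for generic $\rho$ this Gauss sum is not $\pm q$, so even after squaring over $\F_{q^4}$ and twisting by $(1/q^2)^{\deg}$ one is left with the non-trivial unitary constant $\Gauss(\overline{\psi}_A,\rho)^2/q^2$.

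What is needed instead (and what the paper does) is, first, the determinant formula \cite[2.3(3),(4)]{KT1} --- in essence the Hasse--Davenport product relation --- which collapses the product of the $A$ Gauss sums into a single factor $-\Gauss_{\F_{q^2}}(\overline{\psi}_A,\rho)$ times $(q^2)^{(A-1)/2}$; and second, the special arithmetic of characters of order $r\mid(q+1)$ over $\F_{q^2}$: Stickelberger's evaluation gives $\Gauss_{\F_{q^2}}(\overline{\psi}_A,\rho)=(-1)^{(q+1)/r}q$ for $q$ odd and $=q$ for $q$ even. (Equivalently, one can argue that $\bar\rho=\rho^{q}$ and $g(\psi,\chi^{p})=g(\psi,\chi)$ force $g\overline{g}=\rho(-1)g^2=q^2$ with $\rho(-1)=1$, so $g=\pm q$; Stickelberger just pins down the sign, which the passage to $\F_{q^4}$ then renders irrelevant, while in characteristic $2$ there is no sign and one already concludes over $\F_{q^2}$.) If you replace your conjugation-pairing step by these two inputs, the rest of your outline goes through.
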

\begin{proof}From Lemmas \ref{detdown1} and \ref{ABparity}, we know that each has geometrically trivial determinant.
So it suffices to show that at a single point $u \in \F_{q^4}$, the determinant is $1$. We take the point $u=0$. According to \cite[2.3, parts (3) and (4)]{KT1}, the determinant for $\sG_\triv$ at the point $u=0$ viewed in $\F_{q^2}$ is $(q^2)^{(A-1)/2}=q^{A-1}$, and the determinant of $\sG_\rho$ for $\rho$ nontrivial is $(-\Gauss_{\F_{q^2}}(\overline{\psi}_A,\rho))(q^2)^{(A-1)/2}$. By Stickelberger's theorem \cite[11.6.1]{Be-Ev-Wi}, for $\rho$ nontrivial of order $r $ dividing $q+1$, one has
\begin{equation}\label{stick}
  \Gauss_{\F_{q^2}}(\overline{\psi}_A,\rho) = \left\{ \begin{array}{rl}(-1)^{(q+1)/r}q, & q{\rm\ odd},\\ q, & q {\rm\ even}.\end{array}\right.
\end{equation}  
So over $\F_{q^4}$, these determinants are respectively $(q^2)^{A-1}$ and $(q^2)^A$. Hence after the $\otimes (1/q^2)^{\deg}$ twist,
all these determinants become $1$. And in characteristic $2$, we are already okay on $\A^1/\F_{q^2}$.
\end{proof}

\begin{cor}\label{det-stick}Suppose $p$ is odd, $q$ a power of $p$, and $\rho$ a nontrivial character of $\F_{q^2}^\times$ of order $r$ dividing $q+1$. Then the arithmetic determinant of 
$$\sG_\rho \otimes (-\Gauss_{\F_{q^2}},\chi_2)^{-\deg/\F_{q^2}}$$
is trivial if $(-1)^{(q+1)/r} =(-1)^{(q+1)/2}$, and is $(-1)^{-\deg/\F_{q^2}}$ otherwise.
\end{cor}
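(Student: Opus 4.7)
The plan is to refine the determinant calculation already carried out inside the proof of Theorem \ref{su-det}, now tracking the precise sign over $\F_{q^2}$ rather than just triviality over $\F_{q^4}$. Because $\sG_\rho$ has geometrically trivial determinant (Lemma \ref{detdown1} combined with Lemma \ref{ABparity}, which forces $A-1$ to be even so that $\sL_{\chi_2^{A-1}}$ is trivial), the arithmetic determinant of the twisted sheaf on $\A^1/\F_{q^2}$ is of the form $\alpha^{\deg/\F_{q^2}}$ for a single scalar $\alpha$, and it suffices to evaluate it at the single point $u=0 \in \A^1(\F_{q^2})$.

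At $u=0$, following the proof of Theorem \ref{su-det} via \cite[Theorem 2.3, parts (3) and (4)]{KT1}, the untwisted arithmetic determinant of $\sG_\rho$ on $\F_{q^2}$ equals
$$-\Gauss_{\F_{q^2}}(\overline{\psi}_A,\rho)\cdot (q^2)^{(A-1)/2}=-\Gauss_{\F_{q^2}}(\overline{\psi}_A,\rho)\cdot q^{A-1},$$
using that $A$ is odd. Substituting Stickelberger's evaluation \eqref{stick} for $\rho$ nontrivial of order $r\mid q+1$ and $q$ odd collapses this to $-(-1)^{(q+1)/r}q^A$.

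Next I would handle the clearing factor. Since $\sG_\rho$ has rank $A=\max(A,B)$, the twist $(-\Gauss_{\F_{q^2}}(\psi,\chi_2))^{-\deg/\F_{q^2}}$ contributes the factor ${\sf G}^{-A}$ to the determinant at each $\F_{q^2}$-Frobenius, where ${\sf G}:=-\Gauss_{\F_{q^2}}(\psi,\chi_2)$. By Hasse--Davenport applied to $\F_{q^2}\supset\F_p$ (which introduces the sign $(-1)^{2f-1}=-1$), combined with $\Gauss_{\F_p}(\psi,\chi_2)^2=(-1)^{(p-1)/2}p$, one gets ${\sf G}=(-1)^{f(p-1)/2}q$. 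A short case check on $p\bmod 4$ and the parity of $f$ identifies this with $-(-1)^{(q+1)/2}q$; since $A$ is odd, ${\sf G}^{-A}=-(-1)^{(q+1)/2}q^{-A}$.

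Multiplying the two contributions produces
$$\bigl(-(-1)^{(q+1)/r}q^A\bigr)\cdot\bigl(-(-1)^{(q+1)/2}q^{-A}\bigr)=(-1)^{(q+1)/r+(q+1)/2},$$
which is $+1$ exactly when $(-1)^{(q+1)/r}=(-1)^{(q+1)/2}$ and $-1$ otherwise, matching the claim. The only genuinely delicate point is the parity identity $(-1)^{f(p-1)/2}=-(-1)^{(q+1)/2}$; the rest is direct assembly of the proof of Theorem \ref{su-det} with Stickelberger and Hasse--Davenport.
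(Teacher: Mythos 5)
Your computation is correct and amounts to essentially the same argument the paper gives: geometric triviality of $\det$ reduces everything to the point $u=0$, where the formula of \cite[2.3 (3),(4)]{KT1} together with Stickelberger and the oddness of $A$ produces exactly the sign $(-1)^{(q+1)/r+(q+1)/2}$. The only cosmetic difference is that the paper organizes this as a comparison of the two clearing factors, taking the ratio $(-\Gauss_{\F_{q^2}}(\psi,\rho))/(-\Gauss_{\F_{q^2}}(\psi,\chi_2))=(-1)^{(q+1)/r}/(-1)^{(q+1)/2}$ directly from \eqref{stick} (the quadratic Gauss sum over $\F_{q^2}$ being just the $r=2$ case of that formula), which makes your Hasse--Davenport evaluation of ${\sf G}$ and the parity check $(-1)^{f(p-1)/2}=-(-1)^{(q+1)/2}$ unnecessary, though both are correct.
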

\begin{proof}For any $\psi$ which begins life over $\F_p$, the Gauss sum $\Gauss(\psi_{\F_{q^2}},\rho)$ is independent of the choice of $\psi$ (because every element of $\F_p^\times$ becomes a square in $ \F_{q^2}$), and its square is $q^2$. Thus the arithmetic determinant of $\sG_\rho\otimes (-\Gauss_{\F_{q^2}}(\psi,\rho))^{-\deg/\F_{q^2}}$ is trivial. If instead we look at
 $$\sG_\rho\otimes (-\Gauss_{\F_{q^2}}(\psi,\chi_2))^{-\deg/\F_{q^2}},$$
 the factor by which we have twisted differs, thanks to Stickelberger  \cite[11.6.1]{Be-Ev-Wi}, by the ratio
 $$(-\Gauss_{\F_{q^2}}(\psi,\rho))/(-\Gauss_{\F_{q^2}}(\psi,\chi_2)) =(-1)^{(q+1)/r}/(-1)^{(q+1)/2}.$$
 Let us denote $\epsilon :=(-1)^{(q+1)/r}/(-1)^{(q+1)/2}$.
 As $\sG_\rho$ has rank $A$, which is odd (being $1 \mod q-1$), this second twist will have arithmetic determinant $\epsilon^{-\deg/\F_{q^2}}$.
\end{proof}

Let us denote by 
$$\sW_\SU(1/2)$$
the sheaf on $\A^1/\F_{p^2}$ obtained by twisting $\sW_\SU$ by $-\chi_2(-1)/p$ for $p$ odd, and by $1/p$ for $p=2$.
\begin{thm}\label{tracevalues1}We have the following results for the sheaf $\sW_\SU(1/2)$ on $\A^1/\F_{p^2}$.
\begin{itemize}
\item[\rm(i)] Over any extension of $\F_{q^2}$, all traces lie in $\Q$.
\item[\rm(ii]If $p$ is odd, then over any extension of $\F_{q^2}$, all traces are $\pm$(powers of $q$). If $p=2$, the trace $0$ may also occur.
\item[\rm(iii)] The arithmetic and geometric monodromy groups of $\sW_{\SU}(1/2)$ {\rm (}and hence also of each $\sG_\rho(1/2)${\rm )} are finite.
\end{itemize}
\end{thm}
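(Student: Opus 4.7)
The plan is to deduce all three parts from an explicit trace formula together with: (a) a substitution/Galois-invariance argument for (i), (b) the van der Geer--van der Vlugt theorem for (ii), and (c) the standard finite-monodromy criterion (the same one invoked in the proof of Theorem \ref{vdG-vdV2}(ii)) for (iii). Write $f_u(z):= uz^{q^m+1}-z^{q^n+1}$. By Proposition \ref{total-su} and the definition of the clearing factor, for $u\in E$ with $E/\F_{p^2}$ a finite extension, the trace of $\sW_\SU(1/2)$ at $u$ is
\[T(u) \;=\; (-\chi_2(-1)/p)^{[E:\F_{p^2}]}\cdot\Bigl(-\sum_{z\in E}\psi_E(f_u(z))\Bigr)\]
when $p$ is odd (and similarly with $1/p$ in place of $-\chi_2(-1)/p$ when $p=2$).

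For (i), $T(u)$ lies a priori in $\Q(\zeta_p)$. Given $\lambda\in\F_p^\times$, the Galois automorphism $\zeta_p\mapsto\zeta_p^\lambda$ sends the inner sum to $\sum_{z\in E}\psi_E(\lambda f_u(z))$. The key observation is that since $n$ and $m$ are both odd, every $c\in\F_{q^2}$ satisfies $c^{q^n+1}=c^{q^m+1}=c^{q+1}$ (because $c^{q^2}=c$). Since the norm map $\F_{q^2}^\times\to\F_q^\times$, $c\mapsto c^{q+1}$, is surjective, and $\lambda\in\F_p^\times\subset\F_q^\times$, we may choose $c\in\F_{q^2}\subset E$ with $c^{q+1}=\lambda$. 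The bijection $z\mapsto cz$ of $E$ then converts the $\lambda$-twisted sum back to the original (because $f_u(cz)=\lambda f_u(z)$), establishing Galois invariance. Therefore $T(u)\in\Q(\zeta_p)^{\F_p^\times}=\Q$.

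For (ii), $f_u$ has the van der Geer--van der Vlugt shape $\sum a_iz^{q^i+1}$, so Theorem \ref{thm:vdG-vdV}(i) applies (since $E\supset\F_{q^2}\supset\F_q$) and gives $|S_{f_u}|^2=q^r$ for some $0\le r\le n$, with the value $0$ also possible when $p=2$. A direct accounting of clearing factors (using $\Gauss_{\F_p}^2=\chi_2(-1)p$) shows $T(u)=(-1)^{[E:\F_{p^2}]}S_{f_u}$, so $|T(u)|^2$ is a power of $q$; combined with (i), $T(u)$ is a rational number whose square is a power of $q$, hence $T(u)=\pm p^k$ for some integer $k\ge0$ (with $T(u)=0$ possible in characteristic $2$). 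For (iii), $T(u)$ then takes values in a finite set (bounded in absolute value by the rank $q^n$ of $\sW_\SU$ by weight-zero purity), so the standard finite-monodromy criterion of \cite[\S5]{KT2} (as invoked in the proof of Theorem \ref{vdG-vdV2}(ii)) yields finiteness of both the arithmetic and geometric monodromy of $\sW_\SU(1/2)$; finiteness passes to each $\sG_\rho(1/2)$ via the geometric decomposition of Proposition \ref{total-su}. The main obstacle is the Galois-invariance step: one needs a single $c\in E$ with $c^{q^n+1}=c^{q^m+1}=\lambda$, and the oddness of both $n$ and $m$—which collapses both exponents to $c^{q+1}$ on $\F_{q^2}$—is precisely what makes this possible; the remainder is routine bookkeeping of signs and normalizations.
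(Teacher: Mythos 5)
Your parts (i) and (iii) follow the paper's own route: the substitution $z\mapsto \tau z$ with $\tau^{q+1}=\lambda$ and the oddness of $n,m$ give Galois invariance over $\Q(\zeta_p)$, and integrality plus boundedness of traces gives finiteness via the criterion of \cite[\S 5]{KT2}. The clearing-factor bookkeeping is also fine.

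There is, however, a genuine gap in (ii). You invoke Theorem \ref{thm:vdG-vdV}(i), which only gives $|S_{f_u}|^2=q^r$ for some $0\le r\le n$, and then combine with rationality to conclude $T(u)=\pm p^k$. But the theorem asserts that the traces are $\pm$(powers of $q$), not merely $\pm$(powers of $p$). Rationality forces $fr$ to be even, not $r$ itself: when $q=p^f$ with $f$ even, your argument cannot exclude values such as $T(u)=\pm q^{r/2}$ with $r$ odd, i.e.\ $\pm p^{fr/2}$, which is not a power of $q$. The paper closes exactly this gap by proving the stronger statement that over extensions of $\F_{q^2}$ the square absolute value of each trace is a power of $q^2$: in the van der Geer--van der Vlugt mechanism the square absolute value is the cardinality of the radical of the associated bilinear form, and because $n,m$ are odd (so that $c^{q^{\pm n}}=c^{q^{\pm m}}=c^q$ for $c\in\F_{q^2}$ -- the same observation you use in step (i)) that radical is stable under multiplication by $\F_{q^2}$, hence an $\F_{q^2}$-vector space; see \cite[beginning of \S 5]{KT2} with $t=0$. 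With $|T(u)|^2$ a power of $q^2$ and $T(u)\in\Q$, one gets $T(u)=\pm q^k$ as claimed. Your weaker conclusion does suffice for (iii), but (ii) as stated is not proved without this extra input.
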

\begin{proof}To see that (i) holds, notice that for $\lambda \in \F_q^\times$, there exists $\tau \in \F_{q^2}$ with $\tau^{q+1}=\lambda$, by surjectivity of the norm. Because $n,m$ are both odd, we have $\tau^q=\tau^{q^m}=\tau^{q^n}$. Apply this to $\lambda$ which lies in $\F_p^\times$. Making
the substitution $z \mapsto \tau z$ has the effect of replacing $\psi(x)$ by $\psi(\lambda x)$, without changing the sum, which therefore lies in $\Q$. to prove (ii), it suffices, given (i), to show that the square absolute values of traces are powers of $q^2$.
This holds because in the van der Geer-van der Vlugt approach, when $p$ is odd, each square absolute value is the cardinality of a vector space over $\F_{q^2}$, cf. \cite[beginning of \S5]{KT2} with $t=0$ to see this in the case $m=1$. When $p=2$, the trace is the sum of the values of an additive character on such a vector space, so the sum is either $0$, if the additve character is nontrivial, or it is the dimension of the vector space.  The integrality of the traces then implies the finiteness of the arithmetic and geometric monodromy groups, cf. \cite[\S5]{KT2}.
\end{proof}


\begin{prop}\label{prop-Sbis}For $A,B$ as above, i.e. 
$$A=(q^n+1)/(q+1),~B=(q^m+1)/(q+1),~n>m>0,~2 \nmid nm,$$ 
and $\chi$ a character with $\chi^A \neq \triv$, both the sheaves $\sH_{small,A,B,descent}$ and $\sH_{big,A,B,\chi,descent}$ satisfy property {\rm ({\bf S+})}.
\end{prop}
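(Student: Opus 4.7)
The plan is to reduce to the two general criteria Propositions \ref{ABcondSbig1} and \ref{ABcondSsmall1} by verifying their arithmetic and size hypotheses for our specific $A=(q^n+1)/(q+1)$ and $B=(q^m+1)/(q+1)$ with $n>m>0$ both odd.

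First I would assemble the divisibility facts. Coprimality $\gcd(A,B)=1$ follows from the stated $\gcd(q^n+1,q^m+1)=q+1$. Since $A\equiv B\equiv 1 \pmod p$, both are prime to $p$, and $\max(A,B)=A$. Since $n-m$ and $n-1$ are both even, $q+1$ divides each of $q^{n-m}-1$ and $q^{n-1}-1$, so
$$A-B=\frac{q^m(q^{n-m}-1)}{q+1},\qquad A-1=\frac{q(q^{n-1}-1)}{q+1}$$
are visibly divisible by $q$, hence by $p$. This supplies all of the divisibility hypotheses of the two criteria.

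Next I would check the size inequalities. The ``big'' hypothesis $|A-B|>A/2$ rearranges to $q^n-2q^m>1$, immediate from $n-m\geq 2$ and $q\geq 2$. The stronger ``small'' hypothesis $|A-B|>(2/3)(A-2)$ rearranges to $q^n-3q^m+4q+2>0$, equally immediate since $q^n\geq q^2\cdot q^m\geq 4q^m$. To rule out the enumerated exceptional values in the two criteria, note that $A$ is odd by Lemma \ref{ABparity}, so $A\neq 8$; and $A=9$ would force $q(q^{n-1}-9)=8$ with $n\geq 3$ odd, which has no prime-power solutions. Thus the special cases listed in \ref{ABcondSbig1} and \ref{ABcondSsmall1} are vacuous in our setting.

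The residual auxiliary bounds are $A\geq 4$ (for big) and $(2/3)(A-2)>2$, i.e.\ $A\geq 7$ (for small). The only value of $A$ in our family violating the latter is $A=3$, occurring uniquely at $(q,n,m)=(2,3,1)$ and giving $(A,B)=(3,1)$. This exceptional $m=1$ case, where $\sH_{small,A,B}$ degenerates to a rank-$2$ Kloosterman-type sheaf, is the main obstacle to the otherwise mechanical reduction: it falls outside the hypotheses of \cite[Theorem 1.5]{KT4} and has to be treated by hand, appealing to the analysis of the $m=1$ case carried out in \cite{KT3}. With that one case dispatched, Propositions \ref{ABcondSbig1} and \ref{ABcondSsmall1} apply to deliver property ({\bf S+}) for both $\sH_{small,A,B,descent}$ and $\sH_{big,A,B,\chi,descent}$.
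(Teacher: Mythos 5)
Your reduction is in substance the same as the paper's: the paper verifies primitivity via $\gcd(A,B)=1$ (no Kummer induction) and $p \mid A-B$ (no Belyi induction, by Proposition \ref{ABBelyi1}), and then applies \cite[1.2.3]{KT4} to $\sH_{big,A,B,\chi,descent}$ and \cite[1.2.6]{KT4} to $\sH_{small,A,B,descent}$, which is exactly what your citations of Propositions \ref{ABcondSbig1} and \ref{ABcondSsmall1} package together. Your divisibility observations ($A\equiv 1$, $A-1\equiv 0$, $A-B\equiv 0 \bmod q$, using that $n-1$ and $n-m$ are even) and your two inequalities ($q^n-2q^m>1$ and $q^n-3q^m+4q+2>0$) match the paper's computations, and your elimination of the listed exceptional ranks $8,9$ (parity of $A$, and $q(q^{n-1}-9)=8$ having no solutions) is correct. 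Note also that the other low value the paper pauses over, $(q,n)=(3,3)$ with $A=7$, $A-1=6$, is absorbed automatically by your criteria, as you implicitly observe.

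The one genuine defect is your treatment of the residual case $A=3$, i.e. $(q,n,m)=(2,3,1)$, $B=1$. You correctly identify that it escapes the hypotheses of \cite[Theorem 1.5]{KT4}, but ``appealing to the analysis of the $m=1$ case carried out in \cite{KT3}'' is not an argument: \cite{KT3} determines monodromy groups of the pulled-back local systems, and property ({\bf S+}) for the hypergeometric sheaves is an input to (not a consequence of) that sort of analysis, so nothing there can simply be quoted. The paper's fix is one line and you should use it: in this case the ranks are $A=3$ and $A-1=2$, which are prime, so the sheaves can be neither tensor decomposable nor tensor induced for rank reasons, and primitivity has already been established ($\gcd(A,B)=1$ kills Kummer induction, $p\mid A-B$ kills Belyi induction); hence ({\bf S+}) holds outright. (If one insists on the paper's blanket hypothesis $p>2$, this case never arises at all; but since your proof, like the paper's, is written to include $q=2$ as in Lemma \ref{ABparity}, you need the prime-rank observation to close it.)
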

\begin{proof}The fact that $\gcd(A,B)=1$ shows that neither is Kummer induced. For each, the wild part has dimension
$A-B =(q^n -q^m)/(q+1)$, which is divisible by $q^m$, hence also by $p$, so neither is Belyi induced (by Proposition \ref{ABBelyi1}). Thus both are primitive. 

We first treat $\sH_{big,A,B,\chi,descent}$. Its rank $A$ is prime to $p$, and the dimension $W$ of the wild part is $A-B$. By \cite[1.2.3]{KT4}, provided that the rank $A \ge 4$ and $A\neq 8,9$, it suffices to show that 
$$A-B > A/2,$$
i.e., that (after multiplying both sides by $q+1$)
$$q^n -q^m > (q^n +1)/2, {\rm\  i.e., that \  \ }2q^n-2q^m >q^n+1,{\rm\  i.e., that \  \ }q^n-2q^m >1,$$ 
i.e., that
$$q^m(q^{n-m}-2) >1.$$
Because $m \ge 1$ and $n-m \ge 2$ (both being odd), we have
$$q^m(q^{n-m}-2) \ge q(q^2-2) \ge 2(2^2-2)=4 >1.$$
The least possible values of $A$ are attained by $(q=2,n=3), (q=3,n=3)$, all other possible $A$ are $\ge 11$ (attained by $(q=2,n=5)$).
For  $(q=2,n=3)$, we have $A=3$, which is prime, and hence property {\rm ({\bf S+})}. holds. Similarly, for $(q=3,n=3)$, we have $A=7$, again prime (or, an acceptable value).

We now treat $\sH_{small,A,B,descent}$. Here the rank is $A-1=(q^n-q)/(q+1)$ which is divisible by $p$, and the wild part has dimension $A-B$. By \cite[1.2.6]{KT4}, provided that the rank $A-1> 4$ and $A-1\neq 8,9$, it suffices to show that
$$A-B >(2/3)(A-2),$$
i.e., that (after multiplying both sides by $q+1$)
$$q^n-q^m >(2/3)(q^n+1 -2(q+1)), {\rm\  i.e., that \  \ }3q^n-3q^m > 2q^n +2 -4q-4,  {\rm\  i.e., that \  \ }q^n-3q^m >-2q-2.$$
In fact, we have $q^n-3q^m \ge q$. Indeed,
$$q^m-3q^m =q^b(q^{n-m}-3) \ge q(q^2-3) \ge q >0.$$
The least possible values of $A-1$ are attained by $(q=2,n=3), (q=3,n=3)$, all other possible $A-1$ are $\ge 10$ (attained by $(q=2,n=5)$). For $(q=2,n=3)$, we have $A-1=2$, which is prime. For $(q=3,n=3)$, we have $A-1=6$, which is an acceptable value.
\end{proof}

\begin{rmk}\label{coprime}There is one situation in which the sheaves $\sG_\rho$ are the $[A]^\star$ pullbacks of canonically chosen hypergeometric sheaves, 
namely the case when $\gcd(n,q+1)=1$. The key observation is that $A \equiv n \mod (q+1)$, cf. Lemma \ref{ABparity}. In defining the sheaves $\sH_{big,A,B,\chi,descent}$, we can choose for $\chi$ a nontrivial character of order dividing $q+1$. For these $\chi$, $\chi \mapsto \chi^A =\chi^n$ is simply a permutation of the nontrivial elements of $\Char (q+1)$. Moreover, in choosing $\alpha, \beta$ with $\alpha A-\beta B=1$, we may change $(\alpha,\beta)$ to $(\alpha + nB,\beta +nA)$ for any integer $n$. Since $A$ is invertible mod $q+1$, we may impose on $\beta$ any congruence mod $q+1$ that we like. We will impose that $\beta$ is divisible by $q+1$, and write
$$\beta = (q+1)\gamma$$
for some $\gamma \in \Z$.
With this choice, the trace function of  $\sH_{big,A,B,\chi,descent}$ is simply
$$\begin{aligned}u \in E^\times & \mapsto -\sum_{z \in E}\psi_E(Au^\alpha z^B -Bu^\beta z^A)\chi(u^\beta z^A)\\
    & =-\sum_{z \in E}\psi_E(Au^\alpha z^B -Bu^\beta z^A)\chi^n(z),\end{aligned}$$
because $\chi$ has order dividing $q+1$, $\beta \equiv 0 \mod (q+1)$, and $\chi^A=\chi^n$.

Recall that $A,B \equiv 1 (\bmod\ q)$. So in this $\gcd(n,q+1)=1$ case the direct sum
\begin{equation}\label{eq:510a}
 \bigoplus_{\chi \in \Char(q+1),\chi \neq \triv}\sH_{big,A,B,\chi,descent} \oplus \sH_{small,A,B,descent}  
\end{equation} 
has trace function
\begin{equation}\label{eq:510aa}
  u \in E^\times \mapsto -\sum_{z \in E}\psi_E(u^\alpha z^{q^m+1}-u^\beta z^{q^n+1}).
\end{equation}  
This formula is  analogous to that of Corollary \ref{totalztrace}, where $A$ was odd and $\beta$ could always be taken even. Moreover,
the $[A]^\star$ Kummer pullback of $\sH^{n,m}$ has trace function
\begin{equation}\label{eq:510b}
\begin{aligned}u \in E^\times & \mapsto - \sum_{z \in E}\psi_E(u^{\alpha A}z^{q^m+1}-u^{\beta A}z^{q^n+1})\\ 
 & = -\sum_{z \in E}\psi_E(u^{\alpha A -\beta B} u^{\beta B}z^{(q+1)B}-u^{\beta A} z^{(q+1)A})\\
 & = -\sum_{z \in E}\psi_E(uu^{\gamma (q+1) B}z^{(q+1)B}-u^{\beta A} z^{(q+1)A})\\
 & = -\sum_{z \in E}\psi_E(u(u^\gamma z)^{(q+1)B}-u^{\beta A-\gamma (q+1)A} (u^\gamma z)^{(q+1)A})\\
  & =- \sum_{z \in E}\psi_E(uz^{q^m+1}-z^{q^n+1} ),
    \end{aligned}
\end{equation}    
    the last equality by the substitution $z \mapsto z/u^\gamma$.
\end{rmk}

\begin{rmk}\label{coprimebis}There is another situation in which there are canonical choices of hypergeometric sheaves whose  $[B]^\star$ pullbacks have interesting trace functions, namely the situation in which $\gcd(m,q+1)=1$, or equivalently $\gcd(B,q+1)=1$. In this case, we can choose integers $(\alpha,\beta)$ with $\alpha A -\beta B=1$ and impose any congruence condition we like on $\alpha$ (because $B$ is invertible mod $q+1$, cf. the remark above). We impose that $\alpha$ is divisible by $q+1$, and write
$$\alpha =(q+1)\delta$$
for some integer $\delta$.
Then from Lemma \ref{ztracesbis1} we have that for any nontrivial character $\chi$ of order dividing $q+1$, the lisse sheaf
$\sH^\sharp_{big,A,\chi,B,descent}$ on $\G_m/\F_{q^2}$ has trace function as follows: for $E/\F_{q^2}$ a finite extension, and $u \in E^\times$, the trace at time $u$ is given by
$$u \in E^\times \mapsto -\sum_{z \in E}\psi_E(u^\alpha z^B -u^\beta z^A)\chi^B(z).$$
Since $\chi \mapsto \chi^B$ is a bijection on the set of nontrivial characters of order dividing $q+1$, with ${\sf G} :=-\Gauss$ 
for either choice $\Gauss$ of quadratic Gauss sum over $\F_p$, the direct sum
\begin{equation}\label{eq:511a}
 \sH_{bis}^{n,m}:= \biggl( \bigoplus_{\chi \in \Char(q+1),\chi \neq \triv}\sH^\sharp_{big,A,\chi,B,descent} \oplus \sH_{small,A,B,descent} \biggr) \otimes
 {\sf {G}}^{-\deg} 
\end{equation} 
has trace function
\begin{equation}\label{eq:511b}
u \in E^\times \mapsto  \frac{1}{\Gauss(\psi_E,\chi_2)}\sum_{z \in E}\psi_E(u^\alpha z^{q^m+1}-u^\beta z^{q^n+1}).
\end{equation} 
[This formula looks very much like that of \eqref{eq:510aa}, but the reader must remember that the exponents $(\alpha,\beta)$ of $u$ are different in the two cases; also, $\sH^{n,m}_{bis}$ has a clearing factor ${\sf G}^{\deg}$.] 
The $[B]^\star$ pullback $\sW^{\, n,m}_{bis}$ of $ \sH_{bis}^{n,m}$ has trace function
\begin{equation}\label{eq:511bb}
u \in E^\times \mapsto  \frac{1}{\Gauss(\psi_E,\chi_2)}\sum_{z \in E}\psi_E( z^{q^m+1}-u^{-1}z^{q^n+1}),
\end{equation} 
by an argument similar to that in Remark \ref{coprime}.
\end{rmk}

In the general case of any odd $n$, we can build the sheaves $\sH_{big,A,B,\chi}$ using the following choice of characters:

\begin{lem}\label{choice}
Let $n \in \Z_{\geq 1}$ be odd and let $q$ be any prime power. If $\gcd(n,q+1) = 1$, define $n_0:=1$. If $\gcd(n,q+1) > 1$,
let $p_1, \ldots, p_s$ be the distinct prime divisors of $\gcd(n,q+1)$, $p_i^{c_i}$ be the $p_i$-part of $n$, and 
define $n_0:=\prod^s_{i=1}p_i^{c_i} = \gcd(n,(q+1)^n)$. If $\nu$ is a multiplicative character of order $n_0(q+1)$, then
$\nu^A$ has order $q+1$; and conversely any character of order $q+1$ can be obtained this way.
\end{lem}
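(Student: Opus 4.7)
The whole lemma reduces to the single arithmetic identity
\[ \gcd(n_0(q+1),\, A) \,=\, n_0, \]
after which both statements are an exercise in the structure of cyclic character groups. My plan is to establish this identity by a prime-by-prime computation, using lifting-the-exponent.

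First I would show that $v_p(A) = v_p(n)$ for every prime $p$ dividing $q+1$. For odd $p$, this is the classical LTE applied to $q^n - (-1)^n$ (using that $n$ is odd): one gets $v_p(q^n+1) = v_p(q+1) + v_p(n)$, and dividing by $q+1$ yields $v_p(A) = v_p(n)$. For $p = 2$ (relevant only when $q$ is odd, i.e.\ when $2 \mid q+1$), the oddness of $n$ gives $v_2(q^n+1) = v_2(q+1)$ by a direct induction, so $v_2(A) = 0 = v_2(n)$. With this in hand, the gcd identity is a routine check: the primes of $n_0(q+1)$ are precisely the primes of $q+1$, and for such a prime $p$ the definition of $n_0$ gives $v_p(n_0) = v_p(n)$ if $p \mid n$ and $v_p(n_0) = 0$ otherwise. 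In either case $\min(v_p(n_0) + v_p(q+1),\, v_p(A)) = v_p(n_0)$, establishing $\gcd(n_0(q+1),A) = n_0$.

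From this identity both directions follow from cyclic group theory. In the cyclic group $C_N$ of characters of order dividing $N := n_0(q+1)$, the $A$-th power map is a group endomorphism whose kernel has order $n_0$ and whose image has order $N/n_0 = q+1$, hence coincides with the unique subgroup of order $q+1$. For $\nu$ of exact order $N$, this gives $\ord(\nu^A) = N/n_0 = q+1$, which is the forward direction. For the converse, surjectivity onto $C_{q+1}$ yields, for any $\chi$ of exact order $q+1$, some $\nu \in C_N$ with $\nu^A = \chi$; and a short prime-by-prime argument parallel to Step 1 (if $\ord(\nu) = d \mid N$ and $d/\gcd(d,A) = q+1$, then $v_p(d) = v_p(N)$ for every prime $p \mid q+1$) shows that any such preimage $\nu$ automatically has exact order $N$.

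The only real obstacle is Step 1, namely applying LTE cleanly for all primes dividing $q+1$; the $p=2$ case looks delicate but is in fact painless here because $n$ is odd. Everything after that is bookkeeping in a cyclic group.
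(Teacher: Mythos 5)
Your proposal is correct and follows essentially the same route as the paper: both hinge on the identity $\gcd(n_0(q+1),A)=n_0$, proved prime-by-prime via lifting-the-exponent for primes dividing $q+1$, and then deduce both directions by order counting in a cyclic character group. Your write-up is merely a bit more explicit than the paper's at two points, namely the separate (and correctly handled) $p=2$ case and the verification that any preimage $\nu$ of a character of order $q+1$ automatically has exact order $n_0(q+1)$, which the paper dispatches with ``the converse also follows by the same argument.''
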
 

\begin{proof}
The case $\gcd(n,q+1)=1$ has already been explained in Remark \ref{coprime}, so we will assume $\gcd(n,q+1)>1$. 
First we prove that
\begin{equation}\label{eq:cp11}
  \gcd(n_0(q+1),A) = n_0.
\end{equation}  
Let $p_1, \ldots,p_s,p_{s+1}, \ldots ,p_t$ be the distinct prime divisors of $q+1$ (for some $t \geq s$). If $1 \leq i \leq s$,
then $p_i|n$ and so it is odd but divides $q+1$, whence the $p_i$-part of $A=(q^n+1)/(q+1)$ is exactly $p_i^{c_i}$. To see this, use the fact, applied successively to $x:=-q$ and to an odd prime $p_i:=\ell$, that if  $x \in \Z$ has $\ord_\ell(x-1)=d>0$, then $\ord_\ell(x^{\ell}-1)=d+1$, while for an integer $m$ prime to $\ell$, $\ord_\ell(x^m -1)=d$, cf. also
\cite[Lemma 4.4]{KlT}. In particular, this shows that $n_0|A$ and $p_i \nmid (A/n_0)$. If $s+1 \leq i \leq t$, then $p_i \nmid n$ and $p_i|(q+1)$, but $A \equiv n (\bmod\ q+1)$, and so $A$
is coprime to $p_i$, whence $p_i \nmid (A/n_0)$ in this case as well. Thus \eqref{eq:cp11} is proved.

Now let $\nu$ have order $n_0(q+1)$. Then $\nu^{n_0}$ has order $q+1$. Since $\gcd(A/n_0,q+1)=1$ by \eqref{eq:cp11}, 
$\nu^A = (\nu^{n_0})^{A/n_0}$ has order $q+1$. The converse also follows by the same argument.
\end{proof}

\section{Extensions of Weil representations of finite symplectic groups}
In this section, we consider a non-degenerate symplectic space $W = \F_p^{2N}$ for a fixed prime $p>2$, a 
(reducible) total Weil representation of degree $p^N$ of $\Gamma = \Sp(W) \cong \Sp_{2N}(p)$ with character $\omega_{N,p}$ as in \cite{KT2}; in particular,
\begin{equation}\label{eq:value1}
  |\omega_{N,p}(g)| = |\CB_W(g)|^{1/2}
\end{equation}  
for any $g \in \Gamma$. We will also take $N = nf$ and $q = p^f$ for some positive integers $N,f$. We may then assume that $W$ is 
obtained from the symplectic space $W_1:=\F_q^{2n}$ (with a Witt basis $(e_1, \ldots, e_n,f_1, \ldots ,f_n)$) by base change
from $\F_q$ to $\F_p$. Using this basis, for any divisor $e|f$ we can consider the transformation 
$$\sigma_r:\sum^n_{i=1}(x_ie_i+y_if_i) \mapsto \sum^n_{i=1}(x_i^re_i+y_i^rf_i)$$ 
induced by the Galois automorphism $x \mapsto x^r$ for $r := q^{1/e}$. Then $\sigma_e$ belongs to $\Gamma$ and 
induces a field automorphism of order $e$ of $L := \Sp(W_1) \cong \Sp_{2n}(q)$. 
In what follows, we will refer to $L \rtimes \langle \sigma_r$ as a {\it standard subgroup $\Sp_{2n}(q) \rtimes C_e$} of $\Gamma$.
Also, let $\bj$ be the central involution of $\Gamma$.

We will denote by $\omega_{N,q}$ the restriction of
the total Weil character $\omega_{N,p}$ to $L$, and let the character $\xi_{n,q}$, respectively $\eta_{n,q}$, denote the irreducible summand of $\omega_{N,q}$ of degree $(q^n+1)/2$, respectively $(q^n-1)/2$. 
The  following statement clarifies the behavior of total Weil characters under 
embeddings $\Sp_{2n}(2n,q) \hookrightarrow \Sp_{2nm}(q)$ (by a similar base change as above); cf. \cite[Lemma 4.3]{KT2} for the 
case of irreducible Weil characters.

\begin{lem}\label{emb}
Let $q$ be an odd prime power and let $n, m \geq 1$. For fixed total Weil 
representations 
$$\Phi: \Sp_{2n}(q^m) \to \GL_{q^{nm}}(\C),~~\Lambda: \Sp_{2nm}(q) \to \GL_{q^{nm}}(\C),$$
affording character $\omega_{n,q^m}$, respectively $\omega_{nm,q}$,
there exists an embedding $\Theta: \Sp_{2n}(q^m) \to \Sp_{2nm}(q)$  
such that the representations $\Phi$ and $\Lambda \circ \Theta$ of $\Sp_{2n}(q^m)$ are equivalent. 
\end{lem}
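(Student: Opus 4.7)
The plan is to realize $\Theta$ as the ``restriction of scalars'' embedding attached to viewing the symplectic $\F_{q^m}$-space underlying $\Sp_{2n}(q^m)$ as a symplectic $\F_q$-space of twice the dimension, and then to reduce the equivalence $\Phi \cong \Lambda\circ\Theta$ to the analogous statement at the level of irreducible Weil constituents, which is exactly \cite[Lemma 4.3]{KT2}.

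First I would construct $\Theta$ explicitly. Let $W' = \F_{q^m}^{2n}$ carry a non-degenerate $\F_{q^m}$-valued symplectic form $\langle\cdot,\cdot\rangle_{q^m}$, and equip the same underlying set, now regarded as an $\F_q$-vector space $\tilde W'$ of dimension $2nm$, with the $\F_q$-valued form $\Tr_{\F_{q^m}/\F_q}\circ\langle\cdot,\cdot\rangle_{q^m}$. Non-degeneracy of this trace-composed form is immediate from non-degeneracy of the trace bilinear form on $\F_{q^m}/\F_q$. Every $g\in\Sp(W') = \Sp_{2n}(q^m)$ is in particular $\F_q$-linear on $\tilde W'$ and preserves the new form, yielding an injective homomorphism
$$\Theta: \Sp_{2n}(q^m)\hookrightarrow \Sp(\tilde W') = \Sp_{2nm}(q).$$
Iterating the same construction from $\F_q$ down to $\F_p$ (so $q=p^f$) gives a chain $\Sp_{2n}(q^m) \hookrightarrow \Sp_{2nm}(q) \hookrightarrow \Sp_{2nmf}(p)$, and the composition is the standard base-change embedding of $\Sp_{2n}(q^m)$ into $\Sp_{2nmf}(p)$ coming from regarding $\F_{q^m}$ as an $\F_p$-vector space of dimension $mf$.

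Next I would compare characters. By the convention fixed in the opening of the section, both $\omega_{n,q^m}$ and $\omega_{nm,q}$ are defined as restrictions of the total Weil character $\omega_{nmf,p}$ of $\Sp_{2nmf}(p)$ to the respective subgroups. Transitivity of restriction along the chain above then gives
$$\omega_{nm,q}\circ\Theta \;=\; \bigl(\omega_{nmf,p}|_{\Sp_{2nm}(q)}\bigr)\circ\Theta \;=\; \omega_{nmf,p}|_{\Sp_{2n}(q^m)} \;=\; \omega_{n,q^m},$$
so that $\Phi$ and $\Lambda\circ\Theta$ have the same character, and hence are equivalent as representations.

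The delicate point I would expect to be the main obstacle is the latent ambiguity in what is meant by ``fixing'' a total Weil representation: up to isomorphism, such a representation depends on an auxiliary choice of nontrivial additive character of $\F_p$, and different choices can yield representations that differ by composition with a diagonal outer automorphism. The clean way around this is to apply \cite[Lemma 4.3]{KT2} separately to each of the two pairs of irreducible Weil characters, $(\xi_{n,q^m},\xi_{nm,q})$ and $(\eta_{n,q^m},\eta_{nm,q})$, obtaining (possibly after post-composing with an inner automorphism of $\Sp_{2nm}(q)$) a single $\Theta$ that matches both irreducible constituents simultaneously; summing the two irreducible identities recovers the total Weil identity above independently of the choice of $\psi$.
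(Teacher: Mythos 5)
Your trace-form restriction-of-scalars construction of $\Theta$ is fine (it is the same base-change embedding the paper starts from), but the proof has a genuine gap, and it sits exactly where you relegate the difficulty to a closing remark. First, the lemma quantifies over \emph{arbitrary} fixed total Weil representations $\Phi$ and $\Lambda$: each of $\Sp_{2n}(q^m)$ and $\Sp_{2nm}(q)$ has two inequivalent total Weil representations (swapped by a non-inner diagonal automorphism), and the restriction of $\omega_{nmf,p}$ along your particular chain of embeddings realizes only one of them, which one depending on the embedding. So your displayed identity $\omega_{nm,q}\circ\Theta=\omega_{n,q^m}$ is not forced by any convention of the section; for the given $\Phi,\Lambda$ it can fail, and one must correct the embedding by a diagonal automorphism.

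Second, your proposed repair --- apply \cite[Lemma 4.3]{KT2} separately to the pair of degree-$(q^{nm}+1)/2$ constituents and to the pair of degree-$(q^{nm}-1)/2$ constituents, and then assert that ``a single $\Theta$ matches both constituents simultaneously'' --- simply asserts the nontrivial content of the lemma. From one embedding $\iota$ the cited lemma only gives diagonal automorphisms $\alpha^+,\alpha^-$ of $X=\Sp_{2n}(q^m)$ with $\Lambda^{\eps}\circ\iota\circ\alpha^{\eps}\cong\Phi^{\eps}$, and nothing in your argument rules out that $\alpha^+$ and $\alpha^-$ lie in different cosets of $\mathrm{Inn}(X)$, i.e.\ that $\Lambda\circ\iota$ is the ``mixed'' sum pairing a constituent of one total Weil representation of $X$ with a constituent of the other, in which case no single $\Theta$ obtained this way works. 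The paper's proof exists precisely to exclude this: a non-inner diagonal automorphism fuses the two irreducible Weil characters of each degree, so in the mixed situation \cite[Lemma 2.6(iii)]{TZ2} gives $\Tr(\Phi^+(t))=-\Tr(\Phi^-(t))$ at a transvection $t$, whence $\omega_{n,q^m}(t)=0$, contradicting the non-vanishing $|\omega_{n,q^m}(t)|=|\CB_W(t)|^{1/2}$ of \eqref{eq:value1}. You need this (or some equivalent argument showing that the restriction of a total Weil representation along a base-change embedding is again a genuine total Weil representation, never a mixed sum) before your ``summing the two irreducible identities'' step is legitimate.
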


\begin{proof}
Fix an embedding $\iota$ of $X:=\Sp_{2n}(q^m)$ into $\Sp_{2nm}(q)$. 
For $\eps = \pm$, let $\Phi^\eps$, respectively $\Lambda^\eps$, denote unique the irreducible constituent 
of $\Phi$, respectively of $\Lambda$, of degree $(q^{nm} + \eps)/2$. As is well-known, 
$\Lambda^\eps \circ \iota$ is an irreducible Weil representation of $X$ of degree $\deg(\Phi^\eps)$, and so
there is an outer diagonal automorphism $\alpha^\eps$ of $X$ such that 
$\Lambda^\eps \circ \iota \circ \alpha^\eps \cong \Phi^\eps$. If $\alpha^+$ and $\alpha^-$ belong to the same coset 
of ${\mathrm {Inn}}(X)$ in $\Aut(X)$, then we may in fact assume that $\alpha^+ = \alpha^-$ and 
take $\Theta = \iota \circ \alpha^+$. Otherwise, since any non-inner diagonal automorphism of $X$ fuses the two irreducible
Weil characters of any given degree, we see by \cite[Lemma 2.6(iii)]{TZ2} that 
$$\Tr(\Phi^+(t)) = - \Tr(\Phi^-(t))$$
for any transvection $t \in X$, and so $\omega_{n,q^m}(t) = \Tr(\Phi(t)) = 0$, contradicting \eqref{eq:value1}.
\end{proof} 

We will need the following slight extension of \cite[Lemma 3.1]{NT}:
 
\begin{lem}\label{value1}
Let $e$ be an odd integer, and consider the subgroup $L \rtimes \langle \sigma_r \rangle$ of $\Gamma$. Then 
$$\xi_{N,p}(\sigma_r) = (r^n+1)/2,~\eta_{N,p}(\sigma_r) = (r^n-1)/2.$$ 
\end{lem}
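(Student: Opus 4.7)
The plan is to compute both $\omega_{N,p}(\sigma_r)$ and $\omega_{N,p}(\bj\sigma_r)$ and then to recover $\xi_{N,p}(\sigma_r)$ and $\eta_{N,p}(\sigma_r)$ from this pair. Since $\bj$ is central in $\Gamma$ and acts as a fixed scalar in $\{\pm 1\}$ on each of the irreducible constituents $\xi_{N,p},\eta_{N,p}$, this inversion is routine once the two values of $\omega_{N,p}$ are known.

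For the magnitudes I invoke \eqref{eq:value1}. The fixed-point space $C_W(\sigma_r)\subset W=\F_q^{2n}$ is the coordinate-wise $\F_r$-rational part, hence $|C_W(\sigma_r)|=r^{2n}$ and $|\omega_{N,p}(\sigma_r)|=r^n$. For $\bj\sigma_r=-\sigma_r$, a fixed point $x\in W$ satisfies $x^r=-x$ coordinate-wise in $\F_q$; since $e$ is odd, the integer $(q-1)/(r-1)=1+r+\cdots+r^{e-1}$ is a sum of $e$ odd numbers and hence odd, so $(r-1)\nmid(q-1)/2$ and $-1$ is not an $(r-1)$-st power in $\F_q^\times$. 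Consequently $C_W(\bj\sigma_r)=\{0\}$ and $|\omega_{N,p}(\bj\sigma_r)|=1$.

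To pin down the signs, I descend via Lemma \ref{emb}. Writing $q=r^e$, the $\F_p$-symplectic space $W$ carries a compatible $\F_r$-structure of dimension $2ne$, yielding a standard embedding $M:=\Sp_{2ne}(r)\hookrightarrow\Gamma$ with $\omega_{N,p}|_M=\omega_{ne,r}$. Matching degrees of the irreducible constituents forces $\xi_{N,p}|_M=\xi_{ne,r}$ and $\eta_{N,p}|_M=\eta_{ne,r}$, and the $\F_r$-linearity of $\sigma_r$ places it in $M$, reducing the problem to computing $\xi_{ne,r}(\sigma_r)$ and $\eta_{ne,r}(\sigma_r)$ inside $M$. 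There $\sigma_r$ lies in the Siegel Levi $\GL_{ne}(r)\subset\Sp_{2ne}(r)$ preserving the Lagrangian $U=\F_q^n$, and acts on $U$ as a product of $n$ copies of the Galois Frobenius of $\F_q/\F_r$; choosing a normal basis $\{\alpha^{r^i}\}_{i=0}^{e-1}$, each factor is a cyclic permutation of length $e$, of sign $(-1)^{e-1}=+1$ since $e$ is odd, so $\det(\sigma_r|_U)=+1$. The Schr\"odinger model on $\C[U]$ then gives $\omega_{ne,r}(\sigma_r)$ and $\omega_{ne,r}(\bj\sigma_r)$ with signs that combine to yield $\xi_{N,p}(\sigma_r)=(r^n+1)/2$ and $\eta_{N,p}(\sigma_r)=(r^n-1)/2$. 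This is essentially \cite[Lemma 3.1]{NT}, the ``slight extension'' being the presence of the intermediate field $\F_r\subsetneq\F_q$ which Lemma \ref{emb} absorbs.

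The main obstacle is the sign bookkeeping: once descended to $M$, one must carefully track the Weil cocycle on the Siegel Levi so that the quadratic-character twists coming from the successive embeddings $L\subset M\subset\Gamma$ combine to $+1$. The hypothesis that $e$ is odd plays a double role here: it is responsible both for the vanishing $C_W(\bj\sigma_r)=\{0\}$ and for the positive sign of the permutation $\sigma_r|_U$.
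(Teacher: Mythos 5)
Your strategy is the same as the paper's: evaluate the total Weil character at $\sigma_r$ and at $\bj\sigma_r$ in a Lagrangian (Schr\"odinger/Gross) model and then separate $\xi_{N,p}$ from $\eta_{N,p}$ using the central involution; and your verified computations (the fixed-point counts $|\CB_W(\sigma_r)|=r^{2n}$ and, via the odd-$e$ argument, $\CB_W(\bj\sigma_r)=\{0\}$) are correct and are exactly the counts the paper performs, on the Lagrangian $B$ rather than on $W$. The gap is at the decisive step, which you assert rather than prove: ``the Schr\"odinger model then gives \dots signs that combine to yield'' is precisely where the lemma happens, and you yourself defer the sign bookkeeping as the ``main obstacle.'' Concretely, two inputs are missing. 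First, the exact value, not just the absolute value, $\omega_{N,p}(\bj\sigma_r)=\kappa:=\left(\frac{-1}{p}\right)^N$; the paper gets it from Gross's formula \cite[(13.3)]{Gross1}, $\omega_{N,p}(g)=\left(\frac{\det(g|_B)}{p}\right)\cdot|\CB_B(g)|$ for $g$ stabilizing $(B,B^*)$, using $\det(\bj|_B)=(-1)^N$ together with the observation that the Legendre symbol of $\det(\sigma_r|_B)$ is $1$ simply because $\sigma_r$ has odd order (a sign whose odd power is $1$ is $1$). Second, the ``routine inversion'' is not routine from the two values of $\omega_{N,p}$ alone: one needs that $\bj$ acts by \emph{opposite} scalars on the two constituents and which scalar goes with which degree, namely by $\kappa$ on the $(p^N+1)/2$-dimensional constituent and by $-\kappa$ on the other, which is \cite[Lemma 2.6(i)]{TZ2}. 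Only with that does $\omega(\bj\sigma_r)=\kappa$ give $\xi_{N,p}(\sigma_r)-\eta_{N,p}(\sigma_r)=1$, which combined with $\xi_{N,p}(\sigma_r)+\eta_{N,p}(\sigma_r)=r^n$ yields the claim; magnitudes alone leave both the sign of $\omega(\sigma_r)$ and the split undetermined.

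A further remark: the descent to $M=\Sp_{2ne}(r)$ via Lemma \ref{emb} is an avoidable detour, and it is what creates the cocycle worry you flag. Gross's formula applies directly in $\Gamma$ to $\sigma_r$ and $\bj\sigma_r$, since both stabilize $(B,B^*)$, so no normal-basis computation of $\det(\sigma_r|_U)$ over $\F_r$ is needed. If you do descend, note that Lemma \ref{emb} as stated only produces \emph{some} embedding with the total-Weil restriction property, so you must check (by its proof) that it applies to your specific $M$ containing $\sigma_r$; also ``matching degrees forces $\xi_{N,p}|_M=\xi_{ne,r}$'' is too strong, since $M$ has two irreducible Weil characters of each degree, though this ambiguity is harmless for the rational value you are after. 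Finally, the compatibility of the $\F_p$- and $\F_r$-twists on the Siegel Levi is automatic: $\det_{\F_p}=N_{\F_r/\F_p}\circ\det_{\F_r}$ and the Legendre symbol composed with the norm is the quadratic character of $\F_r$, so the ``cocycle tracking'' dissolves, but as written your argument has not established the signs it needs.
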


\begin{proof}
Note that $B : = \langle e_1, \ldots,e_n\rangle_{\F_q}$ 
and $B^* := \langle f_1, \ldots,f_n \rangle_{\F_q}$ are complementary
maximal totally isotropic $\F_p$-subspaces of $W$, both fixed by $\sigma_r \in \Gamma$. Then $\omega_{N,p}$ is the
character afforded by the reducible Weil representation of $\Gamma = \Sp(W)$ as constructed in 
\cite[\S13]{Gross1} using the decomposition $W = B \oplus B^*$. Let 
$\left(\frac{\cdot}{p}\right)$ denote the Legendre symbol on $\F_p$. Then the character value of 
$\omega_{N,p}$ at any element $g \in \Stab_{\Gamma}(B,B^*)$ is given in 
\cite[(13.3)]{Gross1}:
$$\omega_{N,p}(g) =  \left(\dfrac{\det {(g|_B)}}{p}\right) \cdot |\CB_B(g)|.$$ 
Recall that $\sigma_r$ has {\bf odd} order $e$, and 
$$\left(\dfrac{\det {((\sigma_r)|_B)}}{p}\right)^e =  \left(\dfrac{\det {(((\sigma_r)^e)|_B)}}{p}\right) = 1.$$
Hence $\left(\frac{\det {((\sigma_r)|_B)}}{p}\right) = 1$ and so, since, $|\CB_B(\sigma_r)| = r$, we have 
$$\xi_{N,p}(\sigma_r)+\eta_{N,p}(\sigma_r) = r^n.$$
For the central involution $\bj$ of $\Sp(W)$ we have that $\det(\bj|_B) = (-1)^N$, and so 
$\left(\frac{\det {((\bj\sigma_r)|_B)}}{p}\right) = \left(\frac{-1}{p}\right)^N =: \kappa$. 
Furthermore, since $e$ is odd, the equation $x^r = -x$ has only one solution $x=0$ in the field $\F_q = \F_{r^e}$, which implies
that $|\CB_B(\bf\sigma_r)| = 1$. It follows that 
$$\xi_{N,p}(\bj\sigma_r)+\eta_{N,p}(\bj\sigma_r) = \kappa.$$
Note that, by \cite[Lemma 2.6(i)]{TZ2}, $\bj$ acts via multiplication by $\kappa$ in any irreducible Weil representation 
of degree $(p^N+1)/2$ and via multiplication by $-\kappa$ in any irreducible Weil representation of degree $(p^N-1)/2$. 
Therefore, 
$$\xi_{N,p}(\bj\sigma_r) = \xi_{N,p}(\sigma_r)\cdot\kappa,~~\eta_{N,p}(\bj\sigma_r) = -\eta_{N,p}(\sigma_r)\cdot\kappa.$$
It follows that $\xi_{N,p}(\sigma_r) = (r^n+1)/2$ and $\eta_{N,p}(\sigma_r) = (r^n-1)/2$, as stated. 
\end{proof}

\begin{lem}\label{cent-sp}
Let $q=p^f$ be a power of a prime $p > 2$, $n \in \Z_{\geq 1}$, and let $L := \Sp_{2n}(q)$ with $(n,q) \neq (1,3)$. Suppose
that $\Phi: G \to \GL_{q^n}(\C)$ is a faithful representation of a finite group $G \geq L$ with the following properties:
\begin{enumerate}[\rm(a)]
\item $\Phi$ is a sum of two representations, $\Phi^+$ of degree $(q^n-1)/2$ and $\Phi^-$ of degree $(q^n+1)/2$;
\item For $\eps = \pm$, $\K_\eps:= \Q(\Tr(\Phi^\eps(g)) \mid g \in G)$ is contained in $\K:= \Q(\sqrt{(-1)^{(p-1)/2}p})$; 
\item $\Phi^\eps|_L$ is irreducible for each $\eps = \pm$; and
\item For all $g \in G$, $|\Tr(\Phi(g))|^2$ is always a power of $p$.
\end{enumerate}
Then the following statements hold.
\begin{enumerate}[\rm(i)]
\item $\Phi|_L$ is a total Weil representation. 
\item $\CB_G(L) = \ZB(G)=C \times \ZB(L)$, where we have
$\ZB(L) = \langle \bj \rangle \cong C_2$, and $C$ can be chosen to act via scalars in $\Phi$. Furthermore, either 
\begin{enumerate}
\item[$(\alpha)$] $C= 1$ or $C_2$,  or 
\item[$(\beta)$] $p=3$, $C \in \{C_3,C_6\}$, $\K_{\eps} = \K$ and $p$ divides $|\det(\Phi^\eps(G))|$ for both 
$\eps = \pm$.
\end{enumerate}
\end{enumerate}
\end{lem}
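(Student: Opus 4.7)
The plan is as follows. For part (i), I would appeal to the classification of low-dimensional irreducible complex representations of finite quasisimple groups of Lie type (Tiep--Zalesski, Guralnick--Tiep): under the exclusion $(n,q) \neq (1,3)$, every irreducible complex representation of $L = \Sp_{2n}(q)$ of degree $(q^n-1)/2$ or $(q^n+1)/2$ is a Weil representation. Applied to each $\Phi^\eps|_L$ (irreducible by (c)), this identifies both summands as irreducible Weil characters of the appropriate degrees, and since any two irreducible Weil characters of complementary degrees sum to a total Weil character $\omega_{n,q}$, (i) follows.

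For part (ii), I first establish $C_G(L) = Z(G)$. Given $c \in C_G(L)$, Schur's lemma applied to the irreducible $L$-representations $\Phi^\eps|_L$ gives $\Phi(c) = \lambda^\eps(c)\cdot\Id$ on each $\Phi^\eps$. The decomposition (a) realises $\Phi^\pm$ as $G$-subrepresentations (distinguished by their distinct degrees), so every $\Phi(g)$ is block-diagonal with respect to this decomposition, and hence commutes with the block-scalar $\Phi(c)$. Faithfulness of $\Phi$ forces $c \in Z(G)$, and the reverse inclusion is trivial.

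Next I analyse $Z(G)$ via the character $\omega := \lambda^+/\lambda^-\colon Z(G) \to \C^\times$. The central involution $\bj$ acts with opposite signs on the two Weil summands, so $\omega(\bj) = -1$. A direct computation gives
$$|\Tr(\Phi(z))|^2 = \tfrac{1}{2}\bigl(q^{2n} + 1 + (q^{2n}-1)\,\Re(\omega(z))\bigr),$$
which, by (d), must equal $p^k$ for some $k \geq 0$. This forces $\Re(\omega(z)) \in \Q$, so by Niven's theorem $\omega(z)$ is a root of unity of order dividing $12$, with $\Re(\omega(z)) \in \{0,\pm 1/2,\pm 1\}$. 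The cases $\Re\omega = 1/2,\,0,\,-1/2$ respectively yield $|\Tr|^2 = (3q^{2n}+1)/4,\,(q^{2n}+1)/2,\,(q^{2n}+3)/4$, and a short mod-$p$ analysis rules each out as a power of $p$ except when $(n,q)=(1,3)$, which is excluded. Hence $\omega(Z(G)) = \{\pm 1\}$; setting $C := \ker\omega$ yields $Z(G) = C \times \langle \bj\rangle$ with $C$ acting by scalars on $\Phi$.

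Finally, to bound $|C|$, write $\Phi(c) = \mu(c)\,\Id$ for $c \in C$; condition (b) forces $\mu(c) \in \K$. The roots of unity in $\K$ are $\{\pm 1\}$ unless $p = 3$, in which case $\K = \Q(\zeta_3)$ contains $\mu_6$. Thus $|C| \leq 2$ when $p > 3$, giving case $(\alpha)$. When $p = 3$ and $|C| \in \{3,6\}$, choose $c \in C$ with $\mu(c)$ of order $3$: then $\Tr(\Phi^\eps(c)) = \mu(c)\deg\Phi^\eps \in \K\setminus\Q$ forces $\K_\eps = \K$, and since $\deg\Phi^\eps = (q^n\pm 1)/2$ is coprime to $3$ (as $q \equiv 0\pmod{3}$), $\det(\Phi^\eps(c)) = \mu(c)^{\deg\Phi^\eps}$ has order $3$, so $3 \mid |\det\Phi^\eps(G)|$, giving case $(\beta)$. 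I expect the main obstacle to be the enumeration step pinning $\omega(Z(G))$ down to $\{\pm 1\}$: one must check, for each of the five candidate rational values of $\Re\omega$ and each prime $p$, that $(q^{2n}+1+(q^{2n}-1)\Re\omega)/2$ is a power of $p$ only when $\omega = \pm 1$, reducing every marginal instance to the excluded pair $(n,q)=(1,3)$.
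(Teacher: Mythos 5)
Your part (ii) is essentially sound, and in fact takes a mildly different route from the paper: you pin down the ratio $\omega(z)=\lambda^+(z)/\lambda^-(z)$ to $\pm 1$ for \emph{all} $p$ via condition (d) and Niven's theorem, whereas the paper first uses (b) to force each scalar $z_\eps$ into $\K$ (so $z_\eps=\pm1$ unless $p=3$) and only invokes the trace computation $\frac{q^{2n}+1}{2}+\frac{q^{2n}-1}{4}(t+t^{-1})$ in the $p=3$ branch. Your enumeration of the cases $\Re\,\omega\in\{0,\pm\tfrac12\}$, with the marginal case $(q^{2n}+3)/4$ reducing to $(n,q)=(1,3)$, checks out, and the endgame ($C\hookrightarrow$ roots of unity in $\K$, the $\K_\eps=\K$ and determinant-order-$3$ conclusions when $p=3$) agrees with the paper.

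However, part (i) has a genuine gap: the assertion that ``any two irreducible Weil characters of complementary degrees sum to a total Weil character'' is false. For $p$ odd there are \emph{two} inequivalent total Weil representations of $\Sp_{2n}(q)$, hence two irreducible Weil characters of degree $(q^n+1)/2$ and two of degree $(q^n-1)/2$; only the matched pairs (constituents of the same total Weil representation) sum to $\omega_{n,q}$. For the mismatched pairing, the values at a transvection $t$ cancel: by \cite[Lemma 2.6(iii)]{TZ2} one has $\Tr(\Phi^+(t))=-\Tr(\Phi^-(t))$, so $\Tr(\Phi(t))=0$ (already visible for $\SL_2(q)$, where the unipotent values are $(\pm1+\sqrt{\eps q})/2$). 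So identifying each $\Phi^\eps|_L$ as a Weil representation via Tiep--Zalesskii does not by itself give (i); you must rule out the cross pairing, and this is exactly where hypothesis (d) enters in the paper's proof: $\Tr(\Phi(t))=0$ would violate the requirement that $|\Tr(\Phi(g))|^2$ be a power of $p$. Adding this transvection argument closes the gap; as written, your proof of (i) proves something weaker than the statement.
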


\begin{proof}
(i) Using the well-known character table of $\Sp_2(q)$ when $n=1$ and \cite[Theorem 5.2]{TZ1} when $n \geq 2$, we see that 
$\Phi^\eps|_L$ is an irreducible Weil representation of $L$ for each $\eps = \pm$. Now if these two irreducible Weil representations 
do not come from the same total Weil representation, then for a transvection $t \in L$ we have by \cite[Lemma 2.6(iii)]{TZ2} that 
$\Tr(\Phi(t)) = 0$, contradicting (d). Hence $\Phi|_L$ is a total Weil representation. 

\smallskip
(ii) Consider any element $z \in \CB_G(L)$. By Schur's lemma, $\Phi^\eps(g) =  z_\eps \cdot \mathrm{Id}$ for some root of unity
$z_\eps \in \C$. As $\Phi = \Phi^+ \oplus \Phi^-$, $z \in \ZB(G)$ and so 
$$\CB_G(L) = \ZB(G).$$ 
By (b), $z_\eps \in \K$. Now, $\pm 1$ are the only roots of 
unity in $\K$, unless $p=3$. In the former case, we obtain that $\ZB(G) = C \times \ZB(L)$, where
$1 \leq C \leq C_2$ and 
$$\Phi(C_2 \times \ZB(L)) = \{ \diag(\pm \mathrm{Id}, \pm \mathrm{Id})\}.$$ 
Thus, assuming $\ZB(G) \not\leq C_2 \times \ZB(L)$, we must have that $p=3$, and $z_\eps$ is a root of unity of order $3$ or $6$ 
for some $\eps$ and some $z \in \CB_G(L)$.
In this case, 
$\K$ contains exactly $6$ roots of unity $\pm \zeta_3^k$, $0 \leq k \leq 2$. Next, condition (d) implies that 
$$|\omega(z)|^2=\frac{q^{2n}+1}{2} + \frac{q^{2n}-1}{4}(t + t^{-1})$$
is a power of $3$ for $t = z_+/z_-$, a $6^{\mathrm {th}}$ root of unity. As $q^n > 3$, this can happen only when $t= \pm 1$.
Thus both $z_+$ and $z_-$ now have order divisible by $3$.
It is easy to see that in this case we have $\ZB(G) = C \times \ZB(L)$, where $C_3 \leq C \leq C_6$ and
$$\Phi(C_6 \times \ZB(L)) = \{ \pm \zeta_3^k\cdot \diag(\mathrm{Id}, \pm \mathrm{Id}) \mid 0 \leq k \leq 2 \}.$$
In particular, we can find an element $t \in \CB_G(L)$ with $\Phi(t)=\zeta_3 \cdot \mathrm{Id}$. It follows that for each $\eps=\pm$
we have that $\det(\Phi^\eps(t)) = \zeta_3^\eps$ has order $3$ and so $p$ divides $|\det(\Phi^\eps(G))|$, and furthermore
$\K_\eps = \K$.

We have also shown that for any $z \in \ZB(G)$,
\begin{equation}\label{eq:z1}
  \Phi(z) = z_-\Phi(\bj^k)
\end{equation}
for some $k = 0,1$ and some $z_- \in \K^\times$; i.e. $z$ acts as a scalar in $\Phi$.  
\end{proof}

The main result of this section is the following theorem:

\begin{thm}\label{ext-sp}
Let $q=p^f$ be a power of a prime $p > 2$, $n \in \Z_{\geq 1}$, and let $L := \Sp_{2n}(q)$ with $(n,q) \neq (1,3)$. Suppose
that $\Phi: G \to \GL_{q^n}(\C)$ is a faithful representation of a finite group $G \rhd L$ with the following properties:
\begin{enumerate}[\rm(a)]
\item $\Phi$ is a sum of two representations, $\Phi^+$ of degree $(q^n-1)/2$ and $\Phi^-$ of degree $(q^n+1)/2$;
\item For all $g \in G$ and $\eps = \pm$, $\Tr(\Phi^\eps(g)) \in \K$, where $\K = \Q$ if $p=3$ and $2|f$, and $\K = \Q(\sqrt{(-1)^{(p-1)/2}p})$
otherwise;
\item $\Phi^\eps|_L$ is irreducible for each $\eps = \pm$; and
\item For all $g \in G$, $|\Tr(\Phi(g))|^2$ is always a power of $q$.
\end{enumerate}
Then $\Phi|_L$ is a total Weil representation. Furthermore, $G = CL$ and $\ZB(G) = C \times \ZB(L)$, where
$\ZB(L) = \langle \bj \rangle \cong C_2$, and either 
\begin{enumerate}
\item[$(\alpha)$] $|C| \leq 2$, or 
\item[$(\beta)$] $p=3$ divides $|\det(\Phi^\eps(G))|$ for each $\eps = \pm$, $2 \nmid f$, and $C \in \{C_3,C_6\}$.
\end{enumerate}
In all cases, $C$ can be chosen to act via scalars in $\Phi$.
\end{thm}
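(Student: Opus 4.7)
My plan proceeds in three stages: reduce to Lemma \ref{cent-sp}, then show $G=CL$ by analyzing the outer automorphism image, then handle the parity constraint on $f$ in case $(\beta)$.

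First I would check that the hypotheses here imply those of Lemma \ref{cent-sp}. In every case the field $\K$ of this theorem is contained in $\K':=\Q(\sqrt{(-1)^{(p-1)/2}p})$, and ``power of $q$'' certainly implies ``power of $p$''. Applying Lemma \ref{cent-sp} gives that $\Phi|_L$ is a total Weil representation and $\CB_G(L)=\ZB(G)=C\times\ZB(L)$, with $C$ of order $\leq 2$ (case $\alpha$) or, in case $\beta$, $p=3$ and $C\in\{C_3,C_6\}$ with $\Phi^\eps(c)$ a nontrivial cube-root-of-unity scalar for a suitable $c\in C$.

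Next I would set $H:=CL$, which is normal in $G$ because $L\lhd G$ and $C\leq \ZB(G)$, and note $C\cap L=1$ since $C\cap\ZB(L)=1$. The conjugation map factors as an injection $G/H\hookrightarrow \Out(L)$. Now by Clifford theory combined with (c), each $\Phi^\eps$ is a $G$-irreducible representation whose restriction to $L$ is irreducible, so the isomorphism classes $[\Phi^\pm|_L]$ are $G$-stable. For $L=\Sp_{2n}(q)$ with $p>2$ and $(n,q)\neq(1,3)$, the outer diagonal automorphism $\delta$ fuses the two irreducible Weil characters of each degree $(q^n\pm1)/2$, so $\delta$ lies outside the stabilizer of $[\Phi^\pm|_L]$; and since $\sigma_p$ acts trivially on $\F_q^\times/(\F_q^\times)^2$, the field-automorphism subgroup $\langle\sigma_p\rangle\cong C_f$ does stabilize. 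Hence $G/H$ embeds into $C_f$. Suppose for contradiction this image is nontrivial; pick $\tau\in G$ lifting $\sigma_r$ of prime order $\ell$, where $r=q^{1/\ell}=p^{f/\ell}$. Using Lemma \ref{emb} I embed $L\cdot\langle\sigma_r\rangle\hookrightarrow \Gamma=\Sp_{2N}(p)$ with $N=nf$; Lemma \ref{value1} gives the standard trace values $\eta(\sigma_r)=(r^n-1)/2$ and $\xi(\sigma_r)=(r^n+1)/2$. Any other extension of $\Phi|_L$, in particular $\Phi$ itself, differs from standard by tensoring $\Phi^\pm$ with an $\ell$-th root of unity $\zeta_\pm$, and hypothesis (b) forces $\zeta_\pm\in\K$; so generically $\zeta_\pm\in\{\pm 1\}$.

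A direct computation using $\omega=\xi+\eta$ and $\omega(\cdot\bj)=\kappa(\xi-\eta)$ with $\kappa=\pm 1$ then yields, for every $l\in L$, the clean identity
\begin{equation*}
|\Tr(\Phi(\tau l))|^2 \;=\; |\CB_{W_p}(\tau^{\mathrm{std}}\bj^\epsilon l)|,
\end{equation*}
where $\epsilon\in\{0,1\}$ is determined by whether $\zeta_+\bar\zeta_-$ equals $+1$ or $-1$, and $W_p=\F_p^{2nf}$ is the underlying symplectic $\F_p$-space. Hypothesis (d) then demands that $\dim_{\F_p}\CB_{W_p}(\tau^{\mathrm{std}}\bj^\epsilon l)$ be a multiple of $f$ for every $l\in L$. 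This is the main obstacle. To reach a contradiction I would take $l$ to be the transvection $v\mapsto v+(y_1)e_1$ in the Witt basis $(e_1,\ldots,e_n,f_1,\ldots,f_n)$ and compute the fixed space of $\sigma_r^{\mathrm{std}}l$ (or $\sigma_r^{\mathrm{std}}l\bj$) directly: solving the equations $y_i^r=y_i$, $x_i^r=x_i$ for $i\geq 2$, and $x_1^r-x_1=-y_1$ shows the fixed space has $\F_p$-dimension $(2n-1)f/\ell+d$, where $d\in\{0,1,2\}$ depends on the intersection of $\F_r$ with the image of $x\mapsto x-x^r$ on $\F_q$. For $\ell>1$ this dimension is not divisible by $f$ (e.g.\ when $\ell=f=2$ and $n=1$ it equals $1$, not $2$), giving the required contradiction and forcing $G=H=CL$. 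The case $p=3$, where $\K$ can contain $\sqrt{-3}$ and $\zeta_\pm$ may be primitive cube roots of unity, is handled by the same identity together with an additional pass through all six possibilities for $\zeta_+\bar\zeta_-$.

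Finally, to establish the dichotomy $(\alpha)/(\beta)$ as stated, it remains only to show that case $(\beta)$ of Lemma \ref{cent-sp} forces $2\nmid f$. If $p=3$ and $C\in\{C_3,C_6\}$, some $c\in C$ acts as a primitive cube-root-of-unity scalar $\zeta_3$ on each $\Phi^\pm$, so $\Tr(\Phi^\pm(c))=\zeta_3\deg(\Phi^\pm)$, and (b) forces $\zeta_3\in\K$, whence $\K\supseteq \Q(\sqrt{-3})\neq\Q$; this excludes $p=3,\ 2\mid f$, as required. The statement that $C$ acts by scalars is already part of Lemma \ref{cent-sp}(ii).
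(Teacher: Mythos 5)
Your overall skeleton (reduce to Lemma \ref{cent-sp}; use Clifford theory to force the image of $G$ in $\Out(L)$ into the field-automorphism subgroup; embed via Lemma \ref{emb} and compare $\Phi(\tau)$ with $\Phi(\sigma_r)$ by Schur's lemma; then play hypothesis (d) against a trace whose square is not a power of $q$) is the same as the paper's. But the decisive step — producing an element of the coset $\tau L$ whose trace has square absolute value that is \emph{not} a power of $q$ — is not established by your argument. Your candidate, $\sigma_r$ times the transvection $v \mapsto v + y_1e_1$, does not work in general: the fixed-space count you set up gives exactly $r^{2n-1}$ points when $p\nmid \ell$ (only $y_1=0$ survives the condition $-y_1\in\mathrm{im}(x\mapsto x^r-x)$, and then $x_1$ ranges over a coset of $\F_r$), so $\dim_{\F_p}\CB_W(\sigma_r l)=(2n-1)f/\ell$, which \emph{is} a multiple of $f$ whenever $\ell \mid 2n-1$ (e.g. $\ell=3$, $n=2$); and for odd $\ell$ the twisted element $\bj\sigma_r l$ has fixed space $0$, i.e. $|\CB_W|=1=q^0$. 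So in such cases both values of $\epsilon$ give powers of $q$ and no contradiction arises. The existence of a suitable "bad" element in the standard subgroup $\Sp_{2n}(q)\rtimes C_e$ — one with $|\Tr(\Phi(h))|^2$ equal to $q^{1/e}$ exactly — is a genuinely nontrivial fact, which the paper imports from \cite[Theorem 3.5]{KT3}; it cannot be replaced by this single explicit transvection computation without substantially more work (one would at least have to vary $l$ and prove that \emph{some} centralizer dimension in the coset avoids $f\Z$, which is essentially re-proving that cited theorem).

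A second, smaller gap: your identity $|\Tr(\Phi(\tau l))|^2=|\CB_{W}(\bj^{\epsilon}\sigma_r l)|$ presupposes $\zeta_+\overline{\zeta_-}\in\{\pm1\}$, and the remaining possibilities are dismissed as "an additional pass." When $\ell$ is odd one can normalize the ratio by replacing $\tau$ with $\tau\bj$ and, for $p=3$, rule out a primitive cube-root ratio by the explicit value $|\xi(\sigma_r)+\zeta_3^{\pm1}\eta(\sigma_r)|^2=(r^{2n}+3)/4$ (this is the paper's step (ii), using Lemma \ref{value1} and $(n,q)\neq(1,3)$). But when $\ell=2$ the ratio $-1$ cannot be normalized away, and the paper needs a separate argument: it first shows the ratio $\alpha'$ lies in $\K'$ and satisfies $\alpha'^2=\pm1$ (via $h^2\in L$ and the scalar description \eqref{eq:z1} of $\ZB(G)$), concludes $\alpha'=-1$, and then twists by $\bj$ to recover the forbidden trace value $r$. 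None of this is in your write-up, so the even-order case is not actually handled. Your final paragraph (case $(\beta)$ forces $2\nmid f$, via roots of unity in $\K$) is fine.
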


\begin{proof}
(i) The fact that $\Phi|_L$ is a total Weil representation and that $\CB_G(L)=\ZB(G)$ together with its structure are already proved 
in Lemma \ref{cent-sp}.

By Lemma \ref{emb}, we may assume that $L$ embeds in $\Gamma = \Sp(W) \cong \Sp_{2nf}(p)$ as the subgroup 
$\Sp(W_1)$ introduced above via base change, and so the character of $\Phi|_L$ is the restriction to $L$ of $\omega:= \omega_{nf,p}$.
We may also assume that $\Phi$, $\Phi^\eps$ extend to $\Gamma$, and denote them by the same symbols.
Also write $\xi$ and $\eta$ instead of $\xi_{nf,p}$ and $\eta_{nf,p}$.

Note that any element in $\NB_{\Gamma}(L)$ preserves the equivalence class of each of the Weil representations $\Phi^\eps|_L$,
hence it can only induce a field automorphism of $L$ (modulo $\mathrm{Inn}(L)$). The subgroup of all the field automorphisms of 
$L$ is cyclic of order $f$. Thus we may assume that there is some $e|f$ such that $G$ induces a subgroup of field automorphisms 
of $L$ of order $e$. In the notation introduced right before Lemma \ref{emb}, this means that the action of $G$ via conjugation on $L$
induces the same automorphism subgroup as of 
$$H := L \rtimes \langle \sigma_r \rangle < \Gamma.$$
In view of the above results, we are done if $e=1$. Assume the contrary: $e > 1$.

\smallskip
(ii) Replacing $G$ by a subgroup of index $2$ if necessary, first we rule out the case $e > 1$ is odd. Let $g \in G$ induce (via conjugation)
the same automorphism of $L$ as of $\sigma_r$. It follows that $\Phi^\eps(g)\Phi^\eps(\sigma_r)^{-1}$ centralizes $\Phi^\eps(L)$, and so 
by Schur's lemma we have
\begin{equation}\label{eq:phi1}
  \Phi^+(g) = \beta\Phi^+(\sigma_r),~\Phi^-(g) = \beta\alpha\Phi^-(\sigma_r)
\end{equation}  
for some $\alpha,\beta \in \C^\times$. As $\sigma_r$ has order $e$, we obtain that 
$$\Phi(g^e) = \beta^e \cdot \diag(\mathrm{Id},\alpha^e\mathrm{Id}).$$
By (b), $\beta^e,\alpha^e\beta^e \in \K$; in particular, $\alpha^e \in \K$. Using the oddness of $e$ and replacing $g$ by $g\bj$ if necessary, 
we may assume that either $\alpha = 1$, or $p=3$ and $\alpha = \zeta_3^{\pm 1}$. 
However, in the latter case by Lemma \ref{value1} we have that
$$|\omega(g)|^2 = |\Tr(\Phi(g))|^2 = |\xi(\sigma_r)+\alpha\eta(\sigma_r)|^2 = \bigl|\frac{r^n+1}{2} + \alpha\frac{r^n-1}{2}\bigr|^2 = 
    \frac{r^{2n}+3}{4},$$
which can be a $p$-power only when $r^n=3$. In particular, it can be a $q$-power only when
$r^n=3=q$, and this contradicts the assumption $(n,q) \neq (1,3)$. 

Thus we have shown that $\alpha=1$ in \eqref{eq:phi1}, i.e. $\Phi(g) = \beta\Phi(\sigma_r)$. On the other hand, by the choice of $g$, 
we have that $G = \langle L,\ZB(G),g \rangle$. As $H = \langle L,\sigma_r \rangle$, together with \eqref{eq:phi1}, this shows that 
$\Phi(G)$ and $\Phi(H)$ are the same up to scalar matrices in $\GL_{q^n}(\C)$. Now we apply \cite[Theorem 3.5]{KT3} to get 
an element $h \in H$ such that $|\Tr(\Phi(h))|^2 =r$. As we have just shown, $\Phi(h) = \gamma\Phi(\tilde g) $ for some $\gamma \in \C^\times$. Since $h$ and $\tilde g$ have finite order, $|\gamma| = 1$. Thus 
$$|\omega(\tilde g)|^2 = |\Tr(\Phi(\tilde g))|^2 = |\Tr(\Phi(h))|^2 = r,$$
contradicting (d).

\smallskip
(iii) It remains to rule out the case $e=2$. We again 
use the element $h \in H$ with $|\Tr(\Phi(h))|^2 = r = q^{1/2}$ constructed in \cite[Theorem 3.5]{KT3}. Let $g' \in G$ induce the same
automorphism of $L$ as of $h$. As in (iii), we can again write 
$$\Phi^+(g') = \beta'\Phi^+(h),~\Phi^-(g') = \beta'\alpha'\Phi^-(h)$$
for some $\alpha',\beta' \in \C^\times$. Since $g',h$ have finite order, $|\beta'| = |\alpha'|=1$.
Now we have 
$$r=|\Tr(\Phi(h))|^2 = |\xi(h)+\eta(h)|^2,$$
but 
$$|\omega(g')|^2 = |\Tr(\Phi(g'))|^2 = |\xi(h)+\alpha'\eta(h)|^2$$
is a power of $q$ by (d). This implies that $\xi(h),\eta(h) \neq 0$ and 
\begin{equation}\label{eq:phi2}
 \alpha' \neq 1.  
\end{equation} 
Since $\Q(\eta) = \Q(\xi) \subseteq \K':=\Q(\sqrt{(-1)^{(p-1)/2}p})$
for $\Gamma$, cf. \cite[Lemma 13.5]{Gross1}, it follows from (b) that 
\begin{equation}\label{eq:phi3}
  \alpha' \in \K'.
\end{equation}
In our case, $h^2 \in L$, and so $h^2 \in G$. It follows that
$$\Phi(h^2)^{-1}\Phi(g'^2) = \beta'^2 \cdot \diag(\mathrm{Id},\alpha'^2\mathrm{Id})$$
belongs to $\Phi(G)$ and centralizes $\Phi(L)$. In this case, 
\eqref{eq:z1} shows that $\alpha'^2 = \pm 1$. But then \eqref{eq:phi3} rules out the case $\alpha'^2 = -1$,
and so $\alpha' = -1$ by \eqref{eq:phi2}. Recalling $\Phi(\bj) = \pm \diag(\mathrm{Id},-\mathrm{Id})$, we now 
have 
$$\Phi(\bj g') = \pm \beta' \cdot \diag(\Phi^+(h),\Phi^-(h)),$$
whence
$$|\omega(\bj g')|^2 = |\Tr(\Phi(\bj g'))|^2 = |\xi(h)+\eta(h)|^2 = r,$$
again contradicting (d).
\end{proof}

Theorem \ref{ext-sp} will usually be used in tandem with the following Goursat-like theorem:

\begin{thm}\label{weil-sp}
Let $p \geq 3$ be a prime, $f,n \in \Z_{\geq 1}$, $q=p^f$, and let $(n,q) \neq (1,3)$. Let $\Phi:L \to \GL_{q^{n}}(\C)$ be a faithful 
representation of a finite group $L$, which is a sum of two irreducible representations $\Phi^\eps$ of degree
$(q^{n}-\eps)/2$, $\eps = \pm$, that satisfies the following two conditions:
\begin{enumerate}[\rm(a)]
\item For each $\eps = \pm$, $\Phi^\eps(L)$ is quasisimple;
\item For (at least) one value $\gam = \pm$, there are positive integers $n_\gam,a_\gam$ such that $nf = n_\gam a_\gam$ and 
$\Phi^\gam(L)$ is isomorphic to a quotient of $\Sp_{2n_\gam}(p^{a_\gam})$; and 
\item For each $g \in L$, $|\Tr(\Phi(g))|^2$ is a power of $q$.
\end{enumerate}
Then there is some divisor $d$ of $n$ such that $L \cong \Sp_{2n/d}(q^d)$, and furthermore $\Phi$ is a total Weil representation of $L$. 
\end{thm}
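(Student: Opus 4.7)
The proof plan has four stages: identify $\Phi^\gamma$ as a Weil representation of $S := \Sp_{2n_\gamma}(p^{a_\gamma})$; use Goursat plus (c) to force both kernels $\Ker\Phi^{\pm}$ to be central; conclude $L \cong S$ and $\Phi = \omega_{n_\gamma, p^{a_\gamma}}$; and use (c) once more to force $p^{a_\gamma} = q^d$ with $d \mid n$.

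For the first stage, by (a) and (b), the quasisimple group $\Phi^\gamma(L)$ is a quotient of $S$, hence equals $S$ or $\PSp_{2n_\gamma}(p^{a_\gamma})$, and it carries an irreducible representation of the characteristic degree $(p^{n_\gamma a_\gamma}-\gamma)/2$ of an irreducible Weil representation. The Tiep-Zalesski classification of low-dimensional irreducible representations of quasisimple groups (as already invoked in the proof of Lemma \ref{cent-sp}, via \cite[Theorem 5.2]{TZ1} together with \cite[Lemma 2.6]{TZ2}) then identifies $\Phi^\gamma$ as a Weil representation of $S$ pulled back through the quotient map.

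For the second and third stages, faithfulness of $\Phi$ gives $\Ker\Phi^+ \cap \Ker\Phi^- = 1$, so $L$ embeds as a subdirect product in $\Phi^+(L) \times \Phi^-(L)$. The normal subgroup $K := \Phi^{-\gamma}(\Ker\Phi^\gamma)$ of the quasisimple group $\Phi^{-\gamma}(L)$ is either central or all of $\Phi^{-\gamma}(L)$; in the latter case Goursat yields $L = \Phi^+(L) \times \Phi^-(L)$, a direct product. This direct-product case must be excluded using (c): choosing $g = (e, h) \in L$ with $e$ the identity of $\Phi^+(L)$ and $h$ a transvection in $\Phi^-(L)$, one has $\Tr\Phi(g) = (q^n-\gamma)/2 + \Tr\Phi^{-\gamma}(h)$, and the Weil-character value $|\Tr\Phi^{-\gamma}(h)|$ is of order $p^{a_\gamma(2n_\gamma - 1)/2}/2$, making $|\Tr\Phi(g)|^2$ of order $q^{2n}/4$, which is not a $q$-power for $q \ge 3$. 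Once this case is ruled out, both $\Ker\Phi^{\pm}$ are central. Since $\Phi^{\pm}(L)$ are perfect, $L^{ab} \subseteq \Ker\Phi^+ \cap \Ker\Phi^- = 1$, so $L$ itself is perfect, and is therefore a quotient of the universal cover of $\PSp_{2n_\gamma}(p^{a_\gamma})$, which is $S$ (invoking $(n_\gamma, p^{a_\gamma}) \ne (1,3)$, forced by the hypothesis $(n,q) \ne (1,3)$ since $p^{n_\gamma a_\gamma} = q^n$). Since exactly one of the two Weil representations of $S$ has $-\Id$ acting trivially, the simultaneous irreducibility of both $\Phi^{\pm}$ as Weil representations of $L$ forces $L = S$ rather than $\PSp_{2n_\gamma}(p^{a_\gamma})$. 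Their compatibility as the two summands of a single total Weil representation (rather than of inequivalent ones differing by an outer diagonal automorphism) follows exactly as in the proof of Lemma \ref{emb}: otherwise \cite[Lemma 2.6(iii)]{TZ2} would force $\Tr\Phi(t) = 0$ at a transvection $t$, contradicting that $|\Tr\Phi(t)|^2$ is a positive $q$-power.

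For the fourth stage, knowing $\Phi = \omega_{n_\gamma, p^{a_\gamma}}$, the identity \eqref{eq:value1} gives $|\omega(g)|^2 = (p^{a_\gamma})^{\dim_{\F_{p^{a_\gamma}}} \Ker(g-\Id)}$ for all $g$. Requiring this to be a power of $q = p^f$ for every centralizer dimension $c$ that occurs — notably $c = 2n_\gamma$ at the identity and $c = 2n_\gamma - 1$ at a transvection — gives consecutive multiples of $a_\gamma$ divisible by $f$, hence $f \mid a_\gamma$. Writing $a_\gamma = df$, the relation $n_\gamma a_\gamma = nf$ yields $n_\gamma = n/d$ with $d \mid n$, so $S = \Sp_{2n/d}(q^d)$, proving the theorem. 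The principal obstacle I expect is the quantitative character estimate in the second stage — carefully bounding $|\Tr\Phi^{-\gamma}(h)|$ and ruling out the $q$-power coincidence across all parameter ranges (especially small $q$ such as $q=3$); the rest of the argument is structural and should follow the template already laid down in the proof of Lemma \ref{cent-sp} and Theorem \ref{ext-sp}.
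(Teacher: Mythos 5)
Your outline follows the same skeleton as the paper's proof (analyze the two kernels Goursat-style, exclude the direct-product case using (c), identify the summands as Weil representations via the Tiep--Zalesskii classification, get total-Weil compatibility from \cite[Lemma 2.6(iii)]{TZ2}, and finish with an arithmetic step forcing $f\mid a_\gam$), and your last step (transvection together with the identity element, giving $f \mid a_\gam(2n_\gam-1)$ and hence $f\mid a_\gam$) is a correct variant of the paper's regular-unipotent computation. But two of your middle steps have genuine gaps. First, your exclusion of the direct-product case is not a proof: knowing $|\Tr(\Phi(g))|^2$ is ``of order $q^{2n}/4$'' does not preclude it from equaling a power of $q$ --- for $q=3$ the window you describe contains $3^{2n-1}$ --- and the exact value involves quadratic Gauss sums, so one would need genuine case-by-case arithmetic (which you flag but do not supply). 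The paper kills this case with no estimates at all: if $L=K_+\times K_-$, then $\Phi^+|_{K_-}$ and $\Phi^-|_{K_+}$ are irreducible of degree $>1$, so by Burnside's vanishing theorem there are $g_-\in K_-$ and $g_+\in K_+$ with $\Tr(\Phi^+(g_-))=\Tr(\Phi^-(g_+))=0$, and then $g=g_+g_-$ has $\Tr(\Phi(g))=0$, which is not a power of $q$.

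Second, your passage from ``both kernels are central'' to the isomorphism type of $L$ is flawed in two places. The claim ``$L^{\mathrm{ab}}\subseteq \Ker\Phi^+\cap\Ker\Phi^-=1$, so $L$ is perfect'' is a non sequitur: centrality of the two kernels together with quasisimplicity of the two images does not by itself force $L$ perfect (a priori $L$ could be $M\langle z\rangle$ with $M$ perfect and $z$ central outside $M$; it is the interplay of (a) with the parity of the two degrees that rules this out, and you have not made that argument). Moreover, even granting perfectness, ``the universal cover of $\PSp_{2n_\gam}(p^{a_\gam})$ is $\Sp_{2n_\gam}(p^{a_\gam})$'' is false exactly when $p^{n_\gam a_\gam}=q^n=9$, where $\PSp_2(9)\cong A_6$ has Schur multiplier $C_6$; this case is allowed by the hypotheses (only $(n,q)=(1,3)$ is excluded) and the paper treats it separately, noting that among central extensions of $\PSp_2(9)$ only quotients of $\Sp_2(9)$ have irreducible representations of degree $4$ or $5$. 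The paper's actual route avoids both issues: after showing $K_+\le \ZB(L)$, it identifies both $\Phi^{\pm}$ as irreducible Weil representations of $\Sp_{2n_\gam}(p^{a_\gam})$ via \cite[Theorem 5.2]{TZ1}, observes that the summand of odd degree has quasisimple image equal to the simple group $\PSp_{2n_\gam}(p^{a_\gam})$ (trivial center), so the corresponding kernel argument gives $K_+=1$, and then concludes $L\cong \Phi^+(L)\cong \Sp_{2n_\gam}(p^{a_\gam})$ because the even-degree Weil representation is faithful on the double cover. You should replace your perfectness/universal-cover step by this parity argument (or supply the missing justifications, including the $q^n=9$ exception), and replace your quantitative estimate by the Burnside argument.
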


\begin{proof}
(i) For each $\eps = \pm$, we have that $L_\eps:=\Phi^\eps(L)$ is quasisimple, with cyclic center $\ZB(L_\eps)$ and simple 
quotient $S_\eps = L_\eps/\ZB(L_\eps)$. Let $K_\eps$ denote the kernel of $\Phi^\eps$. 
Interchanging $\eps$ with $-\eps$ if necessary, we may assume that $\gam=-$.
Now we have the exact sequences
$$1 \to K_+ \to L \stackrel{\Phi^+}{\longrightarrow} L_+ \to 1,~~1 \to K_- \to L \stackrel{\Phi^-}{\longrightarrow} L_- \to 1.$$
As $L_-$ is quasisimple and $K_+ \lhd L$, either $\Phi^-(K_+) = L_-$ or $\Phi^-(K_+) \leq \ZB(L_-)$. 

Assume we are in the former case. Then $\Phi$ maps $K_+$ isomorphically onto $L_-$, and so $K_+ \cong L_-$. 
Furthermore, $\Phi^-(L) = \Phi^-(K_+)$, which implies that $L = K_+K_-$. However, $K_+ \cap K_- = \Ker(\Phi)=1$, so 
we must have that $L = K_+ \times K_-$. We also have $K_- \cong L/K^+ \cong \Phi^+(L) \cong L_+$. Thus 
$$L = K_+ \times K_- \cong L_- \times L_+.$$
By Burnside's theorem, $\Phi^+(K_-) = \Phi^+(L)$ contains some element $g_- \in K_-$ with $\Tr(\Phi^+(g_-)) = 0$. Likewise,
$\Phi^-(K_+) = \Phi^-(L)$ contains some element $g_+ \in K_+$ with $\Tr(\Phi^-(g_+)) = 0$. As $g_\eps \in K_\eps$ acts trivially in
$\Phi^\eps$, for $g :=g_+g_- \in L$ we have 
$$\Tr(\Phi(g)) = \Tr(\Phi^+(g)) + \Tr(\Phi^-(g)) = \Tr(\Phi^+(g_-)) + \Tr(\Phi^-(g_+)) = 0,$$
contradicting (c).

We have shown that $\Phi^-(K_+) \leq \ZB(L_-)$; i.e. $\Phi^-(h)$ centralizes $\Phi^-(L)$ if $h \in K_+$.
Clearly, $\Phi^+(h) = \mathrm{Id}$ also centralizes $\Phi^+(L)$. Thus $\Phi(h)$ centralizes $\Phi(L)$, and so
$h \in \ZB(L)$ since $\Phi$ is faithful. It follows that $K_+ \leq \ZB(L)$.

\smallskip
(ii) Recall that $L/K_- = \Phi^\gam(L) = L_\gam$ is isomorphic to a quotient of $\Sp_{2n_\gam}(p^{a_\gam})$; in particular,
$S_\gam=\PSp_{2n_\gam}(p^{a_\gam})$ is a quotient of $L$. As $K_+ \leq \ZB(L)$, we see that $S_\gam$ is a non-abelian
composition factor of the quasisimple group $L_+= L/K_+$. It follows that $S_+ \cong S_\gam$. In the case 
$S_\gam \cong \PSp_2(9)$, note that, among central extensions of $\PSp_2(9)$, 
only quotients of $\Sp_2(9)$ can have irreducible representations of degree $4$ or $5$. 
Thus in all cases $L_\eps$ are quotients of $\Sp_{2n_\gam}(p^{a_\gam})$, and we can view 
$\Phi^\eps$ as an irreducible representation of $\Sp_{2n_\gam}(p^{a_\gam})$ of degree $(p^{a_\gam n_\gam} -\eps)/2$.
Using the well-known character table of $\Sp_2(p^{a_\gam})$ when $n_\gam=1$ and \cite[Theorem 5.2]{TZ1} when $n_\gam \geq 2$, 
we see that each $\Phi^\eps$ is an irreducible {\it Weil} representation of the quasisimple group $2S_\gam=\Sp_{2n_\gam}(p^{a_\gam})$.

Without loss of generality, we may assume that $\deg(\Phi^-)$ is odd, whence $\Phi^-(L)=L^- \cong S_\gam$ is simple, and the arguments in 
(i) show that $\Phi^-(K_+)=1$. Hence $\Phi(K_+) = 1$, and so $K_+ = 1$ since 
$\Phi$ is faithful. As $\deg(\Phi^+)$ is even, it follows that $L \cong \Phi^+(L) \cong 2S_\gam = \Sp_{2n_\gam}(p^{a_\gam})$.
Now if these two irreducible Weil representations $\Phi^\pm$ of $L$ 
do not come from the same total Weil representation of $L$, then for a transvection $t \in L$ we have by \cite[Lemma 2.6(iii)]{TZ2} that 
$\Tr(\Phi(t)) = 0$, again contradicting (c). Hence $\Phi|_L$ is a total Weil representation. 

Next, we choose $u \in L$ to be a regular unipotent element. Then $u$ acting on the natural module $\F_{p^{a_\gam}}^{2n_\gam}$ 
with exactly $p^{a_\gam}$ fixed points. Applying to $u$ the well-known character formula \cite[(3.4.5)]{KT2} for total Weil representations, 
we then obtain that $|\Tr(\Phi(u))|^2 = p^{a_\gam}$. Condition (c) implies that $p^{a_\gam}$ is a power $q^d$ of $q$, and we are done.
\end{proof}

The following statement is well known; we include the proof for the reader's convenience.

\begin{lem}\label{simple-product}
Let $S_1, \ldots,S_n$ be finite non-abelian simple groups. For each $i$, let $\pi_i$ denote the projection of 
$S_1 \times S_2 \times \ldots \times S_n$ onto the $i^{\mathrm {th}}$ component. Let $G \leq S_1 \times \ldots \times S_n$ 
be a subgroup such that $\pi_i(G)=S_i$ for all $i = 1,2, \ldots ,n$. Then there exists a subset $J$ of $\{1,2, \ldots ,n\}$ such that
$G$ is isomorphic to the direct product $\prod_{j \in J}S_j$.
\end{lem}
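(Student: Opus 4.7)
The plan is to prove Lemma \ref{simple-product} by induction on $n$. The base case $n=1$ is trivial since the only subgroup with $\pi_1(G) = S_1$ is $G = S_1$ itself.

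For the inductive step, for each $i$ define
$$N_i := G \cap \bigl( 1 \times \cdots \times 1 \times S_i \times 1 \times \cdots \times 1 \bigr),$$
the intersection of $G$ with the $i$-th coordinate subgroup. Since this coordinate subgroup is normal in $S_1 \times \cdots \times S_n$, $N_i$ is normal in $G$. Identifying $N_i$ with its image $\pi_i(N_i) \leq S_i$, the normality of $N_i$ in $G$ together with $\pi_i(G) = S_i$ makes $\pi_i(N_i)$ a normal subgroup of $S_i$, which by simplicity equals either $1$ or $S_i$.

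Now split into cases. If $N_{i_0} = 1$ for some index $i_0$, then the map $p_{i_0} : G \to \prod_{j \neq i_0} S_j$ that forgets the $i_0$-th coordinate has kernel $N_{i_0} = 1$, hence is injective. Its image $p_{i_0}(G)$ still projects onto each $S_j$ for $j \neq i_0$, so the induction hypothesis applied to $p_{i_0}(G) \leq \prod_{j \neq i_0} S_j$ yields $G \cong p_{i_0}(G) \cong \prod_{j \in J} S_j$ for some $J \subseteq \{1, \ldots, n\} \setminus \{i_0\}$. Otherwise $N_i = S_i$ for every $i$. The subgroups $N_1, \ldots, N_n$ lie in distinct coordinate factors of $S_1 \times \cdots \times S_n$, so they pairwise commute and intersect trivially; their internal product in $G$ is therefore their (external) direct product, namely the whole of $S_1 \times \cdots \times S_n$. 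Since $G$ contains each $N_i$ and is contained in $S_1 \times \cdots \times S_n$, we conclude $G = S_1 \times \cdots \times S_n$ and take $J = \{1, \ldots, n\}$.

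There is no genuine obstacle here: the whole argument rests on the single observation that a normal subgroup of a subdirect product of simple groups projects either trivially or surjectively onto each factor, which immediately yields a direct factor to peel off or reduces to a product of one fewer factor. I would not expect any subtlety beyond keeping track of which coordinate has been eliminated at each stage of the induction.
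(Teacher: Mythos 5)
Your proof is correct and is essentially the paper's argument: both are inductions on $n$ hinging on the fact that the intersection of $G$ with a coordinate factor is normal in $G$, hence projects to a normal (so trivial or full) subgroup of the simple factor, after which one either drops that coordinate injectively or recognizes a full coordinate subgroup inside $G$. The only difference is organizational — the paper works one coordinate at a time and, in the ``full'' case, splits $G = K_1 \times K_2$ and recurses, whereas you treat all coordinates at once so that your ``all full'' case terminates immediately with $G$ equal to the whole product — but the key dichotomy and induction are the same.
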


\begin{proof}
We induct on $n$, with the induction base $n=1$ being trivial. For the induction step $n \geq 2$, let $K_1 := \Ker\bigl((\pi_1)|_G\bigr)$ and
let $K_2$ be the kernel of the homomorphism 
$$\pi' := \pi_2 \times \pi_3 \times \ldots \times \pi_n: G \to S_2 \times S_3 \times \ldots \times S_n.$$
Then $K_1,K_2 \lhd G$ and $K_1 \cap K_2 = 1$. In particular, $\pi_1$ maps $K_2$ injectively onto a normal subgroup of $\pi_1(G)=S_1$.
By simplicity, $K_2 = 1$ or $\pi_1(K_2)=S_1$. In the former case, $\pi'$ is injective, hence we can consider $G$ as a subgroup of
$S_2 \times S_3 \times \ldots \times S_n$, and conclude by applying the induction hypothesis for $n-1$.

In the latter case, $\pi_1(K_2) = \pi_1(G)$, whence $G = K_1K_2 = K_1 \times K_2$ and $K_2 \cong S_1$. It also follows for each
$i \geq 2$ that $\pi_i(K_1) = \pi_i(G) = S_i$. As $K_1 \leq S_2 \times S_3 \times \ldots \times S_n$, we can again apply the induction hypothesis to $K_1$ to obtain $J' \subseteq \{2,3, \ldots,n\}$ such that $K_1 \cong \prod_{j \in J'}S_j$. Thus 
$G \cong \prod_{j \in J}S_j$ with $J := J' \cup \{1\}$.
\end{proof}

The next result generalizes Goursat's lemma:

\begin{prop}\label{goursat}
Let $G$ be a perfect finite group, and let $\Phi:G \to \GL_N(\C)$ be a faithful representation that satisfies the following conditions:
\begin{enumerate}[\rm(a)]
\item $\Phi=\oplus^n_{i=1}\Phi_i$ is a sum of $n$ irreducible constituents;
\item $L_i:=\Phi_i(G)$ is quasisimple, with simple quotient $S_i = L_i/\ZB(L_i)$.
\end{enumerate}
Then there is a subset $\{j_1, j_2, \ldots,j_m\}$ of $\{1,2,\ldots,n\}$ such that $G$ is isomorphic to a central product
$R_{j_1} * R_{j_2} * \ldots *R_{j_m}$, where each $R_{j_i}$ is a quasisimple cover of $S_{j_i}$. 

Suppose that, in addition to {\rm (a)--(b)}, $\Phi$ also satisfies the following two additional conditions:
\begin{itemize}
\item[(c)] $\Tr(\Phi(g)) \neq 0$ for all $g \in G$;
\item[(d)] For any quasisimple subgroup $H \leq G$ and for any proper subset $\sX \subset \{\Phi_1, \Phi_2, \ldots,\Phi_n\}$
with the property that $\Phi_i(H) = \Phi_i(G)$ for all $\Phi_i \in \sX$, there exists $h \in H$ such that $\Tr(\Phi_i(h)) = 0$ for all
$\Phi_i \in \sX$. {\rm [Note that, by Burnside's theorem, this condition is automatic if $n=2$.]}
\end{itemize}
Then $G$ is quasisimple.
\end{prop}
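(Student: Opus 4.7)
My plan is to deduce the central product structure from Lemma~\ref{simple-product} and then, for the second assertion, manufacture a trace-zero element of $G$ by assembling contributions coming from each factor. Since $\Phi=\bigoplus_i\Phi_i$ is faithful, the diagonal map $G\to L_1\times\cdots\times L_n$ is injective; composing with the quotients $L_i\twoheadrightarrow S_i$ yields a surjection of $G$ onto a subdirect product $\bar G\leq\prod_i S_i$ whose kernel is central in $G$ (each of its elements acts by scalars on every irreducible summand $\Phi_i$, by Schur). Lemma~\ref{simple-product} then supplies a subset $J=\{j_1,\ldots,j_m\}\subseteq\{1,\ldots,n\}$ with $\bar G\cong\prod_{j\in J}S_j$. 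Because $G$ is perfect and maps onto $\bar G$ with central kernel, the universal perfect central extension $\prod_{j\in J}\tilde S_j$ of $\bar G$ surjects onto $G$; writing $R_{j_k}$ for the image of $\tilde S_{j_k}$, one obtains quasisimple subgroups $R_{j_k}\leq G$ that pairwise commute, generate $G$, and have pairwise intersections inside $\ZB(G)$, i.e., $G=R_{j_1}*R_{j_2}*\cdots*R_{j_m}$ is the desired central product.

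For the second assertion, I argue by contradiction, assuming $m\geq2$. The key structural observation is that each $\Phi_i$ ``sees'' exactly one factor: for $k\neq l$, the images $\Phi_i(R_{j_k})$ and $\Phi_i(R_{j_l})$ are commuting quasisimple-or-trivial subgroups of $L_i$ whose product is $L_i$; since $L_i$ has a unique non-abelian composition factor (namely $S_i$), at most one of these images can fail to lie in $\ZB(L_i)$, and perfectness of $L_i$ then forces that distinguished image to equal $L_i$. Hence there is a unique $k^*(i)\in\{1,\ldots,m\}$ with $\Phi_i(R_{j_{k^*(i)}})=L_i$, while for $k\neq k^*(i)$ the subgroup $\Phi_i(R_{j_k})\leq\ZB(L_i)$ acts by a scalar $z_{i,k}\in\C^\times$ on the irreducible $\Phi_i$. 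This partitions $\{1,\ldots,n\}$ as $\sqcup_{k=1}^m I_k$ with $I_k:=\{i:k^*(i)=k\}$, and each $I_k$ is nonempty since $j_k\in I_k$.

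The contradiction is then produced by splicing trace-zero elements from each factor. For each $k$, the set $\sX_k:=\{\Phi_i:i\in I_k\}$ is nonempty and, because $m\geq2$, a proper subset of $\{\Phi_1,\ldots,\Phi_n\}$ on which $\Phi_i(R_{j_k})=\Phi_i(G)$, so hypothesis~(d) applied to the quasisimple subgroup $H=R_{j_k}$ yields $h_k\in R_{j_k}$ with $\Tr(\Phi_i(h_k))=0$ for every $i\in I_k$. Setting $g:=h_1h_2\cdots h_m$, which is well defined since the $R_{j_k}$ pairwise commute, one computes $\Phi_i(g)=\bigl(\prod_{k\neq k^*(i)}z_{i,k}\bigr)\Phi_i(h_{k^*(i)})$, whence $\Tr(\Phi_i(g))=0$ for every $i$ and therefore $\Tr(\Phi(g))=0$, contradicting~(c). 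I expect the main delicate point to be the verification in the previous paragraph that $\Phi_i(R_{j_k})$ is either all of $L_i$ or contained in $\ZB(L_i)$; this uses both that each such image is quasisimple or trivial (inherited from $R_{j_k}$) and that a normal quasisimple subgroup of $L_i$ mapping onto $S_i$ must, by perfectness of $L_i$, coincide with $L_i$.
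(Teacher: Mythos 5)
Your argument is correct and follows the same overall strategy as the paper: reduce via Lemma \ref{simple-product} to a quotient of $G$ isomorphic to $\prod_{j\in J}S_j$ with central kernel, decompose $G$ as a central product of quasisimple factors $R_{j_k}$, show that each $\Phi_i$ is ``seen'' by exactly one factor, and splice together trace-zero elements supplied by (d) to contradict (c). You implement two of the steps differently. For the central product decomposition the paper takes $R_j:=\bigl(\Sigma^{-1}(S_j)\bigr)^{(\infty)}$ and proves the factors commute with the Three Subgroups Lemma, whereas you invoke universal central extensions (the Schur cover of $\prod_{j\in J}S_j$ surjects onto the perfect central extension $G$); both are standard, yours is quicker but uses heavier machinery, and it does give the normality of each $R_{j_k}$ in $G$ that you need later. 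For the claim that each $\Phi_i$ is nontrivial on only one factor, the paper factors $\Phi_i$ as an outer tensor product over the central product and uses quasisimplicity of $L_i$, while you use that $\Phi_i(R_{j_k})\lhd L_i$ together with the dichotomy that a normal subgroup of a quasisimple group is either central or the whole group; this works (be careful that the ``unique non-abelian composition factor'' phrasing is only valid for normal subgroups, which is what your final sentence correctly uses), and in fact perfectness of $R_{j_k}$ forces the scalars $z_{i,k}$ to equal $1$, though you do not need this. One small inaccuracy: the isomorphism $\bar G\cong\prod_{j\in J}S_j$ from Lemma \ref{simple-product} is only abstract, so your justification ``$j_k\in I_k$'' for the nonemptiness of $I_k$ need not hold literally (when several $S_j$ are isomorphic the labels can be permuted); nonemptiness is nevertheless immediate, e.g.\ because if $R_{j_k}$ acted by scalars in every $\Phi_i$ it would be central in $G$ by faithfulness of $\Phi$, hence abelian, which is essentially how the paper argues.
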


\begin{proof}
(i) Consider the natural projection
$$\Theta: L_1 \times L_2 \times \ldots \times L_n \to L_1/\ZB(L_1) \times L_2/\ZB(L_2) \times \ldots \times L_n/\ZB(L_n) = S_1 \times S_2
    \times \ldots \times S_n.$$
Next, $\Phi$ gives rise to an injective homomorphism
$$\tilde\Phi:g\in G \mapsto \bigl(\Phi_1(g),\Phi_2(g), \ldots,\Phi_n(g)\bigr) \in L_1 \times L_2 \times \ldots \times L_n.$$ 
Then $\Theta\tilde\Phi(G)$ is a subgroup of  
$S_1 \times S_2 \times \ldots \times S_n$ that projects surjectively onto each of the $n$ components. By Lemma \ref{simple-product}, 
there is a subset $J:=\{j_1, j_2, \ldots,j_m\}$ of $\{1,2,\ldots,n\}$ such that 
$$\Theta\tilde\Phi(G) \cong S_{j_1} \times S_{j_2} \times  \ldots  \times S_{j_m}.$$
Let $Z := \Ker(\Theta\tilde\Phi)$. Then $\tilde\Phi(Z) \leq \ZB(L_1) \times \ZB(L_2) \times \ldots \times \ZB(L_n)$ and so centralizes 
$\tilde\Phi(G)$. As $\tilde\Phi$ is faithful, $Z \leq \ZB(G)$. 

We have proved that there is a surjective homomorphism
$$\Sigma:G \twoheadrightarrow S_{j_1} \times S_{j_2} \times  \ldots  \times S_{j_m}$$
with kernel $Z\leq \ZB(G)$ (in fact, the simplicity of $S_j$ for all $j \in J$ then implies that $Z = \ZB(G)$). Let $j \in J$ and set 
$\tilde R_j := \Sigma^{-1}(S_j)$, $R_j := \tilde R_j^{(\infty)}$. Then it is easy to see that $\Sigma(R_j) = S_j = \Sigma(\tilde R_j)$ and so $\tilde R_j = ZR_j$.
This in turn implies that $R_j/(Z \cap R_j) \cong \tilde R_j/Z \cong S_j$, whence $R_j$ is a quasisimple cover of $S_j$. 
Since $\tilde R_j \lhd G$, we also have that $R_j \lhd G$. Consider any $i \in J$ with $i \neq j$. Then $\Sigma(R_i \cap R_j) = 1$ and 
so $R_i \cap R_j \leq Z$. In particular, $[R_i,R_j] \leq R_i \cap R_j$ centralizes $R_j$. It follows from the Three Subgroups Lemma
that $[R_i,R_j] = [[R_i,R_i],R_j]$ is in fact trivial. This implies that 
$R_{j_1}R_{j_2} \ldots R_{j_m}$ is a central product. Next,
$$\Sigma(ZR_{j_1}R_{j_2} \ldots R_{j_m}) = S_{j_1} \times S_{j_2} \times  \ldots  \times S_{j_m} = \Sigma(G),$$
implying $G = ZR_{j_1}R_{j_2} \ldots R_{j_m}$. As $G = G^{(\infty)}$, it follows that 
\begin{equation}\label{eq:g1}
  G=R_{j_1}R_{j_2} \ldots R_{j_m} = R_{j_1}* R_{j_2} * \ldots * R_{j_m}.
\end{equation}   
In particular, the first statement of the proposition follows.

\smallskip
(ii) For the second statement, note that, by \eqref{eq:g1}, for each $1 \leq i \leq n$ we can express
$$\Phi_i = \Psi_{i,1} \boxtimes \Psi_{i,2} \boxtimes \ldots \boxtimes \Psi_{i,m}$$
as an outer tensor product of $\Psi_{i,k} \in \Irr(R_{j_k})$, $1 \leq k \leq m$. It follows that $L_i = \Phi_i(G)$ is a central product 
$\Psi_{i,1}(R_{j_1})*\Psi_{i,2}(R_{j_2})* \ldots * \Psi_{i,m}(R_{j_m})$ of (normal) subgroups. Since $L_i$ is quasisimple and since
each $R_{j_k}$ is also quasisimple, we conclude that all but one $\Psi_{i,k}$ are trivial, say for all $k \neq k_i$. This implies that
$$L_i=\Phi_i(G) = \Psi_{i,k_i}(R_{k_i}) = \Phi_i(R_{k_i}).$$

On the other hand, the faithfulness of $\Phi$ implies that each $R_j$ with $j \in J$ must be acting nontrivially in some $\Phi_i$. So we 
can partition $\{\Phi_1, \Phi_2, \ldots,\Phi_n\}$ into a disjoint union $\sX_1 \sqcup \sX_2 \sqcup \ldots \sqcup \sX_m$ of non-empty subsets 
such that for each $1 \leq t \leq m$ and for all $\Phi_i \in \sX_t$ we have 
\begin{equation}\label{eq:g2}
  L_i = \Phi_i(G) = \Phi_i(R_{j_t})
\end{equation}  
but $\Phi_i(R_{j'})$ is trivial for all $j' \in J \smallsetminus \{j_t\}$.

Now if $m=1$, then $G$ is quasisimple, as stated. Suppose $m \geq 2$. Then $R_{j_t}$ is a quasisimple subgroup of $G$, and 
$|\sX_t| \leq n-1$. By \eqref{eq:g2} and assumption (d), there exists $x_t \in R_{j_t}$ such that $\Tr(\Phi_i(x_t)) = 0$ for all $\Phi_i \in \sX_t$. 
Setting $g:=x_1x_2 \ldots x_t$, we see that 
$$\Tr(\Phi_i(g)) = \Tr(\Phi_i(x_t)) = 0$$
for all $\Phi_i \in \sX_t$. It follows that $\Tr(\Phi(g)) = \sum^n_{i=1}\Tr(\Phi_i(g)) = 0$, contradicting (c).
\end{proof}

\section{Going-up and going-down}
First we prove the following going-up result. (Notice the difference between this and Theorem \ref{ext-sp}: in the latter we assumed 
that $L$ is a {\it normal} subgroup of $G$.) Usually, we apply this result with $(G,e)=(\tilde G,1)$, in which case condition (e) is equivalent 
to (d).

\begin{thm}\label{sp-up}
Let $q=p^f$ be a power of a prime $p > 2$, $n \in \Z_{\geq 1}$, and let $H := \Sp_{2n}(q)$ with $q^n \geq 9$. Suppose
that $\Phi: \tilde G \to \GL_{q^n}(\C)$ is a faithful representation of a finite group $\tilde G \geq H$ with the following properties:
\begin{enumerate}[\rm(a)]
\item $\Phi$ is a sum of two representations, $\Phi^+$ of degree $(q^n-1)/2$ and $\Phi^+$ of degree $(q^n-1)/2$;
\item For all $g \in \tilde G$ and $\eps = \pm$, $\Tr(\Phi^\eps(g)) \in \K$, where $\K = \Q$ if $p=3$ and $2|f$, and 
$\K = \Q(\sqrt{(-1)^{(p-1)/2}p})$ otherwise;
\item $\Phi^\eps|_H$ is irreducible for each $\eps = \pm$; and
\item For some subgroup $G \leq \tilde G$ that contains $\tilde G^{(\infty)}$, $|\Tr(\Phi(x))|^2$ is always a power of $q$ for all $x \in G$.
\item There exists some divisor $e$ of $f$ such that $|\Tr(\Phi(y))|^2$ is always a power of $q^{1/e}$ for all $y \in \tilde G$. If $e>1$,
assume in addition that there exists some $g \in \tilde G$ such that $|\Tr(\Phi(g))|^2=q^{1/e}$ and that, if $p=3$ then 
$p \nmid |\det(\Phi^\eps(\tilde G))|$ for $\eps = \pm$.
\end{enumerate}
Then $H \lhd \tilde G$, $G = \CB_G(H)H$ and $\CB_{\tilde G}(H) = \ZB(\tilde G)=C \times \ZB(H)$, where
$\ZB(H) = \langle \bj \rangle \cong C_2$, and either 
\begin{enumerate}[\rm(i)]
\item $C= 1$ or $C_2$,  or 
\item $e=1$, $p=3$ divides $|\det(\Phi^\eps(\tilde G))|$ for each $\eps = \pm$, $2 \nmid f$, and $C \in \{C_3,C_6\}$.
\end{enumerate}
Moreover, $\tilde G$ induces a field automorphism of order $e$ of $H$, and $\tilde G/\CB_{\tilde G}(H) \cong \PSp_{2n}(q) \rtimes C_e$.
\end{thm}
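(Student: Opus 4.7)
The overall strategy is to first upgrade the embedding $H \leq \tilde G$ to normality $H \lhd \tilde G$, then apply Theorem \ref{ext-sp} to the intermediate group $G$ to pin down the structure of $G$ itself, and finally extend the analysis to $\tilde G$ using the finer trace control coming from condition (e).

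To show $H \lhd \tilde G$, I would consider the normal closure $K := \langle H^{\tilde G}\rangle$. Since $q^n \geq 9$ and $p$ is odd, $H = \Sp_{2n}(q)$ is quasisimple, so $K$ is a central product $H_1 * H_2 * \cdots * H_r$ of $\tilde G$-conjugates of $H$, and each $\Phi^\eps|_K$ decomposes along this central product structure as an outer tensor product $\Psi_1^\eps \boxtimes \cdots \boxtimes \Psi_r^\eps$. Any nontrivial factor $\Psi_i^\eps$ has dimension at least $(q^n-1)/2$ by the Landazuri--Seitz--Tiep bound; since the $H_i$ are $\tilde G$-conjugate, they act nontrivially in the same set of $\Phi^\eps$'s, and since $((q^n-1)/2)^2 > (q^n+1)/2$ for $q^n \geq 9$, at most one factor $H_i$ can act nontrivially in any given $\Phi^\eps$. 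Faithfulness of $\Phi$ then forces $r = 1$, so $H = K \lhd \tilde G$.

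With normality in hand, the inclusions $H = H^{(\infty)} \leq \tilde G^{(\infty)} \leq G$ and $H \lhd \tilde G$ give $H \lhd G$, and conditions (a)--(d) of Theorem \ref{ext-sp} are immediate from (a)--(d) of the present theorem. Applied to $(G, H)$, it yields that $\Phi|_H$ is a total Weil representation, $G = CH$, $\ZB(G) = C \times \ZB(H)$, and $C$ falls into case $(\alpha)$ or $(\beta)$ of that theorem, which establishes $G = \CB_G(H) H$. Schur's lemma applied to the irreducibles $\Phi^\eps|_H$ then yields $\CB_{\tilde G}(H) = \ZB(\tilde G)$, and the observation that conjugation by $\tilde G$ preserves the isomorphism class of each $\Phi^\eps|_H$, while a non-inner diagonal automorphism of $\Sp_{2n}(q)$ would swap the two irreducible Weil representations of any fixed degree \cite[Lemma 2.6]{TZ2}, rules out diagonal automorphisms; hence $\tilde G/\CB_{\tilde G}(H) \hookrightarrow \PSp_{2n}(q) \rtimes C_{e'}$ for some $e' \mid f$.

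The final step is to pin down $e' = e$ and the exact structure of $\CB_{\tilde G}(H)$, which is where the main technical work lies. For any $y \in \tilde G$ inducing a field automorphism $\sigma_r$ of order $e''$ of $H$, the trace computations from the proof of Theorem \ref{ext-sp} --- based on Lemma \ref{value1} applied to the total Weil character and on \cite[Theorem 3.5]{KT3} --- give $|\Tr(\Phi(y))|^2$ as powers of $r = q^{1/e''}$; condition (e) then forces $e'' \mid e$, so $e' \mid e$. The element $g$ of condition (e) with $|\Tr(\Phi(g))|^2 = q^{1/e}$ (when $e > 1$) cannot come from $\CB_{\tilde G}(H)$, since such scalar elements contribute $|\lambda^+(q^n-1)/2 + \lambda^-(q^n+1)/2|^2$, which for $\lambda^\pm \in \{\pm 1\}$ equals $q^{2n}$ or $1$, while the sixth-root analysis in the $p=3$, $2 \nmid f$ case gives only finitely many other values, none of which equal $q^{1/e}$ for $q^n \geq 9$; hence $g$ must induce a field automorphism of order exactly $e$, yielding $e = e'$. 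The main obstacle is the concluding case analysis of scalar pairs $(\lambda^+, \lambda^-) \in \K^\times \times \K^\times$ permitted by (b) and (e): one must enumerate possibilities, evaluate $|\lambda^+(q^n-1)/2 + \lambda^-(q^n+1)/2|^2$ explicitly, and verify that only pairs producing cases (i) and (ii) of the theorem survive, with case (ii) forcing the additional conditions $e = 1$, $2 \nmid f$, and $p \mid |\det(\Phi^\eps(\tilde G))|$ precisely as stated.
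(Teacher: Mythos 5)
Your first step---establishing $H \lhd \tilde G$ via the normal closure $K=\langle H^{\tilde G}\rangle$---contains the decisive gap. The claim that $K$ is a central product of $\tilde G$-conjugates of $H$ is only valid when $H$ is \emph{subnormal} in $\tilde G$ (i.e.\ a component); for an arbitrary quasisimple subgroup the conjugates of $H$ need not normalize or centralize one another, and $K$ can be a much larger group that is not a central product of copies of $H$. This is exactly the danger the theorem is designed to exclude: with $q=p^f$, $f>1$, the standard subgroup $H=\Sp_{2n}(q)$ sits non-normally inside $\Gamma=\Sp_{2nf}(p)$, and the image of $\Gamma$ in its total Weil representation of degree $p^{nf}=q^n$ satisfies hypotheses (a), (b), (c) (restriction of each irreducible Weil piece of $\Gamma$ to $H$ stays irreducible, cf.\ Lemma \ref{emb} and \cite[Lemma 4.3]{KT2}); here the normal closure of $H$ is all of $\Gamma$. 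Your normality argument never invokes the $q$-power trace hypotheses (d)/(e), so it would ``prove'' $H\lhd\tilde G$ in this configuration, where it is false. The dimension count $((q^n-1)/2)^2>(q^n+1)/2$ therefore cannot substitute for the real work, which is to rule out $\tilde G^{(\infty)}$ being a symplectic group over a proper subfield of $\F_q$.

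The paper proceeds differently: it sets $L:=\tilde G^{(\infty)}\geq H$, applies the recognition theorems \cite[Theorems 4.1 and 4.2]{KT2} to the irreducible images $\Phi^\eps(L)\leq \SL_{(q^n\pm\,1)/2}(\C)$ to see that each $\Phi^\eps(L)$ is a quotient of some $\Sp_{2n_\eps}(p^{a_\eps})$ with $n_\eps a_\eps=nf$, and then feeds this into the Goursat-like Theorem \ref{weil-sp}, whose hypothesis (c) is precisely the $q$-power condition (d); this identifies $L\cong\Sp_{2n/d}(q^d)$, and order comparison with $H\leq L$ forces $L=H$, whence $H\lhd\tilde G$. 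Without some version of this classification-plus-trace argument your proof does not get off the ground. Your subsequent steps (applying Theorem \ref{ext-sp} to $G$, ruling out diagonal automorphisms, and matching $e'=e$) are in the right spirit, but the $e=e'$ step is also lighter than what is needed: the paper must first transfer each $x\in\tilde G$ to an element $x^\sharp$ of the standard subgroup $\tilde H=\Sp_{2n}(q)\rtimes C_{e'}$ with $\Phi(x)=\pm\Phi(x^\sharp)$, which requires controlling the Schur scalars $\alpha_\eps(x)$ via (b), (e) and \cite[Lemma 13.5]{Gross1}, before \cite[Theorem 3.5]{KT3} can be applied in both directions; your sketch of a case analysis of scalar pairs does not address this transfer.
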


\begin{proof}
Let $L := \tilde G^{(\infty)} = G^{(\infty)}$. 
Since $H$ is perfect, $L \geq H$, and so $\Phi^\eps(L)$ contains $\Phi^\eps(H)$, which in turn contains 
$\Phi^\eps(H_1)$ with $H_1 \cong \SL_2(q^n)$ a subgroup of $H$. Since $L$ is perfect, $\Phi^\eps(L)$ lands in 
$\SL_{(q^n+\eps)/2}(\C)$. Now, applying Theorems 4.1 and 4.2 of \cite{KT2} to $\Phi^\eps:L \to \SL_{(q^n+\eps)/2}(\C)$, for each
$\eps = \pm$ we obtain a pair $(n_\eps,a_\eps)$ with $nf = n_\eps a_\eps$ and $\Phi^\eps(L)$ is isomorphic to a quotient of 
$\Sp_{2n_\eps}(p^{a_\eps})$. By Theorem \ref{weil-sp}, $L \cong \Sp_{2n/d}(q^d)$ for some divisor $d|n$. But $L \geq H=\Sp_{2n}(q)$,
hence $H=L$ by order comparison. Thus $H \lhd \tilde G$, and the statements about $G$ and $\CB_{\tilde G}(H)$ follow from Theorem \ref{ext-sp} and Lemma \ref{cent-sp}. We are also done if $e=1$ (by taking $G = \tilde G$).

\smallskip
Consider the case $e > 1$. Since no outer-diagonal automorphism of $H$ can preserve the equivalence class of $\Phi^\eps|_H$, 
$\tilde G$ can only induce inner and field automorphisms of $H$, whence there exists some $e'|f$ such that 
$\tilde G/\CB_{\tilde G}(H) \cong \PSp_{2n}(q) \rtimes C_{e'}$, where the subgroup $C_e$ is generated by the field automorphism 
$\sigma$ induced by the Galois automorphism $x \mapsto x^{q^{1/e'}}$. 
By Lemma \ref{emb}, we may embed $H$ in $\Gamma:=\Sp_{2nf}(p)$ and extend $\Phi$ to a total Weil representation 
$\Gamma \to \GL_{q^n}(\C)$. As noted in \cite[\S3]{KT3}, there is a standard subgroup
$$\tilde H = \Sp_{2n}(q) \rtimes C_{e'}$$
of $\Gamma$ with $[\tilde H,\tilde H] = H$ that induces the same automorphism subgroup of $H$ as the one induced by $\tilde G$.  
In particular, for any element $x \in \tilde G$, there is an element $x^* \in \tilde H$ such that conjugations by $x$ and $x^*$ induce
the same automorphism of $H$. By Schur's lemma, for each $\eps = \pm$ there is $\alpha_\eps(x) \in \C^\times$ such that
\begin{equation}\label{eq:up11}
  \Phi^\eps(x) = \alpha_\eps(x)\Phi^\eps(x^*).
\end{equation}
Note that $\CB_{\tilde H}(H) = \ZB(H) = \langle \bj \rangle C_2$, so $x^*$ is unique up to a power of $\bj$. We will show that
\begin{equation}\label{eq:up12}
  \alpha_-(x),\alpha_+(x) \in \{1,-1\}.
\end{equation}  
Indeed, as $\tilde H$ is contained in the perfect group $\Gamma$, $\det(\Phi^\eps(x^*))=1$. Now, by assumption (e),
$$\alpha^\eps(x)^{(q^n -\eps)/2} = \det\bigl( \alpha_\eps(x)\Phi^\eps(x^*)\bigr) = \det \bigl( \Phi^\eps(x) \bigr)$$
has order coprime to $p$ when $p=3$. Thus 
\begin{equation}\label{eq:up13}
  p\nmid |\alpha_\eps(x)| \mbox{ when }p=3. 
\end{equation}  
Next, by \cite[Lemma (8.14)(c)]{Is}, since $\Phi^\eps|_H$ is irreducible, for each $\eps=\pm$
there is some $y_\eps =h_\eps x \in Hx$ such that $\Tr(\Phi^\eps(y_\eps)) \neq 0$. Then, by the above remark,  if $y^*_\eps$ is 
chosen to fulfill \eqref{eq:up11} for $y_\eps$, then it is equal to $h_\eps x^*$ up to a power of $\bj$. As $\Phi^\eps(\bj) = \pm \mathrm{Id}$, 
we have
$$\Phi^\eps(h_\eps)\Phi^\eps(x) = \Phi^\eps(y_\eps) = \alpha_\eps(y_\eps)\Phi^\eps(y^*_\eps) =
    \pm \alpha_\eps(y_\eps)\Phi^\eps(h_\eps x^*) = \pm \alpha_\eps(y_\eps)\Phi^\eps(h_\eps)\Phi^\eps(x^*).$$
Comparing this to \eqref{eq:up11}, we obtain $\alpha_\eps(x) = \pm \alpha_\eps(y_\eps)$. On the other hand,
taking traces, we see that $0 \neq \Tr(\Phi^\eps(y_\eps) = \alpha_\eps(y_\eps)\Tr(\Phi^\eps(y^*_\eps))$, and furthermore,
$\Tr(\Phi^\eps(y_\eps) \in \K_1:=\Q(\sqrt{(-1)^{(p-1)/2}p})$ by (b) and $\Tr(\Phi^\eps(y^*_\eps)) \in \K_1$ by \cite[Lemma 13.5]{Gross1}
applied to Weil characters of $\Gamma=\Sp_{2nf}(p)$. It follows that
$\alpha_\eps(x) = \pm \alpha_\eps(y_\eps)$ is  a root of unity in $\K_1$, whose order is coprime to $p$ when $p=3$ by \eqref{eq:up13}.
Hence \eqref{eq:up12} follows.

Now, if $\alpha_-(x)=\alpha_+(x)$, then \eqref{eq:up11} implies that $\Phi(x)=\alpha_+(x)\Phi(x^*)$; set $x^\sharp :=x^*$
in this case. If $\alpha_-(x)=-\alpha_+(x)$, then taking $x^\sharp := \bj x^*$, we again have that $\Phi(x)=\pm\alpha_+(x)\Phi(x^\sharp)$.
Thus in all cases, given any $x \in \tilde G$, there is (a unique) $x^\sharp \in \tilde H$ such that 
\begin{equation}\label{eq:up14}
  \Phi(x) = \pm \Phi(x^\sharp), \mbox{ and conjugations by }x \mbox{ and }x^\sharp\mbox{ induce the same automorphism of }H.
\end{equation}  
Applying this result to the element $g \in \tilde G$ in (e), we see that $|\Tr(\Phi(g^\sharp))|^2 = q^{1/e}$. By \cite[Theorem 3.5]{KT3},
$q^{1/e}$ is a power of $q^{1/e'}$.

Again by \cite[Theorem 3.5]{KT3}, $\tilde H$ contains an element $h$ such that  $|\Tr(\Phi(h)|^2 = q^{1/e'}$. 
Since $\tilde H$ and $\tilde G$ induce the same automorphism subgroup of $H$ and since 
$\CB_{\tilde H}(H) = \langle \bj \rangle$, by \eqref{eq:up14} there exists some $k \in \{0,1\}$ and some $s \in \tilde G$ such 
that $\bj^k h = s^\sharp$. Also by \eqref{eq:up14}, 
$$\Phi(\bj^ks) = \Phi(\bj^k)\Phi(s) = \pm \Phi(\bj^k)\Phi(s^\sharp) = \pm \Phi(\bj^k)\Phi(\bj^k h) = \pm \Phi(\bj^{2k})\Phi(h) = \pm \Phi(h),$$
in particular, $|\Tr(\Phi(\bj^ks)|^2 = q^{1/e'}$. But $\bj^k s \in \tilde G$, hence $q^{1/e'}$ is a power of $q^{1/e}$. We conclude 
that $q^{1/e} = q^{1/e'}$, i.e. $e=e'$, as stated.
\end{proof}

Next we prove a going-down result:

\begin{thm}\label{sp-down}
Let $p > 2$ be a prime, $N \in \Z_{\geq 1}$, $p \geq 13$ if $N = 1$, and $(p,N) \neq (3,2)$, $(3,3)$, $(5,3)$. Consider 
a total Weil representation $\Phi: \Gamma \to \GL_{p^N}(\C)$ of $\Gamma := \Sp_{2N}(p)$ and extend it to 
$$\Phi: \tilde\Gamma := C \times \Gamma \to \GL_{p^N}(\C),$$
where $C$ is a finite cyclic group and acts faithfully via scalars in $\Phi$. Suppose $G$ is a finite subgroup of $\tilde\Gamma$ 
with the following properties:
\begin{enumerate}[\rm(a)]
\item Each of the two irreducible components $\Phi^\eps$, of degree $(p^N-\eps)/2$ for $\eps = \pm$, of $\Phi$ is irreducible over 
$L:=G^{(\infty)}$, with $\Phi^\eps(L)$ being quasisimple; and
\item For all $x \in G$, $|\Tr(\Phi(x))|^2$ is always a power of $q=p^f$.
\end{enumerate}
Then $f|N$, and there exist a divisor $d$ of $N/f$ and a divisor $e$ of $d$ such that 
$$\Sp_{2N/df}(q^d) \cong L \lhd G \leq C \times \bigl(\Sp_{2N/df}(q^d) \rtimes C_e\bigr) = CG,$$ 
with the subgroup $\Sp_{2N/df}(q^d) \rtimes C_e$ identified in $\Gamma$ as in {\rm \cite[\S4]{KT2}}, and 
$G$ inducing a subgroup of order $e$ of outer field automorphism of $L$. Moreover, if there exists 
$g \in G$ with $|\Tr(\Phi(g))|^2=q$, then $e=d$.
\end{thm}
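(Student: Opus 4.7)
The plan is to first identify $L = G^{(\infty)}$ as a specific symplectic group, then to pin down its embedding in $\Gamma$, and finally to determine the field-automorphism extension $G/L$. I would begin by applying Theorem \ref{weil-sp} to $\Phi|_L$, taking its parameter ``$q$'' to be the prime $p$ and its ``$n$'' to be $N$. Hypothesis (a) of the present theorem supplies the quasisimplicity required; the classification of quasisimple groups having an irreducible complex representation of degree $(p^N\pm 1)/2$ realized inside $\Sp_{2N}(p)$, via \cite[Theorems 4.1, 4.2]{KT2} (exactly as in the proof of Theorem \ref{sp-up}), supplies condition (b) of Theorem \ref{weil-sp}; and its condition (c) follows from hypothesis (b) here, since any power of $q = p^f$ is a power of $p$. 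Theorem \ref{weil-sp} then produces a divisor $d_0$ of $N$ with $L \cong \Sp_{2N/d_0}(p^{d_0})$ and $\Phi|_L$ a total Weil representation. Taking a regular unipotent $u \in L$, the Gross character formula \eqref{eq:value1} yields $|\Tr(\Phi(u))|^2 = p^{d_0}$; hypothesis (b) then forces $f \mid d_0$, so writing $d_0 = df$ establishes $f \mid N$, $d \mid N/f$, and $L \cong \Sp_{2N/df}(q^d)$.

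Next, I would use Lemma \ref{emb} together with the standard uniqueness (up to $\Gamma$-conjugacy and outer diagonal twist) of a quasisimple subgroup of $\Gamma$ carrying a prescribed total Weil representation to conjugate so that $L$ sits in $\Gamma$ as the standard base-change subgroup of \cite[\S4]{KT2}. Since $L = G^{(\infty)}$ is normal in $G$ and since the two summands $\Phi^\eps|_L$ have distinct dimensions, each is $G$-stable; in particular $G$ induces no outer diagonal automorphism of $L$, because such an automorphism would fuse the two Weil constituents of a single total Weil representation by \cite[Lemma 2.6]{TZ2}, and the hypotheses on $(p,N)$ exclude graph automorphisms. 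Hence the image of $G$ in $\Out(L)$ is cyclic, generated by a field automorphism of some order $e'$ dividing $df$, and $\CB_{\tilde\Gamma}(L) = C \times \langle \bj \rangle$.

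The main obstacle, as in Theorem \ref{sp-up}(iii), will be to tighten $e' \mid df$ to $e' \mid d$ and to deduce the ``moreover'' clause; this is where the stronger trace hypothesis (powers of $q$, not merely of $p$) is essential. Mimicking the field-automorphism analysis in Theorem \ref{sp-up}(iii), I would use Lemma \ref{value1} together with the minimal-trace element furnished by \cite[Theorem 3.5]{KT3}, applied to the standard subgroup $L \rtimes \langle \sigma_r \rangle$ with $r := q^{d/e'}$, to produce an element $h$ of the nontrivial field-automorphism coset with $|\Tr(\Phi(h))|^2 = q^{d/e'}$. Hypothesis (b) then forces $q^{d/e'}$ to be a power of $q$, giving $e' \mid d$; setting $e := e'$ and absorbing the remaining scalar indeterminacy into $C$ yields the inclusion $G \leq C \times (\Sp_{2N/df}(q^d) \rtimes C_e)$ with the standard subgroup of \cite[\S4]{KT2}. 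Finally, if some $g \in G$ satisfies $|\Tr(\Phi(g))|^2 = q$, then the minimum squared trace over $g$'s field-automorphism coset is at most $q$, i.e.\ $q^{d/e} \leq q$, forcing $e \geq d$, and combined with $e \mid d$ this gives $e = d$, as asserted.
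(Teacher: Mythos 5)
The decisive gap is in your first step, where you feed Theorem \ref{weil-sp} its hypothesis (b) by invoking \cite[Theorems 4.1, 4.2]{KT2} ``exactly as in the proof of Theorem \ref{sp-up}''. In Theorem \ref{sp-up} those results are applicable only because $L \geq H = \Sp_{2n}(q)$ there, so that $\Phi^\eps(L)$ is known in advance to contain the image of $\SL_2(q^n)$ in a Weil representation: they are going-up (overgroup) classifications. In the present going-down situation $L = G^{(\infty)}$ is an a priori arbitrary perfect subgroup of $\Gamma$ whose image is merely quasisimple and irreducible of degree $(p^N \pm 1)/2$, and no such lower bound on $L$ is available; so Theorems 4.1/4.2 do not apply, condition (b) of Theorem \ref{weil-sp} is left unverified, and your identification $L \cong \Sp_{2N/d_0}(p^{d_0})$ is unsupported. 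Quasisimplicity plus the degree alone does not force symplectic type. The paper's proof instead rests on the genuinely different subgroup-classification result \cite[Theorem 4.7]{KT2}, applied to the irreducible subgroup $\Phi^-(L)$ of $\SL_{(p^N+1)/2}(\C)$: either $\Phi^-(L) = \Phi^-(H)$ for a standard subgroup $H = \Sp_{2A}(p^B) \rtimes C_b$ of $\Gamma$, or an exceptional case with $p=3$ in which $\Phi^-(L)$ contains $\SU_N(3)$ as a proper normal subgroup of $2$-power index; the latter is ruled out by perfectness of the quasisimple group $\Phi^-(L)$, and $b=1$ for the same reason. This is precisely where the excluded parameters ($p \geq 13$ when $N=1$, $(p,N) \neq (3,2),(3,3),(5,3)$) enter; your sketch never uses them, which is a symptom of the missing classification input. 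Only after this does the paper apply Theorem \ref{weil-sp}, now with its hypothesis (b) supplied by the equality $\Phi^-(L)=\Phi^-(H)$, to get $B = df$, $L \cong \Sp_{2N/df}(q^d)$, $f \mid N$ and $d \mid N/f$.

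A second, related weakness: even granting the abstract isomorphism type of $L$, you place $L$ inside $\Gamma$ as the standard base-change subgroup by appealing to Lemma \ref{emb} plus an asserted ``standard uniqueness up to $\Gamma$-conjugacy'' of quasisimple subgroups affording a prescribed total Weil representation. Lemma \ref{emb} gives existence of such an embedding, not uniqueness of the subgroup up to conjugacy, and that uniqueness is nowhere established in the paper (conjugating also moves $G$). The paper gets the exact identification for free from Theorem 4.7: $\Phi^-(L) = \Phi^-(H)$ together with $\Ker(\Phi^-) \cap \Gamma \leq \ZB(\Gamma) = \ZB(H)$ forces $L = H$ on the nose, whence $\NB_{\tilde\Gamma}(L) = C \times (H \rtimes C_B)$. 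From that point on, your endgame -- only field automorphisms can preserve both Weil constituents, and the trace bound via \cite[Theorem 3.5]{KT3}, transferring squared traces from $L \rtimes C_e$ into $G$ through the scalar group $C$, to get $e \mid d$ and then $e=d$ in the ``moreover'' case -- is essentially the paper's argument and is fine, up to minor slips (writing $r = q^{d/e'}$ presupposes $e' \mid d$, and hypothesis (b) applies to elements of $G$, so the passage through $CG$ must be made explicit, as you only gesture at).
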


\begin{proof}
Note that $L \leq \Gamma$ and $\Phi^\eps(\Gamma) < \SL_{(p^N-\eps)/2}(\C)$ since $\Gamma$ is perfect; 
furthermore, for all $x \in \Gamma$ we have
$\Tr(\Phi^\eps(x)) \in  \Q(\sqrt{(-1)^{(p-1)/2}p})$, cf. \cite[Lemma 13.5]{Gross1}.
By \cite[Theorem 4.7]{KT2} applied to the irreducible subgroup $\Phi^-(L)$ of  $\SL_{(p^N+1)/2}(\C)$, one of the following must occur:

(i) There is a factorization $N = AB$, 
a divisor $b|B$, and a standard subgroup $H := \Sp_{2A}(p^B) \rtimes C_b$ of $\Gamma$ such that $\Phi^-(L) = \Phi^-(H)$. 

(ii) $p=3$, $2 \nmid N$, and $\Phi^-(L)$ contains $\SU_N(3)$ as a proper normal subgroup of $2$-power index.

As $\Phi^-(L)$ is perfect, (i) must hold and moreover $b=1$. In particular, $L$ projects onto the simple group $\PSp_{2A}(p^B)$.
Applying Theorem \ref{weil-sp}, we conclude that there exists some $d \in \Z_{\geq 1}$ such that 
$$B=df,~~N = AB= Adf,$$ 
and $L \cong \Sp_{2N/df}(q^d)$; in particular, $f|N$ and $d|(N/f)$.  Using the equality
$\Phi^-(L) = \Phi^-(H)$ and the inclusion $\Ker(\Phi^-) \cap \Gamma \leq \ZB(\Gamma) = \ZB(H)$, we see that $L = H$. 
Thus $L$ is the standard subgroup $\Sp_{2A}(p^B)$, with normalizer $C \times \bigl(\Sp_{2A}(p^B) \rtimes C_B\bigr)$ in $\tilde\Gamma$,
that induces the full group (of order $B$) of outer field automorphisms of $L$.

Since $L \lhd G \leq \NB_{\tilde\Gamma}(L)$, there is some $e|B$ that $G$ induces a subgroup $C_e$ of outer field automorphisms of
$L$. As $\CB_{\tilde \Gamma}(L) = \ZB(L)C$, in this case we have 
$$G \leq C \times \bigl(\Sp_{2N/df}(q^d) \rtimes C_e\bigr) = CG.$$
By \cite[Theorem 3.5]{KT3}, we can find $h \in \Sp_{2N/df}(q^d) \rtimes C_e$ such that $|\Tr(\Phi(h))|^2 = p^{B/e}$. As $h \in CG$,
we can find $z \in C$ such that $zh \in G$. But $\Phi(C)$ is scalar, so $|\Tr(\Phi(g))|^2 = |\Tr(\Phi(h))|^2 = p^{B/e}$, and 
condition (b) implies that $e|d$.

Assume now that $|\Tr(\Phi(g))|^2 = q$ for some $g \in G$. Then $g=z_1h_1$ for some $z_1 \in C$ and 
$h_1 \in \Sp_{2N/df}(q^d) \rtimes C_e$. This again implies that $|\Tr(\Phi(h_1))|^2 = |\Tr(\Phi(g))|^2 = p^f$.
By \cite[Theorem 3.5]{KT3}, $p^f$ is a power of $p^{B/e}$, i.e. $B/e=df/e$ divides $f$, and we conclude that $e=d$ in this case. 
\end{proof}

\section{Local systems and total Weil representations: Symplectic groups over $\F_p$}\label{p12}
Fix a prime $p > 2$ and $N \geq 3$. In this section, we will work with the local system $\sG^{u,r,s,t}$ on 
$\G_m \times \A^3/\F_p$ whose trace function is given as follows: for $k/\F_p$ a finite extension, and $(u,r,s,t) \in k^\times \times k^3$,
$$(u,r,s,t) \mapsto \frac{1}{\Gauss(\overline\psi_{k},\chi_2)}\sum_{x \in k}\psi_{-1/2,k}\bigl(-ux^{p^N+1} + rx^{p^2+1}+sx^{p+1}+tx^2\bigr),$$
and its various specializations, say $\sG^{1,r,0,0}$ obtained when we take $u=1$, $s=0$, and $t=0$. Then $G^{u,r,s,t}_{\ari}$ and 
$G^{u,r,s,t}_{\geo}$ denote the arithmetic and the geometric monodromy groups of $\sG^{u,r,s,t}$, and similarly,
$G^{1,r,0,0}_{\ari}$ and $G^{1,r,0,0}_{\geo}$ denote the arithmetic and the geometric monodromy groups of $\sG^{1,r,0,0}$.

\begin{thm}\label{main-sp1}
Over any finite extension $k$ of $\F_p$, the following statements hold.
\begin{enumerate}[\rm(i)]
\item 
$G^{-1,0,s,t}_{\geo}$ equals $L=\Sp_{2N}(p)$ in 
one of its total Weil representations. If $2|N$, then we also have $G^{-1,0,s,0}_{\geo}= L$. Furthermore, 
$$C_0 \times L=G^{-1,r,s,t}_{\ari} \geq G^{-1,r,s,t}_{\geo} \rhd L,$$ 
where $C_0$ is a cyclic scalar subgroup, and either $|C_0| \leq 2$, or $p=3$ and $|C_0|$ divides $6$. Moreover, 
$$C \times L=G^{u,r,s,t}_{\ari} \geq G^{u,r,s,t}_{\geo} \rhd L,$$ 
where $C$ is a cyclic scalar subgroup, and either $|C|=1,2$, or $p=3$ and $|C|$ divides $6$.
\item Assume $2 \nmid N$. Then each of $G^{u,1,0,0}_{\geo}$ and $G^{u,1,0,0}_{\ari}$ 
contains the normal subgroup $L=\Sp_{2N}(p)$ acting in 
one of its total Weil representations, and furthermore, is equal to $C' \times L$ for a cyclic scalar subgroup $C'$ of order $\leq 2$.
\end{enumerate}
\end{thm}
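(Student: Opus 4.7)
The plan is to identify $\sG^{u,1,0,0}$ (and, for part (i), the family $\sG^{-1,0,s,t}$) with normalizations of the $\sW_\Sp$-type sheaves built in Section 4, and then to deploy the finite group theory of Sections 6 and 7 to extract the monodromy. I focus on part (ii); part (i) follows by the same template, with Corollary \ref{multiABmomentrho} replacing Lemma \ref{ABmoment1} for the multi-parameter decomposition.

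For (ii), taking $q=p$ and $(n,m) = (2,N)$, we have $\gcd(p^2+1,p^N+1)=2$ since $N$ is odd, so Section 4 applies. Writing $A = (p^2+1)/2$ and $B = (p^N+1)/2$, the trace function comparison identifies $\sG^{u,1,0,0}$, up to a harmless constant-field sign twist and reparameterization $u \mapsto -u$, with $\sW_\Sp(1/2)$, giving the geometric decomposition
\[
\sG^{u,1,0,0} \;\cong\; [A]^\star \sH_{small,A,B,descent}(1/2) \;\oplus\; [A]^\star \sH_{big,A,B,descent}(1/2),
\]
with summands of ranks $(p^N-1)/2$ and $(p^N+1)/2$, each geometrically irreducible by Lemma \ref{ABmoment1}. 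Corollary \ref{AvdG-vdV2} then gives that all traces lie in $\K$ with squared absolute values being powers of $p$, which forces both monodromy groups to be finite; in particular, the trace at $u=1$ has squared absolute value exactly $p$, exhibiting an element $g$ with $|\Tr(\Phi(g))|^2 = q = p$.

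Next, Proposition \ref{cond-S} shows both $\sH_{small,A,B}$ and $\sH_{big,A,B}$ satisfy $({\bf S+})$, so by the classification of \cite{KT4}, each Kummer-pullback summand has quasisimple geometric image. I then apply Theorem \ref{sp-down} to each summand inside the ambient total Weil representation of $\Gamma = \Sp_{2N}(p)$ (using $f=1$); the above element with squared trace absolute value $p$ forces $e=d$ in the conclusion, and a further comparison of the rank $(p^N \mp 1)/2$ with the admissible Weil-dimensions $(p^{N/d} \mp 1)/2$ of $\Sp_{2N/d}(p^d)$ (combined with the list of possibilities produced by the $({\bf S+})$ classification) forces $d=1$. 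Hence each summand has geometric monodromy exactly the image of $\Sp_{2N}(p)$ in the corresponding irreducible Weil representation of dimension $(p^N\mp 1)/2$. Theorem \ref{weil-sp} (or Proposition \ref{goursat}) then yields $L = \Sp_{2N}(p) \lhd G^{u,1,0,0}_{\geo}$ acting as a total Weil representation.

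Finally, Theorem \ref{ext-sp} applies to $G = G^{u,1,0,0}_{\ari}$: hypotheses (a)--(d) are all met, since $(N,p)\neq(1,3)$. It yields $G^{u,1,0,0}_{\ari} = C' \times L$ with $C'$ cyclic acting by scalars. To force $|C'| \leq 2$, I rule out subcase $(\beta)$ of Theorem \ref{ext-sp} by invoking Theorem \ref{detbis}: over a suitable constant-field extension, the arithmetic determinants of both summands take values in $\{\pm 1\}$, so $3$ cannot divide $|\det(\Phi^\eps(G))|$ in the $p=3$ regime. Applying the same analysis with $G^{u,1,0,0}_{\geo}$ as its own ambient group yields the parallel statement geometrically. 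The main obstacle will be the going-down step: specifically, one must verify via $({\bf S+})$ together with \cite{KT4} that no quasisimple group other than $\Sp_{2N}(p)$ (in its irreducible Weil representation of dimension $(p^N\mp 1)/2$) is compatible with the rank, trace-field, and squared-trace-values constraints, thereby excluding descent to $\Sp_{2N/d}(p^d)$ for any $d > 1$.
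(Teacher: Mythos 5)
Your proposal has two genuine gaps, both at the heart of the argument. First, Theorem \ref{sp-down} can only be applied to a group already known to sit inside $C \times \Sp_{2N}(p)$ acting through a total Weil representation, and you never establish this containment for $G^{u,1,0,0}_{\geo}$. In the paper this is precisely what part (i) supplies: the multi-parameter system $\sG^{u,r,s,t}$ specializes (at $u=-1$, $r=0$) to the two-parameter systems of \cite{KT3}, whose geometric monodromy is already known to be $\Sp_{2N}(p)$ (Theorems 4.3, 10.3, 10.6 of \cite{KT3}), and the going-up Theorem \ref{sp-up} then yields $G^{u,r,s,t}_{\ari}=C\times L$; only with $G^{u,1,0,0}_{\geo}\le C\times L$ in hand can one rule out the extraspecial-normalizer branch of \cite[Prop.\ 2.8]{G-T} (via the abelian/cyclic Sylow subgroups of $C\times L$) and invoke \cite[Lemma 2.5]{G-T} to get a quasisimple image -- a step you assert but do not justify. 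So part (i) is not ``the same template as (ii)''; it is a logically prior input resting on \cite{KT3}, and your argument for (ii) cannot even start without it.

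Second, your mechanism for forcing $d=1$ in the going-down step fails. For every odd divisor $d$ of $N$, the group $\Sp_{2N/d}(p^d)$ has a total Weil representation of degree exactly $p^N$, with irreducible summands of the same dimensions $(p^N\pm1)/2$, traces in the same field $\K$ (since $d$ is odd), and squared trace absolute values that are powers of $p^d$, hence of $p$. Thus the rank, trace-field, and $p$-power-trace constraints you propose cannot distinguish $d>1$ from $d=1$, and the element with $|\Tr|^2=p$ only gives $e=d$ in Theorem \ref{sp-down}, not $d=1$. The paper eliminates $d>1$ by a different device: condition $(\mathbf{S}+)$ provides, inside $H^{u,1,0,0,\eps_0}_{\ari}$, the image of the inertia group $I(0)$, containing a cyclic $p'$-element of order divisible by $(p^{N-2}-1)/2$ that permutes the $(p^{N-2}-1)/2$ irreducible $P(0)$-submodules of dimension $p^2$; a Zsigmondy primitive prime divisor of $p^{N-2}-1$, combined with the normalizer bound $\Sp_{2N/d}(p^d)\cdot C_d\cdot Z$, forces $d$ to divide $\gcd(N-2,N)=1$ when $N\ge 5$, while Ito's theorem handles $N=3$. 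Without an argument of this kind (your final sentence flags the issue but offers only the insufficient constraints above), the proof does not close. Your determinant argument via Theorem \ref{detbis} to exclude case $(\beta)$ of Theorem \ref{ext-sp} is essentially the paper's use of Theorem \ref{sp-det} and is fine.
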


\begin{proof}
(i) First we choose $k$ to contain $\F_{p^2}$, so that any element of $\F_p^\times$ is a square in $k$. 
In this case, $\Gauss(\psi_k,\chi_2)=\Gauss((\psi_a)_k,\chi_2)$ for any $\psi_a: t \mapsto \psi(at)$ with $a \in \F_p^\times$.
It follows that  
$\sG^{-1,0,s,t}$ is the local system $\sW_{2{\tiny \mbox{-param}}}(\psi_{-1/2},N,p)$
introduced in \cite[\S4]{KT3} when $2\nmid N$ and in \cite[\S9]{KT3} when $2|N$. Hence $G^{-1,0,s,t}_{\geo} = L$ by Theorem 4.3 and Theorem 10.3 of \cite{KT3}. Similarly, when $2|N$, we have $G^{-1,0,s,0}_{\geo} = L$ by \cite[Theorem 10.6]{KT3}.

Now we return to work with any extension $k$ of $\F_p$. Then $G^{-1,0,s,t}_{\ari}$ contains $G^{-1,0,s,t}_{\geo} = L$ as a 
normal subgroup.
Since $G^{-1,r,s,t}_*$ contains $G^{-1,0,s,t}_*$ and is finite (with $* = ${\small {\it arith}} or 
{\small {\it geom}}), 
applying Theorem \ref{thm:vdG-vdV} and the second moment two result (Proposition \ref{ABmoment}), we see that
the second statements in (i) follows from Theorem \ref{sp-up} (with $\tilde G = G = G^{-1,r,s,t}_*$ and $H = L=\Sp_{2N}(p)$). 
The same argument also applies to $G^{u,r,s,t}_*$.

\smallskip
(ii) Let $\Phi:G^{u,r,s,t}=CL \to \GL_{p^N}(\C)$ denote the corresponding representation of $G^{u,r,s,t}$ acting on $\sG^{u,r,s,t}$. By 
Corollary \ref{multiABmomentrho}, $\Phi$ is 
a sum of two irreducible representations $\Phi^\eps$ of degree $(p^N -\eps)/2$, $\eps = \pm$. 

Now we aim to determine $G:=G^{u,1,0,0}_{\geo} \leq CL$, which is irreducible in both $\Phi^\eps$ by Proposition \ref{ABmoment}. 
Recall that each of the two irreducible summands of $\sG^{u,1,0,0}$ is the Kummer pullback by $u \mapsto u^A$, $A = (p^2+1)/2$, of 
one of the two irreducible hypergeometric summands $\sH^{u,1,0,0,\eps}$ of rank $(p^N - \eps)/2$, $\eps = \pm$, which both satisfy 
$(\mathbf{S}+)$ by Proposition \ref{cond-S}.  [These two hypergeometric sheaves were denoted by 
$\sH_{small,A,B,descent}$ and $\sH_{big,A,B,descent}$ in Corollary \ref{pullback}, with $(A,B) := ((p^2+1)/2,(p^N+1)/2)$.]
Hence we can apply \cite[Proposition 2.8]{G-T} to its geometric monodromy group $H^{u,1,0,0,\eps}_{\geo}$ which is finite.

Consider some $\eps=\pm$ and assume we are in the extraspecial case of \cite[Proposition 2.8(iii)]{G-T}.
Then $(p^N-\eps)/2 = (p_1)^m$ for some prime $p_1$, and $H^{u,1,0,0,\eps}_{\geo}$ contains a normal $p_1$-subgroup $P_1$ 
that acts irreducibly on the sheaf $\sH^{u,1,0,0,\eps}$. As $N \geq 3$ is odd, $p_1 \nmid A$. On the other hand, $G^{u,1,0,0,\eps}_{\geo}$ is a normal subgroup of  $H^{u,1,0,0,\eps}_{\geo}$ of index dividing $A$. It follows that $P_1 \lhd G^{u,1,0,0,\eps}_{\geo}=\Phi^\eps(G)$. Recall that 
$G \leq CL = C \times L$ with $L = \Sp_{2N}(p)$. Since $(p^N-\eps)/2 = (p_1)^m$, we see that Sylow $p_1$-subgroups of $L$ are abelian 
(in fact cyclic). Hence Sylow $p_1$-subgroups of $G$ are abelian, and so $P_1$ is abelian. But this contradicts the irreducibility of $P_1$ on 
$\sH^{u,1,0,0,\eps}$.

Thus we have shown that $H^{u,1,0,0,\eps}_{\geo}$ is almost quasisimple for each $\eps = \pm$. Using property $(\mathbf{S}+)$ and 
\cite[Lemma 2.5]{G-T}, we then have that $\Phi^\eps(G^{(\infty)})$ is a quasisimple irreducible subgroup of $\SL_{(p^N-\eps)/2}(\C)$. 
Furthermore, $G^{(\infty)} \leq (CL)^{(\infty)} = L=\Sp_{2N}(p)$.
%
%
By Theorem \ref{sp-down}, there are some divisors $d|N$ and $e|d$ such that $G^{(\infty)} = \Sp_{2N/d}(p^d) \rtimes C_e$, whence $e=1$ by perfectness. (Note that Theorem \ref{sp-down} assumes $p > 5$ when $N=3$. However, when $N=3$, since
$|H^{u,1,0,0,\eps}_{\geo}|$ is divisible by $(p^3-\eps)/2$ for each $\eps = \pm$, it is easy to see that $|G^{(\infty)}|$ is divisible by 
$13 \cdot 7$ when $p=3$ and by $31 \cdot 7$ when $p=5$. Using the list of maximal subgroups of $\Sp_6(p)$ 
\cite[Tables 8.28, 28.29]{BHR}, we see that the same conclusion holds for $p=3,5$.) In particular,
$H^{u,1,0,0,\eps}_{\ari} \rhd H^{u,1,0,0,\eps}_{\geo}$ 
contains the normal quasisimple subgroup $\Phi^\eps(\Sp_{2N/d}(p^d))$. It also contains (the image) of the inertia subgroup $I(0)$, which has a cyclic $p'$-subgroup $\langle h \rangle$ of order divisible by $(p^{N-2}-1)/2$ 
that cyclically permutes the $(p^{N-2}-1)/2$ 
irreducible $P(0)$-submodules of dimension $p^2$ by Proposition \ref{cond-S}.

Next we choose $\eps_0 = \pm$
such that $D_0:=(p^N-\eps_0)/2$ is even. Since no outer-diagonal automorphisms of $\Sp_{2N/d}(p^d)$ can preserve the Weil 
representation $\Phi^{\eps_0}(\Sp_{2N/d}(p^d))$ up to equivalence, by Schur's lemma we have
\begin{equation}\label{eq:d1}
  \Sp_{2N/d}(p^d) \lhd H^{u,1,0,0,\eps_0}_{\ari} \leq \NB_{\GL_{D_0}(\C)}(\Sp_{2N/d}(p^d)) \leq (\Sp_{2N/d}(p^d) \cdot C_{d})Z,
\end{equation}  
where $Z = \ZB(\GL_{D_0}(\C))$.

Consider the case $N \geq 5$. Then $p^{N-2}-1$ admits a primitive prime divisor $\ell$ by \cite{Zs}, which is either equal to
$N-1$ or at least $2(N-2)+1 > N$. In either case, $\ell$ is coprime to $N$ but divides $|h|$. 
Let $h_0$ denote the $\ell$-part of $h$. Now using $\ell \nmid d$ and \eqref{eq:d1}, we see
that $h_0 \in \Sp_{2N/d}(q^d)Z$. Since $h_0$ acts nontrivially on the set of $(p^{N-2}-1)/2$ irreducible $P_0$-submodules
in $\sH^{u,1,0,0,\eps_0}$, we conclude 
that $h_0 \notin Z$ and $\ell$ divides $|\Sp_{2N/d}(p^d)|$. 
Thus there exists $1 \leq i \leq N/d$ such that $\ell|(p^{2di}-1)$, whence $N-2$ divides $2id$ by the 
choice of $\ell$. As $2 \nmid N \geq 5$, it follows that $N-2=id$. Hence $d$ divides both $N-2$ and $N$, and we conclude that $d=1$.

Next we consider the case $N=3$ but $d > 1$. Then $d=3$. Let $Q$ denote the image of $P(0)$ in $H^{u,1,0,0,\eps_0}_{\ari}$. Then \eqref{eq:d1} shows that $Q$ has a normal subgroup $Q_1$ of index dividing $d=3$, where $Q_1 \in \Syl_p(\Sp_2(p^3)Z)$ is abelian.
It follows from Ito's theorem \cite[(6.15)]{Is} that any irreducible $\C Q_1$-module has dimension dividing $3$. But this contradicts 
the fact that $P(0)$ possesses an irreducible submodule of dimension $p^2$ on $\sH^{u,1,0,0,\eps_0}$.

Thus we have shown that $d=1$ and so $G^{(\infty)} = \Sp_{2N}(p) = L$. Clearly, $G^{(\infty)}$ is a normal subgroup of each of 
$G^{u,1,0,0}_{\geo}$, $G^{u,1,0,0}_{\ari}$. Furthermore, by Theorem \ref{sp-det}, $\det(H^{u,1,0,0,\eps}_{\ari})$ has order a $2$-power
(dividing $4$; note that we use the oddness of $p$ here to deduce the normality of arithemetic monodromy groups of $[2]^\star$ pullbacks). 
It follows that $\det(\Phi^\eps(G^{u,1,0,0}_{\ari}))$ is also a $2$-group.
The statement now follows from Theorem \ref{ext-sp}.
\end{proof}

For later use, in the case $N > N' > 2$, we also need to consider the local system $\tilde\sG^{v,r,s,t}$ on 
$\A^4/\F_p$ whose trace function, for $k/\F_p$ a finite extension and $(v,r,s,t) \in k^4$, is given by
$$(v,r,s,t) \mapsto \frac{1}{\Gauss(\overline\psi_k,\chi_2)}\sum_{x \in k}\psi_{-1/2,k}\bigl(x^{p^N+1} + vx^{p^{N'}+1}+rx^{p^2+1}+sx^{p+1}+tx^2\bigr).$$
 Let $\tG^{v,r,s,t}_{\ari}$ and $\tG^{v,r,s,t}_{\geo}$ denote its arithmetic and geometric monodromy groups,
respectively.

\begin{thm}\label{main-sp1b}
Over any finite extension $k$ of $\F_p$, we have 
$$\tilde C \times L=\tG^{v,r,s,t}_{\ari} \geq \tG^{v,r,s,t}_{\geo} \rhd L,$$ 
where $L=\Sp_{2N}(p)$ in one of its total Weil representations, $\tilde C$ is a cyclic scalar subgroup,
and either $|\tilde C|=1,2$, or $p=3$ and $|\tilde C|$ divides $6$.
\end{thm}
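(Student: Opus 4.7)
The plan is to mimic the argument of Theorem \ref{main-sp1}(i) and apply the going-up Theorem \ref{sp-up} to $\tilde G = G = \tG^{v,r,s,t}_{\ari}$ with $H = L = \Sp_{2N}(p)$, $q=p$, $f=1$, and hence $e=1$. Since $p$ is odd, every exponent $p^N+1,\, p^{N'}+1,\, p^2+1,\, p+1,\, 2$ appearing in the defining polynomial of $\tilde\sG^{v,r,s,t}$ is even, so the pullback along $\{v=0\}\hookrightarrow\A^4$ has exactly the same trace function as $\sG^{-1,r,s,t}$. Theorem \ref{main-sp1}(i) gives $G^{-1,r,s,t}_{\geo}\rhd L$, and the induced map on fundamental groups then yields $L\subseteq\tG^{v,r,s,t}_{\geo}\subseteq\tG^{v,r,s,t}_{\ari}$.

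Next I would verify the hypotheses of Theorem \ref{sp-up} for the faithful representation $\Phi:\tG^{v,r,s,t}_{\ari}\to\GL_{p^N}(\C)$ attached to $\tilde\sG^{v,r,s,t}$. For (a), Corollary \ref{multiABmomentrho} applied with $f(x)=x^{p^N+1}$ and $\{B_i\}=\{p^{N'}+1,\,p^2+1,\,p+1,\,2\}$ (whose common gcd with $p^N+1$ is $D=2$) produces a geometric decomposition $\Phi=\Phi^+\oplus\Phi^-$ into irreducibles of degrees $(p^N\mp 1)/2$ separated by the $\chi_2$-twist. For (b), Theorem \ref{thm:vdG-vdV}(ii) applies since every monomial present is of the form $x^{p^i+1}$ with $i\in\{0,1,2,N',N\}$, yielding traces of each $\Phi^\eps$ in $\K$. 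For (c), the restrictions $\Phi^\eps|_L$ are irreducible because their further restrictions along $\{v=0\}$ already realize $L$ in both of its irreducible total Weil constituents by Theorem \ref{main-sp1}(i). For (d) and (e), Theorem \ref{thm:vdG-vdV}(i) makes $|\Tr\Phi|^2$ a power of $p$ at every $k$-point; the integrality/finite-monodromy argument of \cite[\S5]{KT2}, used in the proof of Theorem \ref{main-sp1}(i), then propagates this to every element of $\tG^{v,r,s,t}_{\ari}$, and since $f=1$ forces $e=1$, (e) coincides with (d).

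Theorem \ref{sp-up} now yields $L\lhd\tG^{v,r,s,t}_{\ari}$ together with $\CB_{\tG^{v,r,s,t}_{\ari}}(L)=\tilde C\times\ZB(L)$ and $\tG^{v,r,s,t}_{\ari}=\CB(L)\cdot L$, whence $\tG^{v,r,s,t}_{\ari}=\tilde C\cdot L$; the intersection $\tilde C\cap L\subseteq\tilde C\cap\ZB(L)=1$ upgrades the product to a direct product $\tG^{v,r,s,t}_{\ari}=\tilde C\times L$, with $\tilde C$ a cyclic scalar subgroup. The dichotomy on $|\tilde C|$ reads off directly from Theorem \ref{sp-up}(i)/(ii): since $f=1$ is odd, case (ii) can occur only when $p=3$ (with $|\tilde C|$ dividing $6$), while case (i) forces $|\tilde C|\in\{1,2\}$ otherwise. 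The main obstacle is verifying hypothesis (c) in the presence of the additional parameter $v$; once the $v=0$ specialization is identified with $\sG^{-1,r,s,t}$ the irreducibility is inherited for free, so no genuinely new difficulty appears beyond those already met in the proof of Theorem \ref{main-sp1}(i).
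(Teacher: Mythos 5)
Your proposal is correct and follows essentially the same route as the paper: specialize (the paper at $(v,r)=(0,0)$, you at $v=0$) to recover a system already handled by Theorem \ref{main-sp1}, thereby placing $L=\Sp_{2N}(p)$ inside the geometric monodromy, and then invoke the going-up Theorem \ref{sp-up} with $e=1$, using the second-moment-two decomposition and the van der Geer--van der Vlugt trace properties (Theorem \ref{thm:vdG-vdV}) exactly as the paper does. Your more detailed verification of hypotheses (a)--(e) of Theorem \ref{sp-up} is consistent with the paper's terser argument, so no new idea or gap is involved.
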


\begin{proof}
Note that $\tilde\sG^{0,0,s,t}$ is the local system $\sG^{-1,0,s,t}$ considered in Theorem \ref{main-sp1}. Hence, 
$$\tG^{v,r,s,t}_{\ari} \rhd \tG^{v,r,s,t}_{\geo} \geq G^{-1,0,s,t}_{\geo} = L = \Sp_{2N}(p).$$
Using the finiteness, Proposition \ref{ABmoment}, and Theorem \ref{thm:vdG-vdV}, we see that
the statements follow from Theorem \ref{sp-up}. 
\end{proof}

\section{Local systems and total Weil representations: Symplectic groups over $\F_q$}
We continue to work with the prime $p > 2$, and fix a power $q=p^f$ and positive integers $n,m$, where 
\begin{equation}\label{eq:mn1}
  n > m,~~\gcd(n,m) = 1,~~2|mn,~~q^n > 9,~~\mbox{and either }m< n/2,\mbox{ or }(n,m) = (3,2),\;(2,1).
\end{equation}  
This assumption implies that 
\begin{equation}\label{eq:mn12}
  \gcd(q^n+1,q^m+1) = 2.
\end{equation}  

For compatibility with the notations used in $\S4$, we recall that precisely one of $n,m$ is even, and we define the integers $A,B$ as follows:
\begin{equation}\label{eq:ab11}
  (A,B) = \left\{ \begin{array}{ll}\bigl( (q^n+1)/2, (q^m+1)/2\bigr), & \mbox{if }2|n,\\
  \bigl((q^m+1)/2, (q^n+1)/2\bigr), & \mbox{if }2|m. \end{array} \right.
\end{equation}  

In this section, our ultimate target is the local system $\sW(\psi,n,m,q)$ on $\A^1/\F_p$, whose trace function for $k/\F_p$ a finite extension and $r \in k$ is given as follows:
\begin{equation}\label{eq:sheaf1}
  r \mapsto\frac{1}{\Gauss(\overline\psi_k,\chi_2)} \sum_{x \in k}\psi_{-1/2,k}\bigl(x^{q^n+1} + rx^{q^m+1}\bigr).
\end{equation}  

To study $\sW(\psi,n,m,q)$, we first study the local system $\stW^{u,r}$ on $\G_m \times \A^1/\F_p$
whose trace function is given as follows. For $k/\F_p$ a finite extension, and $(u,r)\in k^\times \times k$, 
$$(u,r) \mapsto \frac{1}{\Gauss(\overline\psi_k,\chi_2)}\sum_{x \in k}\psi_{-1/2,k}\bigl(-ux^{q^n+1} - rx^{q^m+1}\bigr).$$
By Proposition \ref{ABmomentrho} and \eqref{eq:mn12}, $\stW^{u,r}$ is the sum of two irreducible subsystems of rank $(q^n \pm 1)/2$.
Let $\tG^{u,r}_{\ari}$ and 
$\tG^{u,r}_{\geo}$ denote the arithmetic and the geometric monodromy groups of $\stW^{u,r}$, and similarly,
$\tG^{u,r,\eps}_{\ari}$ and $\tG^{u,r,\eps}_{\geo}$ denote the arithmetic and the geometric monodromy groups of each of the 
two irreducible subsystems $\stW^{u,r,\eps}$, of rank $(q^n-\eps)/2$ for $\eps = \pm$.
  

Now, let $\sW(n,m)$ denote the local system defined as follows. When $2|n$, i.e. when $A >B$ in \eqref{eq:ab11}, it is the local system $\stW^{-1,r}$ on $\A^1/\F_p$, that is, the one with trace function 
given as follows: for $k/\F_p$ a finite extension, and $r \in k$,
$$r \mapsto \frac{1}{\Gauss(\overline\psi_k,\chi_2)} \sum_{x \in k}\psi_{-1/2,k}\bigl(x^{q^n+1} - rx^{q^m+1}\bigr).$$
When $2 \nmid n$, i.e. when $A < B$ in \eqref{eq:ab11}, it is 
the local system $\stW^{u,-1}$ on $\G_m/\F_p$, 
with trace function given as follows: for $k/\F_p$ a finite extension, and $u \in k^\times$,
$$u \mapsto  \frac{1}{\Gauss(\overline\psi_k,\chi_2)} \sum_{x \in k}\psi_{-1/2,k}\bigl(-ux^{q^n+1} + x^{q^m+1}\bigr).$$
For $\eps=\pm$, let $\sW(n,m,\eps)$ denote the irreducible subsystem of $\sW(n,m)$ of rank $(q^n-\eps)/2$.
Then $\sW(n,m,\eps)$ is the $[A]^\star$ Kummer pullback  of the hypergeometric sheaf $\sH(n,m,\eps)$ defined by
\begin{equation}\label{eq:ab12}
  \begin{array}{lr}\sH(n,m,+) := & \sH_{small,A,B,descent}\otimes \bigl(-\Gauss(\overline\psi_{\F_p},\chi_2)\bigr)^{-\deg},\\
    \sH(n,m,-) := & \sH_{big,A,B,descent}\otimes \bigl(-\Gauss(\overline\psi_{\F_p},\chi_2)\bigr)^{-\deg}.\end{array}
\end{equation}    

Let $G(n,m)_{\ari}$, $G(n,m)_{\geo}$, $G(n,m,\eps)_{\ari}$, $G(n,m,\eps)_{\geo}$, $H(n,m,\eps)_{\ari}$, and 
$H(n,m,\eps)_{\geo}$ denote the arithmetic and geometric monodromy groups of the local systems
$\sW(n,m)$, $\sW(n,m,\eps)$, and $\sH(n,m,\eps)$, respectively.
The pullback relation implies that $G_{\geo} \lhd H_{\geo}$ and the quotient is a cyclic group of order dividing $A$, 
for a pair of respective geometric monodromy groups $G_{\geo}$ and $H_{\geo}$.


\begin{thm}\label{main-sp2}
Given the assumption \eqref{eq:mn1}, and over any finite extension $k$ of $\F_q$, the following statements hold.
Each of $G(n,m)_{\geo}$ and $G(n,m)_{\ari}$ 
contains the normal subgroup $M=\Sp_{2n}(q)$ acting in 
one of its total Weil representations, and furthermore, is of the form $C' \times M$ 
for a suitable cyclic scalar subgroup $C'$ of order $\leq 2$.
\end{thm}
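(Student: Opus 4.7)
The plan is to adapt the three-stage scheme of the proof of Theorem \ref{main-sp1} to the $\F_q$-setting, relying heavily on the $[A]^\star$ Kummer pullback relation $\sW(n,m,\eps) = [A]^\star \sH(n,m,\eps)$.

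\emph{Stage 1 (Quasisimplicity of the quasisimple layer).} By Proposition \ref{cond-S}, both of the hypergeometric sheaves $\sH(n,m,\pm)$ satisfy {\rm ({\bf S+})}. Applying \cite[Proposition 2.8]{G-T} to $H(n,m,\eps)_{\geo}$ together with the same Sylow-based ruling out of the extraspecial-normalizer alternative as in the proof of Theorem \ref{main-sp1} (since $(q^n-\eps)/2$ can only be a prime power $p_1^m$ when $p_1$ lies in a Sylow subgroup of $\Sp_{2n}(q)$ that is abelian, contradicting the irreducibility it would induce) yields that $H(n,m,\eps)_{\geo}$ is almost quasisimple. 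Because $G(n,m,\eps)_{\geo}$ is normal in $H(n,m,\eps)_{\geo}$ of index dividing $A$ (which is coprime to $p$), the group $\Phi^\eps(G(n,m)^{(\infty)}_{\geo})$ is a quasisimple irreducible subgroup of $\SL_{(q^n-\eps)/2}(\C)$ for each $\eps = \pm$.

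\emph{Stage 2 (Identification with $\Sp_{2n}(q)$).} By van der Geer--van der Vlugt (Theorem \ref{vdG-vdV2}), the sheaf $\sW(n,m)$ is realized inside the total Weil representation of the ambient group $\Gamma = \Sp_{2nf}(p)$, with all traces in $\K$ and squared absolute values powers of $q$. Theorem \ref{sp-down}, with $N = nf$, then gives $G(n,m)^{(\infty)}_{\geo} \cong \Sp_{2n/d}(q^d)$ for some divisor $d \mid n$. To force $d = 1$, I use the wild-inertia structure at the wild singular point ($\infty$ if $A > B$, else $0$): by Proposition \ref{cond-S} the wild part has dimension $W = |A-B| = |q^n - q^m|/2$ with all slopes $1/W$, and the (image of the) inertia group contains a cyclic element that cyclically permutes a large family of $P$-isotypic constituents. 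Taking a Zsigmondy primitive prime divisor $\ell$ of a suitable $q^{n-m}-1$, and arguing as in Theorem \ref{main-sp1}, $\ell$ must divide $|\Sp_{2n/d}(q^d)|$, which forces $d \mid (n-m)$; combined with $d \mid n$ and $\gcd(n,m) = 1$, this gives $d = 1$. The small exceptional cases $(n,m) = (2,1)$ and $(3,2)$ of \eqref{eq:mn1}, where Zsigmondy may fail, are handled by direct inspection of the maximal subgroups of $\Sp_4(q)$ and $\Sp_6(q)$, exactly as the $(p,N) \in \{(3,3),(5,3)\}$ cases are in Theorem \ref{main-sp1}.

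\emph{Stage 3 (Scalar part).} With $M = \Sp_{2n}(q) = G(n,m)^{(\infty)}_{\geo}$ a normal subgroup of $G(n,m)_{\ari}$, each $\Phi^\eps|_M$ irreducible, and both trace hypotheses $\Tr(\Phi^\eps(g)) \in \K$ and $|\Tr(\Phi(g))|^2 \in q^{\Z}$ verified by Theorem \ref{vdG-vdV2}, Theorem \ref{ext-sp} gives $G(n,m)_{\ari} = C' \times M$ with $C'$ cyclic and acting via scalars. To upgrade to $|C'| \leq 2$, I invoke Theorem \ref{sp-det}: after extending scalars to $\F_{p^2} \subseteq \F_q$ (and, if $B > A$, taking a $[2]^\star$ Kummer pullback), both twisted sheaves have arithmetically trivial determinants. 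A cyclic scalar of order $3$ or $6$ would contribute a nontrivial $3$-power to $\det(\Phi^\eps)$, contradicting this triviality; this excludes the $p = 3$, $|C'| \in \{3,6\}$ alternative of Theorem \ref{ext-sp}. The conclusion for $G(n,m)_{\geo}$ follows verbatim, as it is contained in $G(n,m)_{\ari}$ and still normally contains $M$.

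The main obstacle is Stage 2: translating the slope $1/W$ from Proposition \ref{cond-S} into an inertia element of exactly the order required to run a Zsigmondy-style divisibility argument, and then pinning down $d = 1$ uniformly in $(n,m)$. The bookkeeping is delicate when $n - m$ is small (so that a primitive prime divisor may not exist), and the exceptional pairs $(2,1)$ and $(3,2)$ must be disposed of by hand. Everything else is a straightforward adaptation of the $\F_p$-proof.
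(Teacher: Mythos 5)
There is a genuine gap at the hinge of your Stage 1/Stage 2: you never establish the containment of the monodromy group inside a scalar extension of $\Sp_{2nf}(p)$ acting through a total Weil representation, yet both of your key steps silently use it. Theorem \ref{vdG-vdV2} only gives trace information (values in $\K$, squared absolute values powers of $q$, finiteness); it does \emph{not} ``realize $\sW(n,m)$ inside the total Weil representation of $\Gamma=\Sp_{2nf}(p)$,'' and Theorem \ref{sp-down} cannot be invoked without the hypothesis $G\leq C\times\Gamma$ with $\Gamma=\Sp_{2N}(p)$ in a total Weil representation. Likewise, your Sylow-based exclusion of the extraspecial alternative appeals to the Sylow structure of $\Sp_{2n}(q)$ (or of some ambient group), but at that point no such ambient group is available --- indeed $\Sp_{2n}(q)$ is the group you are trying to identify, so as written the argument is circular. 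The paper supplies exactly this missing ingredient by a going-up step: it first studies the auxiliary multi-parameter system $\stW^{u,r,s}$ obtained by adding a monomial $x^{p^\kappa+1}$ ($\kappa=1$ or $2$ according to the parity of $N=nf$), observes that a suitable specialization is one of the systems of Theorem \ref{main-sp1} whose monodromy contains $L=\Sp_{2N}(p)$, and then applies Theorem \ref{sp-up} to conclude $\tG^{u,r,s}_{\ari}=C\times L$. Only after this does one know $M\leq\Sp_{2N}(p)$ (and that the relevant Sylow subgroups are abelian/cyclic), which is what makes both the extraspecial exclusion and the application of Theorem \ref{sp-down} legitimate. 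Your proposal needs this step (or a substitute for it); as it stands, Stage 2 rests on a misreading of Theorem \ref{vdG-vdV2}.

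Two smaller points. First, the extraspecial exclusion is not uniformly ``the same as in Theorem \ref{main-sp1}'': in the case $2\mid n$, $\eps=-$, the rank $(q^n+1)/2$ equals the pullback degree $A$, so the index of $G(n,m,-)_{\geo}$ in $H(n,m,-)_{\geo}$ may be a power of the same prime $p_2$ and one cannot conclude that the normal $p_2$-subgroup lies in $G(n,m,-)_{\geo}$; the paper handles this separately using the cyclicity of Sylow $p_2$-subgroups of $C\times L$ and Ito's theorem. Second, for the small cases $(n,m)=(2,1),(3,2)$ where Zsigmondy is unavailable, the paper does not inspect maximal subgroups of $\Sp_4(q)$ or $\Sp_6(q)$; it rules out $d>1$ by an Ito-type argument on the image of the wild inertia group $P(\delta)$, which must admit an irreducible submodule of dimension $q^m$. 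Your Stage 3 (determinants via Theorem \ref{sp-det} killing the $p=3$ scalar alternative of Theorem \ref{ext-sp}) does match the paper.
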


\begin{proof}
(i) Write $q=p^f$, define $N:=nf$, $N':=mf$, 
and choose $\kappa := 1$ if $2|N$ and $\kappa:= 2$ if $2 \nmid N$. First we 
consider the local system $\stW^{u,r,s}$ on $\G_m \times \A^2/\F_p$, with arithmetic monodromy group $\tG^{u,r,s}_{\ari}$, and  
with trace function given as follows: for $(u,r,s)\in k^\times \times k^2$,
$$(u,r,s) \mapsto  \frac{1}{\Gauss(\overline\psi_k,\chi_2)}  \sum_{x \in k}\psi_{-1/2,k}\bigl(-ux^{q^n+1} - rx^{q^m+1}+sx^{p^\kappa+1}\bigr)=$$
$$= \frac{1}{\Gauss(\overline\psi_k,\chi_2)}  \sum_{x \in k}\psi_{-1/2,k}\bigl(-ux^{p^N+1} - rx^{p^{N'}+1}+sx^{p^\kappa+1}\bigr).$$
When $2|N$, the system $\stW^{-1,0,s}$ at $u=1$ and $r=0$ is exactly 
the system $\sG^{-1,0,s,0}$ considered in \S\ref{p12}. Likewise, when $2 \nmid N$, the system $\stW^{u,0,1}$ at $r=0$ is exactly 
the system $\sG^{u,1,0,0}$ considered in \S\ref{p12}. 
It follows from Theorem \ref{main-sp1} (applied to $\stW^{1,0,s}$, respectively
$\stW^{u,0,1}$) that $(\tG^{u,r,s}_{\ari})^{(\infty)}$ contains $L = \Sp_{2N}(p)$ acting in one of its total Weil representations. 

By Theorem \ref{thm:vdG-vdV}, the sheaf $\stW^{u,r,s}$ and its various specializations satisfy the $p$-power property 
for the entire sheaf and the property of having all traces belonging to $\K$ for the two irreducible subsheaves. Moreover,
their monodromy groups satisfy the second moment $2$ property, as follows from Corollary \ref{multiABmomentrho}. In the subsequent
arguments, we will repeatedly use these properties without recalling them explicitly again.
Now, Theorem \ref{sp-up} applied to $\tG^{u,r,s}_{\ari} \geq L$, with
$(\tG,G,H,e) = (\tG^{u,r,s}_{\ari},\tG^{u,r,s}_{\ari},L,1)$,  yields that
\begin{equation}\label{eq:mn13}
  \tG^{u,r,s}_{\ari} = C \times L,
\end{equation}  
where $C$ a cyclic scalar subgroup, and either $|C| = 1,2$, or $p=3$ and $|C| =3,6$.

Let $\Phi:\tG^{u,r,s}_{\ari}=CL \to \GL_{p^N}(\C)$ denote the corresponding representation of $\tG^{u,r,s}_{\ari}$ 
acting on $\stW^{u,r,s}$, 
which is a sum of two irreducible representations $\Phi^\eps$ of degree $(p^N -\eps)/2$, $\eps = \pm$.

\smallskip
(ii) Given the information about respective cyclic quotients, we see that the groups $G(n,m)_{\geo}$ and $G(n,m)_{\ari}$ 
have a common last term $M$ of their derived series:
$$M = (G(n,m)_{\geo})^{(\infty)} =  (G(n,m)_{\ari})^{(\infty)}.$$
As $G(n,m)_{\ari} \leq \tG^{u,r,s}_{\ari}$, it follows from \eqref{eq:mn13} that 
\begin{equation}\label{eq:mn14}
  M \leq L=\Sp_{2N}(p).
\end{equation}  

Recall from \eqref{eq:ab12} 
that each of the two irreducible summands $\sW(n,m,\eps)$, $\eps=\pm$, is the $[A]^\star$ Kummer pullback, with
\begin{equation}\label{eq:mn14a}
  A = (q^n+1)/2  \mbox{ when }2|n \mbox{ and }A = (q^m+1)/2 \mbox{ when }2 \nmid n, 
\end{equation}  
of the irreducible hypergeometric sheaf $\sH(n,m,\eps)$ of rank $(p^N - \eps)/2$, which satisfies 
$(\mathbf{S}+)$ by Proposition \ref{cond-S}.
Hence we can apply \cite[Proposition 2.8]{G-T} to its geometric monodromy group $H(n,m,\eps)_{\geo}$ which is finite.

Assume we are in the extraspecial case of \cite[Proposition 2.8(iii)]{G-T} for some $\eps = \pm$.
Then $(q^n-\eps)/2 = (p_2)^a$ for some prime $p_2$ and some $a \in \Z_{\geq 1}$, and $H(n,m,\eps)_{\geo}$ contains a normal 
$p_2$-subgroup $P_2$ that acts irreducibly on the sheaf $\sH(n,m,\eps)$. 
Assume in addition that $\eps = +$ when $2|n$. Then recalling \eqref{eq:mn1} and \eqref{eq:mn14a},
we easily check that $p_2 \nmid A$. On the other hand, $G(n,m,\eps)_{\geo}$ is a normal subgroup of  $H(n,m,\eps)_{\geo}$ of index dividing $A$. It follows that 
$$P_2 \lhd G(n,m,\eps)_{\geo}=\Phi^\eps(G(n,m)_{\geo}) \leq \Phi^\eps(\tG^{u,r,s}_{\ari}).$$ 
Now using \eqref{eq:mn13} and the equality  $(p^N-\eps)/2 = (p_2)^a$, we see that Sylow $p_2$-subgroups of $\tG^{u,r,s}_{\ari}$ are abelian. Hence Sylow $p_2$-subgroups of $G(n,m,\eps)_{\geo}$ are abelian, and so $P_2$ is abelian. But this contradicts the 
irreducibility of $P_2$ on $\sH(n,m,\eps)$.

We still assume the extraspecial case, but now with $\eps = -$ and $2|n$. Then $A = (q^n+1)/2 = (p_2)^a$. Again, $G(n,m,-)_{\geo}$ is a normal subgroup of $H(n,m,-)_{\geo}$ of index dividing $A$, and  
$$G(n,m,-)_{\geo}=\Phi^-(G(n,m)_{\geo}) \leq \Phi^-(\tG^{u,r,s}_{\ari}).$$ 
Now using \eqref{eq:mn13} and the equality  $(p^N+1)/2 = (p_2)^a$, we see that Sylow $p_2$-subgroups of $\tG^{u,r,s}_{\ari}$ are  cyclic of order $(p_2)^a$. Hence $Q_2 := P_2 \cap G(n,m,-)_{\geo} \lhd P_2$ is cyclic of order say $(p_2)^b$ with $0 \leq b \leq a$,
and $P_2/Q_2$ is a cyclic group of order dividing $A$. 
Note that $\Aut(Q_2)$ is trivial if $b=0$ and is cyclic of order $(p_2)^{b-1}(p_2-1)$
if $b \geq 1$. As $b \leq a$ and $P_2/R_2 \inj \Aut(Q_2)$ for $R_2 := \CB_{P_2}(Q_2) \lhd P_2$, we have 
\begin{equation}\label{eq:mn15}
  |P_2/R_2| \leq (p_2)^{a-1}.
\end{equation}  
Next, $R_2/Q_2 \leq P_2/Q_2$ is cyclic, and $Q_2 \leq \ZB(R_2)$. Hence $R_2$ is abelian. This, together with 
\eqref{eq:mn15}, implies by Ito's theorem \cite[(6.15)]{Is} that any irreducible $\C P_2$-module has dimension at most $(p_2)^{a-1}$.
But this again contradicts the irreducibility of $P_2$ on $\sH(n,m,-)$.

\smallskip
(iii) Thus we have shown that $H(n,m,\eps)_{\geo}$ is almost quasisimple for all $\eps = \pm$.
Using property $(\mathbf{S}+)$ and 
\cite[Lemma 2.5]{G-T}, we then have that $\Phi^\eps(M)$ is a quasisimple irreducible subgroup of $\SL_{(p^N-\eps)/2}(\C)$, 
and, furthermore, $M \leq  L=\Sp_{2N}(p)$ by \eqref{eq:mn14}.
By Theorem \ref{sp-down}, there are some divisors $d$ of $n=N/f$ and $e$ of $d$ such that $M = \Sp_{2n/d}(q^d) \rtimes C_e$, whence $e=1$ by perfectness. [Note that Theorem \ref{sp-down} assumes $p > 5$ when $N=3$ and $p >3$ when $N=2$. However, when $N=3$, the statement follows from
Theorem \ref{main-sp1}(i) and (iii); and the case $(p,N)=(3,2)$ is excluded by the assumption $q^n=p^N > 9$.]
In particular, $H(n,m,\eps)_{\ari} \rhd H(n,m,\eps)_{\geo}$ contains the normal quasisimple subgroup $\Phi^\eps(\Sp_{2n/d}(q^d))$. 
By Proposition \ref{cond-S}, it also contains (the image) of the inertia subgroup $I(\delta)$, which has a cyclic $p'$-subgroup $\langle h \rangle$ of order divisible by $(q^{n-m}-1)/2$ that cyclically permutes the $(q^{n-m}-1)/2$ 
irreducible $P(\delta)$-submodules of dimension $q^m$, where $\delta := \infty$ if $2|n$ and $\delta:= 0$ if $2 \nmid n$.

Next we choose $\eps_0 = \pm$ such that $D_0:=(q^n-\eps_0)/2$ is even. Since no outer-diagonal automorphism of 
$\Sp_{2n/d}(q^d)$ can preserve the Weil representation $\Phi^{\eps_0}(\Sp_{2n/d}(q^d))$ up to equivalence, by Schur's lemma we have
\begin{equation}\label{eq:d2}
  \Sp_{2n/d}(q^d) \lhd H(n,m,\eps_0)_{\ari} \leq \NB_{\GL_{D_0}(\C)}(\Sp_{2n/d}(q^d)) \leq (\Sp_{2n/d}(q^d) \cdot C_{df})Z,
\end{equation}  
where $Z = \ZB(\GL_{D_0}(\C))$.

Consider the case $n-m \geq 3$; in particular, $m < n/2$.
Then $q^{n-m}-1=p^{(n-m)f}-1$ admits a primitive prime divisor $\ell$ by \cite{Zs}, which is either equal to
$(n-m)f+1$ or at least $2f(n-m)+1 > N=nf$. Clearly, $\ell \nmid df$ in the latter case. In the former case, if $f \geq 2$ we have
$\ell > 2(n-m) > n \geq d$ and $\ell \nmid f$, whence $\ell \nmid df$. On the other hand, if $f=1$ in the former case, then
$n/2 < \ell = n-m+1$, so $\ell|df$ would imply $\ell=n=d$, $m=1$, $2|n$, and so $n-m=1$, a contradiction. Thus 
$\ell \nmid df$ in all cases, but $\ell$ divides $|h|$. Let $h_0$ denote the $\ell$-part of $h$.
Now using $\ell \nmid df$ and \eqref{eq:d2}, we see
that $h_0 \in \Sp_{2n/d}(q^d)Z$. Since $h_0$ acts nontrivially on the set of $(q^{n-m}-1)/2$ of irreducible $P(\delta)$-submodules
in $\sH(n,m,\eps_0)$, 
we conclude that $h_0 \notin Z$ and so $\ell$ divides $|\Sp_{2n/d}(p^{df})|$. Thus there exists $1 \leq i \leq n/d$ such that $\ell|(p^{2idf}-1)$, whence $n-m$ divides $2id$ by the 
choice of $\ell$. As $n-m > n/2$ and $n-m$ is odd, it follows that $n-m=id$. Hence $d$ divides both $n-m$ and $n$. Since 
$\gcd(n,m) = 1$ by \eqref{eq:mn1}, we conclude that $d=1$.

Next we consider the case $n-m<3$ but $d > 1$. Then $(n,m)=(3,2)$ or $(2,1)$, and $d=n$. Let $Q$ denote the image of $P(\delta)$ in 
$H(n,m,\eps_0)_{\ari}$. Then \eqref{eq:d2} shows that $Q$ has a normal subgroup $Q_1$ of index dividing $nf$, where $Q_1 \in \Syl_p(\Sp_2(q^n)Z)$ is abelian.
It follows from Ito's theorem \cite[(6.15)]{Is} that any irreducible $\C Q_1$-module has dimension dividing $nf$. But this contradicts 
the fact that $P(\delta)$ possesses an irreducible submodule of dimension $q^m = p^{mf}$ on $\sH(n,m,\eps_0)$.

Thus we have shown that $d=1$ and so $M = \Sp_{2n}(q) = L$. Clearly, $M$ is a normal subgroup of each of 
$G(n,m)_{\geo}$, $G(n,m)_{\ari}$.
Furthermore, by Theorem \ref{sp-det}, $\det(H(n,m,\eps)_{\ari})$ has order a $2$-power
(dividing $4$; again, we are using the oddness of $p$ here). It follows that $\det(\Phi^\eps(G(n,m)_{\ari}))$ is also a $2$-group.
The statement now follows from Theorem \ref{ext-sp}.
\end{proof}

The first main result of this section is the following theorem describing the monodromy groups of the local system 
$\sW(\psi,n,m,q)$ defined in \eqref{eq:sheaf1}.

\begin{thm}\label{main-sp3}
Given the assumption \eqref{eq:mn1}, the following statements hold.
\begin{enumerate}[\rm(i)]
\item Let $k$ be any finite extension of $\F_q$. Then the geometric monodromy group $G_{\geo}(\psi,n,m,q)$ 
and the arithmetic monodromy group $G_{\ari}(\psi,n,m,q,k)$
of $\sW(\psi,n,m,q)$ on $\A^1/k$ are 
$$G_{\geo}(\psi,n,m,q) = M,~~G_{\ari}(\psi,n,m,q,k) = C_{\ari,k} \times M,$$
where $M=\Sp_{2n}(q)$ acts via
one of its total Weil representations, 
and either $C_{\ari,k} \leq C_2$,
or $2 \nmid nf$, $p=3$, and $C_{\ari,k} \leq C_6$.
\item Let $e|f$ and let $k = \F_{q^{1/e}}$ be a subfield of $\F_q$. Then on $\A^1/k$ the arithmetic monodromy group $G_{\ari}(\psi,n,m,q,k)$ 
of $\sW(\psi,n,m,q)$ contains $G_{\ari}(\psi,n,m,q,\F_q)$ as a normal subgroup of index $e$:
$$G_{\ari}(\psi,n,m,q,k) = \bigl(C_{\ari,\F_q} \times M\bigr) \cdot C_e,$$
and induces a subgroup of order $e$ of outer field automorphisms of $M=\Sp_{2n}(q)$.
\end{enumerate}
\end{thm}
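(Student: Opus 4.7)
The plan is to carry out for $\sW(\psi,n,m,q)$ the same sequence of arguments used to prove Theorem \ref{main-sp2}, now with $\sW(\psi,n,m,q)$ on $\A^1$ playing the role of $\sW(n,m)$ and with the arithmetic structure over subfields of $\F_q$ recovered via the going-up Theorem \ref{sp-up}. When $n$ is even, $\sW(\psi,n,m,q)$ coincides with $\sW(n,m)$ after the involution $r \mapsto -r$ of $\A^1$, so part (i) is immediate from Theorem \ref{main-sp2}; when $n$ is odd, $\sW(n,m)$ lives on $\G_m$ rather than $\A^1$, so a parallel but separate analysis is required. An upper bound on the monodromy is supplied by realizing $\sW(\psi,n,m,q)$ as the restriction at $u=-1$, $s=0$ (with the substitution $r \mapsto -r$) of the family $\stW^{u,r,s}$ already studied in the proof of Theorem \ref{main-sp2}, whose arithmetic monodromy is $C \times L$ with $L = \Sp_{2nf}(p)$; consequently $G_{\ari}(\sW(\psi,n,m,q),k) \leq C \times L$ for any extension $k$ of $\F_p$.

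For the lower bound, I would first apply Corollary \ref{multiABmomentrho} to obtain second moment $\gcd(q^n+1,q^m+1)=2$ via \eqref{eq:mn12}, yielding a decomposition of $\sW(\psi,n,m,q)$ into two geometrically irreducible summands of ranks $(q^n \pm 1)/2$ distinguished by $\chi_2$. Theorem \ref{thm:vdG-vdV} supplies the $p$-power trace property over $\F_q$ and traces in $\K$ for each summand. By Corollary \ref{pullback}, each summand is the Kummer $[A]^\star$ pullback of a hypergeometric sheaf satisfying property $({\bf S+})$ (Proposition \ref{cond-S}). An application of \cite[Proposition 2.8]{G-T}, combined with a Sylow analysis inside $L$ ruling out the extraspecial case exactly as in step (ii) of Theorem \ref{main-sp2}, forces each hypergeometric geometric monodromy to be almost quasisimple. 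Hence $M := G_{\geo}(\sW(\psi,n,m,q))^{(\infty)}$ is a quasisimple subgroup of $L$, and Theorem \ref{sp-down} gives $M \cong \Sp_{2n/d}(q^d) \rtimes C_e$ for some $d \mid n$ and $e \mid d$, with $e=1$ by perfectness.

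To rule out $d>1$, I would reuse the inertia analysis from step (iii) of Theorem \ref{main-sp2}: Proposition \ref{cond-S} produces a cyclic $p'$-element in the inertia group at $\infty$ or $0$ permuting $(q^{n-m}-1)/2$ irreducible $P$-submodules of rank $q^m$, and Zsygmondy's theorem applied to $q^{n-m}-1$ exhibits a primitive prime $\ell$ that must divide $|\Sp_{2n/d}(q^d)|$, forcing $d \mid \gcd(n,n-m)=1$; the small cases $(n,m) \in \{(3,2),(2,1)\}$ are handled by the Ito-theorem argument from Theorem \ref{main-sp2} applied to Sylow $p$-subgroups of $\Sp_2(q^n)Z$. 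Hence $M = \Sp_{2n}(q)$. Combining this with the determinant calculation of Theorem \ref{sp-det} and Theorem \ref{ext-sp} yields part (i): $G_{\ari}(\psi,n,m,q,k) = C_{\ari,k} \times M$ with $|C_{\ari,k}| \leq 2$ in general, or $|C_{\ari,k}| \leq 6$ in the exceptional case $p=3$, $2 \nmid nf$.

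For part (ii), I would apply Theorem \ref{sp-up} with $\tilde G := G_{\ari}(\psi,n,m,q,\F_{q^{1/e}})$, $G := G_{\ari}(\psi,n,m,q,\F_q)$, and $H = M$; Theorem \ref{thm:vdG-vdV} provides hypothesis (e) of Theorem \ref{sp-up} over $\F_{q^{1/e}}$, namely that squared absolute values of traces are $q^{1/e}$-powers. The existence of an element $g \in \tilde G$ with $|\Tr(\Phi(g))|^2 = q^{1/e}$, required to force the induced field automorphism to have exact order $e$, comes from a trace computation at a concrete rational point in $\F_{q^{1/e}}$, paralleling the Frobenius-trace analysis of \cite[Theorem 3.5]{KT3}. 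Theorem \ref{sp-up} then delivers $\tilde G = (C_{\ari,\F_q} \times M) \cdot C_e$ with the induced $C_e$ of field automorphisms on $M$. The main obstacle I expect is this last ingredient: explicitly producing an element with squared Frobenius trace $q^{1/e}$, which is the crucial input needed to pin down the exact field-automorphism order $e$, together with the delicate small-case verifications in the lower-bound analysis.
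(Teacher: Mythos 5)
There are two genuine gaps. First, in part (i) your argument never actually proves $G_{\geo}(\psi,n,m,q)=M$; it only produces $M$ as a normal subgroup with small central (and cyclic) quotient. Already in the even-$n$ case this is not ``immediate from Theorem \ref{main-sp2}'': that theorem allows $G(n,m)_{\geo}=C'\times M$ with $|C'|\le 2$, so after the pullback $r\mapsto -r$ you still must kill $C'$. The paper does this by combining the fact that a local system on $\A^1$ has geometric monodromy with no nontrivial $p'$-quotient with an argument that $G_{\geo}(\psi,n,m,q)/M$ has order prime to $p$ (via the Kummer-pullback comparison below); your proposal omits this step entirely. Second, your lower-bound plan for odd $n$ rests on a wrong identification: by Corollary \ref{pullback} (with \eqref{eq:ab11}, so $A=(q^m+1)/2$, $B=(q^n+1)/2$), the $[A]^\star$ pullbacks of $\sH(n,m,\pm)$ carry the parameter on the monomial $x^{q^n+1}$, whereas $\sW(\psi,n,m,q)$ has it on $x^{q^m+1}$. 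So its two summands are \emph{not} the $[A]^\star$ pullbacks of the sheaves covered by Proposition \ref{cond-S}; they are $[B]^\star$ pullbacks of a different pair (namely $\sH_{small,B,A,descent}$ and $\sH_{big,B,A,\rho,descent}$ with $\rho^B=\chi_2$, as used in Theorem \ref{sp-center}(iii)), for which your quoted $(\mathbf{S}+)$/inertia input would have to be re-justified. The paper avoids redoing any of this: it observes that $[q^m+1]^\star\sW(\psi,n,m,q)$ and $[u\mapsto 1/u]^\star[q^n+1]^\star[u\mapsto -u]^\star\sW(n,m)$ have identical trace functions, hence identical monodromy groups, and thereby imports Theorem \ref{main-sp2} wholesale; this comparison is the missing key idea in your write-up.

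For part (ii) your route differs from the paper's: you invoke the going-up Theorem \ref{sp-up} with $(\tilde G,G,H)=(G_{\ari}(\psi,n,m,q,k),G_{\ari}(\psi,n,m,q,\F_q),M)$, whereas the paper first embeds $\tilde G$ into $C\times\Sp_{2nf}(p)$ using Theorem \ref{main-sp1b} (or \ref{main-sp1} when $mf=2$) and then applies the going-down Theorem \ref{sp-down}. Your route is plausible but needs hypotheses you do not verify: condition (b) of Theorem \ref{sp-up} demands traces in $\Q$ when $p=3$ and $2|f$, and over a subfield $k$ of odd degree over $\F_p$ the traces of the two summands are only guaranteed (by Theorem \ref{thm:vdG-vdV}) to lie in $\Q(\sqrt{-3})$; condition (e) also requires a $p'$-determinant check when $p=3$. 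The going-down route sidesteps both issues, which is presumably why the paper uses it. Finally, the ingredient you flag as the main obstacle --- an element with squared trace $q^{1/e}$ --- is in fact the easy part: Theorem \ref{vdG-vdV2}/Corollary \ref{AvdG-vdV2} (trace at the point corresponding to $u=1$ over the subfield, where the sum collapses to a quadratic Gauss sum) supplies it directly.
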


\begin{proof}
(i) 
In the case $2|n$, $\sW(\psi,n,m,q)$ is the pullback by $[r \mapsto -r]$ of $\sW(n,m)$, and the statements are already proved in Theorem \ref{main-sp2}, using 
the extra information that $G_{\geo}(\psi,n,m,q)$ has no nontrivial $p'$-quotient.

Consider the case $2 \nmid n$. Then, the Kummer pullback 
$$\mathcal{K} =[q^m+1]^\star\sW(\psi,n,m,q)$$ 
of $\sW(\psi,n,m,q)$ has trace function at $r \in k^\times$
$$r \mapsto  \frac{1}{\Gauss(\overline\psi_k,\chi_2)}  \sum_{x \in k}\psi_{-1/2,k}\bigl(x^{q^n+1}+(rx)^{q^m+1}\bigr) = \frac{1}{\Gauss(\overline\psi_k,\chi_2)}\sum_{x \in k}\psi_{-1/2,k}\bigl((r^{-1}x)^{q^n+1}+x^{q^m+1}\bigr)$$
on $\G_m/k$. 

On the other hand, if we define 
$$\sW'(n,m):=[u \mapsto -u]^\star \sW(n,m),$$
and define
$$\sK':=[u \mapsto 1/u]^\star[q^n+1]^\star\sW'(n,m),$$
then $\sK'$ has trace function at $u \in k^\times$ 
$$u \mapsto \frac{1}{\Gauss(\overline\psi_k,\chi_2)}\sum_{x \in k}\psi_{-1/2,k}\bigl((u^{-1}x)^{q^n+1}+x^{q^m+1}\bigr).$$
Thus $\mathcal{K}'$ is arithmetically isomorphic to $\mathcal{K}$, because they have equal trace functions and are each arithmetically semisimple. So they have the same geometric and arithmetic monodromy groups as each other:
$$K_{\geo} =K'_{\geo},\ \ K_{\ari}=K'_{\ari}.$$
From the definition of $\sK'$ as a pullback, we see that  $K'_{\geo}=K_{\geo}$ is a normal subgroup of $G(n,m)_{\geo}$, with cyclic quotient. 
It follows from Theorem \ref{main-sp2} that 
$$M = (G(n,m)_{\geo})^{(\infty)} \leq K_{\geo} \leq G(n,m)_{\geo} = C_0 \times M,$$
for some cyclic scalar subgroup $C_0$. Hence $(K_{\geo})^{(\infty)} = M$. 
From the definition of $\sK$ as a pullback, we see that
 $K_{\geo}$ is a normal subgroup of $G_{\geo}(\psi,n,m,q)$, with cyclic quotient, and that
$G_{\ari}(\psi,n,m,q,k)/G_{\geo}(\psi,n,m,q)$ is cyclic. This in turn implies that 
\begin{equation}\label{eq:sp31}
  (G_{\ari}(\psi,n,m,q,k))^{(\infty)} = (G_{\geo}(\psi,n,m,q))^{(\infty)} = (K_{\geo})^{(\infty)} = M,
\end{equation}  
where $M = \Sp_{2n}(q)$ acts on $\sW(\psi,n,m,q)$ via one of its total Weil representations. 

Note that the arithmetic monodromy group $K_{\ari}$ of $\sK$ is a subgroup of $G(n,m)_{\ari}$ containing $M$. Hence, by Theorem
\ref{main-sp2}, $K_{\ari}/M$ is a $2$-group.
Next, $K_{\geo}$ is a normal subgroup of $G_{\geo}(\psi,n,m,q)$, with cyclic quotient of
order dividing $q^m+1$ which is coprime to $p$. It follows that $p \nmid |G_{\geo}(\psi,n,m,q)/M|$.
As $G_{\geo}(\psi,n,m,q)$ has no nontrivial $p'$-quotient, it follows from \eqref{eq:sp31} that 
$G_{\geo}(\psi,n,m,q) = M$. 
The statement for $G_{\ari}(\psi,n,m,q,k)$ now follows by applying 
Theorem \ref{thm:vdG-vdV} (guaranteeing the necessary properties on traces) and Theorem \ref{ext-sp}.

\smallskip
(ii) It suffices to consider the case $q=p^f > p$.
By assumption, $\F_q$ is an extension of degree $e$ of $k$. Hence, $G_{\ari}(\psi,n,m,q,\F_q)$ is a normal subgroup of 
$\tilde G:=G_{\ari}(\psi,n,m,q,k)$, with cyclic quotient of order dividing $e$. In particular, it follows from (i) that
\begin{equation}\label{eq:sp32}
  \tilde G^{(\infty)} = M = \Sp_{2n}(q),~~|\tilde G| \leq e|G_{\ari}(\psi,n,m,q,\F_q)|.
\end{equation}
On the other hand, if $mf > 2$, note that $\sW(\psi,n,m,q)$ is precisely the sheaf $\tilde\sG^{r,0,0,0}$ (over $k$) considered in 
Theorem \ref{main-sp1b}, with $(N,N') = (nf,mf)$, whence $\tilde G$ is a subgroup of 
$\tilde \Gamma:=\tilde G^{r,v,s,t}_{\ari} = C \times L$, with $L = \Sp_{2N}(p)$ acting via one of its total Weil representations, and 
$C$ a finite cyclic subgroup. If $mf=2$ (and so $(m,f) = (1,2)$), then $\sW(\psi,n,m,q)$ is the sheaf $\sG^{-1,r,0,0}$ considered in 
Theorem \ref{main-sp1}, with $N = nf$, whence $\tilde G$ is a subgroup of 
$\tilde \Gamma:=G^{u,r,s,t}_{\ari} = C \times L$ with $C$ a finite cyclic subgroup. Now we can apply Theorem \ref{vdG-vdV2}(i) to 
$\sW(\psi,n,m,q)$ to see that $|\Tr(\Phi(x))|^2$ is a power of $q^{1/e}$ for all $x \in \tilde G$, and $q^{1/e}$ can be attained,
if $\Phi:\tilde G \to \GL_{q^n}(\C)$ is the representation of $\tilde G$ on the sheaf $\sW(\psi,n,m,q)$. By Theorem \ref{sp-down},
there exists some divisor $d$ of $N/(f/e) = ne$ such that
$$\Sp_{2ne/d}(q^{d/e}) \lhd \tilde G \leq C \times \bigl(\Sp_{2ne/d}(q^{d/e}) \rtimes C_d \bigr) = C\tilde G.$$
Recalling \eqref{eq:sp32}, we now see that $d=e$, and that $\tilde G$ induces a subgroup of order $e$ of outer field automorphisms 
of $M$. As $G_{\ari}(\psi,n,m,q,\F_q) = C_{\ari,\F_q} \times M$ induces only inner automorphisms of $M$, \eqref{eq:sp32} implies 
that $\tilde G = G_{\ari}(\psi,n,m,q,\F_q) \cdot C_e$, as stated.   
\end{proof}

In fact, the central factor $C_{\ari,k}$ in Theorem \ref{main-sp3}(i) will be explicitly determined in Theorem \ref{sp-center}.

To formulate the second main result of the section, recall the assumptions \eqref{eq:mn1} and \eqref{eq:ab11},
and consider the hypergeometric sheaves 
$\sH(n,m,+)$ of rank $(q^n-1)/2$ and $\sH(n,m,-)$ of rank $(q^n+1)/2$ introduced in \eqref{eq:ab12}.
Among these two sheaves, we denote the one of even rank by $\sH^{even}(n,m)$ and the one of odd rank
by $\sH^{odd}(n,m)$. Also, let 
$$\sH(n,m) := \sH(n,m,+) \oplus \sH(n,m,-) = \sH^{odd}(n,m) \oplus \sH^{even}(n,m).$$


\begin{thm}\label{main-sp4}
Given the assumption \eqref{eq:mn1} and the above notation, the following statements hold.
\begin{enumerate}[\rm(i)]
\item Let $k$ be any finite extension of $\F_q$. Then the arithmetic monodromy group $H_{\ari}^{even}(n,m,k)$ and the geometric monodromy group $H_{\geo}^{even}(n,m)$ of $\sH^{even}(n,m)$ on $\G_m/k$ are 
$$H_{\ari}^{even}(n,m,k) = H_{\geo}^{even}(n,m) = M \cong \Sp_{2n}(q)$$
and $M=\Sp_{2n}(q)$ acts in 
one of its even-degree irreducible Weil representations. Furthermore, the arithmetic monodromy group $H_{\ari}^{odd}(n,m,k)$ and the geometric monodromy group $H_{\geo}^{odd}(n,m)$ 
of $\sH^{odd}(n,m)$ on $\G_m/k$ are 
$$H_{\ari}^{odd}(n,m,k) = C'_{\ari,k} \times M/\ZB(M),~H_{\geo}^{odd}(n,m) = C'_{\geo} \times M/\ZB(M),$$
where $M/\ZB(M)\cong \PSp_{2n}(q)$ acts in 
one of its odd-degree irreducible Weil representations, and 
$$1 \leq C'_{\geo} \leq C'_{\ari,k}$$
with $C'_{\ari,k}$ a central subgroup of order $\leq 2$.
 
\item Let $e|f$ and let $k = \F_{q^{1/e}}$ be a subfield of $\F_q$. Then on $\G_m/k$ the arithmetic monodromy group $H_{\ari}^{even}(n,m,k)$ 
of $\sH^{even}(n,m)$ contains $H_{\ari}^{even}(n,m,\F_q)$ as a normal subgroups of index $e$, and likewise for 
the monodromy group of $\sH^{odd}(n,m)$:
$$H_{\ari}^{even}(n,m,k) = M \cdot C_e,~
    H_{\ari}^{odd}(n,m,k) = \bigl( C'_{\ari,\F_q} \times M/\ZB(M) \bigr) \cdot C_e,$$
and each of them induces a subgroup of order $e$ of outer field automorphisms of $M=\Sp_{2n}(q)$.
\end{enumerate}
\end{thm}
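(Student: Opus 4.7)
The plan is to bootstrap from Theorem \ref{main-sp2} to Theorem \ref{main-sp4} via the Kummer pullback identity $\sW(n,m,\eps) = [A]^\star \sH(n,m,\eps)$ of \eqref{eq:ab12}: this identity makes $G(n,m,\eps)_{\geo}$ a normal subgroup of $H(n,m,\eps)_{\geo}$ with cyclic quotient of order dividing $A$. The first step is to identify the image of $M = \Sp_{2n}(q)$ in each irreducible summand of the total Weil representation $\Phi = \Phi^+ \oplus \Phi^-$. Standard Weil-representation theory (cf.\ \cite[Lemma 2.6]{TZ2} and Gross \cite[\S 13]{Gross1}) says exactly one of $\Phi^\pm|_M$ is faithful with central involution $\bj \mapsto -\mathrm{Id}$, and this is precisely the even-rank component; the other factors through $\PSp_{2n}(q) = M/\langle \bj \rangle$ and has odd rank. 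From $G(n,m)_{\geo} = C'_{\geo} \times M$ of Theorem \ref{main-sp2}, on the even-rank side the scalar $C'_{\geo}$ is absorbed into $\Phi^\eps(\bj) = -\mathrm{Id}$, so $G(n,m,\eps)_{\geo} = \Sp_{2n}(q)$; on the odd-rank side $\Phi^\eps(C'_{\geo})$ remains external, so $G(n,m,\eps)_{\geo} \leq C_2 \times \PSp_{2n}(q)$.

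The second and main step is to determine the cyclic quotient $H(n,m,\eps)_{\geo}/\Phi^\eps(M)$. Since no outer-diagonal automorphism of $\Sp_{2n}(q)$ preserves the equivalence class of any of its Weil representations, the normalizer of $\Phi^\eps(M)$ in $\GL_{(q^n-\eps)/2}(\C)$ equals $\Phi^\eps(M) \cdot C_f \cdot Z$, where $C_f$ denotes field automorphisms and $Z$ the scalars. Any field-automorphism contribution is ruled out as follows: an element of $H_{\geo}$ inducing a nontrivial field automorphism of order $e > 1$ of $M$ would, by Lemma \ref{value1} and the standard extension of the total Weil representation to $\Sp_{2n}(q) \rtimes C_e$, have trace-absolute-value squared equal to $\bigl((q^{n/e} \pm 1)/2\bigr)^2$, which is not a power of $q$, whereas the trace structure of the Kummer pullback $\sW(n,m,\eps)$ forces, via Theorem \ref{vdG-vdV2}(i), all such values to be genuine powers of $q$. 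The residual scalar extension $Z_0 \leq Z$ is then constrained by the determinant computations of Lemmas \ref{detdown1}--\ref{detdown2} combined with $|Z_0| \mid A$: no new scalar arises in the even-rank case (where the only possible scalars $\pm \mathrm{Id}$ are already in $\Phi^\eps(M)$), and at most a $C_2$-factor in the odd-rank case.

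Passage to the arithmetic monodromy over $k = \F_q$ is via Theorem \ref{ext-sp}, whose hypotheses are supplied by Theorem \ref{thm:vdG-vdV} (traces in $\K$), Theorem \ref{vdG-vdV2}(i) (power-of-$q$ squared trace absolute values), and Theorems \ref{det}--\ref{detbis} (arithmetic determinants). Part (ii), over the subfield $k = \F_{q^{1/e}} \subsetneq \F_q$, parallels the proof of Theorem \ref{main-sp3}(ii): Frobenius over $\F_{q^{1/e}}$ induces an outer field automorphism of order $e$ on $M$, generating the $C_e$-factor in the arithmetic monodromy, and the analysis is completed via Theorem \ref{sp-up}. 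The main obstacle is the second step — excluding field-automorphism contributions in the geometric monodromy of $\sH$ — because the Kummer cyclic extension of order dividing $A$ does not a priori preclude such enlargements; Lemma \ref{value1}'s explicit trace formula for field-automorphism elements, matched against Theorem \ref{vdG-vdV2}(i)'s power-of-$q$ constraint transported through the Kummer pullback, is the decisive input.
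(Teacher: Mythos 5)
Your overall strategy (pull back to $\sW(n,m)$ to import $M=\Sp_{2n}(q)$ from Theorem \ref{main-sp2}, then control the cyclic overgroup of the hypergeometric monodromy, then pass to the arithmetic groups via Theorems \ref{ext-sp} and \ref{sp-up}) agrees with the paper in outline, and your treatment of the scalars, of the even/odd split via $\ZB(M)$, and of part (ii) is consistent with the paper's. But your decisive step, the exclusion of field automorphisms from $H(n,m,\eps)_{\geo}$, has a genuine gap. The power-of-$q$ constraint of Theorem \ref{vdG-vdV2}(i) applies only to the \emph{direct sum} $\sH(n,m)=\sH^{even}(n,m)\oplus\sH^{odd}(n,m)$ (this sheaf is exactly $\sW_{\sH,\Sp}(1/2)$), not to a single irreducible component: the per-component Frobenius traces are of the shape $(\pm q^j\pm 1)/2$ and are not powers of $q$ in absolute value, so no contradiction follows from your claimed component trace $(q^{n/e}\pm 1)/2$. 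Worse, even for the total sheaf the naive field-automorphism element $\sigma_r$ (with $r=q^{1/e}$, $e$ odd) has total Weil trace $r^n$ by Lemma \ref{value1}, and $|r^n|^2=q^{2n/e}$ \emph{is} an integral power of $q$ whenever $e\mid 2n$ (e.g. $e=2$, or $e\mid n$), so this element by itself violates nothing; moreover Lemma \ref{value1} only covers odd $e$, leaving the $e=2$ case untreated. The correct exclusion, as in the proof of Theorem \ref{ext-sp} (steps (ii)--(iii)), needs the special element $h\in \Sp_{2n}(q)\rtimes C_e$ of \cite[Theorem 3.5]{KT3} with $|\Tr(\Phi(h))|^2=q^{1/e}$, combined with a careful bookkeeping of the scalar ambiguities $\alpha,\beta$ relating the two components of a lift (and a separate argument with $\alpha'=-1$ when $e=2$); your sketch supplies none of this.

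The paper avoids your component-by-component normalizer computation altogether: it notes that $\sH(n,m)$ is $\sW_{\sH,\Sp}(1/2)$, so Theorem \ref{vdG-vdV2} provides the power-of-$q$ and $\K$-rationality hypotheses for the monodromy of the total sheaf; the Kummer-pullback relation to $\sW(n,m)$ and Theorem \ref{main-sp2} give $M=G_{\geo}^{(\infty)}=G_{\ari,k}^{(\infty)}$; Theorem \ref{sp-det} supplies the determinant hypothesis; and a single application of Theorem \ref{ext-sp} then yields $G_{\ari,k}=C'_{\ari,k}\times M$ and $G_{\geo}=C'_{\geo}\times M$, with the field-automorphism exclusion performed correctly inside that theorem. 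Part (i) follows by projecting onto the two components (using that $\ZB(M)$ acts trivially on $\sH^{odd}(n,m)$ and as $\pm 1$ on $\sH^{even}(n,m)$), and part (ii) by applying Theorem \ref{sp-up} to $(\tilde G,G,H)=(G_{\ari,k},G_{\ari,\F_q},M)$, much as you propose. To repair your argument, replace your second step by this application of Theorem \ref{ext-sp} to the total sheaf (or reproduce its internal argument), rather than attempting to rule out field automorphisms from the trace of $\sigma_r$ in a single component.
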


\begin{proof}
(i) Note that 
$\sH(n,m) = \sH^{even}(n,m) \oplus \sH^{odd}(n,m)$ 
is exactly the system $\sW_{\sH,\Sp}(1/2) $ defined after Corollary \ref{pullback}, because we already built the 
Tate $(1/2)$-twist  into the definition of  $\sH^{even}(n,m)$ and $ \sH^{odd}(n,m)$; in particular,
Theorem \ref{vdG-vdV2} applies to $\sH(n,m)$. Now,
the geometric monodromy group $G_{\geo}$ of $\sH(n,m)$ contains $G(n,m)_{\geo}$ as a normal subgroup, with cyclic quotient of
order dividing $A$ which is coprime to $p$, and $G(n,m)_{\geo}$ is described in Theorem \ref{main-sp2}. 
As $G_{\geo}$ is a normal subgroup with cyclic quotient in the arithmetic monodromy group $G_{\ari,k}$ of $\sH(n,m)$, it follows that
$G_{\ari,k}^{(\infty)} = G_{\geo}^{(\infty)} = M \cong \Sp_{2n}(q)$, acting via a total Weil representation. Next, as $p$ is odd,
Theorem \ref{sp-det} shows that the determinant of $G_{\ari,k}$ on each of the two irreducible subsheaves of $\sH(n,m)$ is 
a $2$-group. Now, Lemma \ref{ourtracesinK1} and Corollary \ref{pullback} ensure that we can first apply Theorem \ref{ext-sp}
to obtain 
\begin{equation}\label{eq:z2}
  G_{\ari,k} = C'_{\ari,k} \times M,~G_{\geo} = C'_{\geo} \times M,
\end{equation}  
where $C'_{\geo} \leq C'_{\ari,k}$ are both central of order $\leq 2$, acting 
on $\sH(n,m)$ via scalars. The statements in (i) then follow, by recalling that $H_{\ari}^{even}(n,m,k)$ and $H_{\ari}^{odd}(n,m,k)$
are the images of $G_{\ari,k}$ acting on $\sH^{even}(n,m)$ and $\sH^{odd}(n,m)$, with cyclic centers, and noting that
$\ZB(M)$ acts trivially on $\sH^{odd}(n,m)$ and as $\{\pm 1\}$ on $\sH^{even}(n,m)$.

\smallskip
(ii) We again work with $\sH(n,m)$ and its arithmetic monodromy group $G_{\ari,k}$. 
By Theorem \ref{sp-det}, the determinantal image
of $G_{\ari,k}$ on each of $\sH^{even}(n,m)$ and $\sH^{odd}(n,m)$ is a $p'$-group. Now, 
Theorem \ref{vdG-vdV2} 
ensures that we can apply Theorem \ref{sp-up} to $(\tilde G,G,H) = (G_{\ari,k},G_{\ari,\F_q},M)$. As $G_{\ari,k}/G_{\ari,\F_q}$ is cyclic of order dividing $e$, the statements follow.
\end{proof}

Our final result in this section determines all the central subgroups involved in Theorems \ref{main-sp2}--\ref{main-sp4}.

\begin{thm}\label{sp-center}
Keep the assumption \eqref{eq:mn1}. Then the following statements hold.
\begin{enumerate}[\rm(i)]
\item Assume that $2|n$. Then the central subgroups
$C'$ in Theorem \ref{main-sp2}, $C_{\ari,k}$ in Theorem \ref{main-sp3}(i), $C'_{\geo}$ and $C'_{\ari,k}$ in Theorem \ref{main-sp4}(i) are all trivial. Furthermore, for any extension $k$ of $\F_q$, $\sH(n,m)$ has its geometric and arithmetic monodromy groups
$G_{\geo} = G_{\ari,k} = \Sp_{2n}(q)$.
\item Assume that $2 \nmid n$. Then the central subgroups
$C'_{\geo}$ and $C'_{\ari,k}$ in Theorem \ref{main-sp4}(i) are both cyclic of order $2$. Furthermore, 
$G(n,m)_{\ari}=G(n,m)_{\geo} = C_2 \times \Sp_{2n}(q)$ in Theorem \ref{main-sp2}.
Moreover, the local system $\stH(n,m):=\sH(n,m) \otimes \sL_{\chi_2}$ has its geometric monodromy group
$\tG_{\geo} = \Sp_{2n}(q)$.
\item Assume again that $2 \nmid n$. Then the central subgroup $C_{\ari,k}$ in Theorem 
\ref{main-sp3}(i) has order $1$ when $q \equiv 1 (\bmod\ 4)$ or if $k \supseteq \F_{p^2}$, and 
has order $2$ if $q \equiv 3 (\bmod\ 4)$ and $k \not\supseteq \F_{p^2}$.
\end{enumerate}
\end{thm}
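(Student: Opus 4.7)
For part (i), with $2 \mid n$, the approach is a determinant obstruction. Theorem \ref{det}(i),(ii) gives that both $\sH(n,m,+)$ and $\sH(n,m,-)$ on $\G_m/\F_p$ have arithmetically trivial determinant with our chosen Gauss-sum clearing factor. A central scalar $\lambda \neq 1$ in any of the subgroups $C'$ would contribute $\lambda^{(q^n-\eps)/2}$ to the arithmetic determinant on the rank-$(q^n-\eps)/2$ piece; since $2 \mid n$ forces $q^n \equiv 1 \pmod{8}$, the rank $(q^n+1)/2$ is odd, so $\lambda = -1$ gives determinant $-1$, a contradiction. The exotic $p=3$ possibilities $C_3, C_6$ in Theorem \ref{main-sp2}(ii)(b) are ruled out identically, since a scalar of order $3$ would produce a determinant of order $3$. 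The conclusions for $C'$ in Theorem \ref{main-sp2} and $C_{\ari,k}$ in Theorem \ref{main-sp3} then follow by tracing through $\sW(n,m) = [A]^\star \sH(n,m)$ (with $A = (q^n+1)/2$ odd, preserving trivial determinants) and the identification of $\sW(\psi,n,m,q)$ with $\sW(n,m)$ via the coordinate change $r \leftrightarrow -r$.

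For part (ii), with $2 \nmid n$ (hence $2 \mid m$), Theorem \ref{detbis} shows $\sH^{odd}(n,m)$ has nontrivial geometric determinant $\sL_{\chi_2}$. Since $M = \Sp_{2n}(q)$ is perfect, a nontrivial geometric determinant forces $C'_\geo$ to be nontrivial; combined with $|C'_\geo| \leq 2$ from Theorem \ref{main-sp4}, this gives $C'_\geo = C_2$ and likewise $C'_{\ari,k} = C_2$. For the twist $\stH(n,m) := \sH(n,m) \otimes \sL_{\chi_2}$, the new geometric determinants are $\sL_{\chi_2}^{1+d_{odd}} = \triv$ (using $d_{odd}$ odd) and $\sL_{\chi_2}^{d_{even}} = \triv$ (using $d_{even}$ even) on the two irreducible pieces, so $\tG_\geo(\stH) = M$. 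For $G(n,m)_*$, note that $A = (q^m+1)/2$ is odd because $m$ even forces $q^m \equiv 1 \pmod{8}$, so $[A]^\star \sL_{\chi_2} = \sL_{\chi_2}$ and the nontrivial determinant persists in $\sW(n,m) = [A]^\star \sH(n,m)$, giving $C' = C_2$; the arithmetic-geometric equality is then immediate as the quotient can contribute at most $\pm 1$, already captured by $C_2$.

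For part (iii), since $G_\geo(\sW(\psi,n,m,q)) = M$ is perfect, the arithmetic determinant is a constant-field twist $\alpha^{\deg(E/k)}$ with $\alpha \in \{\pm 1\}$. To compute $\alpha$, I would use the identification $[q^m+1]^\star \sW(\psi,n,m,q) \cong \phi^\star \sW(n,m)$, where $\phi(r) = -r^{-(q^n+1)}$, obtained (as in the proof of Theorem \ref{main-sp3}(i)) by the substitution $x \mapsto y/r$ in the defining trace sum. Factoring $\phi$ as multiplication by $-1$ composed with $r \mapsto r^{-(q^n+1)}$: the Kummer pullback by the latter kills $\sL_{\chi_2}$ (as $q^n+1$ is even), while the multiplication-by-$(-1)$ contributes a constant-field factor $\chi_2(-1)^{[E:\F_p]} = (\chi_2(-1)^a)^{\deg(E/k)}$ with $a := [k:\F_p]$. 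Assuming the arithmetic determinant of $\sW(n,m)$ over $\F_p$ has trivial constant-field part beyond $\sL_{\chi_2}$ itself, this yields $\alpha = \chi_2(-1)^a$. This equals $1$ exactly when $\chi_2(-1)=1$ (i.e., $p \equiv 1 \pmod{4}$) or $a$ is even (i.e., $\F_{p^2} \subseteq k$), which rewrites as the stated condition $q \equiv 1 \pmod{4}$ or $\F_{p^2} \subseteq k$ (using that $q \equiv 3 \pmod{4}$ is equivalent to $p \equiv 3 \pmod{4}$ with $f$ odd).

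The main obstacle lies in part (iii): verifying that the arithmetic determinant of $\sH(n,m)$ (and hence of $\sW(n,m) = [A]^\star \sH(n,m)$) over $\F_p$ has no extra $\pm 1$ constant-field twist beyond $\sL_{\chi_2}$ itself. Theorem \ref{detbis} gives this only over $\F_{p^2}$, where the real value $\tilde{\sf G} = (-1)^{(p-1)/2}p$ replaces the actual Gauss-sum clearing factor $(-\Gauss(\overline\psi,\chi_2))^{-\deg/\F_p}$ built into the definition of $\sH(n,m,\pm)$. The verification requires a direct Frobenius determinant computation at an $\F_p$-rational point (say $u=1$) via \cite[Theorem 2.3]{KT1}, carefully reconciling the two clearing factors; alternatively, one can work from the explicit total trace formula for $\sH(n,m)$ in Corollary \ref{totalztrace} combined with the parity of the ranks $(q^n \pm 1)/2$ modulo $4$.
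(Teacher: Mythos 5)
Your part (i), and the determination of $C'_{\geo}$, $C'_{\ari,k}$ and $G(n,m)_{\geo/\ari}$ in part (ii), essentially follow the paper's own route: determinant obstructions from Theorems \ref{det} and \ref{detbis}, transported through the odd-degree Kummer pullbacks. But your final claim in (ii), that the triviality of the two twisted geometric determinants already gives $\tG_{\geo}=\Sp_{2n}(q)$, is a non sequitur: a central involution acting as $+1$ on the odd-rank constituent and $-1$ on the even-rank constituent of $\stH(n,m)$ has trivial determinant on both pieces, so determinants alone cannot exclude it, and you have not even established that any element of $\tG_{\geo}$ outside $M$ acts by scalars. The paper first gets $(\tG_{\geo})^{(\infty)}=M$ from the $[2]^\star$-pullback comparison of $\stH(n,m)$ with $\sH(n,m)$, then applies Theorem \ref{ext-sp} (the trace hypotheses survive tensoring with $\sL_{\chi_2}$, which only changes signs) to obtain $M\lhd\tG_{\geo}\leq C_2\times L$ with scalar complement, and only then combines determinant triviality with faithfulness: a $(+1,-1)$-scalar coincides with $\bj\in\ZB(M)$, so no genuinely new element exists. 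These steps are missing from your sketch.

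The decisive gap is part (iii), which you flag yourself. Your formula $\alpha=\chi_2(-1)^{[k:\F_p]}$ rests on the unproven assumption that the arithmetic determinant of the odd-rank constituent of $\sW(n,m)$ over $\F_p$ is exactly $\sL_{\chi_2}$ with no further constant-field sign; Theorem \ref{detbis} controls this only over $\F_{p^2}$, which is precisely where the sign you need becomes invisible, so this assumption is the entire content of (iii), not a routine verification. The paper sidesteps your obstacle by a different presentation: for $2\nmid n$ it realizes $\sW(\psi,n,m,q)$ as the $[r\mapsto -r]$-pullback of $[B]^\star\stW$, where $B=(q^n+1)/2$, $A=(q^m+1)/2$, and $\stW=\bigl(\sH_{small,B,A,descent}\oplus\sH_{big,B,A,\rho,descent}\bigr)\otimes\bigl(-\Gauss(\overline\psi_{\F_p},\chi_2)\bigr)^{-\deg}$ with $\rho^B=\chi_2$; then Proposition \ref{arithdet} (case $4\mid(q-1)$) and Proposition \ref{arithdetbis} with $C=-1$ (case $4\mid(q+1)$, giving arithmetic determinant $(-1)^{\deg}$ on the odd-rank piece and trivial on the even-rank piece) supply the arithmetic determinants over the relevant base fields on the nose. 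A central element $c$, of order dividing $2p$ by Theorem \ref{main-sp3}(i), is then pinned down by these determinant values (the odd rank being coprime to $2p$) to agree with an element of $\ZB(M)$, and the case $q\equiv 3\pmod 4$ with $[k:\F_p]$ odd is settled by noting that $C_{\ari,k}=1$ would make $G_{\ari}$ perfect, contradicting the nontrivial determinant $(-1)^{\deg}$. If you insist on your route through $\sW(n,m)$, you must prove your assumption by an equivalent Gauss-sum determinant computation (in effect Proposition \ref{arithdetbis}/\ref{arithdetter} with the correct clearing factor), and you must also rule out a possible order-$3$ central part when $p=3$ and $2\nmid nf$, which your assertion that $\alpha\in\{\pm1\}$ silently assumes.
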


\begin{proof}
(i) Our assumptions on $(n,q)$ imply that the sheaf $\sH^{odd}(n,m)$ is the sheaf $\sH(n,m,-)$ of rank $A=(q^n+1)/2$ defined using 
$\sH_{big,A,B,descent}$ in \eqref{eq:ab12}. 
By Theorem \ref{det}(ii), $\sH^{odd}(n,m)$ has trivial arithmetic determinant. Since the 
rank $(q^n+1)/2$ is odd, any central element of order $2$ in $H^{odd}_{\ari}(n,m,k)$ would have determinant $-1$, a contradiction.
It follows that $\ZB(H^{odd}_{\ari}(n,m,k))$ has odd order, and so $C'_{\ari,k} = 1$ in Theorem \ref{main-sp4}(i). 
In particular, we have $G_{\ari,k}=M \cong \Sp_{2n}(q)$ in \eqref{eq:z2}. Now, using the $[A]^\star$
Kummer pullback to get back to $\sW(n,m)$, and the further pullback by $[r \mapsto -r]$ to get to $\sW(\psi,n,m,q)$, we conclude that 
$C'=1$ in Theorem \ref{main-sp2} and 
$C_{\ari,k}=1$ in Theorem \ref{main-sp3}(i).
%

\smallskip
(ii) Our assumptions on $(n,q)$ imply that the sheaf $\sH^{odd}(n,m)$ is the sheaf $\sH(n,m,-)$ of rank $B=(q^n+1)/2$ defined using 
$\sH_{big,A,B,descent}$ in \eqref{eq:ab12} when $2 \nmid B$, and it is the sheaf $\sH(n,m,+)$ of rank $B-1=(q^n-1)/2$ defined using 
$\sH_{small,A,B,descent}$ in \eqref{eq:ab12} when $2|B$. 
Now, by Theorem \ref{detbis}, $\sH^{odd}(n,m)$ has geometric determinant $\sL_{\chi_2}$, and so some element of 
$H^{odd}_{\geo}(n,m)$ has determinant $-1$ on $\sH^{odd}(n,m)$. Hence, $H^{odd}_{\geo}(n,m)$ cannot be perfect, and therefore 
$H^{odd}_{\geo}(n,m) \cong C_2 \times \PSp_{2n}(q)$ and $C'_{\ari,k}=C'_{\geo} \cong C_2$ in Theorem \ref{main-sp4}(i);
in particular, \eqref{eq:z2} implies that $\sH$ has geometric monodromy group 
\begin{equation}\label{eq:z3}
  G_{\geo} = \langle c \rangle \times M \cong C_2 \times \Sp_{2n}(q).
\end{equation} 
Next, since $\sW(n,m,-)$ is the $[A]^\star$ Kummer pullback of $\sH^{odd}(n,m)$, $G_{\geo}(n,m,-)$ is a normal subgroup
of $H^{odd}_{\geo}(n,m,k)$ of index dividing $A$, which is odd, and this implies that 
$G(n,m,-)_{\geo}$ cannot be perfect. As $G(n,m,-)_{\geo}$ is the image of $G(n,m)_{\geo}$ acting on 
$\sW(n,m,-)$, it follows that $G(n,m)_{\ari}=G(n,m)_{\geo} = C_2 \times \Sp_{2n}(q)$ in Theorem \ref{main-sp2}.

Let $\ZB(M) = \langle t \rangle$. Then $t$ acts as $-1$ on the even-rank subsheaf $\sH^{even}(n,m)$ of $\sH(n,m)$ and trivially on
the odd-rank subsheaf $\sH_{odd}(n,m)$. Replacing $c$ from \eqref{eq:z3} by $ct$ if necessary, we may assume that $c$ acts trivially on 
$\sH^{even}(n,m)$, whence $c$ acts as $-1$ on $\sH^{odd}(n,m)$ (otherwise $c$ would be trivial). 
Now \eqref{eq:z3} implies that $\sH^{odd}(n,m)$ has geometric determinant $\sL_{\chi_2}$ and $\sH^{even}(n,m)$ has trivial geometric determinant. Hence, both $\sH^{odd}(n,m) \otimes \sL_{\chi_2}$ and $\sH^{even}(n,m) \otimes \sL_{\chi_2}$ have trivial geometric determinants.

Next, tensoring with $\sL_{\chi_2}$ changes the trace at $v \in E^\times$ by a factor of $\chi_2(v) = \pm 1$. In particular, it does not
change the absolute value of the trace at any $v \in E^\times$. Furthermore, the $[2]^\star$ Kummer pullbacks of $\sH(n,m)$
and $\stH(n,m)$ are isomorphic, and so $\tG_{\geo}$ has a normal subgroup $X$ of index at most $2$, which is also a normal subgroup of $G_{\geo}$ of index at most $2$. It follows that $(\tG_{\geo})^{(\infty)} = X^{(\infty)} = M \cong \Sp_{2n}(q)$. Applying Theorem \ref{ext-sp}
to $\tG_{\geo}$ and arguing as in p. (i) of the proof of Theorem \ref{ext-sp}, we conclude that $M \lhd \tG_{\geo} \leq C_2 \times L$. 
Now, if $\tG_{\geo} > M$, then we have 
$\tG_{\geo} = \langle \tilde c \rangle \times M$ with $\langle \tilde c \rangle \cong C_2$. Since $\tilde c$ has trivial determinant on 
$\sH^{odd}(n,m) \otimes \sL_{\chi_2}$, it acts trivially on it, and $\tilde c$ acts as $1$ or $-1$ on $\sH^{even}(n,m) \otimes \sL_{\chi_2}$. But this means that the action of $\tilde c$ on $\stH(n,m)$ agrees with some element in $\ZB(M)$ and so $\tilde c \in \ZB(M)$ by faithfulness, a contradiction. Thus $\tG_{\geo}=M$, as stated.

\smallskip
(iii) In this case we have $2 \nmid n$. Recalling \eqref{eq:ab11}, we note from Corollary \ref{Apullbackbis1} that $\sW(\psi,n,m,q)$ 
is the pullback by 
$[r \mapsto -r]$ of $[B]^\star \stW$, where 
$$\stW:= \sH_{small,B,A,descent} \otimes \bigl(-\Gauss(\overline\psi_{\F_p},\chi_2)\bigr)^{-\deg} \oplus 
                            \sH_{big,B,A,\rho,descent} \otimes \bigl(-\Gauss(\overline\psi_{\F_p},\chi_2)\bigr)^{-\deg},$$
and $\rho$ is chosen so that $\rho^B=\chi_2$ (in particular, we will take $\rho=\chi_2$ if $B=(q^n+1)/2$ is odd).  

Consider the case $2 \nmid B$, equivalently, $4|(q-1)$. Then, for any $k \supseteq \F_q$, $-1$ is a square in $k$, whence
$\Gauss(\psi_k,\chi_2)=\Gauss(\overline\psi_k,\chi_2)$. By Proposition \ref{arithdet}, both 
$\sH_{small,B,A,descent} \otimes \bigl(-\Gauss(\overline\psi_{\F_p},\chi_2)\bigr)^{-\deg}$ and 
$\sH_{big,B,A,\rho,descent} \otimes \bigl(-\Gauss(\overline\psi_{\F_p},\chi_2)\bigr)^{-\deg}$ have trivial arithmetic determinants.
Now consider any central element $c$ in the arithmetic monodromy group of $[B]^\star\stW$. By Theorem \ref{main-sp3}(i), 
$\ord(c)$ divides $2p$. Now, on the subsystem of $[B]^\star\stW$ of odd rank $(q^n-\eps)/2$ (for a suitable $\eps \in \{-1,1\}$), 
$c$ acts as a scalar $\alpha$ with $\alpha^{2p}=1$ and $1=\det(c)=\alpha^{(q^n-\eps)/2}$, whence $\alpha=1$. On the subsystem of 
$[B]^\star\stW$ of even rank $(q^n+\eps)/2$, $c$ acts as a scalar $\beta$ with 
$\beta^{2p}=1$ and $1=\det(c)=\beta^{(q^n+\eps)/2}$, whence $\beta= \pm 1$. We see that the action of $c$ agrees with the action 
of a central element of $M = \Sp_{2n}(q)$, and therefore $c \in \ZB(M)$. Pulling back by $[r \mapsto -r]$ to $\sW(\psi,n,m,q)$, we 
obtain $C_{\ari,k}=1$ in Theorem \ref{main-sp3}(i).

Now assume $2|B$, equivalently, $4|(q+1)$. We apply Proposition \ref{arithdetbis} with $C=-1$ to see that the subsheaf
$[B]^\star\sH_{small,B,A,descent} \otimes \bigl(-\Gauss(\overline\psi_{\F_p},\chi_2)\bigr)^{-\deg}$, which has odd rank $B-1$,
has arithmetic determinant $(-1)^{\deg}$, whereas  
$[B]^\star\sH_{big,B,A,\rho,descent} \otimes \bigl(-\Gauss(\overline\psi_{\F_p},\chi_2)\bigr)^{-\deg}$, which has even rank $B$, 
has trivial arithmetic determinant.
Again consider any central element $c$ in the arithmetic monodromy group of $[B]^\star\stW$. By Theorem \ref{main-sp3}(i), 
$\ord(c)$ divides $2p$. Now, if $k \supseteq \F_{p^2}$, equivalently, $[k:\F_p]$ is even, then $c$ has trivial determinant on 
both subsystems of $[B]^\star\stW$, and the previous arguments show that $c \in \ZB(M)$, and 
pulling back by $[r \mapsto -r]$, we see that $C_{\ari,k}=1$ in Theorem \ref{main-sp3}(i). The same arguments also show
that we always have $c^2 \in \ZB(M)$, whence $\ord(c)$ divides $\gcd(4,2p) = 2$ and thus $\ord(c)$ divides $2$.
Now assume that $[k:\F_p]$ is odd. We have just proved that $\ZB(G_{\ari}(\psi,n,m,q,k))$ is a $2$-group and contains
$\ZB(M)\cong C_2$, whence $C_{\ari,k}$ has order $1$ or $2$. Suppose that $C_{\ari,k}=1$. Then 
$G_{\ari}(\psi,n,m,q,k)=M$ is perfect, and so it has trivial arithmetic determinant on both subsystems of $\sW(\psi,n,m,q)$,
a contradiction. Thus $C_{\ari,k}=C_2$ in this case.
\end{proof}


\section{Local systems and total Weil representations: Unitary groups over $\F_q$}
We continue to work with the prime $p > 2$, and fix a power $q=p^f$ and positive integers $n,m$, where 
\begin{equation}\label{eq:mn2}
  n > m,~~\gcd(n,m) = 1,~~2 \nmid mn,~~n \geq 3,~~\mbox{and either }m< n/2,\mbox{ or }(n,m) = (5,3).
\end{equation}  
This assumption implies that 
\begin{equation}\label{eq:mn22}
  \gcd(q^n+1,q^m+1) = q+1.
\end{equation}  
For compatibility with the notations used in section $\S5$, we denote
$$A:=(q^n+1)/(q+1), \ \ B:=(q^m+1)/(q+1).$$
In this section, we study the local system $\sW^{\, n,m}$ on $\A^1/\F_{q^2}$ with trace function given as follows: for $k/\F_{q^2}$ a finite extension, and $r \in k$,
$$r \mapsto \frac{1}{\Gauss(\psi_k,\chi_2)} \sum_{x \in k}\psi_k\bigl(x^{q^n+1} - rx^{q^m+1}\bigr).$$
Next, we fix a character $\chi_{q+1}$ of order $q+1$, and then, for $0 \leq j \leq q$, define $\sW^{\, n,m,j}$ to be the local system on $\G_m/\F_{q^2}$ whose trace function is given by as follows: for
$k/\F_{q^2}$ a finite extension, and $r \in k$,
$$r \mapsto \frac{1}{\Gauss(\psi_k,\chi_2)} \sum_{x \in k}\psi_k\bigl(x^A - rx^B\bigr)\chi_{q+1}^j(x).$$
By \eqref{eq:mn22} and Proposition \ref{ABmoment}, $\sW^{\, n,m} = \oplus^q_{j=1}\sW^{\, n,m,j}$ is the sum of
$(q+1)$ irreducible subsystems $\sW^{\, n,m,j}$, of rank $(q^n-q)/(q+1)$ for $j = 0$ and $(q^n+1)/(q+1)$ when $1 \leq j \leq q$.
Let $G^{n,m}_{\ari}$ and $G^{n,m}_{\geo}$, respectively $G^{n,m,j}_{\ari}$ and $G^{n,m,j}_{\geo}$, denote the arithmetic and the geometric monodromy groups of $\sW^{\, n,m}$, respectively of $\sW^{\, n,m,j}$.
 
Next, recall that in Lemma \ref{choice} we defined 
$$n_0:=\gcd(n,(q+1)^n),$$ 
and showed that we can fix a character $\nu$ of order $n_0(q+1)$ such that $\nu^A = \chi_{q+1}$.
We then define the hypergeometric sheaves $\sH^{n,m,j}$ over $\G_m/\F_{q^2}(\nu)$ for $j \in \Z$ to be
\begin{equation}\label{eq:ab21}
  \sH^{n,m,j} = \left\{ \begin{array}{ll}\sH_{small,A,B,descent}\otimes \bigl(-\Gauss(\psi_{\F_{q^2}(\nu)},\chi_2)\bigr)^{-\deg}, & \mbox{if }j \equiv
  0 (\bmod\ q+1),\\
    \sH_{big,A,B,\nu^j,descent}\otimes \bigl(-\Gauss(\psi_{\F_{q^2}(\nu)},\chi_2)\bigr)^{-\deg}, & \mbox{if }j \not\equiv 0 (\bmod\ q+1),\end{array} \right.
\end{equation}    
with $\sH_{small,A,B,descent}$ and $\sH_{big,A,B,\chi,descent}$ as defined in \S5. We also let
\begin{equation}\label{eq:ab21a}
  \sH^{n,m}=\bigoplus^q_{j=0}\sH^{n,m,j}.
\end{equation}  
By Propositions \ref{totaltraces} and \ref{totaltraces-1}, $\sW^{\, n,m,j}$ is the $[A]^\star$ Kummer pullback of $\sH^{n,m,-j}$.  
Denote the arithmetic and geometric monodromy groups of $\sH^{n,m,j}$ by $H^{n,m,j}_{\ari}$ and 
$H^{n,m,j}_{\geo}$. Again, this pullback relationship implies that $G^{n,m,j}_{\geo} \lhd H^{n,m,-j}_{\geo}$ 
and the quotient is a cyclic group of order dividing $A$.

\smallskip
We will need the following statement, which is an odd-$n$ analogue of \cite[Lemma 17.3]{KT5}. For the reader's convenience,
we give the proof.

\begin{lem}\label{center-su}
Let $Z$ be a finite abelian group, $q$ a prime power, and let $\lambda_0, \lambda_1, \ldots ,\lambda_q \in \Irr(Z)$.
\begin{enumerate}[\rm(i)]
\item Suppose $\Lambda:=\sum^q_{i=0}\lambda_i$ vanishes on $Z \smallsetminus \{1\}$. Then $|Z|$ divides $q+1$.
\item Suppose there is some $z \in Z$ such that $\Lambda=\sum^q_{i=0}\lambda_i$ vanishes on $Z \smallsetminus \{1,z\}$ and 
$\Lambda(z)=-(q+1)$. Then $|Z|$ divides $2(q+1)$.
\item Suppose $2 \nmid n \geq 3$, $(n,q) \neq (3,2)$, $\lambda_0^2=1_Z$, and that 
$$\Sigma:= -\lambda_0 + A\sum^q_{i=0}\lambda_i,$$
with $A:=(q^n+1)/(q+1)$, 
takes values only in $\{-q^n,0, \pm q^i \mid 0 \leq i \leq n-1\}$ on $Z \smallsetminus \{1\}$. Then either 
$|Z|$ divides $q+1$, or $Z$ contains an element $z$ with $\lambda_i(z)=-1$ for all $0 \leq i \leq q$. In the latter case,
if in addition $\Sigma$ is faithful, then $|Z|$ divides $2(q+1)$.
\end{enumerate}
\end{lem}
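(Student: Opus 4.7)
For part (i), I would exploit the standard character-theoretic inner product: since $\Lambda = \sum_{i=0}^q \lambda_i$ is a (genuine) character of the abelian group $Z$ and vanishes on $Z \setminus \{1\}$, the computation $\langle \Lambda, \mu \rangle_Z = \Lambda(1)/|Z| = (q+1)/|Z|$ shows that each $\mu \in \Irr(Z)$ appears with multiplicity $(q+1)/|Z|$ in $\Lambda$. This must be a nonnegative integer, forcing $|Z| \mid q+1$.

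Part (ii) is a refinement of the same computation: one has $\langle \Lambda, \mu \rangle_Z = (q+1)(1 - \overline{\mu(z)})/|Z|$ for every $\mu \in \Irr(Z)$. Nonnegativity of these multiplicities over all $\mu$ forces $\mu(z) \in \{\pm 1\}$ for every $\mu$, whence $z^2 = 1$; moreover, some $\mu$ must satisfy $\mu(z) = -1$ (otherwise $\Lambda(z) = q+1$, contradicting $\Lambda(z) = -(q+1)$), yielding a positive integer multiplicity $2(q+1)/|Z|$ and hence $|Z| \mid 2(q+1)$.

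For part (iii), set $\Pi := \sum_{i=0}^q \lambda_i$, so that $A\,\Pi = \Sigma + \lambda_0$ with $A = (q^n+1)/(q+1)$. Since $\Sigma$ takes integer values and $\lambda_0(w) \in \{\pm 1\}$ (from $\lambda_0^2 = 1_Z$), the quantity $A\,\Pi(w)$ is always an integer; combined with $\Pi(w)$ being an algebraic integer, this forces $\Pi(w) \in \Z$ with $|\Pi(w)| \leq q+1$. The crux is to show $\Pi(w) \in \{0,-(q+1)\}$ for all $w \neq 1$. The value $\Pi(w) = q+1$ is excluded because it forces $\Sigma(w) = q^n$, outside the allowed set. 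For any other candidate $1 \leq |\Pi(w)| \leq q$, the bound $|A\,\Pi(w)| < q^n$ rules out $\Sigma(w) = -q^n$, and the constraint $A\,\Pi(w) - \lambda_0(w) \in \{0, \pm q^i : 0 \leq i \leq n-1\}$ reduces (after handling $i=0$ via $A \geq 3$) to $A \mid q^i \pm 1$ for some $1 \leq i \leq n-1$. Tracking the multiplicative order of $q$ modulo $A$ and using $q^n \equiv -1 \pmod{A}$ shows this requires $A \mid q^e + 1$ for some proper divisor $e$ of $n$, which contradicts the size bound $A(q+1) = q^n + 1 > (q+1)(q^e+1)$. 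The latter inequality, equivalent to $q^n > q^{e+1} + q^e + q$, holds for every proper divisor $e$ of $n$ except in the single case $(n,q) = (3,2)$, which is precisely the excluded pair.

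With $\Pi(w) \in \{0, -(q+1)\}$ for $w \neq 1$ in hand, the dichotomy is immediate. If $\Pi$ vanishes on $Z \setminus \{1\}$, part (i) applied to $\Pi$ yields $|Z| \mid q+1$. Otherwise, some $z \neq 1$ satisfies $\Pi(z) = -(q+1)$; this forces each $\lambda_i(z) = -1$, since $q+1$ unit complex numbers summing to $-(q+1)$ must all equal $-1$. Under the faithfulness hypothesis on $\Sigma$ (which is in fact automatic: any $w \in \bigcap_i \Ker(\lambda_i) \setminus \{1\}$ would satisfy $\Sigma(w) = q^n$, outside the allowed set), such $z$ is unique, so $\Pi$ vanishes on $Z \setminus \{1, z\}$, and part (ii) applied to $\Pi$ gives $|Z| \mid 2(q+1)$. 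The hardest step is the order-of-$q$ analysis in part (iii), where the excluded case $(n,q) = (3,2)$ is precisely the edge where $A = 3$ divides $q+1 = 3$ and the size bound breaks.
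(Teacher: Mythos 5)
Your proposal is correct, and its skeleton coincides with the paper's: your (i) and (ii) are the same integrality computations (the paper pairs $\Lambda$ with $1_Z$, respectively with a character $\beta$ extending the order-two character of $\langle z\rangle$, which is exactly your multiplicity argument), and in (iii) you, like the paper, use that $\Lambda(w)=(\Sigma(w)+\lambda_0(w))/A$ is a rational algebraic integer to get $A \mid \Sigma(w)+\lambda_0(w)$, deduce $\Lambda(w)\in\{0,-(q+1)\}$ for all $w\neq 1$, and then invoke (i)/(ii). Where you genuinely differ is in excluding the values $\Sigma(w)=\pm q^i$: the paper splits into $1\le i\le n-2$ (killed by the crude bound $q^{n-2}+1<A$) and $i=n-1$ (killed by the gcd identities $\gcd(q^n+1,q^{n-1}+1)=\gcd(2,q-1)$ and $\gcd(q^n+1,q^{n-1}-1)=q+1$), whereas you treat all $1\le i\le n-1$ uniformly: from $A\mid q^i\pm 1$ together with $q^n\equiv -1 \pmod{A}$, the multiplicative order of $q$ modulo $A$ is $2e$ with $e$ an odd proper divisor of $n$ (this is where oddness of $n$ is used), forcing $A\mid q^e+1$, which contradicts $A>q^e+1$ except precisely at $(n,q)=(3,2)$. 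Your route makes the role of the excluded pair $(3,2)$ transparent and avoids the separate $i=n-1$ gcd computation; the paper's is a more elementary case split. Two further points in your write-up are worth keeping: the observation that faithfulness of $\Sigma$ is automatic from the value constraint (any $w\in\bigcap_i\Ker\lambda_i\smallsetminus\{1\}$ would give $\Sigma(w)=q^n$, which is not an allowed value), something the paper carries as an extra hypothesis; and, when ruling out $\Sigma(w)=-q^n$ in the range $1\le|\Lambda(w)|\le q$, you actually need the slightly sharper (and true) inequality $Aq<q^n-1$, not merely $Aq<q^n$, so state it that way when writing the argument up in full.
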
 
 
\begin{proof}
(i) Note that 
$$[\Lambda,1_Z]_Z = \frac{1}{|Z|}\sum_{x \in Z}\Lambda(x) = \frac{q+1}{|Z|}$$
is an integer, whence the statement follows.

(ii) Let $\alpha$ be the linear character of $\langle z\rangle$ sending $z$ to $-1$. Since $Z$ is abelian, we can find a linear
extension $\beta$ of $\alpha$ to $Z$. Now 
$$[\Lambda,\beta]_Z = \frac{1}{|Z|}\sum_{x \in Z}\Lambda(x)\beta(x) = \frac{(q+1)\beta(1)-(q+1)\beta(z)}{|Z|}=\frac{2(q+1)}{|Z|}$$
is an integer, whence the statement follows.

\smallskip
(iii) Consider any $1 \neq x \in Z$. By the assumption, $\lambda_0(x) = \pm 1$, and $\Sigma(x)=0$, $-q^n$, or $\pm q^j$ for 
some $0 \leq j \leq n-1$. Now
$$\Z \ni \Sigma(x)+\lambda_0(x) = A\cdot \Lambda(x),$$
and so $\Lambda(x) = (\Sigma(x)+\lambda_0(x))/A$ is both rational and an algebraic integer, whence 
\begin{equation}\label{eq:Adiv}A \mbox{ divides } \Sigma(x)+\lambda_0(x). 
\end{equation}
We will now show that either $\Sigma(x) = -\lambda_0(x)$ or $\Sigma(x) = -q^n$.
If $\Sigma(x)=0$, or $\Sigma(x)=\pm q^j$ with $1 \leq j \leq n-2$, or if $\Sigma(x)=\lambda_0(x)$, then 
$\Sigma(x)+\lambda_0(x) \neq 0$ and $|\Sigma(x)+\lambda_0(x)| \leq q^{n-2}+1 < A$ (as $n \geq 3$ and $(n,q) \neq (3,2)$), contradicting (\ref{eq:Adiv}). If 
$\Sigma(x)=\lambda_0(x) q^{n-1}$, then we have $A|(q^{n-1}+1)$ by \eqref{eq:Adiv}, whence $\frac{q^n+1}{\gcd(2,q-1)}$ divides 
$(q+1) \cdot \frac{q^{n-1}+1}{\gcd(2,q-1)}$, 
which is impossible since $\gcd(q^n+1,q^{n-1}+1) = \gcd(2,q-1)$. If 
$\Sigma(x)=-\lambda_0(x) q^{n-1}$, then we have $A|(q^{n-1}-1)$ by \eqref{eq:Adiv},
which is also impossible since $\gcd(q^n+1,q^{n-1}-1)=q+1 < (q^n+1)/(q+1)$ for $2 \nmid n \geq 3$ and $(n,q) \neq (3,2)$. 

\smallskip
(iv) Now, if $\Sigma(x) \neq -q^n$ for all $1 \neq x \in Z$, 
then $\Sigma(x)=-\lambda_0(x)$ and $\Lambda(x)=0$
for all $1 \neq x \in Z$, whence the statement follows from (i).

Consider the case $\Sigma(x)=-q^n$ for some $1 \neq x \in Z$. Then we must have 
$\lambda_0(x)=-1$, and 
$$\sum^q_{i=0}(-\lambda_i(x)) = -\Lambda(x) = (-\lambda_0(x)-\Sigma(x))/A = q+1,$$ 
implying that all roots of unity $-\lambda_i(x)$ must be $1$. Now, assume $\Sigma$ is faithful, and 
fix an element $z \in Z$ with $\lambda_i(z)=-1$ for all $i$. In this case, $\lambda_i(xz^{-1})=1$
for all $i$, and so $\Sigma(xz^{-1})=q+1$ and $x=z$ by faithfulness of $\Sigma$. We have shown that $\Lambda(x) = -(q+1)$ for 
$x=z$, and $\Lambda(x) = 0$ for all $x \in Z \smallsetminus \{1,z\}$, and so the statement follows from (ii).
\end{proof}

\smallskip
In this section, we will work with a subgroup $\GU_n(q) = \GU(W)$ of $\Sp_{2n}(q)$ as specified in 
\cite[Theorem 3.4]{KT3}, $W = \F_{q^2}^n$, and with $\bj = -1_W$, the central involution of both $\GU(W)$ and $\Sp_{2n}(q)$. 

The main result of this section is the following theorem:

\begin{thm}\label{main-su1}
Given the assumption \eqref{eq:mn2}.
Then the geometric monodromy group $G^{n,m}_{\geo}$ of $\sW^{\, n,m}$ is isomorphic to $\SU_n(q)$ acting in its total Weil representation of degree $q^n$. Furthermore, for any finite extension $k$ of $\F_{q^2}$, the arithmetic monodromy group $G^{n,m}_{\ari,k}$ of $\sW^{\, n,m}$ on 
$\G_m/k$ is 
$$G^{n,m}_{\ari,k} = C_{\ari,k} \times \SU_n(q),$$ 
where $C_{\ari,k} = C_{\ari,\F_{q^2}} = \langle \bj \rangle \cong C_2$ if 
$2 \nmid \deg(k/\F_{q^2})$, and $C_{\ari,k}=1$ if $2 \mid \deg(k/\F_{q^2})$. 
\end{thm}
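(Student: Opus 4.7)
The plan is to first show that the geometric monodromy group $G^{n,m}_{\geo}$ is isomorphic to $\SU_n(q)$ in its total Weil representation, and then to go up to determine $G^{n,m}_{\ari,k}$. By \eqref{eq:mn22} together with Proposition \ref{ABmoment}, $\sW^{\,n,m} = \bigoplus_{j=0}^q \sW^{\,n,m,j}$ decomposes as a sum of $q+1$ geometrically irreducible summands. Each $\sW^{\,n,m,j}$ is the $[A]^\star$ Kummer pullback of the hypergeometric sheaf $\sH^{n,m,-j}$ of \eqref{eq:ab21}, which satisfies condition $(\mathbf{S}+)$ by Proposition \ref{prop-Sbis}. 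Theorem \ref{tracevalues1} (transposed to the present clearing factor) guarantees that the monodromy groups are finite, that all traces lie in $\Q$ over extensions of $\F_{q^2}$, and that squared absolute values of traces are powers of $q$.

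I would then prove $G^{n,m}_{\geo} \cong \SU_n(q)$ in two steps. First, I would apply \cite[Proposition 2.8]{G-T} to each $H^{n,m,j}_{\geo}$ and rule out the extraspecial normalizer case by the Sylow-theoretic argument used in parts (ii)--(iii) of the proof of Theorem \ref{main-sp2}: a hypothetical normal extraspecial $p_1$-subgroup whose order equals $(q^n+1)/(q+1)$ or $(q^n-q)/(q+1)$ would be forced by the ambient symplectic Weil embedding of $L := (G^{n,m}_{\geo})^{(\infty)}$ into $\Sp_{2nf}(p)$ to be abelian, contradicting irreducibility. Property $(\mathbf{S}+)$ and \cite[Lemma 2.5]{G-T} then make $\Phi^j(L)$ quasisimple for each $j$, and Proposition \ref{goursat} applied to $L \hookrightarrow \prod_{j} \Phi^j(L)$ — with its non-vanishing condition (c) supplied by the $p$-power property, which rules out zero traces as $p$ is odd — yields that $L$ itself is quasisimple. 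Second, I would embed $L$ into the total Weil representation of $\Sp_{2n}(q)$ via the containment $\SU_n(q) \subset \Sp_{2n}(q)$ of \cite[Theorem 3.4]{KT3}, and apply a going-down argument in the spirit of Theorem \ref{sp-down}, refined to distinguish unitary from symplectic groups by the specific $(q+1)$-fold Weil-dimension pattern $\{(q^n+1)/(q+1),(q^n-q)/(q+1)\}$, to conclude that $L \cong \SU_{n/d}(q^d)$ for some $d \mid n$. To exclude $d > 1$, I would use Proposition \ref{cond-S}: the inertia subgroup at $\infty$ of $\sH^{n,m,j}$ contains a cyclic $p'$-element cyclically permuting wild-part blocks, whose order admits a Zsygmondy prime divisor of $q^{n-m}-1$ not dividing $df$ (existing thanks to $m < n/2$ or $(n,m) = (5,3)$), exactly as in Theorem \ref{main-sp2}(iii).

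For the arithmetic side, I would apply an extension result in the spirit of Theorem \ref{ext-sp} (a unitary analogue, or a bootstrap through the symplectic embedding) to obtain $G^{n,m}_{\ari,k} = C_{\ari,k} \times L$ with $C_{\ari,k}$ a cyclic scalar subgroup. To identify $C_{\ari,k}$ precisely, I would use Theorem \ref{su-det} and Corollary \ref{det-stick}: after the standard Gauss-sum twist the arithmetic determinants on each summand become trivial over $\F_{q^4}$, while over $\F_{q^2}$ the Stickelberger sign $(-1)^{(q+1)/r}/(-1)^{(q+1)/2}$ forces the central involution $\bj \in \ZB(\SU_n(q))$ to act nontrivially as a global scalar, giving $C_{\ari,\F_{q^2}} = \langle \bj \rangle \cong C_2$. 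Tracking this sign through field extensions $k/\F_{q^2}$ then yields the claimed dichotomy: $C_{\ari,k} = \langle \bj \rangle$ when $[k:\F_{q^2}]$ is odd, and $C_{\ari,k} = 1$ when $[k:\F_{q^2}]$ is even. Throughout, Lemma \ref{center-su} serves as the character-theoretic input constraining $\ZB(G^{n,m}_{\ari,k})$ once its action on the $q+1$ summands is known.

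The main obstacle is the going-down step in the second paragraph: concretely identifying the quasisimple $L$ as $\SU_n(q)$ — rather than $\Sp_{2n/d'}(q^{d'})$, a proper subfield subgroup $\SU_{n/d}(q^d)$ with $d > 1$, or an unrelated quasisimple group admitting a complex representation of dimension $(q^n+1)/(q+1)$ — requires either a dedicated unitary analogue of Theorem \ref{sp-down} classifying the possible candidates, or a careful bootstrap through the symplectic embedding that exploits the precise $(q+1)$-fold decomposition of $\sW^{\,n,m}$ together with inertia-theoretic control on each $\sH^{n,m,j}$ to separate unitary types from symplectic and linear types.
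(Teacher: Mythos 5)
Your plan stalls exactly where you say it does, and that obstacle is not a technicality: identifying the quasisimple group $L=(G^{n,m}_{\geo})^{(\infty)}$ as $\SU_n(q)$ is the heart of the theorem, and neither a ``unitary analogue of Theorem \ref{sp-down}'' nor the refinement of the going-down argument you invoke exists in the paper, so as written the proposal is not a proof. Moreover, the preliminary steps you do sketch already presuppose what is missing: the Sylow argument ruling out the extraspecial case and your appeal to Proposition \ref{goursat} both assume that $L$ sits inside $\Sp_{2nf}(p)$ acting by a total Weil representation, which is itself something that has to be proved (and your use of Proposition \ref{goursat} with $q+1\geq 4$ summands also leaves its hypothesis (d) unverified, since that condition is automatic only for two constituents). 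The paper's actual route avoids the quasisimple-classification problem altogether: for $m=1$ it quotes \cite[\S4]{KT3} directly; for $m>1$ it first \emph{goes up}, embedding $\sW^{\,n,m}$ as the specialization $s=t=0$ of the auxiliary system $\sW^{\,n,m,1,0}$ with trace $\psi_k(x^{q^n+1}-rx^{q^m+1}+sx^{q+1}+tx^2)$, whose $r=0$ slice is the two-parameter system of \cite[Theorem 4.3]{KT3} with monodromy $\Sp_{2n}(q)$; Theorem \ref{sp-up} then traps $G^{n,m}_{\ari,k}$ inside $C\times L$ with $L=\Sp_{2n}(q)$ in a total Weil representation. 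At that point the decomposition of $\sW^{\,n,m}$ into $q+1$ pieces of degrees $(q^n+1)/(q+1)$ and $(q^n-q)/(q+1)$ lets one apply the recognition theorem \cite[Theorem 3.4]{KT3} — which you cite only as furnishing the embedding $\SU_n(q)\subset\Sp_{2n}(q)$, not as the branching characterization it actually is — to conclude $\SU_n(q)\lhd CG\cap L\leq \GU_n(q)$; then $G_{\geo}=\OB^{p'}(G_{\geo})$ (lisseness on $\A^1$) together with the geometric determinants of Lemma \ref{detdown2} forces $G_{\geo}=\SU_n(q)$. No condition $(\mathbf{S}+)$, \cite{G-T}, or Goursat argument enters the proof of this theorem.

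Your treatment of the arithmetic group has a parallel gap. There is no ``unitary analogue of Theorem \ref{ext-sp}'' to appeal to; the paper instead works inside $C\times\GU_n(q)$, bounds $\ZB(G^{n,m}_{\ari,k})$ by Lemma \ref{center-su}, and — this is the step your sketch omits — must rule out extra diagonal elements of $\GU_n(q)$ beyond $\langle\bj\rangle\times\SU_n(q)$. That is done by writing $G^{n,m}_{\ari,k}=\langle \SU_n(q),g\rangle$ with $g=\diag(\rho^j,1,\ldots,1)$ and playing the determinant formula of \cite[Lemma 3.2]{KT3} on the summand $\Psi_{(q+3)/2}$ against Corollary \ref{det-stick}, which forces $(q+1)/2\mid j$; only after that does the Stickelberger sign comparison (your last step, which is in the right spirit) decide whether $C_{\ari,\F_{q^2}}$ is trivial or $\langle\bj\rangle$, with the parity of $\deg(k/\F_{q^2})$ handling general $k$. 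So the determinant input you name is the right one, but without the exclusion of diagonal elements and without the going-up-plus-recognition argument for $G_{\geo}$, the proposal does not close.
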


\begin{proof}
(i) Note that, since $k \supseteq \F_{q^2}$, $\Gauss(\psi_k,\chi_2) = \Gauss((\psi_a)_k,\chi_2)$ for any
$\psi_a:t \mapsto \psi(at)$ with $a \in \F_p^\times$. In particular, 
$\sW^{n,1}$ is precisely the pullback by $[r \mapsto -r]$ of the local system $\sW(\psi,n,q)$ considered in \cite[\S4]{KT3}, where we have shown in Theorem
4.2 that it has geometric monodromy group $\SU_n(q)$ (in its total Weil representation of degree $q^n$). Thus 
$G^{n,1}_{\geo} = K:= \SU_n(q)$. 

\smallskip
(ii) In this and the next part of the proof we will assume that $m > 1$.
Consider the local system $\sW^{\, n,m,1,0}$ on $\A^3/\F_{q^2}$ with trace function given as follows. 
For $k/\F_{q^2}$ a finite extension,  and $r,s,t \in k$,
$$(r,s,t) \mapsto  \frac{1}{\Gauss(\psi_k,\chi_2)}\sum_{x \in k}\psi_k\bigl(x^{q^n+1} - rx^{q^m+1}+sx^{q+1}+tx^2\bigr),$$
with arithmetic monodromy group $G^{n,m,1,0}_{\ari}$. As mentioned above, $k \supseteq \F_{q^2}$ implies
that the system $\sW^{\, n,m,1,0}|_{r=0}$ at $r=0$ is exactly 
the local system $\sW_{{\tiny \mbox{2-param}}}(\psi,n,q)$ considered in \cite[\S4]{KT3}. By \cite[Theorem 4.3]{KT3}, 
the arithmetic monodromy group of $\sW^{\, n,m,1,0}|_{r=0}$ equals $L:= \Sp_{2n}(q)$ in one of its total Weil representations 
of degree $q^n$. Thus $G^{n,m,1,0}_{\ari}$ contains $L$. {By Proposition \ref{ABmoment}, $\sW^{\, n,m,1,0}$ is still a sum of two 
subsystems of rank $(q^n \pm 1)/2$}.  Furthermore, it satisfies the conclusions of Theorem \ref{thm:vdG-vdV}.
Now, applying Theorem \ref{sp-up} to $G^{n,m,1,0}_{\ari} \geq L$ (with $e=1$), 
we obtain that
\begin{equation}\label{eq:mn23}
  G^{n,m,1,0}_{\ari} = C \times L,
\end{equation}  
where $C$ a cyclic scalar subgroup, and either $|C| = 1,2$, or $p=3$, $2 \nmid f$, and $|C| =3,6$.

\smallskip
Let $\Phi:G^{n,m,1,0}_{\ari}=CL \to \GL_{q^n}(\C)$ denote the corresponding representation of $G^{n,m,1,0}_{\ari} = CL$ 
acting on $\sW^{\, n,m,1,0}$. Note that $\sW^{\, n,m}$ is precisely $\sW^{\, n,m,1,0}|_{s=0,\ t=0}$, hence $\tG:=G^{n,m}_{\ari,k}$ is a subgroup of 
$G^{n,m,1,0}_{\ari}$. As $\sW^{\, n,m}$ is a local system on $\A^1$, its geometric monodromy group $G:=G^{n,m}_{\geo}$ satisfies 
\begin{equation}\label{eq:mn23b}
  G = \OB^{p'}(G). 
\end{equation}
Given the information about respective cyclic quotients, we see that the two groups 
$G^{n,m}_{\geo}$ and $G^{n,m}_{\ari}$
have a common last term $K$ of their derived series:
\begin{equation}\label{eq:mn24}
  K = (G^{n,m}_{\geo})^{(\infty)} =  (G^{n,m}_{\ari,k})^{(\infty)} \leq (G^{n,m,1,0}_{\ari})^{(\infty)} = L.
\end{equation}  
By \eqref{eq:mn22} and Proposition \ref{ABmoment}, $\Phi|_G$ is a sum of $q+1$ irreducible summands $\Phi_j$
acting on $\sW^{\, n,m,j}$, $0 \leq j \leq q$. Since $\Phi(C)$ consists of scalar matrices, the same is true for 
$CG = C \times (CG \cap L)$, whence also for $CG \cap L$. Applying \cite[Theorem 3.4]{KT3}
to $CG \cap L$, we see that 
$$\SU_n(q) \lhd CG \cap L \leq \GU_n(q),$$ 
where $\GU_n(q)$ is realized inside $L$ via a standard Hermitian 
structure on $\F_q^{2n}$. As $K \leq CG \cap L$, we now have
$$\SU_n(q) = (CG \cap L)^{(\infty)} \leq (CG)^{(\infty)} = G^{(\infty)} = K=K^{(\infty)} \leq (CG \cap L)^{(\infty)} = \SU_n(q),$$
i.e. $K = \SU_n(q)$, acting in its total Weil representation.

\smallskip
(iii) Since $\tG \rhd K$ by \eqref{eq:mn24}, from \eqref{eq:mn23} we now get
$$G \lhd \tG \leq \NB_{C \times L}(K) = C \times \NB_L(K) = C \times \bigl( \GU_n(q) \rtimes C_2\bigr).$$
Note that $C \times \GU_n(q)$ preserves the equivalence of each of $q+1$ irreducible summands $(\Phi_i)|_K$, but the 
subgroup $C_2$ (generated by a field automorphism) does not. It follows that 
\begin{equation}\label{eq:mn25}
  \SU_n(q) = K \lhd G \leq \tG \leq C \times \GU_n(q).
\end{equation}   

Recall from Lemma \ref{detdown2} that the sheaves $\sH_{small,A,B}$ and $\sH_{big,A,B,\chi}$ all have geometric determinants 
being trivial or $\sL_{\chi_2}$. As $\sW^{n,m,j}$ is the $[A]^\star$ Kummer pullback of $\sH^{n,m,-j}$, the same is true for 
$G=G^{n,m}_{\geo}$ acting on each $\sW^{n,m,j}$. Hence, if $\Phi^\eps$ denotes the two summands of degree $(q^n-\eps)/2$, $\eps = \pm$,
of the $CL$-representation $\Phi$, then $\det(\Phi^\eps(g))^2=1$ for all $g \in G$. However, $\det(\Phi^\eps(x)) = 1$ for all
$x \in L$ as $L$ is perfect, and $\det(\Phi^\eps(c))$ has order $3$ if $1 \neq c \in \OB_3(C)$ when $p=3$, since
$C$ is scalar and $\deg(\Phi^\eps)$ is coprime to $p$. Recalling $C \leq C_{2 \cdot\gcd(p,3)}$, we now see from \eqref{eq:mn25}
that $\SU_n(q) \lhd G \leq \OB_{p'}(C) \times \GU_n(q)$. Together with \eqref{eq:mn23b}, this implies that $G = K=\SU_n(q)$.

\smallskip
(iv) Now we return to the general case $m \geq 1$ and let $\lambda_j$ be the central character of $\ZB(\tG)$ acting on 
$\sW^{\, n,m,j}$, $0 \leq j \leq q$. Recall that $\Phi|_{\tG}$ has integer traces, belonging to $\{\pm q^i \mid 0 \leq i \leq n\}$ by Theorem \ref{tracevalues1}, and so it is self-dual. But $(\Phi_0)|_{\tG}$ is the unique 
irreducible constituent of $\Phi|_{\tG}$ of degree $A-1$, hence $(\Phi_0)|_{\tG}$ is self-dual; in particular, 
$\lambda_0^2$ is trivial. It follows that $\Sigma:=-\lambda_0+A\sum^q_{i=0}\lambda_i$ satisfies 
all the hypotheses of Lemma \ref{center-su}, whence 
\begin{equation}\label{eq:mn26}
  |\ZB(\tG)| \mbox{ divides }2(q+1).
\end{equation}  
In particular, we are done if $C \leq C_2$. Consider the case $C \geq C_3$, whence $p=3$. By \eqref{eq:mn25}, 
$\CB_{\tG}(K) = \ZB(\tG)$, and $\tG/\CB_{\tG}(K) \leq \PGU_n(q)$. It then follows from \eqref{eq:mn26} that 
$|\tG/G|$ divides $2(q+1)^2$. On the other hand, 
$\tG/G \leq C \times C_{q+1}$, with $C \times C_{q+1}$ being an abelian group. Hence, 
$$\tG/G \leq \OB_{3'}(C \times C_{q+1}) = \OB_{2}(C) \times C_{q+1} = \bigl(\OB_{2}(C) \times \GU_{n}(q) \bigr)/G,$$
and so 
\begin{equation}\label{eq:mn27}
  G^{n,m}_{\ari,k} \leq \OB_{2}(C) \times \GU_n(q).
\end{equation}  

\smallskip
(v) To completely determine $G^{n,m}_{\ari,k}$, 
first we show that in \eqref{eq:mn27} in fact we have 
\begin{equation}\label{eq:mn28}
  \SU_n(q)=H \lhd G^{n,m}_{\ari,k} \leq \GU_n(q). 
\end{equation}  
This is obvious if $\OB_2(C)=1$, so we will assume that $\OB_2(C) = \langle \bt \rangle$
with $\Phi(\bt) = -\Id$ and that $\tG \ni \bt h$ for some $h \in \GU_n(q)$. We will decompose the total Weil representation $\Phi$ of 
$\GU_n(q)$ as $\oplus^q_{i=0}\Psi_i$ as in \cite[\S3]{KT3}; in particular, $\deg(\Psi_i) = (q^n+1)/(q+1)-\delta_{i,0}$. 
The same decomposition applies to $\OB_2(C) \times \GU_n(q)$, as $\Phi(\bt) = -\Id$.
Restricted to $\tG$, each $\Psi_i$ with $1 \leq i \leq q$ corresponds to the action of $\tG$ on some $\sW^{n,m,j}$ which in turn is
the $[A]^\star$ Kummer pullback of $\sH_{big,A,B,\chi,descent}$ for some $\chi$. Restricted further down to $H =\SU_n(q)$, 
$\Psi_{(q+1)/2}$ is the only self-dual one among the $q$ irreducible Weil representations of degree $A$ of $H$. 
Taking $\rho=\chi=\chi_2$ in Proposition \ref{totaltraces}, we see that $\sH_{big,A,B,\chi_2,descent}$ is geometrically self-dual of rank $A$;
hence this sheaf corresponds to $\Psi_{(q+1)/2}$. Furthermore it has trivial arithmetic determinant, by Proposition 
\ref{arithdet}(iii), see also Corollary \ref{det-stick}.
On the other hand, by \cite[Lemma 3.2(iii)]{KT3},
$\det(\Psi_{(q+1)/2}(h))=1$, and so $\det(\Psi_{(q+1)/2}(\bt h))=-1$, a contradiction.  

Having established \eqref{eq:mn28}, we can write $G^{n,m}_{\ari,k} = \langle H,g\rangle$, where $g := \diag(\rho^j,1, \ldots ,1)$, 
$\rho \in \F_{q^2}^\times$ has order $q+1$, and $0 \leq j \leq q$. As shown in the proof of \cite[Lemma 3.2]{KT3}, 
\begin{equation}\label{eq:mn29}
  \det(\Psi_i(g)) = \zeta^{j(i+(q+1)/2)},
\end{equation}  
if $1 \leq i \leq q$ and $\zeta = \zeta_{q+1} \in \C^\times$ has order $q+1$. According to Corollary \ref{det-stick},
all the $q$ components of degree $A$ of $\sW^{n,m}$ have arithmetic determinant $\pm 1$, hence 
$\det(\Psi_i(g)) = \pm 1$ for all $1 \leq i \leq q$. Applying this and \eqref{eq:mn29} to $i=(q+3)/2$, we get $\zeta^{2j}=1$,
i.e. $(q+1)/2$ divides $j$. Since $2 \nmid n$ it is easy to see that $\langle H,g^{(q+1)/2} \rangle = H \times \langle \bj \rangle$,
and so we have shown that 
\begin{equation}\label{eq:mn29a}
  G^{n,m}_{\ari,k}=C_{\ari,k} \times \SU_n(q),
\end{equation}  
with $C_{\ari,k} \leq C_{\ari,\F_{q^2}} \leq \langle \bj \rangle$.

Assume now that for $C_{\ari,\F_{q^2}}=1$. Then, 
$G^{n,m}_{\ari,\F_{q^2}}=H=\SU_n(q)$ is perfect. It follows that all $q+1$ subsheaves of $\sW^{n,m}$ have trivial arithmetic determinants
over $\F_{q^2}$. If $q \equiv 3 (\bmod\ 4)$, then we choose $\theta$ of order $r:=q+1$, so 
that $(-1)^{(q+1)/r} = -1 \neq (-1)^{(q+1)/2}$. If $q \equiv 1 (\bmod\ 4)$, then we choose $\theta$ of order $r:=(q+1)/2$, so 
that $(-1)^{(q+1)/r} = 1 \neq (-1)^{(q+1)/2}$. In both cases, by Corollary \ref{det-stick}, this choice of $\theta$ implies that the 
subsheaf of rank $A$ of $\sW^{n,m}$ labeled by $\theta$ has nontrivial arithmetic determinant $(-1)^{\deg}$ over extensions of $\F_{q^2}$,  a contradiction. Hence
$C_{\ari,\F_{q^2}}=\langle \bj \rangle$.

Finally, since $G^{n,m}_{\ari,\F_{q^2}}/G^{n,m}_{\geo} = C_2$, the $C_2$ quotient is geometrically trivial and so must be $(-1)^{\deg}$ 
arithmetically. Together with \eqref{eq:mn29a}, this implies that $C_{\ari,k} = C_{\ari,\F_{q^2}}$ when 
$2 \nmid \deg(k/\F_{q^2})$ and $C_{\ari,k}$ is trivial when $2|\deg(k/\F_{q^2})$.
\end{proof}

\begin{thm}\label{main-su2}
Given the assumption \eqref{eq:mn2}. Then the following statements hold.

\begin{enumerate}[\rm(a)]
\item The geometric monodromy group $H=H^{n,m}_{\geo}$ of $\sH^{n,m}$ contains $G^{n,m}_{\geo} = \SU_n(q)$ as 
a normal subgroup, with $H^{n,m}_{\geo}/G^{n,m}_{\geo}$ being cyclic of order $n_0$. Furthermore, $H/\ZB(H) \cong \PGU_n(q)$.
\item Let $0 \leq j \leq q$ and let $H_j=H^{n,m,j}_{\geo}$ be the geometric monodromy group of the hypergeometric 
sheaf $\sH^{n,m,j}$, defined in \eqref{eq:ab21}. Then $H_j^{(\infty)}$ is the image of 
$\SU_n(q)$ in an irreducible Weil representation, of 
degree $A=(q^n+1)/(q+1)$ if $1 \leq j \leq q$ and $A-1$ if $j = 0$, 
$H_j/H_j^{(\infty)}$ is cyclic of order dividing $n_0$, and $H_j/\ZB(H_j) \cong \PGU_n(q)$.
\item Let $k$ be any finite extension of $k_0:=\F_{q^2}(\nu)=\F_{q^{2n_0}}$. Then the arithmetic monodromy group of $\sH^{n,m}$ over 
$\G_m/k$ is $C_{\ari,k} \times H^{n,m}_{\geo}$, where $C_{\ari,k} = \langle \bj \rangle$ if $2 \nmid \deg(k/k_0)$ and 
$C_{\ari,k}=1$ if $2|\deg(k/k_0)$.
\end{enumerate}
\end{thm}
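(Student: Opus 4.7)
My plan is to leverage Theorem \ref{main-su1} together with the Kummer pullback structure. By Propositions \ref{totaltraces} and \ref{totaltraces-1}, each $\sW^{n,m,j}$ is the $[A]^\star$ pullback of $\sH^{n,m,-j}$, hence $\sW^{n,m} \cong [A]^\star \sH^{n,m}$. This makes $G := G^{n,m}_{\geo}$ a normal subgroup of $H := H^{n,m}_{\geo}$ with $H/G$ cyclic of order dividing $A$; Theorem \ref{main-su1} identifies $G \cong \SU_n(q)$ in its total Weil representation. Because each $\sW^{n,m,j}$ is $G$-irreducible, the restriction of $\sH^{n,m,j}$ to $G$ is already an irreducible Weil representation of $\SU_n(q)$ of degree $A$ when $j \neq 0$ and $A-1$ when $j=0$.

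Next I would locate $H$ inside the normalizer of $G$. Since geometric monodromy over $\overline{\F_p}$ cannot induce field automorphisms, the image of $H/\CB_H(G)$ in $\Out(\SU_n(q))$ lies in the diagonal piece $\PGU_n(q)/\PSU_n(q) \cong C_{\gcd(n,q+1)}$; Schur's lemma gives $\CB_H(G) = \ZB(H)$, so $H/\ZB(H) \hookrightarrow \PGU_n(q)$. Proposition \ref{prop-Sbis} verifies $(\mathbf{S}+)$ for each $\sH^{n,m,j}$, so applying \cite[Proposition 2.8]{G-T} and \cite[Lemma 2.5]{G-T}---and ruling out the extraspecial case by the same Sylow-arithmetic considerations used in the proof of Theorem \ref{main-sp2}---shows that each $H_j^{(\infty)}$ is quasisimple. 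Since it contains the image of $G$ acting irreducibly via a Weil rep, $H_j^{(\infty)}$ must be exactly that image, yielding the first assertion of (b).

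The central step is to pin down $|H/G| = n_0$ and $H/\ZB(H) = \PGU_n(q)$ exactly, rather than just upper bounds. The cyclic quotient $H/G$ has order dividing $A$, and by \eqref{eq:cp11} we have $\gcd(n_0(q+1), A) = n_0$, producing the correct ceiling. Per Lemma \ref{choice}, the $(q+1)$ irreducible pieces $\sH^{n,m,j}$ are indexed by characters $\nu^j$ of order dividing $n_0(q+1)$; pairing their central-character data on $\ZB(G)$ against the deck group $\mu_A$ of the $[A]$-pullback should force the image of $H$ in $\Out(G)$ to fill all of $C_{\gcd(n,q+1)}$, while the central group $\ZB(H)$ must have order exactly $n_0$, containing $\ZB(G) = C_{\gcd(n,q+1)}$ with quotient $C_{n_0/\gcd(n,q+1)}$. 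This yields $|H| = n_0 \cdot |\SU_n(q)|$, $|H/G| = n_0$, and $H/\ZB(H) \cong \PGU_n(q)$, establishing (a); each $H_j$ then inherits the analogous quotient structure from the faithful action of $H$ on $\sH^{n,m} = \bigoplus_j \sH^{n,m,j}$, completing (b). The hardest part will be verifying that the central extension is nontrivial to the full extent $n_0/\gcd(n,q+1)$, requiring a detailed compatibility check between Lemma \ref{choice} and the geometric determinant formulas of Lemmas \ref{detdown1}--\ref{detdown2}.

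For part (c), over $k \supseteq k_0 = \F_{q^{2n_0}}$ the character $\nu$ is rational, so each $\sH^{n,m,j}$ is arithmetically well-defined and $H^{n,m}_{\ari,k}$ normalizes $H$ with cyclic quotient. Combining the rationality of traces from Lemma \ref{tracesinK1} with the determinantal information from Corollary \ref{det-stick} and Propositions \ref{arithdet}--\ref{arithdetbis}, I would repeat the Gauss-sum parity argument used in the proof of Theorem \ref{main-su1}(v) to show that the only arithmetic extension of $H$ is by $\langle \bj \rangle \cong C_2$, with $\bj$ entering exactly when $\deg(k/k_0)$ is odd.
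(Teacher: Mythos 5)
Your overall skeleton (pullback relation $\sW^{\,n,m}=[A]^\star\sH^{\,n,m}$, $G=\SU_n(q)\lhd H$ with cyclic quotient dividing $A$, Schur's lemma giving $\CB_H(G)=\ZB(H)$ and $H/\ZB(H)\hookrightarrow \PGU_n(q)$) matches the paper, but the central step --- proving the \emph{exact} statements $|H/G|=n_0$ and $H/\ZB(H)\cong\PGU_n(q)$ --- is where your proposal has a genuine gap. First, the lower bound: nothing in your argument forces $H$ to induce the full diagonal outer automorphism group $C_{\gcd(n,q+1)}$ of $\SU_n(q)$. The paper gets this from \cite[Thm.~8.3, Cor.~8.4]{KT4} applied to the hypergeometric sheaves $\sH^{n,m,j}$ (using property $(\mathbf{S}+)$ and the fact that $I(0)$ acts through a cyclic group of order $A$), which yields $H_j/\ZB(H_j)\cong\PGU_n(q)$; your ``pairing central-character data on $\ZB(G)$ against the deck group $\mu_A$'' is not a substitute, since a priori $H$ could act only by inner automorphisms. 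Second, the upper bound $|H/G|\mid n_0$ does not follow from $\gcd(n_0(q+1),A)=n_0$ alone: you only know $|H/G|\mid A$, and you must first show $|H/G|\mid n_0(q+1)$. The paper does this by writing $\Psi_j(g)=\alpha_j\Psi_j(h)$ with an explicit $h=\diag(\rho,1,\dots,1)\in\GU_n(q)$ (Schur's lemma) and using that the traces of the descent lie in $\Q(\zeta_{n_0(q+1)})$ (Lemma \ref{tracesinchi1} over $\F_{q^2}(\nu)$) versus $\Tr\Psi_j(h)\in\Q(\zeta_{q+1})$, so that $\alpha_j^{n_0(q+1)}=1$; then \eqref{eq:cp11} applies. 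Finally, the exact equality $|H/G|=n_0$ (rather than a proper divisor) is obtained by a determinant computation at $j=(q+3)/2$ using \cite[Lemma 3.2]{KT3} and the triviality of the geometric determinants, forcing $\alpha_j$ to have order exactly $n_0(q+1)$ and then an index argument $s=e$. Your suggested tools, Lemmas \ref{detdown1}--\ref{detdown2}, cannot detect any of this: since $A$ is odd, all the geometric determinants of the summands are trivial, so they carry no information about the size of $H/G$.

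Part (c) has a related gap. The issue there is not only whether $\bj$ appears (which your Gauss-sum/Stickelberger parity argument, as in Theorem \ref{main-su1}(v) and Corollary \ref{det-stick}, can indeed settle), but why the arithmetic group does not acquire any \emph{further} central or cyclic extension of order dividing $A$. The paper controls this by bounding $\tC:=\CB_{\tH}(\SU_n(q))$: each element of $\tC$ acts by scalars in every $\Psi_j$, those scalars lie in $\Q(\zeta_{n_0(q+1)})$ by the rationality statement over $k_0=\F_{q^{2n_0}}$, so the exponent of $\tC$ divides $n_0(q+1)$, and a gcd computation with \eqref{eq:cp11} gives $|\tC|\mid |C_{\ari,k}|\cdot n_0$; comparing orders with $H_{\geo}\leq \tH$ then yields $\tH=C_{\ari,k}\times H_{\geo}$. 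This step is absent from your outline. (A minor point: your justification that $H$ cannot induce field automorphisms --- ``geometric monodromy over $\overline{\F_p}$ cannot induce field automorphisms'' --- is not a valid reason; the correct one is that $H$ stabilizes each summand $\sH^{n,m,j}$ and hence preserves the equivalence class of each Weil representation $(\Psi_j)|_G$, which only inner-diagonal automorphisms do.)
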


\begin{proof}
(i) The definition given in \eqref{eq:ab21} tells us that $H_j$ has its $I(0)$ being cyclic of order $A=(q^n+1)/(q+1)$.
Moreover, $H_j$ has property $({\mathbf S}+)$ by Proposition \ref{prop-Sbis}, and by Theorem \ref{main-su1}, $H_j^{(\infty)}=G^{n,m,-j}_{\geo}$ is 
the image of $\SU_n(q)$ in the relevant irreducible Weil representation, with $H_j/H_j^{(\infty)}$ being cyclic of order dividing $A$. 
Hence, $\PSU_n(q)$ is the unique non-abelian 
composition factor of $H_j$, and by Theorem 8.3 and Corollary 8.4 of \cite{KT4}, 
\begin{equation}\label{eq:hs10}
  H_j/\ZB(H_j) \cong \PGU_n(q).
\end{equation}  

\smallskip
(ii) Next, since the $[A]^\star$ Kummer pullback of $\sH^{n,m}$ is $\sW^{n,m}$, $G:=G^{n,m}_{\geo} \cong \SU_n(q)$ is a normal subgroup
of $H:=H^{n,m}_{\geo}$, with cyclic quotient of order dividing $A$; in particular, we can write
\begin{equation}\label{eq:hs11}
  H = \langle G,g \rangle \rhd G
\end{equation}  
for some element $g \in H$. 

Let $\Psi_j$ denote the representation of $H$ on $\sH^{n,m,j}$, so that $H_j=\Psi_j(H)$ and $(\Psi_j)|_G$ is 
an irreducible Weil representation of $G = \SU_n(q)$. Note that the only automorphisms of $G$ that preserve the equivalence class of
each $(\Psi_j)|_G$ are the inner-diagonal automorphisms, i.e. the ones induced by elements in $\GU_n(q)$ (via conjugation). It follows
that we can find an element $h \in \GU_n(q) \leq L$ 
(with $L=\Sp_{2n}(q)$ as in the proof of Theorem \ref{main-su1}) such that 
$g$ and $h$ induce the same automorphism of $G$. Changing $g$ to another representative in its coset $gG$, we 
can make sure that 
\begin{equation}\label{eq:hs11a}
  h = \diag(\rho,1,1, \ldots,1) 
\end{equation}  
for some $\rho \in \mu_{q+1} \leq \F_{q^2}^\times$. In particular, 
\begin{equation}\label{eq:hs12}
  h^{q+1}=1,
\end{equation}  
and $\Psi_j(g)\Psi_j(h)^{-1}$ centralizes $\Psi_j(G)$, whence
\begin{equation}\label{eq:hs13}
  \Psi_j(g)=\alpha_j \Psi_j(h)
\end{equation}
for some $\alpha_j \in \C^\times$. In fact, $\alpha_j$ is a root of unity because both $g$ and $h$ have finite order.

Recall by \cite[(3.1.2)]{KT3} that $\Tr(\Psi_j(h)) \in \Q(\zeta_{q+1})$. On the other hand, since $\nu$ is chosen to have order
$n_0(q+1)$, $\Tr(\Psi_j(g)) \in \Q(\zeta_{n_0(q+1)})$ by Lemma \ref{tracesinchi1}. Hence the root of unity $\alpha_j$ belongs 
to $\Q(\zeta_{n_0(q+1)})$, and so, as $2|(q+1)$, we have that 
\begin{equation}\label{eq:hs14}
  \alpha_j^{n_0(q+1)}=1 
\end{equation}
for all $j$. Together with 
\eqref{eq:hs12} and \eqref{eq:hs13}, this implies that $\Psi_j(g)^{n_0(q+1)}=\Id$ for all $j$, whence $\Phi(g)^{n_0(q+1)}=\Id$
and $g^{n_0(q+1)}=1$ by faithfulness of $\Phi$. Coupled with \eqref{eq:hs11}, we deduce that $|H/G|$ divides $n_0(q+1)$.
But $|H/G|$ divides $A$ and $\gcd(A,n_0(q+1))=n_0$ by \eqref{eq:cp11}. Consequently, $|H/G|$ divides $n_0$. Applying 
$\Psi_j$, we also get that $|H_j/H_j^{(\infty)}|$ divides $n_0$.

Next we show that 
\begin{equation}\label{eq:hs15}
  \CB_H(G)=\ZB(H),~~H/\ZB(H) \cong \PGU_n(q).
\end{equation}  
Indeed, note that $\CB_H(G)$ acts via scalars in each $\Psi_j$ and so centralizes
$\Psi_j(H)$, whence $\CB_H(G) = \ZB(H)$. We already showed that $H/\CB_H(G)$ embeds in $\PGU_n(q)$ and contains 
$\PSU_n(q) = G/\ZB(G)$. If $H/\ZB(H) < \PGU_n(q)$, then applying $\Psi_j$ and using $\Psi_j(\ZB(H)) \leq \ZB(H_j)$, we 
would have that $H_j/\ZB(H_j)$ is properly contained in $\PGU_n(q)$, contradicting \eqref{eq:hs10}. 

\smallskip
(iii) The relation \eqref{eq:hs15} shows that $H$ induces the full subgroup $\PGU_n(q)$ of inner-diagonal automorphisms of $G$. As 
$H = \langle G,g \rangle$, see \eqref{eq:hs11}, we may therefore assume that for the element $h = \diag(\rho,1, \ldots,1)$ in 
\eqref{eq:hs11a} we have $\rho \in \F_{q^2}^\times$ is of order $q+1$. Write 
\begin{equation}\label{eq:hs16}
  d:=\gcd(n,q+1) = an-b(q+1),~~h^d = (\rho^a \cdot 1_W)h' \mbox{ with }h' := \diag(\rho^{d-a},\rho^{-a}, \ldots,\rho^{-a}), 
\end{equation}  
for some $a,b \in \Z$. Then $\rho^d = \rho^{an-b(q+1)} = \rho^{an}$, hence $\det(h') = \rho^{d-an}=1$, i.e. $h' \in \SU_n(q)$. 

We will now fix $j:=(q+3)/2$ in \eqref{eq:hs13} and let $\al:=\al_j$. By the proof of \cite[Lemma 3.2]{KT3}, this choice of $j$ (and 
the fact that $\rho$ has order $q+1$) ensures 
that $\det(\Psi_j(h))$ is a primitive $(q+1)^{\mathrm {th}}$ root $\zeta_{q+1}$ of unity. On the other hand, 
by Lemma \ref{detdown1} and \eqref{eq:hs13}, 
$$1=\det(\Psi_j(g)) = \al^A\det\Psi_j(h) = \al^A\zeta_{q+1}.$$
Recalling by \eqref{eq:hs14} that $\al^{n_0(q+1)}=1$, we can write 
$\al=\zeta_{n_0(q+1)}^c$ for a primitive $(n_0(q+1))^{\mathrm {th}}$ root $\zeta_{n_0(q+1)}$ of unity with 
$\zeta_{n_0(q+1)}^{n_0}=\zeta_{q+1}$ and $c\in \Z$. Now 
$\zeta_{q+1}=\al^{-A} = \zeta_{q+1}^{-(A/n_0)c}$ has order $q+1$, and so $\gcd(c,q+1)=1$. As $n_0|(q+1)^n$, this implies
that 
\begin{equation}\label{eq:hs17}
  \gcd(c,n_0(q+1))=1, \mbox{ i.e. }\al =\zeta_{n_0(q+1)}^c\mbox{ has order exactly }n_0(q+1).
\end{equation}   
Also write 
$$n_0=de,~~q+1=dr$$
with $e,r \in \Z_{\geq 1}$. 

Recall we have shown that $|H/G|$ divides $n_0=de$, and $H$ induces the subgroup $\PGU_n(q)$ of $\Aut(G)$, whereas 
$G$ induces the subgroup $\PSU_n(q)$ of order $|\PGU_n(q)|/d$ of $\Aut(G)$. It follows that $|H/G| = ds$ for some divisor $s$ of $e$. 
In particular, $g^{ds} \in G$, whence using \eqref{eq:hs16} we obtain that
$$\Psi_j(g^{ds}) = \al^{ds}\Psi_j(h^{ds}) = \al^{ds}\Psi_j\bigl( (\rho^a \cdot 1_W)^s \bigr) \Psi_j\bigl( (h')^s \bigr)$$
belongs to $\Psi_j(G)$. As $h' \in G$ and $\Psi_j(\rho \cdot 1_W) = \zeta_{q+1} \cdot \Id$ by \cite[(3.2.1)]{KT3}, this implies that 
the scalar transformation $\al^{ds}\zeta_{q+1}^{as} \cdot \Id$ belongs to $\Psi_j(G)$. As the quasisimple group $G=\SU_n(q)$ acts 
irreducibly in $\Psi_j$ and has center of order $d$, this scalar transformation has order dividing $d$, that is,
\begin{equation}\label{eq:hs18}
  \bigl( \al^{d^2}\zeta_{q+1}^{ad})^s=1.
\end{equation}   
Now, $\al^{d^2} = \zeta_{n_0(q+1)}^{cd^2} = \zeta_{er}^c$ by \eqref{eq:hs17}, if we take $\zeta_{er}:=\zeta_{n_0(q+1)}^{d^2}$. Next,
$\zeta_{q+1}^{ad} = \zeta_{n_0(q+1)}^{n_0ad}=\zeta_{er}^{ae}$. It follows that $\al^{d^2}\zeta_{q+1}^{ad} = \zeta_{er}^{c-ae}$, and 
so \eqref{eq:hs18} implies that $er$ divides $s(c-ae)$; in particular, $e$ divides $sc$. But $c$ is coprime to $n_0=de$ by 
\eqref{eq:hs17}, hence $e|s$. Consequently, $s=e$, i.e. $|H/G|=n_0$, as stated in (a).

\smallskip
(iv) Now we note that, since all prime divisors of the odd integer $n_0$ divide $q+1$ and $\nu$ has order $n_0(q+1)$, 
$k_0 = \F_{q^2}(\nu)$ equals $\F_{q^{2n_0}}$. [Indeed, if $\ell$ is any (odd) prime divisor of $n_0$ and $\ord_\ell(n_0)=c >0$, then,
as in the proof of Lemma \ref{choice}, we have that $\ell^c|(q^{2a}-1)/(q+1)$ if and only if $\ell^c|a$. Proceeding $\ell$ by $\ell$, we get that $n_0|(q^{2a}-1)/(q+1)$, i.e. $n_0(q+1) \mid (q^{2a}-1)$, if and only if $n_0|a$, and thus $ \F_{q^2}(\nu)= \F_{q^{2n_0}}$.]
To determine $\tH:=H^{n,m}_{\ari,k}$, we recall that $\sW^{n,m}$ is the $[A]^\star$ Kummer pullback of $\sH^{n,m}$, hence 
$\tG:=G^{n,m}_{\ari,k} = C_{\ari,k} \times G^{n,m}_{\geo}$ is a subgroup in $H^{n,m}_{\ari,k}$, with cyclic quotient of order dividing $A$. 
At the same time,
$H^{n,m}_{\ari,k}$ contains $H=H^{n,m}_{\geo}$ as a normal subgroup, also with cyclic quotient, and with 
$H^{(\infty)} = \tG^{(\infty)}=G^{n,m}_{\geo} \cong \SU_n(q)$. 
It follows that $G^{n,m}_{\geo} \lhd H^{n,m}_{\ari,k}$, 
whence 
\begin{equation}\label{eq:hs19}
  G^{n,m}_{\geo} \lhd H^{n,m}_{\ari,k} \geq H^{n,m}_{\geo}, 
  \mbox{ and }[H^{n,m}_{\ari,k}:G^{n,m}_{\ari,k}] \mbox{ divides } A.
\end{equation}
Recall by \eqref{eq:hs15} that $H^{n,m}_{\geo}$ induces the subgroup $\PGU_n(q)$ of all inner-diagonal 
automorphisms of $G^{n,m}_{\geo}=\SU_n(q)$.
Again, since only inner-diagonal automorphisms of $\SU_n(q)$ can fix the equivalence of each irreducible Weil representations $(\Psi_j)|_G$ of
$\SU_n(q)$, $H^{n,m}_{\ari,k}$ must induce the same subgroup $\PGU_n(q)$ while acting on $G^{n,m}_{\geo}$. In particular,
\begin{equation}\label{eq:hs19a}
  |H^{n,m}_{\ari,k}| = |\PGU_n(q)| \cdot |\tC| = |\SU_n(q)| \cdot |\tC| = |G^{n,m}_{\ari,k}/C_{\ari,k}| \cdot |\tC|,
\end{equation}
where $\tC:=\CB_{H^{n,m}_{\ari,k}}(G^{n,m}_{\geo})$. Together with \eqref{eq:hs19}, this implies that
\begin{equation}\label{eq:hs20}
  |\tC| \mbox{ divides }|C_{\ari,k}| \cdot A,
\end{equation}

Consider any $c \in \tC$. For any $0 \leq j \leq q$, 
$\Psi_j(c)$ centralizes the irreducible subgroup $\Psi_j(G)$, hence $\Psi_j(c) = \gamma_j \cdot \Id$
for some root of unity $\gamma \in \C^\times$. Just as above, we see that the field $E_\nu = \F_p(\mu_{n_0(q+1)(p-1)})$  of Lemma \ref{tracesinchi1} is equal to $k_0=\F_{q^{2n_0}}$. Hence, by Lemma \ref{tracesinchi1}, $\gamma_j \deg(\Psi_j) = \Tr(\Psi_j(c))$ belongs to 
$\Q(\nu)=\Q(\zeta_{n_0(q+1)})$. As $2|(q+1)$ and $\gamma_j$ is a root of unity, we conclude that $\gamma_j^{n_0(q+1)}=1$, and 
so $c^{n_0(q+1)}=1$ for all $c \in \tC$, i.e. the exponent of $\tC$ divides $n_0(q+1)$. 
On the other hand, as $\tC$ acts via scalars in all $\Psi_j$, it is a (finite) abelian group. Thus $|\tC|$ divides $(n_0(q+1))^{q+1}$. Applying
\eqref{eq:hs20} and \eqref{eq:cp11}, we now obtain that $|\tC|$ divides 
$$\gcd\bigl( |C_{\ari,k}| \cdot A, (n_0(q+1))^{q+1} \bigr) = 
    |C_{\ari,k}| \cdot n_0 \cdot \gcd\biggl(\frac{A}{n_0},\frac{n_0^q(q+1)^{q+1}}{|C_{\ari,k}|} \biggr) = |C_{\ari,k}| \cdot n_0.$$
Together with \eqref{eq:hs19a}, this implies that $|H^{n,m}_{\ari,k}/G^{n,m}_{\ari,k}| = n_0/e$ for some odd integer $e|n_0$; in particular,
$|H^{n,m}_{\ari,k}|=|C_{\ari,k}| \cdot |\SU_n(q)| \cdot (n_0/e) = \bigl(|C_{\ari,k}|/e\bigr) \cdot |H^{n,m}_{\geo}|$. But $H^{n,m}_{\geo}$ is 
a subgroup of $H^{n,m}_{\ari,k}$ and $|C_{\ari,k}| \leq 2$, so we conclude that $e=1$.

Now, if $C_{\ari,k}=1$, then $H^{n,m}_{\ari,k}=H^{n,m}_{\geo}$. Assume $C_{\ari,k}= \langle \bj \rangle$. Then 
$2=|H^{n,m}_{\ari,k}/H^{n,m}_{\geo}|$. Recall by (a) that $H^{n,m}_{\geo}$ is an extension of 
the quasisimple subgroup $G=\SU_n(q)$ of odd index $n_0$. On the other hand, by Theorem \ref{main-su1}, 
$G^{n,m}_{\ari,k} = C_{\ari,k} \times \SU_n(q)$, and the order $2$ subgroup $C_{\ari,k} = \langle \bj \rangle \leq \ZB(\GU_n(q))$
acts via scalars in each of $\Psi_j$, hence it centralizes $H^{n,m}_{\ari.k}$. It follows that $C_{\ari,k} \cap H^{n,m}_{\geo}=1$ and 
$H^{n,m}_{\ari,k} = C_{\ari,k} \times H^{n,m}_{\geo}$. 

Finally, by Theorem \ref{main-su1}, $|C_{\ari,k}|=2$ if and only if $2 \nmid \deg(k/\F_{q^2})$ if and only if 
$2 \nmid \deg(k/k_0)$, since $\deg(k_0/\F_{q^2})=n_0$ is odd.    
\end{proof}

\begin{rmk}In the special case where $n=3$, $m=1$, and $3|(q+1)$, Theorem \ref{main-su2} complements \cite[Theorem 19.2]{KT1}.
\end{rmk}
Now we specialize to the case where $\gcd(n,q+1)=1$, and follow Remark \ref{coprime} to choose $a \in \Z$ so that
$aA \equiv 1$ (mod $(q+1)$) and take $\nu =\chi_{q+1}^a$. Then the hypergeometric sheaves $\sH^{n,m,j}$ of 
\eqref{eq:ab21} are defined over $\G_m/\F_{q^2}$, and their sum $\sH^{n,m}=\oplus^q_{j=0}\sH^{n,m,j}$ has trace function
$$u \in k^\times  \mapsto \frac{1}{\Gauss(\psi_k,\chi_2)}\sum_{x \in k}\psi_k\bigl(u^\alpha x^{q^m+1}-u^\beta x^{q^n+1}\bigr),$$
with $\alpha A-\beta B=1$ and $\beta \in (q+1)\Z$, see \eqref{eq:510a}.

\begin{thm}\label{main-su3}
Given the assumption \eqref{eq:mn2}, assume in addition that $\gcd(n,q+1)=1$. Then we have the following results.
\begin{enumerate}[\rm(a)]
\item The geometric monodromy group $H^{n,m}_{\geo}$ of $\sH^{n,m}$ is $\SU_n(q)$ 
acting in its total Weil representation of degree $q^n$.
\item The geometric monodromy group $H^{n,m,j}_{\geo}$ of the hypergeometric sheaf $\sH^{n,m,j}$, $0 \leq j \leq q$, 
is the image of $\SU_n(q)$ in an irreducible Weil representation, of degree $A=(q^n+1)/(q+1)$ if $1 \leq j \leq q$ and $A-1$ if $j=0$.
\item Over any finite extension $k$ of $\F_{q^2}$, the arithmetic monodromy group $H^{n,m}_{\ari,k}$ of $\sH^{n,m}$ is 
equal to the arithmetic monodromy group $G^{n,m}_{\ari,k}$ of $\sW^{n,m}$ in Theorem \ref{main-su1}. 
\end{enumerate}
\end{thm}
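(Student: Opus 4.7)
The plan hinges on the observation that the extra hypothesis $\gcd(n,q+1)=1$ forces $n_0=1$ in Lemma \ref{choice}: since no prime divides both $n$ and $q+1$, we have $\gcd(n,(q+1)^n)=1$. Consequently $\nu$ has order exactly $q+1$, is already defined over $\F_{q^2}$, and $k_0:=\F_{q^2}(\nu)=\F_{q^{2n_0}}=\F_{q^2}$. Parts (a) and (b) then fall out immediately from Theorem \ref{main-su2}: the cyclic quotient $H^{n,m}_{\geo}/G^{n,m}_{\geo}$ has order $n_0=1$, so $H^{n,m}_{\geo}=G^{n,m}_{\geo}\cong\SU_n(q)$ in its total Weil representation (applying Theorem \ref{main-su1}); similarly, each $H^{n,m,j}_{\geo}/(H^{n,m,j}_{\geo})^{(\infty)}$ is cyclic of order dividing $n_0=1$, so $H^{n,m,j}_{\geo}$ equals its derived term, which is the image of $\SU_n(q)$ in the irreducible Weil representation of degree $A$ for $1\leq j\leq q$ (respectively $A-1$ for $j=0$).

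For (c), since $k_0=\F_{q^2}$, Theorem \ref{main-su2}(c) applies directly to every finite extension $k$ of $\F_{q^2}$, giving
$$H^{n,m}_{\ari,k}=\tilde C_{\ari,k}\times H^{n,m}_{\geo}=\tilde C_{\ari,k}\times\SU_n(q),$$
where $\tilde C_{\ari,k}=\langle\bj\rangle$ if $\deg(k/\F_{q^2})$ is odd and is trivial otherwise. Theorem \ref{main-su1} supplies the analogous description $G^{n,m}_{\ari,k}=C_{\ari,k}\times\SU_n(q)$ with exactly the same parity condition. So the two groups have the same cardinality; it remains only to promote this numerical coincidence to an actual equality of subgroups.

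To do so, I would invoke the pullback relation: by Propositions \ref{totaltraces} and \ref{totaltraces-1}, together with the trace computation in Remark \ref{coprime}, $\sW^{n,m}$ is (arithmetically) isomorphic to the restriction of the $[A]^\star$ Kummer pullback of $\sH^{n,m}$ to $\G_m$. Since $\sW^{n,m}$ is lisse on $\A^1$, no monodromy is lost when restricting from $\A^1$ to $\G_m$, so the monodromy representation of $\sW^{n,m}$ on $\A^1/k$ coincides with the restriction of the monodromy representation of $\sH^{n,m}$ on $\G_m/k$ to the index-$A$ subgroup $[A]_*\pi_1^{\ari}(\G_m/k)$. This exhibits $G^{n,m}_{\ari,k}$ as a subgroup of $H^{n,m}_{\ari,k}$, and equality of orders then forces $G^{n,m}_{\ari,k}=H^{n,m}_{\ari,k}$. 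The only potential obstacle is the bookkeeping needed to verify that the two central factors $\langle\bj\rangle$ genuinely match—but this is automatic since $k_0=\F_{q^2}$ collapses the two parity conditions to the same condition on $\deg(k/\F_{q^2})$. The substantive content is really all hidden in Theorems \ref{main-su1} and \ref{main-su2}; what remains here is essentially the Lemma \ref{choice} specialization $n_0=1$ together with an order comparison.
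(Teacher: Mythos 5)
Your proposal is correct, and in fact the paper concedes your point in its opening sentence: ``Theorem \ref{main-su3} is the $n_0=1$ case of Theorem \ref{main-su2}.'' Your reduction is sound: $\gcd(n,q+1)=1$ gives $n_0=1$ in Lemma \ref{choice}, so $\nu$ has order $q+1$, $k_0=\F_{q^2}(\nu)=\F_{q^2}$, and parts (a), (b) follow from Theorem \ref{main-su2}(a), (b) since the cyclic quotients of order (dividing) $n_0$ collapse; for (c), the containment $G^{n,m}_{\ari,k}\leq H^{n,m}_{\ari,k}$ coming from the $[A]^\star$ Kummer pullback relation (together with surjectivity of $\pi_1(\G_m)\to\pi_1(\A^1)$, since $\sW^{\,n,m}$ is lisse on $\A^1$) plus the matching order counts from Theorems \ref{main-su1} and \ref{main-su2}(c) forces equality. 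Where you diverge from the paper is that the authors deliberately do \emph{not} argue this way: they give a self-contained alternative proof that avoids Theorem \ref{main-su2} entirely, instead embedding the situation into the three-parameter system $\sW^{\,n,m,2}$ (with the extra $sx^{q^2+1}$ term), using Theorem \ref{main-sp3} at the specialization $(v,r)=(1,0)$ to produce $\Sp_{2n}(q)$, then Theorem \ref{sp-up} to trap $H^{n,m}_{\geo}$ inside $C\times\Sp_{2n}(q)$, and finally \cite[Theorem 3.4]{KT3} together with coprimality of the index $A$ to $q+1$ (here $\gcd(n,q+1)=1$ is used again) to conclude $H^{n,m}_{\geo}=\SU_n(q)$, with a similar coprimality argument for the arithmetic groups. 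Your route is shorter and perfectly adequate for this theorem; what the paper's longer route buys is a template that transfers verbatim to Theorem \ref{main-su4}, where the monodromy group is $\GU_n(q)$ and no reduction to Theorem \ref{main-su2} is available.
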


\begin{proof}
Note Theorem \ref{main-su3} is the $n_0=1$ case of Theorem \ref{main-su2}. But we will give an alternative
proof, which will later apply to the proof of Theorem \ref{main-su4}.

\smallskip
(i) Consider the local system $\sW^{\, n,m,2}$ on $\G_m \times \A^2/\F_{q^2}$ with trace function given as follows. 
For $k/\F_{q^2}$ a finite extension,  and $v \in k^\times$, $r,s \in k$,
$$(v,r,s) \mapsto  \frac{1}{\Gauss(\psi_k,\chi_2)}\sum_{x \in k}\psi_k\bigl(vx^{q^n+1} + rx^{q^m+1}+sx^{q^2+1}\bigr),$$
with geometric monodromy group $G^{n,m,2}_{\geo}$ and arithmetic monodromy group $G^{n,m,2}_{\ari,k}$ over 
any finite extension $k$ of $\F_{q^2}$. Then the system $\sW^{\, n,m,2}|_{v=1,\ r=0}$ at $(v,r)=(1,0)$ is exactly 
the local system $\sW(\psi_{-2},n,2,q)$ considered in \eqref{eq:sheaf1}. By Theorem \ref{main-sp3},
the geometric monodromy group of $\sW^{\, n,m,2}|_{v=1,\ r=0}$ equals $L:= \Sp_{2n}(q)$ in one of its total Weil representations 
of degree $q^n$. Thus $G^{n,m,2}_{\geo}$ contains $L$. By Proposition \ref{ABmoment}, $\sW^{\, n,m,2}$ is a sum of two 
subsystems of rank $(q^n \pm 1)/2$.  Furthermore, it satisfies the conclusions of Theorem \ref{thm:vdG-vdV}.
Now, applying Theorem \ref{sp-up} to $G^{n,m,2}_{\ari,k} \geq L$ (with $e=1$), 
we obtain that
\begin{equation}\label{eq:mn31}
  L \lhd G^{n,m,2}_{\geo}  \leq G^{n,m,2}_{\ari,k} \leq C \times L,
\end{equation}  
where $C$ a cyclic scalar subgroup, and either $|C| = 1,2$, or $p=3$, $2 \nmid f$, and $|C| =3,6$.

Now, by specializing $\sW^{\, n,m,2}$ to the curve $v = -u^\beta$, $r=u^\alpha$, $s=0$, we obtain from \eqref{eq:mn31} that
the geometric monodromy group $H:=H^{n,m}_{\geo}$ of $\sH^{n,m}$ is contained in $C \times L$.

\smallskip
(ii) By \eqref{eq:510b} (and the fact that the traces are all real-valued), 
the $[A]^\star$ Kummer pullback of $\sH^{n,m}$ is the local system $\sW^{\, n,m}$ which has 
geometric monodromy group $G^{n,m}_{\geo}=\SU_n(q)$ by Theorem \ref{main-su1}. Hence, $G^{n,m}_{\geo}$ is a normal 
subgroup of $H$ with cyclic quotient of order dividing $A=(q^n+1)/(q+1)$, which is coprime to $2p$. It follows that 
$L \geq H^{(\infty)}=G^{n,m}_{\geo}$ and $|H/H^{(\infty)}|$ is coprime to $2p$. But $H/(H \cap L)$ embeds in $CL/L \cong C$, and 
$|C|$ divides $2p$. Hence $\SU_n(q) =G^{n,m}_{\geo} \lhd H =H \cap L \leq L \cong \Sp_{2n}(q)$. 
Furthermore, the action of $H$ on $\sH^{n,m}$ is the sum of 
$q+1$ irreducible representations, one of degree $A-1$ and $q$ of degree $A$. Hence, by \cite[Theorem 3.4]{KT3}, we know
that $\SU_n(q) \lhd H \leq \GU_n(q)$. Recall again that $G^{n,m}_{\geo} = \SU_n(q)$ has 
index dividing $A=(q^n+1)/(q+1)$ which is coprime to $q+1 = |\GU_n(q)/\SU_n(q)|$ since $\gcd(n,q+1)=1$. Consequently, $H = \SU_n(q)$ as stated in (a).
Now (b) follows from (a), since $H^{n,m,j}_{\geo}$ is the image of $H=H^{n,m}_{\geo}$ acting on an individual sheaf $\sH^{n,m,j}$.

\smallskip
(iii) For (c), we note that $H^{n,m}_{\ari,k}$ contains $H$ as a normal subgroup of cyclic index, hence 
$H =H^{(\infty)} = (H^{n,m}_{\ari,k})^{(\infty)}$. The specialization $v = -u^\beta$, $r=u^\alpha$, $s=0$ also shows that 
$H^{n,m}_{\ari,k}$ is contained in $G^{n,m,2}_{\ari,k}$, hence 
$$H \lhd H^{n,m}_{\ari,k} \leq \NB_{C \times L}(H) = C \times (\GU_n(q) \rtimes C_2).$$
Note that $\bigl[\bigl(C \times (\GU_n(q) \rtimes C_2)\bigr):H\bigr]$ divides $4(q+1) \cdot \gcd(p,3)$.
On the other hand, since $\sW^{n,m}$ is the $[A]^\star$ Kummer pullback of $\sH^{n,m}$, 
$G^{n,m}_{\ari,k} \geq G^{n,m}_{\geo}=H$ is subgroup of $H^{n,m}_{\ari,k}$ of index dividing $A=(q^n+1)/(q+1)$ which is coprime to $2p(q+1)$. Thus $[H^{n,m}_{\ari,k}:G^{n,m}_{\ari,k}]$ divides 
$4p(q+1)$ and at the same time is coprime to $2p(q+1)$. Hence $H^{n,m}_{\ari,k}=G^{n,m}_{\ari,k}$.
\end{proof}

Next, we will work with any odd $n \geq 3$ and  any odd $m < n$ that is coprime to $q+1$, e.g. $m=1$. Then we follow Remark \ref{coprimebis} to study the sheaf $\sH^{n,m}_{bis}$ defined in \eqref{eq:511a} over $\G_m/\F_{q^2}$, which   
has trace function
$$u \in k^\times  \mapsto \frac{1}{\Gauss(\psi_k,\chi_2)}\sum_{x \in k}\psi_k\bigl(u^\alpha x^{q^m+1}-u^\beta x^{q^n+1}\bigr),$$
with $\alpha A-\beta B=1$ and $\alpha \in (q+1)\Z$.

\begin{thm}\label{main-su4}
Given the assumption \eqref{eq:mn2}, assume in addition that $\gcd(m,q+1)=1$. Then we have the following results.
\begin{enumerate}[\rm(a)]
\item The geometric monodromy group $H^{n,m}_{bis,\geo}$ of $\sH^{n,m}_{bis}$ is $\GU_n(q)$ 
acting in its total Weil representation of degree $q^n$, with character 
$$\zeta_{n,q}: g \mapsto (-1)^n(-q)^{\dim_{\F_{q^2}}\Ker(g-1)}.$$ 
\item The geometric monodromy group $H^{n,m,j}_{bis,\geo}$ of the $q+1$ summands $\sH^{n,m,j}_{bis}$ of $\sH^{n,m}_{bis}$, 
is the image of $\GU_n(q)$ in $q+1$ irreducible Weil representations, $q$ of degree $A=(q^n+1)/(q+1)$ and $1$ of degree 
$A-1$.
\item For any finite extension $k$ of $\F_{q^2}$, the arithmetic monodromy group of $\sH^{n,m}_{bis}$ over $\G_m/k$ is
$$H^{n,m}_{bis,\ari,k} = C_{\ari,k} \times \GU_n(q),$$ 
with $C_{\ari,k}$ being a cyclic scalar subgroup of order $\leq 2$. In fact, 
if $k/\F_{q^2}$ has even degree, or if  $q \equiv 3 (\bmod\ 4)$, then $C_{\ari,k}=1$. If $q \equiv 1 (\bmod\ 4)$ and
$k/\F_{q^2}$ has odd degree, then $C_{\ari,k} = \langle \bt \rangle \cong C_2$.

\item For the sheaf  $\sW_{bis}^{\, n,m}=[B]^\star\sH^{n,m}_{bis}$, whose trace function, cf. \eqref{eq:511bb}, is
$$u \in E^\times \mapsto  \frac{1}{\Gauss(\psi_E,\chi_2)}\sum_{z \in E}\psi_E( z^{q^m+1}-u^{-1}z^{q^n+1}),$$
its geometric monodromy group $G^{n,m}_{bis,\geo}$ is $\GU_n(q)$ 
acting in its total Weil representation of degree $q^n$. Furthermore, for any finite extension $k$ of $\F_{q^2}$, the arithmetic monodromy group $G^{n,m}_{bis,\ari,k}$ of $\sH^{n,m}_{bis}$ is equal to $H^{n,m}_{bis,\ari,k}$.
\end{enumerate}
\end{thm}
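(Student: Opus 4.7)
The plan is to prove Theorem \ref{main-su4} by adapting the strategy of Theorem \ref{main-su3}, with the key new feature being that under our hypothesis $\gcd(m,q+1)=1$ we have $\gcd(B,q+1)=1$ (rather than $\gcd(A,q+1)=1$). First I would embed $\sH^{n,m}_{bis}$ in the auxiliary local system $\widetilde{\sW}^{v,r,s}$ on $\G_m \times \A^2/\F_{q^2}$ with trace function
$$(v,r,s) \mapsto \frac{1}{\Gauss(\psi_k,\chi_2)}\sum_{x \in k}\psi_k\bigl(vx^{q^n+1} + rx^{q^m+1}+sx^{q^2+1}\bigr).$$
Specialization at $(v,r,s)=(1,0,0)$ recovers the sheaf $\sW(\psi,n,2,q)$ of Theorem \ref{main-sp3}, whose geometric monodromy is $L=\Sp_{2n}(q)$ in a total Weil representation. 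Combined with the $p$-power property (Theorem \ref{thm:vdG-vdV}) and second moment equal to $2$ (Corollary \ref{multiABmomentrho}), the going-up result Theorem \ref{sp-up} then forces the geometric monodromy of $\widetilde{\sW}^{v,r,s}$ itself to be $C \times L$ for a small cyclic scalar $C$. Specializing further at $(v,r,s)=(-u^\beta, u^\alpha,0)$ yields $\sH^{n,m}_{bis}$ (cf.\ \eqref{eq:511b}), so $H:=H^{n,m}_{bis,\geo} \leq C \times L$.

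Next I would pin down $H\cap L$. By Proposition \ref{ABmomentrho} the $q+1$ summands $\sH^{n,m,j}_{bis}$ of $\sH^{n,m}_{bis}$ are geometrically irreducible and pairwise non-isomorphic, with one of rank $A-1$ and $q$ of rank $A$, matching exactly the decomposition pattern of the total Weil representation of $\GU_n(q)$. Applying \cite[Theorem 3.4]{KT3} gives $\SU_n(q) \lhd (H \cap L) \leq \GU_n(q)$. To sharpen this to $H \cap L = \GU_n(q)$, I exploit the pairwise non-isomorphism of the $q$ summands of degree $A$: the $q$ non-trivial irreducible Weil representations of $\GU_n(q)$ of degree $A$ are distinguished precisely by their central character on $\ZB(\GU_n(q))\cong C_{q+1}$, so any proper subgroup of $\GU_n(q)$ strictly containing $\SU_n(q)$ would identify at least two of them as abstract representations of $H \cap L$, contradicting the non-isomorphism of the sheaves. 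Combined with a determinant calculation using Lemmas \ref{detdown1} and \ref{detdown2} to absorb the scalar $C$, this gives $H = \GU_n(q)$ acting in its total Weil representation, proving (a); part (b) follows by passing to each irreducible summand.

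For part (c) I would invoke the explicit arithmetic determinant computation of Proposition \ref{arithdetter} (applicable precisely because we chose $\alpha \in (q+1)\Z$ as in Remark \ref{coprimebis}) together with Stickelberger as in Corollary \ref{det-stick}. These identify the arithmetic determinant of each $\sH^{n,m,j}_{bis}$ over $\F_{q^2}$ as an explicit $\pm 1$, and matching against the determinant character of the corresponding Weil representation of $\GU_n(q)$ pins down $C_{\ari,k}$: when all computed determinants agree with those inherited from $\GU_n(q)$, one obtains $C_{\ari,k}=1$; otherwise an extra central involution $\bt$ appears. The split according to $q \bmod 4$ and the parity of $[k:\F_{q^2}]$ emerges from tracking the $(-1)$-factors in Stickelberger's formula \eqref{stick} against the scalar $\chi_2(-1)(-1)^{A/2}$ of Proposition \ref{arithdetter}.

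Part (d) is then a formal consequence: since $\sW^{n,m}_{bis}=[B]^\star\sH^{n,m}_{bis}$ is a Kummer pullback along a cyclic cover of degree $B$, the group $G^{n,m}_{bis,\geo}$ is normal in $H^{n,m}_{bis,\geo}=\GU_n(q)$ with cyclic quotient of order dividing $B$. The only normal subgroups of $\GU_n(q)$ with abelian (in particular cyclic) quotient are those sandwiched between $\SU_n(q)$ and $\GU_n(q)$, so the quotient has order dividing $q+1$; as $\gcd(B,q+1)=\gcd(m,q+1)=1$, it is trivial and $G^{n,m}_{bis,\geo}=\GU_n(q)$. The same coprimality yields $G^{n,m}_{bis,\ari,k}=H^{n,m}_{bis,\ari,k}$. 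The principal obstacle will lie in the second paragraph: ruling out intermediate subgroups strictly between $\SU_n(q)$ and $\GU_n(q)$ using the fine decomposition data, and handling the scalar group $C$ cleanly enough to recover $\GU_n(q)$ itself rather than $\GU_n(q)$ times a small scalar group.
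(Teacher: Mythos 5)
Your overall architecture (embedding into the three-parameter system $\sW^{\,n,m,2}$, going up to $C\times L$ with $L=\Sp_{2n}(q)$ via Theorem \ref{sp-up}, then specializing along $v=-u^\beta$, $r=u^\alpha$, $s=0$) coincides with the paper's, and your part (d) is essentially the paper's argument. But the second paragraph, which carries the whole theorem, has two genuine gaps and reaches a conclusion that is in fact false. First, you never prove $H^{(\infty)}\cong\SU_n(q)$: ``matching the decomposition pattern'' is not an argument, and \cite[Theorem 3.4]{KT3} cannot be applied to $H\cap L$ before one knows the summands remain irreducible on a subgroup of $L$. The paper supplies this by noting that $[q^n+1]^\star\sW^{\,n,m}_{bis}$ and $[q^m+1]^\star\sW^{\,n,m}$ have identical trace functions, hence the same geometric monodromy, so Theorem \ref{main-su1} gives $H^{(\infty)}\cong\SU_n(q)$. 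Second, your mechanism for excluding intermediate subgroups between $\SU_n(q)$ and $\GU_n(q)$ fails: the $q$ degree-$A$ Weil constituents are already pairwise non-isomorphic as representations of $\SU_n(q)$ itself (that is precisely what the second-moment count together with Theorem \ref{main-su1} says for $\sW^{\,n,m}$), so the pairwise non-isomorphism of the sheaf summands puts no constraint on how far $H\cap L$ sits above $\SU_n(q)$. The leverage the paper uses instead is determinantal: since $\gcd(B,q+1)=1$, Lemma \ref{detdown2} shows a suitable rank-$A$ summand of $\sH^{n,m}_{bis}$ has geometric determinant $\sL_{\chi}$ with $\chi$ of order exactly $q+1$, forcing $H/H^{(\infty)}\cong C_{q+1}$.

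More seriously, the endpoint you aim for --- $H\cap L=\GU_n(q)$ with the scalar group ``absorbed,'' so that $H$ is the standard $\GU_n(q)\leq\Sp_{2n}(q)$ in its restricted total Weil representation --- is wrong, and the determinants show it: by \cite[Lemma 3.2(iii)]{KT3} the standard $\GU_n(q)$ inside $\Sp_{2n}(q)$ has trivial determinant on one of the degree-$A$ Weil constituents, whereas no rank-$A$ summand of $\sH^{n,m}_{bis}$ has trivial geometric determinant. The paper uses exactly this to prove $H\not\subseteq L$: necessarily $\OB_2(C)=\langle\bt\rangle$ is nontrivial, $[H:H\cap L]=2$, and $H=\langle \SU_n(q),\bt g\rangle\cong\GU_n(q)$ acts through the character $\zeta_{n,q}$, not through the restriction $\tilde\chi_2\zeta_{n,q}$ of the symplectic total Weil character (Remark \ref{striking}); the scalar $\bt$ cannot be absorbed, it is the whole point. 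Because your identification of $H$ is off by this quadratic twist, the determinant-matching you propose for (c) is anchored at the wrong group; the paper instead pins down $C_{\ari,k}$ by computing the trace of $\sH^{n,m}_{bis}$ at $u=1$ over $\F_{q^2}$ via Stickelberger, namely $(-1)^{(q+1)/2}q$, and comparing it with the possible values of $\zeta_{n,q}$.
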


\begin{proof}
(i) Using the same notation and the arguments in the proof of Corollary \ref{main-su3}, by specializing $\sW^{\, n,m,2}$ to the curve 
$v = -u^\beta$, $r=u^\alpha$, $s=0$, we again have that
the geometric monodromy group $H:=H^{n,m}_{bis,\geo}$ of $\sH^{n,m}_{bis}$ is contained in $C \times L$, with $C$ a cyclic scalar subgroup
of order dividing $2 \cdot \gcd(p,3)$. Defining 
$L_2:= \OB_2(C) \times L$, note that $L_2$ is a normal subgroup of $CL$ of index $1$ or $p$.

Next, note that the Kummer pullback 
$$\mathcal{K} =[q^m+1]^\star\sW^{\, n,m}$$ 
of $\sW^{\, n,m}$, has trace function at $r \in k^\times$
$$r \mapsto  \frac{1}{\Gauss(\overline\psi_k,\chi_2)}  \sum_{x \in k}\overline\psi_{k}\bigl(x^{q^n+1}-(rx)^{q^m+1}\bigr) = 
    \frac{1}{\Gauss(\psi_k,\chi_2)}\sum_{x \in k}\psi_{k}\bigl(x^{q^m+1}-(r^{-1}x)^{q^n+1}\bigr)$$
on $\G_m/k$ (since $-1$ is a square in $k \supseteq \F_{q^2}$ and all traces are integers).
On the other hand, by \eqref{eq:511bb}, if we define 
$$\sK':=[q^n+1]^\star\sW^{\, n,m}_{bis}=[q^n+1]^\star[B]^\star\sH^{n,m}_{bis},$$
then $\sK'$ has trace function at $u \in k^\times$ 
$$u \mapsto \frac{1}{\Gauss(\psi_k,\chi_2)}\sum_{x \in k}\psi_{k}\bigl(x^{q^m+1}-(u^{-1}x)^{q^n+1}\bigr).$$
Thus $\mathcal{K}'$ is arithmetically isomorphic to $\mathcal{K}$, because they have equal trace functions and are each arithmetically semisimple. So their geometric monodromy groups are the same: $K_{\geo} =K'_{\geo}$.

\smallskip
(ii) The aforementioned pullback relationships imply that $K_{\geo}$ is a normal 
subgroup of $G^{n,m}_{\geo}$ with cyclic quotient of order dividing $q^m+1$. It follows from Theorem 
\ref{main-su1} that $K_{\geo} \cong \SU_n(q)$, whence $K'_{\geo} \cong \SU_n(q)$. 
Next, $K'_{\geo}$ is a normal subgroup of $H=H^{n,m}_{bis,\geo}$ with cyclic quotient of order 
dividing $AB(q+1)$ which is coprime to $p$. But $H/(H \cap L_2)$ embeds in $CL/L_2$, and 
$|CL/L_2|$ divides $p$. Hence 
$$\SU_n(q) = K'_{\geo} = H^{(\infty)} \lhd H \cap L_2 = H   \leq L_2=\OB_2(C) \times L \leq C_2 \times \Sp_{2n}(q).$$ 
Furthermore, the action of $H$ on $\sH^{n,m}_{bis}$ is the sum of 
$q+1$ irreducible representations, one of degree $A-1$ and $q$ of degree $A$, and these representations remain irreducible upon 
restriction to $K'_{\geo} \leq H \cap L$. Hence, by \cite[Theorem 3.4]{KT3} applied to $H \cap L$, we obtain that 
$$\SU_n(q) \lhd H \cap L \leq \GU_n(q);$$
in particular, $[H \cap L:K'_{\geo}]$ divides $q+1$. 
As $H/(H \cap L)$ embeds in $L_2/L$ which has order $1$ or $2$,
we see that $|H/K'_{\geo}|$ divides $2(q+1)$. At the same time, $|H/K'_{\geo}|$ divides $AB(q+1)$, an odd multiple of $q+1$. 
It follows that $|H/K'_{\geo}|$ divides $q+1$. 

Choosing $\chi$ of order $q+1$ and using $B=(q^m+1)/(q+1)$ is coprime to $q+1$, by Lemma \ref{detdown2} we see that 
$\sH^\sharp_{big,A,\chi,B,descent}$ has geometric determinant $\sL_\chi$. Hence,
$H/H^{(\infty)}$ has order divisible by $q+1$. Since $K'_{\geo}=H^{(\infty)}$, we
have shown that 
\begin{equation}\label{eq:511c}
  H/K'_{\geo} \cong C_{q+1}; 
\end{equation}  
in particular, $|H| = |\GU_n(q)|$.

\smallskip
(iii) Next, we claim that in fact $\OB_2(C) = C_2 = \langle \bt \rangle$ and $H \neq H \cap L$. Assume the contrary:
$\OB_2(C) = 1$ or $H = H \cap L$. Then $H \leq L$ acts 
on $\sH^{n,m}_{bis}$ via restricting a total Weil representation $\Phi$ of $L \cong \Sp_{2n}(q)$ to $H$. By \cite[Lemma 3.2(iii)]{KT3},
the image of $H$ on one of the irreducible summands of rank $A$ of $\sH^{n,m}_{bis}$ has trivial determinant, which 
is impossible (since the only summand of $\sH^{n,m}_{bis}$ that has trivial geometric determinant has rank $A-1$).
 
As shown on \cite[p. 9]{KT3}, ${\mathbf{N}}_L(K'_{\geo}) \cong M \rtimes \langle \sigma \rangle$, with $M \cong \GU_n(q)$ and 
$\sigma \in L$ an involution that acts as inversion on $\ZB(M) \cong C_{q+1}$. It follows that 
$$H \leq {\mathbf{N}}_{L_2}(K'_{\geo}) = (M \rtimes \langle \sigma \rangle) \times \langle \bt \rangle.$$
Now using \eqref{eq:511c}, we can write $H = \langle K'_{\geo},h \rangle$ where $h= \bt^i\sigma^jg$ for some $g \in M$ and 
$i,j \in \{0,1\}$. Note that $i=1$ since $H \neq H \cap L$. On the other hand, if $j=1$, then $h$ does not fix invariant
some of the irreducible Weil representations of $K'_{\geo}$ occurring in $\sH^{n,m}_{bis}$, a contradiction. 
Thus $h=\bt g$ with $g \in M$.

Let $e$ denote the order of the coset $gK'_{\geo}$ as an element in $M/K'_{\geo} \cong C_{q+1}$, 
in particular, $e|(q+1)$. By the choice of $h$, $hK'_{\geo}$ has order 
$q+1$ in $H/K'_{\geo}$. But $h^{2e} = g^{2e} \in K'_{\geo}$, so $(q+1)/2$ divides $e$. We claim that 
\begin{equation}\label{eq:511d}
  e=q+1.
\end{equation}   
Assume the contrary: $e=(q+1)/2$. If $q \equiv 3 (\bmod\ 4)$, then 
$h^{(q+1)/2} = g^{(q+1)/2} \in K'_{\geo}$, a contradiction. Consider the case $q \equiv 1 (\mod 4)$, in particular, $2 \nmid e$,
and an odd prime divisor $r$ of $q+1$. Then some subsheaf $\sH'$ of $\sH^{n,m}_{bis}$ of odd rank $A$ has geometric determinant
$\sL_\chi$, with $\chi$ of order $r$. On the other hand, $\bt$ acts on $\sH^{n,m}_{bis}$ as scalar $-1$, and $g^e \in K'_{\geo} \cong \SU_n(q)$
has trivial determinant on $\sH'$. It follows that $h=\bt g$ has determinant of even order on $\sH'$, a contradiction.

Now, \eqref{eq:511d} implies that $\langle K'_{\geo},g \rangle = M \cong \GU_n(q)$. We also note that the action of 
$h=\bt g$ on $\sH^{n,m}_{bis}$ is $-\Phi(g)$. Since $H = \langle K'_{\geo},h \rangle$, using \cite[Theorem 3.1(i)]{KT3}, it follows that the action of $H$ on
$\sH^{n,m}_{bis}$ affords the total Weil character $\zeta_{n,q}$ and that $H \cong \GU_n(q)$, and the statements (a)  and (b) follow.

\smallskip
(iv) The specialization $v = -u^\beta$, $r=u^\alpha$, $s=0$ of $\sW^{n,m,2}$ at the beginning of (i) also shows that 
$\tH:=H^{n,m}_{bis,\ari,k}$ embeds in $C' \times L$ for some cyclic scalar subgroup $C' \leq C_{2\cdot\gcd(p,3)}$. Recalling
that $H^{n,m}_{bis,\ari,k}$ normalizes the standard subgroup $H^{(\infty)}=(H^{n,m}_{bis,\geo})^{(\infty)} \cong \SU_n(q)$ of 
$L = \Sp_{2n}(q)$ but preserves the equivalence class of each of the $q+1$ irreducible Weil representations of 
$\SU_n(q)$, we obtain
\begin{equation}\label{eq:511e}
  \GU_n(q) \cong H \lhd \tH \leq \NB_{C' \times L}(\SU_n(q)) = C' \times \GU_n(q).
\end{equation}  
In particular, the first statement in (c) follows if $C' \leq C_2$. Consider the case $C' \geq C_3$, whence $p=3$. In this case, \eqref{eq:511e} 
shows that 
\begin{equation}\label{eq:511f}
  \tH/\CB_{\tH}(H^{(\infty)}) \hookrightarrow \PGU_n(q),
\end{equation}  
and that $\CB_{\tH}(H^{(\infty)})$ is contained in  $C' \times \ZB(\GU_n(q))$ which centralizes $\tH$, whence
$\CB_{\tH}(H^{(\infty)})  = \ZB(\tH)$. Arguing as in the proof of \eqref{eq:mn26} and 
using Lemma \ref{center-su}, we also have that $|\ZB(\tH)|$ divides $2(q+1)$. Together with \eqref{eq:511f}, this implies 
that $|\tH/H^{(\infty)}|$ divides $2(q+1)^2$. On the other hand, by \eqref{eq:511e}, 
$\tH/H^{(\infty)}$ embeds in $C' \times C_{q+1}$, an abelian group.   It follows that 
$$\tH/H^{(\infty)} \leq \OB_{3'}(C' \times C_{q+1}) = \OB_{2}(C') \times C_{q+1} = \bigl(\OB_{2}(C') \times \GU_{n}(q) \bigr)/H^{(\infty)},$$
and so $\tH \leq \OB_{2}(C') \times \GU_n(q)$, and the first statement in (c) is proved in full generality.

To determine 
$C_{\ari,\F_{q^2}}$, we note that, since both $n$ and $m$ are odd, \eqref{eq:511b} shows that the trace at $u=1$
of $\sH^{n,m}_{bis}$ over $\F_{q^2}$ is 
\begin{equation}\label{eq:511g}
  q^2/\Gauss(\psi_{\F_{q^2}},\chi_2) = (-1)^{(q+1)/2}q, 
\end{equation}
since the Gauss sum
$\Gauss(\psi_{\F_{q^2}},\chi_2)$ is $(-1)^{(q+1)/2}q$ by Stickelberger's formula \eqref{stick}. By (a), the only trace of elements 
in $H=H^{n,m}_{bis,\geo}$ on $\sH^{n,m}_{bis}$ with absolute value $q$ is $(-1)^n(-q)=q$.

Assume that $q \equiv 1 (\bmod\ 4)$. As \eqref{eq:511g} gives trace $-q$, we must have that $C_{\ari,\F_{q^2}}=C_2$. Thus 
$[H^{n,m}_{bis,\ari,\F_{q^2}}:H^{n,m}_{bis,\geo}]=2$, and so $|C_{\ari,k}|=[H^{n,m}_{bis,\ari,k}:H^{n,m}_{bis,\geo}]$ is $2$ if 
$2 \nmid \deg(k/\F_{q^2})$ and $1$ if $2|\deg(k/\F_{q^2})$.

Next assume that $q \equiv 3 (\bmod\ 4)$ but $C_{\ari,\F_{q^2}}=C_2$, in particular, $[H^{n,m}_{bis,\ari,\F_{q^2}}:H^{n,m}_{bis,\geo}]=2$.
It follows that the traces of any elements $v$ with $\F_{q^2}(v)$ of odd degree over $\F_{q^2}$ should be $(-1)$ times the traces of elements 
in $H=H^{n,m}_{\geo}$. On the other hand, \eqref{eq:511g} gives trace at $u=1$ to be $q$, a contradiction. 
Thus $C_{\ari,\F_{q^2}}=1$ when $q \equiv 3 (\bmod\ 4)$. Furthermore, $C_{\ari,k}=1$ for any extension $k/\F_{q^2}$, simply because
$H^{n,m}_{bis,\geo} \leq H^{n,m}_{bis,\ari,k} \leq H^{n,m}_{bis,\ari,\F_{q^2}}=H^{n,m}_{bis,\geo}$.

\smallskip
For (d), recall that $\gcd(B,q+1)=1$, and thus $G^{n,m}_{bis,\geo} \rhd \SU_n(q)$ is a 
normal subgroup of $H^{n,m}_{bis,\geo}=\GU_n(q)$ of index dividing $B$, which is prime to $q+1$, so must itself be $\GU_n(q)$. Now,
$G^{n,m}_{bis,\ari,k}$ contains $G^{n,m}_{bis,\geo} = \GU_n(q)$ and has index dividing $B$, which is odd, in 
$H^{n,m}_{bis,\ari,k} = C_{\ari,k} \times \GU_n(q)$ with $C_{\ari,k} \leq C_2$. Hence $G^{n,m}_{bis,\ari,k} = H^{n,m}_{bis,\ari,k}$.
\end{proof}

\begin{rmk}\label{striking}It is striking that when $\gcd(m,q+1)=1$, the local systems $\sW^{\, n,m}$ and $\sW^{\, n,m}_{bis}$
have trace functions that differ ``only" in which power of $z$ has the parameter, yet the first has geometric monodromy group $\SU_n(q)$ while the second has geometric monodromy group $\GU_n(q)$. 

As a word of caution, we also mention that the subgroup 
$\langle \bj \rangle \times \SU_n(q) \leq \GU_n(q)$ in Theorem \ref{main-su1} is contained in 
a subgroup $\GU_n(q)$ of $\Sp_{2n}(q)$, which acts on a total Weil representation of $\Sp_{2n}(q)$ via the character $\tilde\chi_2\zeta_{n,q}$, where $\tilde\chi_2$ is the unique quadratic character of $\GU_n(q)$, cf. \cite[Theorem 3.1]{KT3}. 
In contrast, the subgroup $\GU_n(q)$ in Theorem 
\ref{main-su4} is {\bf not} contained in $\Sp_{2n}(q)$, and acts on a total Weil representation via the character $\zeta_{n,q}$.
\end{rmk}

%


%
%
%
%
%


\begin{thebibliography}{ABCD}



\medskip
\bibitem[BEW]{Be-Ev-Wi}Berndt, B.C.,  Evans, R.J., and Williams, K.S., Gauss and Jacobi sums. Can. Math. Soc. Series of Monographs and Advanced Texts, Wiley, New York, 1998, xii+583 pp.
\medskip
\bibitem[BHR]{BHR}
 Bray, J. N., Holt, D. F., and Roney-Dougal, C. M., The maximal subgroups of the low-dimensional finite classical groups. With a foreword by Martin Liebeck. London Mathematical Society Lecture Note Series, {\bf 407}, 2013.
\medskip
\bibitem[De]{De-Weil II}Deligne, P., La conjecture de Weil II, {\it Publ. Math. I.H.E.S.} {\bf 52} (1981), 313--428.
\medskip
\bibitem[GLS]{GLS}
Gorenstein, D., Lyons, R., and Solomon, R.M.,
The Classification of the Finite Simple Groups, Number 3.
Part {I}. {C}hapter {A}, volume~40 of Mathematical Surveys and
Monographs, American Mathematical Society, Providence, RI, 1998.
\medskip
\bibitem[Gr]{Gross1}
  Gross, B. H., Group representations and lattices, {\it J. Amer. Math. Soc.} {\bf 3} (1990), 929--960.
\medskip
\bibitem[GT]{G-T} Guralnick, R. M. and Tiep, P. H., Symmetric powers and a conjecture of Koll\'ar and Larsen, {\it Invent. Math.} {\bf 174} (2008), 505--554.
\medskip
\bibitem[Is]{Is}
 Isaacs, I. M., Character Theory of Finite Groups, AMS-Chelsea, Providence, 2006.
\medskip
\bibitem[Ka-ESDE]{Ka-ESDE}Katz, N., Exponential sums and 
differential equations. Annals of Mathematics Studies, 124. Princeton Univ. Press, Princeton, NJ, 1990. xii+430 pp.
\medskip
\bibitem[Ka-NG2]{Ka-NG2}Katz, N., Notes on $G_2$, determinants, and equidistribution, {\it Finite Fields Appl.} {\bf 10} (2004), 221--269.
\medskip
\bibitem[KRLT]{Ka-RL-T-2Co1}Katz, N., Rojas-Le\'{o}n, A., and Tiep, P.H., A rigid local system with monodromy group the big Conway group 
$2.{\mathsf {Co}}_1$ and two others  with monodromy group the Suzuki group $6.{\sf{Suz}}$, {\it Trans. Amer. Math. Soc.} 
{\bf 373} (2020), 2007--2044.
\medskip
\bibitem[KT1]{KT1}
Katz, N., with an Appendix by Tiep, P.H., Rigid local systems on $\A^1$ with finite monodromy, {\it Mathematika} {\bf 64} (2018), 785--846.
\medskip
\bibitem[KT2]{KT2}
Katz, N., and Tiep, P.H., Rigid local systems and finite symplectic groups, {\it Finite Fields Appl.} {\bf 59} (2019), 134--174.
\medskip
\bibitem[KT3]{KT3}
Katz, N., and Tiep, P.H., Local systems and finite unitary and symplectic groups, {\it Adv. Math.} {\bf 358} (2019), 106859, 37 pp.
\medskip
\bibitem[KT4]{KT4}
Katz, N., and Tiep, P.H., Monodromy groups of certain hypergeometric and Kloosterman sheaves, (submitted).
\medskip
\bibitem[KT5]{KT5}
Katz, N., and Tiep, P.H., Hypergeometric sheaves and finite symplectic and unitary groups, (in preparation).
\medskip
\bibitem[KT6]{KT6}
Katz, N., and Tiep, P.H., Local systems, extraspecial groups, and finite unitary groups in characteristic $2$, (in preparation).
\medskip
\bibitem[KlT]{KlT}
Kleshchev, A.S., and Tiep, P.H., Representations of finite special linear groups in non-defining characteristic, {\it Adv. Math.} 
{\bf 220} (2009), 478--504.  
\medskip
\bibitem[NT]{NT}
  Navarro, G., and Tiep, P.H., Brauer characters and rationality, {\it Math. Z.} {\bf 276} (2014), 1101--1112.
\medskip
\bibitem[TZ1]{TZ1}
Tiep, P. H. and Zalesskii, A. E., Minimal characters of the finite classical
groups, {\it Comm. Algebra} {\bf 24} (1996), 2093--2167.
\medskip
\bibitem[TZ2]{TZ2}
Tiep, P. H. and Zalesskii, A. E., Some characterizations of the Weil
representations of the symplectic and unitary groups, {\it J. Algebra} {\bf 192}
(1997), 130--165.
\medskip
\bibitem[vdG-vdV]{vdG-vdV}van der Geer, G., van der Vlugt, M., Reed-Muller codes and supersingular curves. I, {\it  Compos. Math.} 
{\bf 84} (1992), 333--367.
\medskip
\bibitem[Zs]{Zs}
Zsigmondy, K., Zur Theorie der Potenzreste, {\it Monatsh. Math. Phys.} {\bf 3} 
(1892), 265--284.
\end{thebibliography}
\end{document}